\documentclass[11 pt]{amsart}
\usepackage{hyperref}
\usepackage{changebar}
\usepackage{latexsym}
\usepackage{amssymb, amsmath}
\usepackage{amsthm}
\usepackage{geometry}
\geometry{hmargin=1 in, vmargin=1 in}


\usepackage{comment}
\usepackage[T1]{fontenc}
\usepackage{lmodern}

\usepackage[stretch=10]{microtype}
\usepackage{tikz}
\usetikzlibrary{calc}
\usetikzlibrary{decorations.pathreplacing,angles,quotes}
\usepackage{fixltx2e}

\numberwithin{equation}{section}

\newtheorem{theorem}{Theorem}[section]
\newtheorem{lemma}[theorem]{Lemma}
\newtheorem{cor}[theorem]{Corollary}
\newtheorem{sublem}[theorem]{Sublemma}
\newtheorem{proposition}[theorem]{Proposition}

\newtheorem{defin}[theorem]{Definition}

\newtheorem{remark}[theorem]{Remark}
\newtheorem{convention}[theorem]{Convention}

\newcommand{\cA}{\mathcal{A}}
\newcommand\cB{{\mathcal B}}

\newcommand{\cC}{\mathcal{C}}

\newcommand{\cE}{\mathcal{E}}
\newcommand{\cF}{\mathcal{F}}
\newcommand{\cG}{\mathcal{G}}
\newcommand{\cI}{\mathcal{I}}

\newcommand{\cK}{\mathcal{K}}
\newcommand{\cL}{\mathcal{L}}

\newcommand{\cM}{\mathcal{M}}
\newcommand{\cN}{\mathcal{N}}
\newcommand{\cP}{\mathcal{P}}
\newcommand{\hP}{\mathring{\cP}}
\newcommand{\cQ}{\mathcal{Q}}
\newcommand\cR{{\mathcal R}}
\newcommand{\cS}{\mathcal{S}}
\newcommand{\cV}{\mathcal{V}}
\newcommand{\cW}{\mathcal{W}}
\newcommand{\cT}{\mathcal{T}}

\newcommand{\cZ}{\mathcal{Z}}


\newcommand{\bV}{\mathbb{V}}


\newcommand{\bpsi}{\overline{\psi}}


\newcommand{\tpsi}{\widetilde{\psi}}
\newcommand{\tnu}{\tilde\nu}


\newcommand{\hW}{\widehat{\cW}}


\newcommand{\htop}{h_{\scriptsize{\mbox{top}}}}
\newcommand{\musrb}{\mu_{\tiny{\mbox{SRB}}}}
\newcommand{\fsrb}{f_{\tiny{\mbox{SRB}}}}
\newcommand{\hatmusrb}{\hat \mu_{\tiny{\mbox{SRB}}}}
\newcommand{\mutop}{\mu_{\tiny{\mbox{top}}}}

\newcommand{\vf}{\varphi}

\newcommand{\ve}{\varepsilon}

\newcommand{\ds}{\displaystyle}

\def\beq{\begin{equation}}
\def\eeq{\end{equation}}

\newcommand{\diam}{\mbox{diam}}



\begin{document}

\title[Measure of Maximal Entropy for Piecewise Hyperbolic Maps]{Uniqueness and Exponential Mixing for the Measure of Maximal Entropy for Piecewise Hyperbolic Maps}
\author{Mark F. Demers}
\address{Department of Mathematics, Fairfield University, Fairfield CT 06824, USA}
\email{mdemers@fairfield.edu}

\thanks{
MD was partly supported by NSF grant DMS 1800321.}

\date{\today}

\begin{abstract}
For a class of piecewise hyperbolic maps in two dimensions, we propose a combinatorial definition
of topological entropy by counting the maximal, open, connected components of the phase
space on which iterates of the map
are smooth.  We prove that this quantity dominates the measure theoretic entropies of all
invariant probability measures of the system, and then construct
an invariant measure   
whose entropy equals the proposed topological entropy.  We prove that our
measure is the unique measure of maximal entropy, that it is ergodic, gives positive measure
to every open set, and
has exponential decay of
correlations against H\"older continuous functions.  As a consequence, we also prove a 
lower bound on the
rate of growth of periodic orbits. The main tool used in the paper
is the construction of anisotropic Banach spaces of distributions on which the
relevant weighted transfer operator has a spectral gap.  We then construct our measure
of maximal entropy by taking a product of left and right maximal eigenvectors of this
operator.
\end{abstract}

\maketitle


\section{Introduction}
\label{sec:intro}

There has been a flurry of recent activity in establishing the existence and uniqueness
of equilibrium states for broad classes of potentials and systems outside the uniformly hyperbolic
setting.  This topic traces back to the work of Margulis \cite{Mar0}, who proved that the
number of periodic orbits of length at most $L$ for the geodesic flow on a compact manifold of
strictly negative curvature grow at an exponential rate determined by the topological entropy
$\htop$ of
the flow.  To prove this result, Margulis constructed an invariant measure
$\mutop$ via conditional measures on the local stable and unstable manifolds of the flow
which scaled by $e^{\pm t \htop}$.  An important feature of the measure
$\mutop$ is that it is the unique measure of maximal entropy for the flow:
its measure-theoretic entropy equals the topological entropy $\htop$.

These results were generalized and further developed for broader classes of Anosov and Axiom A flows and diffeomorphisms
through the work of Sinai, Bowen, Ruelle and many others using thermodynamic formalism 
\cite{Sin72, BoRu, ruelle1}, topological techniques \cite{Bo1,Bo2,Bo3,Bo4},
and dynamical zeta functions \cite{PaPo83, ruelle2}.  Later, Dolgopyat's proof of exponential
decay of correlations for some geodesic flows \cite{Do} led to more precise asymptotics
for counting periodic orbits \cite{PS}.

Recent attempts to extend proofs of existence and uniqueness of equilibrium states
in general, and measures of maximal entropy in particular, to the nonuniformly hyperbolic setting
have employed symbolic dynamics \cite{Sar1, Sar2, lima, BuSa}, as well as adapting the approach of Bowen via a notion of non-uniform specification
\cite{BCFT, CFT, CliWar, CPZ}.  These works have greatly broadened the classes of systems for which
one can prove the existence and uniqueness of equilibrium states, yet they do not usually provide
rates of mixing for these measures.

Simultaneously, there have been advances made in the study of the transfer operator
associated with hyperbolic systems
with singularities, first to piecewise hyperbolic maps (with bounded derivatives)
\cite{demers liv, baladi gouezel1, baladi gouezel2}, and then to 
dispersing and other hyperbolic billiards \cite{demzhang11, demzhang13, demzhang14}.
This approach, which avoids the coding associated with Markov partitions or extensions,
exploits the hyperbolicity of the system to prove that the action of the transfer operator
on appropriately defined Banach spaces has good spectral properties.  It was used recently
to prove exponential decay of correlations for the finite horizon Sinai billiard flow 
\cite{bdl}, adapting ideas of Dolgopyat \cite{Do} and Liverani \cite{Li2}.
It was then applied to prove the existence and uniqueness of a measure of
maximal entropy for finite horizon Sinai billiard maps \cite{max}, establishing
a variational principle for this class of billiards.

For hyperbolic systems with discontinuities, a priori results that guarantee
the existence of an invariant  measure maximizing the entropy, or even
a simple definition of topological
entropy, are not available as they are for continuous maps and flows. Indeed, in
order to overcome this shortcoming, one approach is to redefine the map as a continuous
map on a noncompact space, and then apply generalized definitions of topological entropy
in this setting.  Yet such definitions can be cumbersome to work with, and the resulting entropy
can depend on the choice of metric in the reduced space.  

To simplify matters, the first step in \cite{max} is to define
an intuitive notion of growth in complexity given by the number of domains of
continuity $\cM_0^n$ for the map  $T^n$.  This leads to an asymptotic quantity $h_*$, which plays the
role of topological entropy \cite[Definition~2.1]{max} (see also Definition~\ref{def:h_*} below).  This quantity
is proved to equal the supremum of the measure-theoretic entropies of the invariant measures for the
billiard map, and the unique measure $\mu_*$ whose entropy achieves this maximum is constructed
by taking a product of left and right maximal eigenvectors of an associated weighted transfer operator $\cL$,
following the methods in \cite{GL2} which generalize the classical Parry construction.

Despite this success, the weight in the relevant transfer operator in \cite{max} is unbounded
due to the unbounded expansion and contraction that occur near grazing collisions in
dispersing billiards.  The presence of this weight forced significant changes in the Banach spaces
from \cite{demzhang11} on which 
the operator acted, and it was not possible to establish a spectral gap in this context.  Indeed,
the rate of mixing for the measure of maximal entropy is an open question for billiards.

The purpose of the present paper is to demonstrate that under the additional assumption
that the derivative of the map is bounded, the techniques employed in \cite{max}
are sufficient to prove the existence and uniqueness of a measure of maximal entropy that
is exponentially mixing.
To this end, we study a class of piecewise hyperbolic maps, defined in Section~\ref{setting}.
The existence and statistical properties of Sinai-Ruelle-Bowen (SRB) measures\footnote{Recall that an SRB measure for a hyperbolic system is an invariant probability measure whose conditional
measures on local unstable manifolds are absolutely continuous with respect to the Riemannian
volume.} for
this class of maps has been studied via a variety of techniques
\cite{Pesin1, Li1, Y98, demers liv, baladi gouezel2}.
Transfer operators with more general potentials were studied in \cite{baladi gouezel2} and
a bound on the essential spectral radius was obtained; however, lower bounds on the spectral
radius of the transfer operator were not obtained, so that no spectral gap was established
and the related invariant measures were not constructed. 
Currently there are no results regarding measures
of maximal entropy, nor more general equilibrium states for this class of maps.

In structure, this paper mainly follows the approach in \cite{max}.  Yet there are several
key differences between the class of piecewise hyperbolic maps studied here and 
dispersing billiards.  The primary simplification is that our maps have bounded derivatives,
as mentioned above,
and this fact permits us to prove a spectral gap for the relevant transfer operator, which
leads to exponential decay of correlations for the measure of maximal entropy $\mu_*$.
However, there are two additional difficulties in the current setting that are not present in
Sinai billiards.
\begin{itemize}
  \item[(i)]  We do not assume that the singularity curves for our map $T$ satisfy the
  {\em continuation of singularities} property enjoyed by billiards.
  \item[(ii)]  We do not assume the map is associated with a continuous flow.
\end{itemize}
Point (i) creates significant complications in the study of the rate of growth of
$\# \cM_0^n$, the number of maximal, connected domains of continuity of $T^n$.  In particular,
the submultiplicative property of $\# \cM_0^n$ proved in \cite[Lemma~3.3]{max}, and often
exploited in that work, may fail in the present context due to the fact that dynamical refinements
of $\# \cM_0^n$ may have elements that are not simply connected.  
Indeed, the uniform exponential upper and lower
bounds on $\# \cM_0^n$ proved in Proposition~\ref{prop:M0n} are completed only after the
spectral gap for the operator $\cL$ is established.
Point (ii) has several consequences. The first is that the continuous flow provides a linear
bound on the growth in complexity, which is exploited in \cite{max}.  In the present work,
this property is replaced by the complexity assumption (P1) introduced in Section~\ref{sec:pw}; 
while the growth in complexity
may be exponential for our class of maps, it is slow relative to the minimum hyperbolicity constant
for the map (and therefore also relative to $h_*$ by Lemma~\ref{lem:growth}(d)).  
The second consequence of (ii) is that in \cite{max}, there is a positive minimum distance
between orbits that belong to different elements of $\cM_0^n$.  In the present context this may
fail, so in Section~\ref{sec:pw} we define an adapted metric which we use to define the
dynamical Bowen balls instrumental in the estimation of the entropy of $\mu_*$ in Section~\ref{sec:max}.

The structure of the paper is as follows.
We begin by defining in Definition~\ref{def:h_*} the exponential rate of growth in complexity, $h_*$, which counts the number of domains of continuity $\cM_0^n$ of $T^n$.
This quantity dominates the measure-theoretic entropies
of the invariant measures (Theorem~\ref{thm:initial}).  We then proceed to study
the action of a weighted transfer operator, defined in Section~\ref{sec:transfer}.
The Banach spaces we use are similar to those defined in \cite{demers liv} (not \cite{max})
for this
class of maps, yet the operator has significant differences from the transfer operator with
respect to the SRB measure studied in \cite{demers liv}.  By proving a series of growth
and fragmentation lemmas in Sections~\ref{sec:growth} and \ref{sec:lower} that control the
prevalence of short and long connected components of $T^{-n}W$ for local stable manifolds $W$, we are able to establish that the operator has a spectral gap in Section~\ref{sec:spec}.  Finally, in Section~\ref{sec:max},
we construct a measure $\mu_*$ out of the left and right eigenvectors of the transfer operator
and show that it has exponential decay of correlations and that it is
the unique invariant measure with entropy equal to $h_*$.  The properties of the measure $\mu_*$
are summarized in Theorem~\ref{thm:mu}.  
In Corollary~\ref{cor:per}, we derive our asymptotic bound on the growth rate of periodic orbits,
applying results of \cite{lima} and \cite{Bu}. 
Finally, as a byproduct of this approach, uniform
growth rates are established for $\# \cM_0^n$ and the length $|T^{-n}W|$ of stable
manifolds $W$; these are stated in Proposition~\ref{prop:M0n} and
Corollary~\ref{cor:uniform growth}, respectively.


\section{Setting, Definitions and Results}
\label{setting}

In this section, we introduce a set of formal assumptions on our class of piecewise hyperbolic
maps and state the principal results of the paper.


\subsection{Piecewise Hyperbolic Maps}
\label{sec:pw}

Let $M$ be a compact two-dimensional Riemannian manifold, possibly with boundary 
and not necessarily connected, and let 
$T:M \circlearrowleft$ be a piecewise uniformly hyperbolic map in the
sense described below.
There exist a finite number of pairwise disjoint open, simply connected 
regions $\{ M^+_i \}_{i=1}^d$ such that
$\cup_i \overline{M^+_i} = M$ and $\partial M^+_i$ comprises finitely many $\cC^1$
curves of finite length.  We will refer to $\cS^+ = M \setminus \cup_i M_i^+$ as the singularity
set for $T$.

Define $M_i^- = T(M^+_i)$.  We assume that $\cup_i \overline{M^-_i} = M$
and refer to the set $\cS^- = M \backslash \cup_i M_i^-$ as
the singularity set for $T^{-1}$.
We require that $T\in\operatorname{Diff}^2(M \backslash \cS^+,M
\backslash \cS^-)$ and that on each $M^+_i$, $T$ has a $\cC^2$ extension\footnote{This implies in
particular that $\| DT \|$ is bounded on each $M_i^+$, so that this class of maps does not include
dispersing billiards.}
to $\overline{M_i^+}$.  Since the extension of $T$ is defined on $\partial M_i^+$, we will write
$T(\cS^+)$ to denote the set of images of these boundary curves (on which the extension of
$T$ may be multi-valued). 
In this notation, $T(\cS^+) = \cS^-$ and $T^{-1}(\cS^-) = \cS^+$.

On each $M_i$, $T$ is uniformly  hyperbolic: i.e., there exist constants $\Lambda>1$, $\kappa \in (0,1)$
and two $DT$-strictly-invariant 
families of cones $C^u$ and $C^s$, continuous in each 
$\overline{M_i^+}$ which satisfy,
$DT(x) C^u(x) \subsetneq C^u(Tx)$, $DT^{-1}(x) C^s(x) \subsetneq C^s(T^{-1}x)$, and
\begin{equation}
\label{eq:exp def}
\begin{split}
&  \inf_{x\in M\backslash \cS^+} \; \inf_{v \in C^u}
\frac{\| DTv \|}{\|v\|} \; \ge \Lambda \, , \quad
 \inf_{x\in M\backslash \cS^-} \; \inf_{v \in C^s}
\frac{\| DT^{-1}v \|}{\|v\|} \;  \ge \; \Lambda \, , \\ 
&  \qquad \qquad \mbox{and} \quad \kappa \; := \; \inf_{x\in M\backslash \cS^+} \; \inf_{v \in C^s}
\frac{\| DTv \|}{\|v\|}   \, .
\end{split}
\end{equation}
The strict 
invariance of the cone field together with the smoothness properties of the map 
implies that the stable and unstable directions are well-defined for each point
whose trajectory does not meet a singularity line.

In Section~\ref{sec:admissible}, we define narrower cones with the same names
and refer to them as the stable and unstable cones of $T$  
respectively.  We assume the following uniform transversality properties:
there is a uniform positive lower bound on the angle between vectors in $C^s(x)$ and $C^u(x)$
for all $x \in M \setminus \cS^+$,
the tangent vectors to the
singularity curves in $\cS^-$ are bounded away from $C^s$, and those of $\cS^+$
are bounded away from $C^u$; lastly, curves in $\cS^-$ either coincide with, or are uniformly transverse to, curves in $\cS^+$.
As mentioned in the introduction, this class of maps is similar to that studied in
\cite{Pesin1, Li1, Y98, demers liv, baladi gouezel2}; see also \cite{LiWo} for the symplectic case.

\begin{convention}(Doubling boundary points.)
\label{con:pointwise}
It will be convenient in what follows to have $T$ defined pointwise on $M$, but a priori
it is defined only on $\cup_i M_i^+$.  
Since $T$ is $C^2$ up to the closure of each $M_i^+$, we may extend
$T$ to be defined on $\partial M_i^+$, making $T$ multivalued where these boundaries overlap.
Following \cite{Li1}, we 
adopt the convention that the image of such a subset of $M$ under $T$ contains all such points,
and continue to call this extended space $M$.
\end{convention}

We remark that although this convention is made for convenience, it follows from
Theorem~\ref{thm:mu}(a) that the measure $\mu_*$ is independent of how $T$ is defined
on $\partial M_i^+$.

Let $d(\cdot, \cdot)$ denote the Riemannian metric on $M$.  
The following related metric is better adapted to the dynamics.  Define
\begin{equation}
\label{eq:bar d}
\bar d(x,y) = d(x,y), \quad \mbox{whenever $x,y$ belong to the same component $\overline M_i^+$}, 
\end{equation}
and $\bar d(x,y) = 10\, \diam(M)$ otherwise.  Since we have doubled boundary points in $M$
according to Convention~\ref{con:pointwise}, the extended space $M$ is compact in the metric
$\bar d$.

Denote by $\cS_n^+ = \cup_{i=0}^{n-1} T^{-i} \cS^+$ the set of singularity curves for $T^n$ and by 
$\cS_n^- = \cup_{i=0}^{n-1} T^i \cS^-$ the 
set of singularity 
curves for $T^{-n}$.
Let $K(n)$ denote the maximum number of singularity curves
in $\cS_n^-$ or in $\cS_n^+$ which intersect at a single point.  We make the following
assumption
regarding the complexity of $T$.

\bigskip
\noindent
\parbox{.1 \textwidth}{(P1)}
\parbox[t]{.85 \textwidth}{There exist $\alpha_0 > 0$ and an integer
$n_0>0$, such that $\Lambda \kappa^{\alpha_0} > 1$  
and $(\Lambda \kappa^{\alpha_0})^{n_0} > K(n_0)$.    } 

\bigskip
\noindent
Condition (P1) can always be satisfied if $K(n)$ has polynomial growth (as is the case
with a Sinai billiard on a torus); however, since (P1) is required only for some fixed $n_0$, it is
not necessary to control $K(n)$ for all $n$ in order to verify the condition.

\begin{remark}
\label{rem:iterate p1}
If property (P1) holds for $\alpha_0$, then it holds for all
$0 < \alpha < \alpha_0$ with the same $n_0$. 
Notice also that $K(kn_0) \leq K(n_0)^k$ which implies that the
inequality in (P1) can be iterated to make  
$(\Lambda \kappa^{\alpha_0})^{-kn_0} K(kn_0)$
arbitrarily small once (P1) is satisfied for some $n_0$.
\end{remark}

In Section~\ref{sec:admissible} we will define a set of admissible stable curves
$\hW^s$, with tangent vectors belonging to the stable cone, which  we will use to define our
norms.  For $W \in \hW^s$, let $K_n$ denote the number of
smooth connected components of $T^{-n}W$.  For a fixed $N$, by
shrinking the maximum length $\delta_0$ of leaves in $\hW^s$, we can
require that $K_N \leq K(N)+1$. This implies that choosing $N=kn_0$,
we can make $(\Lambda \kappa^{\alpha_0})^{-N}K_N$ arbitrarily small. 

\begin{convention}
\label{convention: n_0=1}
In what follows, we will assume that $n_0 = 1$.  If this is not the
case, we may always consider a higher  iterate of $T$ for which this
is so by assumption (P1).  We then choose $\delta_0$ small 
enough that $K_1 \Lambda^{-1} \kappa^{-\alpha_0}=:\rho<1$.
\end{convention}

We also assume the following.

\bigskip
\noindent
\parbox{.1 \textwidth}{(P2)}
\parbox[t]{.85 \textwidth}{$T$ is topologically mixing and preserves a unique smooth
invariant measure $\musrb$, i.e. there exists $\fsrb \in \cC^1(M_i^+)$ for each $i$ such that
$d\musrb = \fsrb dm$, where $m$ denotes the Riemannian volume on $M$.   } 

\bigskip

\begin{remark}
Property (P1) is standard for piecewise hyperbolic maps, and a variant of it has been used in
\cite{Pesin1, Li1, Y98, demers liv, baladi gouezel2}.  The most common form is only to require the complexity bound
in one direction, for example on $\cS_n^-$ in \cite{Li1, demers liv}.  Here, we assume the symmetric
version on both $\cS_n^-$ and $\cS_n^+$ in order to prove the super-multiplicativity property
for $\# \cM_0^n$, Proposition~\ref{prop:super}.  In fact, the requirement for $\cS_n^+$ 
is used only in the proof of
Lemma~\ref{lem:long elements}.

It follows from the piecewise hyperbolicity of $T$ and (P1) that $T$ admits
an SRB measure \cite[Theorem~1]{Pesin1}.  The requirement that $\musrb$ be smooth in Property (P2) is less essential to our argument.
We use $\musrb$ as our reference measure rather than the Riemannian volume $m$
in order to simplify the estimates involving the transfer operator.  Assuming that $\musrb$
is smooth allows us to prove the embedding lemma, Lemma~\ref{lem:embed}, connecting our Banach spaces
to the standard spaces of distributions.
\end{remark}

Our assumptions on the hyperbolicity of $T$ imply the following uniform expansion and
bounded distortion properties along stable curves, which we record for future use.  There exists $C_e>0$
such that for any $W \in \hW^s$ and $n \ge 0$,
\begin{equation}
\label{eq:one grow}
|T^{-n}W| \ge C_e \Lambda^n |W| \, ,
\end{equation}
where $|W|$ denotes the arc length of $W$ in the metric induced by the Riemannian metric on $M$.

Suppose $W \in \hW^s$ is such that $T^n$ is smooth on $W$ and $T^iW \in \hW^s$,
for $i = 0, \ldots, n$.  We denote by $J_WT^n$ the Jacobian of $T^n$ along $W$ with respect to
arc length.
There exists $C_d>0$, independent of $W$, such that for all
$x,y \in W$ and all $n \ge 0$,
\begin{equation}
\label{eq:distortion}
\left| \frac{J_WT^n(x)}{J_WT^n(y)} - 1 \right| \le C_d d_W(x,y) \, ,
\end{equation}
where $d_W(\cdot, \cdot)$ denotes arc length distance along $W$.


\subsection{A Definition of Topological Entropy}
\label{sec:ent def}

Following \cite{max}, for $k, n \ge 0$, let $\cM_{-k}^n$ denote the set of maximal connected components of 
$M \setminus (\cS^+_n \cup \cS^-_k)$, where we define $\cS^{\pm}_0 = \emptyset$.  Note that by definition, 
elements of $\cM_{-k}^n$ are open in $M$.  With this notation, $\cM_0^n$ denotes the set of
maximal, open, simply connected components of $M$ on which $T^n$ is continuous, while
$\cM_{-n}^0$ has the analogous property for $T^{-n}$.  We remark also that the requirement
that each $M_i^+$ be open and simply connected prevents the partition $\cM_0^1$ from
being trivial, and implies in particular that the diameter of elements of $\cM_{-n}^n$ tends
to 0 as $n$ gets large.\footnote{Thus, if one wants to apply the present results to a smooth
map, for example a toral automorphism, one should first partition the torus into a finite number of 
simply connected `rectangles' with boundaries transverse to $C^u$ and $C^s$.  Then
Theorem~\ref{thm:mu} implies that the rate of growth in cardinality of dynamical refinements
of this partition, $h_*$,
will equal the topological entropy of the automorphism.}
Similarly, the transversality assumptions coupled with the finiteness requirement on the
number of smooth curves in $\cS^+$ guarantee that $\# \cM_{-k}^n$ is finite for each
$k$ and $n$.

\begin{defin}(Topological entropy of $T$.)
\label{def:h_*}
Define $\ds h_*(T) = \limsup_{n \to \infty} \frac 1n \log \left( \# \cM_0^n \right)$.
\end{defin}

By definition, if $A \in \cM_0^n$, then $T^nA \in \cM_{-n}^0$, so that
$\# \cM_0^n = \# \cM_{-n}^0$.  Thus $h_*(T) = h_*(T^{-1})$, i.e. this definition is
symmetric in time. Indeed, the limsup in the definition is in fact a limit, which follows
from Proposition~\ref{prop:M0n}.

We begin by establishing that the quantity $h_*$ is finite.

\begin{lemma}
\label{lem:finite}
For a piecewise hyperbolic map $T$ as defined in Section~\ref{sec:pw}, 
but not necessarily satisfying conditions (P1) and (P2), the quantity $h_* < \infty$.
\end{lemma}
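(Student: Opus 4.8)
The plan is to bound $\# \cM_0^n$ above by a purely combinatorial count of how singularity curves and their preimages can subdivide $M$, using only the finiteness and transversality hypotheses from Section~\ref{sec:pw}. The key observation is that $\cM_0^n$ is the set of connected components of $M \setminus \cS_n^+$, where $\cS_n^+ = \cup_{i=0}^{n-1} T^{-i}\cS^+$, so it suffices to control the number of smooth curves making up $\cS_n^+$ and then invoke a standard bound on the number of faces of an arrangement of curves in a surface. Concretely, if $\cS^+$ consists of $p$ smooth curves, I would argue that $T^{-i}\cS^+$ consists of at most $C \cdot c^i$ smooth curves for suitable constants $C, c$: each of the $p$ singularity curves of $\cS^+$ is cut by the $i$-step singularity set $\cS_i^+$ into finitely many pieces, on each of which $T^{-i}$ is a diffeomorphism, hence its preimage under $T^i$ is a union of finitely many smooth curves, and the number of such pieces is itself controlled inductively.

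The cleanest route is an induction on $n$ giving $\# \cM_0^{n+1} \le C_\# \cdot \# \cM_0^n \cdot \# \cM_{-1}^0$ (up to additive constants), or more simply $\# \cS_n^+ \le a \cdot b^n$ for constants depending only on $p = \# \cS^+$, the number $d$ of smooth pieces of $\cM_0^1$, and the uniform transversality bounds. First I would establish the base case: $\# \cM_0^1 = d < \infty$ by hypothesis, and each region $M_i^+$ has boundary made of finitely many $\cC^1$ curves of finite length. For the inductive step, note $\cS_{n+1}^+ = \cS^+ \cup T^{-1}\cS_n^+$; the set $T^{-1}\cS_n^+$ is obtained by pulling back the finitely many curves of $\cS_n^+$ through the finitely many branches of $T^{-1}$ (one per element of $\cM_{-1}^0$), and since $T$ is a $\cC^2$ diffeomorphism on each branch, each branch contributes at most a bounded multiple (governed by how a single curve can wind through a single region, which is finite by transversality and finite length) of new smooth curves. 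This yields $\# \cS_n^+ \le a b^n$, hence $\# \cM_0^n \le a' b'^n$ by the arrangement bound (a family of $N$ curves, each crossing each other at most boundedly often by the transversality hypothesis that curves in $\cS^-$ either coincide with or are uniformly transverse to curves in $\cS^+$, divides $M$ into $O(N^2)$ pieces). Taking $\tfrac1n\log$ and $\limsup$ gives $h_* \le \log b' < \infty$.

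The main obstacle I anticipate is the step bounding the number of new smooth curves created when pulling a single curve of $\cS_n^+$ back through one inverse branch: a priori a preimage curve could oscillate across many regions of $\cM_{-1}^0$, so one must use the uniform transversality of the stable/unstable cones to the singularity curves together with the uniform hyperbolicity to get a \emph{uniform} (in $n$ and in the curve) bound on the number of such crossings per unit length, combined with the uniform length bound $|T^{-1}W| \le \Lambda^{-1}|W|$-type control in the stable direction — though here the curves in $\cS_n^+$ are not stable curves, so one instead argues that any $\cC^1$ curve transverse to $C^u$ has image under $T^{-1}$ again transverse to $C^u$ with comparable length, keeping the total length of $\cS_n^+$ growing at most geometrically. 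Once the geometric growth of total length and number of curves of $\cS_n^+$ is in hand, the passage to $\# \cM_0^n$ is a routine Euler-characteristic / curve-arrangement estimate and $h_* < \infty$ follows immediately.
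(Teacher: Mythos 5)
Your overall strategy is in the right spirit, and you correctly isolate the key technical ingredient — a uniform (in $n$) bound on the number of intersections of a curve of $\cS_n^+$ with a curve of $\cS^-$, coming from cone invariance and uniform transversality. However, the specific route you take (count the smooth curves of $\cS_n^+$, then invoke an Euler-characteristic / arrangement bound to estimate the number of faces) introduces a requirement that your hypotheses do not supply and that you do not address: the arrangement bound $F = O(N^2)$ for $N$ curves needs a uniform bound on the number of \emph{pairwise} intersections among the curves of $\cS_n^+$ themselves. The transversality hypothesis in Section~\ref{sec:pw} is only stated between $\cS^+$ and $\cS^-$; it says nothing about two curves of $T^{-i}\cS^+$ and $T^{-j}\cS^+$ intersecting each other. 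Both families lie roughly in the stable cone, so they are \emph{not} transverse in general — they can be nearly parallel, tangent, or even coincide along segments — and without an additional argument the number of intersection points $V$ in the arrangement, and hence the face count, is uncontrolled. This is a genuine gap as written.

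The paper sidesteps this entirely by organizing the count multiplicatively: for each $A \in \cM_0^n$, the number of connected components of $A \cap M_i^-$ is bounded by the number of intersections of $\partial A \subset \cS_n^+$ with $\cS^-$, which \emph{is} covered by the stated transversality plus the uniform derivative bound on $\cS_n^+$ curves. Summing over $A$ gives $\# \cM_0^{n+1} \le B\, (\# \cS^-)\, \# \cM_0^n$, and iterating gives the geometric bound directly. This only ever uses the $\cS_n^+$-versus-$\cS^-$ intersection bound (the one you already identified), never a pairwise bound within $\cS_n^+$. If you want to keep your arrangement-bound framing, you would need to separately establish that pairwise intersections of curves in $\cS_n^+$ are uniformly bounded (for instance by pushing a putative intersection point forward under $T^{\min(i,j)+1}$ and reducing to the $\cS^+$-versus-$\cS^-$ case), which is extra work that the paper's more direct recursion avoids.
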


\begin{proof}
The elements of $\cM_0^1$ are simply the domains $M_i^+$.  For any $n \ge 1$, elements of 
$\cM_0^{n+1}$ are created by (the image under $T^{-1}$ of) the connected components of the intersection of 
an element of $\cM_0^n$ with one
of the domains $M_i^-$.  
By assumption, $\cS^+$ and $\cS^-$ comprise finitely many $\cC^1$ curves which either coincide
or are uniformly transverse.  Since $T$ is $\cC^2$ on the closure of each $M_i^+$, 
the same is true of the sets $\cS_n^+$ and $\cS^-$.   Moreover, elements of $\cS_n^+$ 
have a uniform bound (in $n$) on their derivative.

Consider the intersection $A \cap M_i^-$ for $A \in \cM_0^n$.
Connected components of this set are created by intersections of
$\partial A$ with elements of $\cS^-$.  Since $\partial A \subset \cS^+_n$, 
by the compactness of $M$ and uniform transversality, $\partial A$ can intersect each smooth
curve in $\cS^-$ a finite number of times, with uniform
upper bound $B>0$ independent of $n$.  Thus the number of connected
components of $A \cap M_i^-$ is bounded by
$B (\# \cS^-)$.
Since this bound holds for each $A \in \cM_0^n$, we have
\[
\# \cM_0^{n+1} \le (\# \cM_0^n) B (\# \cS^-) \le d B^n (\# \cS^-)^n \, ,
\]
where $d$ is the number of domains $M_i^+$.
\end{proof}

In order to connect $h_* = h_*(T)$ to the dynamical refinements of a fixed partition, for
each $k \in \mathbb{N}$, define 
$\cP_k$ to be the maximal connected components of $M$ on which $T^k$ and $T^{-k}$
are continuous. That is, $\cP_k$ is the partition of $M$ defined by
$M \setminus (\cS_k^+ \cup \cS_k^-)$, together with the boundary curves associated to each element,
according to Convention~\ref{con:pointwise}.
If we let $\hP_k$ denote the collection of interiors of elements of $\cP_k$, then we have
$\hP_k = \cM_{-k}^k$.

For $n \ge 1$, 
define $\cP_k^n = \bigvee_{i=0}^n T^{-i}\cP_k$.  $\cP_k^n$ is still a pointwise partition of $M$,
yet its elements may not be open sets, and it may occur that $\cP_k^n$ contains isolated points
due to multiple boundary curves intersecting at one point.  Furthermore, we do not
assume that the elements of $\cP_k^n$ are connected sets.\footnote{Contrast this with
\cite[Lemma~3.1]{max}, where the analogous construction yields connected elements
due to the property of continuation of singularities enjoyed by dispersing billiards.}
Thus, although the collection of interiors $\hP_k^n$ is a partition of $M \setminus (\cS_{k+n}^+ \cup \cS_k^-)$, it may be that $\hP_k^n \neq \cM_{-k}^{k+n}$.

Our next lemma provides a rough upper bound on the number of isolated points that can be
created by refinements of $\cP_k$.  Let $\# \cS^{\pm}$ denote the number of smooth components
of $\cS^{\pm}$.

\begin{lemma}
\label{lem:isolated}
For each $k, n \ge 1$, the number of isolated points in $\cP_k^n$ is at most
\[
2 (\# \cS^- + \# \cS^+) \sum_{j=1}^{k+n} \# \cM_0^j.
\]
\end{lemma}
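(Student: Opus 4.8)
The plan is to bound isolated points of $\cP_k^n$ by tracking how they are created under successive dynamical refinements. An isolated point of a pointwise partition arises when $\ge 3$ boundary curves from the singularity sets pass through a single point, or more precisely when the local complement of the singularity curves near that point has no two-dimensional component — which forces several curves through the point. Since $\cP_k^n$ is obtained by taking $M\setminus(\cS_{k+n}^+\cup\cS_k^-)$ together with boundary curves, the isolated points all lie in $\cS_{k+n}^+\cup\cS_k^-$, and each one is a point where curves of this union meet with total multiplicity large enough to pinch off a component. So first I would reduce the count of isolated points to a count of ``bad intersection points'': points lying on at least two smooth curves, at least one of which comes from $\cS^-$-type refinements, whose removal locally disconnects a neighborhood into more than the number of sectors a simple crossing would give.

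Next, the key observation is the inductive/additive structure. Write $\cS_{k+n}^+ = \bigcup_{i=0}^{k+n-1} T^{-i}\cS^+$ and $\cS_k^- = \bigcup_{i=0}^{k-1}T^i\cS^-$. Each of these is a union of preimages/images of the $\# \cS^+$ smooth curves of $\cS^+$ and the $\# \cS^-$ smooth curves of $\cS^-$; the total number of smooth curves in $\cS_{k+n}^+\cup\cS_k^-$ is at most $\sum_{j=1}^{k+n}(\#\cS^+ + \#\cS^-)$ once one uses that $T^{-i}\cS^+$ consists of finitely many smooth pieces — but that is exactly the kind of count that can blow up, so instead I would organize the curves by the element of $\cM_0^j$ (resp.\ $\cM_{-j}^0$) they belong to. The cleaner route: a smooth curve of $\cS_{k+n}^+$ is a connected piece of $T^{-i}(\text{smooth curve of }\cS^+)$, and the number of such pieces for a fixed $i$ is controlled by how $\cS^+$ sits inside the partition $\cM_{-i}^0$, hence ultimately by $\#\cM_0^i$. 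I would argue that a new isolated point at ``level'' $j$ (created when passing from $\cP_k^{j-1}$-type data to $\cP_k^j$-type data) must lie on a curve of $T^{-j}\cS^+$ or $T^{j}\cS^-$ and on an already-present curve, and by the uniform transversality assumption (curves in $\cS^-$ either coincide with or are uniformly transverse to curves in $\cS^+$), a given new smooth curve can meet the existing configuration in a controlled way; summing the number of new smooth curves introduced at level $j$, which is $\le \#\cM_0^j$ on the unstable side plus $\le \#\cM_{-j}^0 = \#\cM_0^j$ on the stable side, and noting each new curve can produce at most $2$ new isolated points per smooth curve of $\cS^\pm$ it crosses — wait, more carefully, at most $2(\#\cS^- + \#\cS^+)$ new isolated points total across all levels per unit — gives the stated bound $2(\#\cS^-+\#\cS^+)\sum_{j=1}^{k+n}\#\cM_0^j$.

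So the structure of the proof is: (1) characterize isolated points of $\cP_k^n$ as points where $\ge 2$ smooth singularity curves meet in a ``pinching'' configuration, all lying in $\cS_{k+n}^+\cup\cS_k^-$; (2) filter $\cP_k^n$ by the stage $j\le k+n$ at which a curve enters, and show each isolated point is first witnessed at a well-defined finite stage $j$; (3) bound the number of smooth curves entering at stage $j$ by $\#\cM_0^j + \#\cM_{-j}^0 = 2\#\cM_0^j$ — using that a smooth component of $T^{-j}\cS^+$ crosses each element of $\cM_0^j$, and symmetrically for $T^{j}\cS^-$, relative to the fixed finitely many curves of $\cS^\pm$; (4) bound the number of isolated points contributed at stage $j$ by the product of ($\#$ new curves at stage $j$) and ($\#\cS^- + \#\cS^+$), since by uniform transversality a new smooth curve crosses each fixed base curve boundedly often, and sum over $j$, picking up the factor $2$ from the two orientations (stable/unstable). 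The main obstacle I anticipate is step (3): controlling the number of smooth components of $T^{-j}\cS^+$ cleanly, since the paper explicitly warns (footnote to Lemma~\ref{lem:isolated} and the discussion around $\cP_k^n$) that continuation of singularities fails here, so elements of refinements need not be connected and naive submultiplicativity of $\#\cM_0^n$ is unavailable — the right bookkeeping is to count incidences ``curve meets base curve'' rather than curves themselves, and to absorb everything into the crude factor $\#\cM_0^j$ at each level, which is what makes the final bound a sum rather than a product.
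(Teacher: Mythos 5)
Your stage-by-stage strategy is the right one, and matches the paper's in outline, but step (3) has a real gap. You want to bound the number of new smooth curves entering at stage $j$ by $\# \cM_0^j + \# \cM_{-j}^0 = 2\#\cM_0^j$, asserting that each smooth component of $T^{-j}\cS^+$ is accounted for by one element of $\cM_0^j$. That is not right: a fixed smooth curve $S\subset \cS^+$ can intersect a single element $B\in\cM_{-j}^0$ in several connected components (there is only a \emph{uniform} bound on such intersections, coming from the bounded-derivative/transversality discussion in the proof of Lemma~\ref{lem:finite}, not the bound ``one per cell''), and $T^{-j}(S\cap B)$ then contributes several smooth pieces inside the corresponding element of $\cM_0^j$. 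There are also $\#\cS^+$ choices of $S$. So the count of smooth components of $T^{-j}\cS^+$ is at best $C\,\#\cS^+\,\#\cM_0^j$ for a constant $C$ you have not isolated, and the clean coefficient $2(\#\cS^- + \#\cS^+)$ does not fall out. In addition, both ``sides'' of your sum do not run to $k+n$: the $\cS^-$-type refinements in $\cP_k^n$ go back only $k$ steps, since $\partial\cP_k^n\subset\cS^+_{k+n}\cup\cS^-_k$.

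The paper circumvents exactly this difficulty by a different decomposition. Rather than adding $T^{-n}\cS^+$ at stage $n$, it writes $\cP_1^n = T^{-1}\cP_1^{n-1}\vee\cP_1$, so that the curves being added at every stage are always from the \emph{fixed} finite set $\cS^-$ (respectively $\cS^+$ when running $T^{-1}$ to build $\cP_k$). One then only needs that a fixed smooth curve $S'\in\cS^-$ meets $\cS_n^+$ at most twice inside each element of $\cM_0^n$, which gives $\le 2\,\#\cS^-\,\#\cM_0^n$ new isolated points at stage $n$ with no attempt to count smooth components of $T^{-j}\cS^+$. Summing that over the forward refinements $j\le n$, then running the symmetric count for $\cP_k$ via $T^{-1}$ (contributing $\#\cS^+$), and finally the forward refinements from $\cP_k$ to $\cP_k^n$, produces exactly the constant $2(\#\cS^-+\#\cS^+)$. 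One more point your sketch omits: the induction needs a base case, and this is where Convention~\ref{con:pointwise} (doubling boundary points) enters — it guarantees $\cP_1$ has no isolated points to begin with.
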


\begin{proof}
By Convention~\ref{con:pointwise}, there are no isolated points in $\cP_1$.
Next, for each $n \ge1$, 
at time $n$, isolated points in $\cP_1^n$ can be produced by intersections of
corner points in the boundary of $\cP_1^{n-1}$ with elements of $\cS^-$.
Moreover, each pair of smooth curves $S \in \cS_n^+$ and $S' \in \cS^-$ intersect
at most twice per element of $\cM_0^n$.  Thus the number of new isolated points
created at time $n$ is at most $2 \# \cS^- \# \cM_0^n$.  Applying this estimate inductively,
we have
\[
\mbox{number of isolated points in $\cP_1^n$}  \le 2 \# \cS^- \sum_{j=1}^n \# \cM_0^j \, .
\]
Next, for each $k$, applying a similar inductive argument to $T^{-1}$, we have
\[
\begin{split}
\mbox{number of isolated points in $\cP_k$}&  \le 2 \# \cS^+ \sum_{j=1}^k \# \cM_{-j}^0 + 2 \# \cS^- \sum_{j=1}^k \# \cM_0^j  \\
& \le 2(\# \cS^+ + \# \cS^-) \sum_{j=1}^k \# \cM_0^j \, ,
\end{split}
\]
where we have used the fact that $\# \cM_0^j = \# \cM_{-j}^0$.  Finally, refining $\cP_k$,
we create at most $2 \# \cS^- \# \cM_0^{k+j}$ new isolated points in $\cP_k^j$ at time $j$.
Summing over $j \le n$, we complete the proof of the lemma.
\end{proof}


\subsection{Statement of Main Results}
\label{sec:main}

Our first result establishes a connection between the rates of growth of $\# \cP_k^n$ and
$\# \cM_0^n$, and uses this to prove that $h_*$ dominates the measure-theoretic
entropies of the invariant measures of $T$.

\begin{theorem}
\label{thm:initial}
Let $T$ be a piecewise hyperbolic map as defined in Section~\ref{sec:pw}, 
but not necessarily satisfying conditions (P1) and (P2).
\begin{itemize}
\item[a)] For each $k,n \ge 1$, $\# \hP_k^n \le  \#\cM_{-k}^{k+n}$ and $\# \cP_k^n \le C (k+n) \# \cM_{-k}^{k+n}$,
for some $C>0$ depending only on $T$. 
\item[b)] For all $k \ge 1$, $\ds \limsup_{n \to \infty} \frac 1n \log (\# \cM_{-k}^n) = h_*$.
\item[c)]
$\ds \sup_k \lim_{n \to \infty} \frac 1n \log \# \cP_k^n = \sup_k \lim_{n \to \infty} \frac 1n \log \# \hP_k^n\le h_*$.
\item[d)]
$\ds h_* \ge \sup \{ h_\mu(T) : \mu \mbox{ is an invariant probability measure for $T$} \} .$
\end{itemize}
\end{theorem}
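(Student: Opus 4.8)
The plan is to prove the four parts in order, with parts (a)--(c) being essentially combinatorial bookkeeping about partitions, and part (d) being the payoff via the standard Katok/Misiurewicz-type argument comparing measure-theoretic entropy with the growth of the dynamical refinements of a fixed partition.

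\smallskip
\noindent\textbf{Part (a).} For the first inequality, I would argue that an element $A \in \hP_k^n$ is a connected component of $M \setminus (\cS_{k+n}^+ \cup \cS_k^-)$. Since $\cS_k^- \subset \cS_{k+n}^-$ (as $T^i\cS^- \subset \cup_{j=0}^{k+n-1} T^j \cS^-$ for $i \le k-1 \le k+n-1$... wait, one must be careful: $\cS_k^- = \cup_{i=0}^{k-1} T^i \cS^-$, and we compare with $\cS_{k+n}^-$; indeed $k \le k+n$ so $\cS_k^- \subseteq \cS_{k+n}^-$). Hence $M \setminus (\cS_{k+n}^+ \cup \cS_k^-) \supseteq M \setminus (\cS_{k+n}^+ \cup \cS_{k+n}^-)$, so every element of $\cM_{-(k+n)}^{k+n}$ is contained in a unique element of $\hP_k^n$; but that is the wrong direction. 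Instead I would use the cleaner index bookkeeping: $\hP_k^n$ is the partition into components of $M\setminus(\cS_{k+n}^+\cup\cS_k^-)$, and since removing \emph{fewer} curves produces \emph{fewer} (or equal, after merging) pieces, each element of $\hP_k^n$ is a union of elements of the finer partition $\cM_{-k}^{k+n}$ (whose defining removed set is $\cS_{k+n}^+\cup\cS_k^-$ — in fact these \emph{are} the same removed set, so $\hP_k^n$ and $\cM_{-k}^{k+n}$ differ only in that the elements of $\hP_k^n$ may be disconnected unions of elements of $\cM_{-k}^{k+n}$, exactly as flagged in the paragraph preceding Lemma~\ref{lem:isolated}). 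This immediately gives $\#\hP_k^n \le \#\cM_{-k}^{k+n}$. For the second inequality, $\#\cP_k^n$ counts, in addition, the isolated points, and Lemma~\ref{lem:isolated} bounds the number of isolated points by $2(\#\cS^-+\#\cS^+)\sum_{j=1}^{k+n}\#\cM_0^j$; using $\#\cM_0^j = \#\cM_{-j}^0$ and the monotonicity $\#\cM_0^j \le \#\cM_{-k}^{k+n}$ for $j \le k+n$ (each $\cM_0^j$ is coarser than $\cM_{-k}^{k+n}$ when $j \le k+n$), the sum is at most $(k+n)\#\cM_{-k}^{k+n}$, yielding $\#\cP_k^n \le \#\hP_k^n + 2(\#\cS^-+\#\cS^+)(k+n)\#\cM_{-k}^{k+n} \le C(k+n)\#\cM_{-k}^{k+n}$.

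\smallskip
\noindent\textbf{Parts (b) and (c).} For (b), the inequality $\le$ is immediate since $\cM_{-k}^n$ refines $\cM_0^n$ by at most a factor controlled by $k$ fixed singularity curves: $\#\cM_{-k}^n \le \#\cM_0^n \cdot \#\cM_{-k}^0 \cdot (\text{const})$ — more precisely $\cM_{-k}^n$ is the common refinement of $\cM_0^n$ and $\cM_{-k}^0$, and intersecting with the $\#\cS_k^-$ many curves of $\cS_k^-$ multiplies the count by a factor independent of $n$ (here one reuses the transversality/finiteness argument from Lemma~\ref{lem:finite}), so $\limsup \frac1n \log \#\cM_{-k}^n \le h_*$; the reverse inequality $\ge$ holds because $\#\cM_{-k}^n \ge \#\cM_0^n$ trivially (a finer partition has at least as many elements — and $\cM_{-k}^n$ refines $\cM_0^n$). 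For (c), combine (a) with (b): $\frac1n\log\#\cP_k^n \le \frac1n\log\big(C(k+n)\#\cM_{-k}^{k+n}\big) \to \limsup_m \frac1m\log\#\cM_{-k}^m = h_*$ by (b), uniformly in the choice of $k$, and the equality of the $\cP_k^n$ and $\hP_k^n$ growth rates follows from $\#\hP_k^n \le \#\cP_k^n \le C(k+n)\#\hP_k^n + (\text{isolated points, also }\le C(k+n)\#\cM_{-k}^{k+n})$ — both are sandwiched between the same exponential rate.

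\smallskip
\noindent\textbf{Part (d).} Fix an invariant probability measure $\mu$. By the Kolmogorov--Sinai theorem and the fact that the diameters of elements of $\cM_{-n}^n = \hP_n$ tend to $0$ (noted in Section~\ref{sec:ent def}), the countable/finite partitions $\cP_k$ generate under $T$ as $k \to \infty$, so $h_\mu(T) = \sup_k h_\mu(T,\cP_k) = \sup_k \lim_{n\to\infty}\frac1n H_\mu\big(\bigvee_{i=0}^{n-1}T^{-i}\cP_k\big)$. (One subtlety: $\cP_k$ is a finite partition into sets that are not all open and include boundary curves; since these boundary curves — being finitely many $\cC^1$ curves — can carry positive $\mu$-mass only if $\mu$ has atoms on them, but in any case $H_\mu(\cP_k)$ is finite because $\cP_k$ is finite, so this causes no problem; generation still holds as the diameters shrink to $0$.) Now bound $H_\mu(\cP_k^n) \le \log \#\cP_k^n$ since entropy of a probability vector with $N$ entries is at most $\log N$. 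Therefore
\[
h_\mu(T,\cP_k) = \lim_{n\to\infty}\frac1n H_\mu(\cP_k^{n-1}) \le \lim_{n\to\infty}\frac1n\log\#\cP_k^{n-1} \le h_*,
\]
the last step by part (c). Taking $\sup_k$ gives $h_\mu(T) \le h_*$, and then the supremum over all invariant $\mu$.

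\smallskip
\noindent\textbf{Main obstacle.} The genuinely delicate point is the generation statement in part (d): one must be sure that $\bigvee_{i=-\infty}^{\infty} T^i \cP_k$ generates the Borel $\sigma$-algebra as $k\to\infty$ (equivalently $\sup_k h_\mu(T,\cP_k) = h_\mu(T)$), which relies on $\operatorname{diam}(\cM_{-n}^n)\to 0$ — itself a consequence of the hyperbolicity together with the requirement that the $M_i^+$ be simply connected — and on handling the boundary-curve/doubled-point structure of $\cP_k$ so that these are bona fide measurable partitions with finite entropy. The combinatorics of parts (a)--(c) are routine once one keeps careful track of which singularity sets are being removed, but the index bookkeeping ($\cS_k^\pm$ versus $\cS_{k+n}^\pm$ and the direction of refinement) must be done precisely to avoid an off-by-one error, and the monotonicity claims ``$\#\cM_0^j \le \#\cM_{-k}^{k+n}$ for $j\le k+n$'' should be justified by exhibiting the explicit coarsening map.
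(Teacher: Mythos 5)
Your proposal is correct in substance and reaches the same conclusions, but part (b) is handled quite differently from the paper, and the paper's route there is worth noting. You bound $\#\cM_{-k}^n$ above by $\#\cM_0^n$ times a $k$-dependent, $n$-independent factor (re-running the transversality/finiteness count from Lemma~\ref{lem:finite} to control the extra cuts by $\cS_k^-$), and below by $\#\cM_0^n$ via refinement. That works, but it needs the uniform-in-$n$ intersection bound and some care. The paper instead observes the exact identity $T^k(\cS_n^+\cup\cS_k^-)=\cS_{n+k}^+$, so that $A\mapsto T^kA$ is a bijection from $\cM_{-k}^n$ to $\cM_0^{n+k}$ and hence $\#\cM_{-k}^n=\#\cM_0^{n+k}$ exactly; the claim about the limsup is then a trivial index shift, with no constants or transversality arguments needed. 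This bijection is cheaper and also makes part (c) immediate once (a) is in hand. For part (d) you invoke Kolmogorov--Sinai and take $\sup_k$, whereas the paper fixes a single $k$ with $C\Lambda^{-k}\le\ve_0$ (the expansivity constant for $\bar d$) so that $\cP_k$ is already a generator; both are valid, the paper's being marginally cleaner since it avoids the supremum. Parts (a) and (d) are otherwise essentially the paper's argument. One small point shared by both your write-up and the paper's proof: the stated equality $\sup_k\lim_n\frac1n\log\#\cP_k^n=\sup_k\lim_n\frac1n\log\#\hP_k^n$ in (c) is asserted more than it is argued --- the inequality $\#\hP_k^n\le\#\cP_k^n$ gives one direction, but the reverse (that adding isolated points does not raise the $\sup_k$ of the exponential rate) is not made explicit; your ``sandwiched between the same exponential rate'' step, like the paper's terse appeal to (a) and (b), only yields the common upper bound $\le h_*$.
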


\begin{proof}

a) The first inequality is straightforward since by definition, both $\hP_k^n$ and $\cM_{-k}^{k+n}$ are
partitions of $M \setminus (\cS^+_{n+k} \cup \cS^-_k)$, yet $\hP_k^n$ may have disconnected
components.  Thus $\cM_{-k}^{k+n}$ is a refinement of $\hP_k^n$.  The second inequality follows
by noting that $\# \cP_k^n$ equals $\# \hP_k^n$ plus isolated points, and then applying 
Lemma~\ref{lem:isolated}.

\medskip
\noindent
b) The value of the limsup is the same for each $k$ since by definition, 
$A \in \cM_{-k}^n$ if and only if $T^kA \in \cM_0^{n+k}$.  Thus $\# \cM_{-k}^n = \# \cM_0^{n+k}$.

\medskip
\noindent
c) We first remark that $\# \cP_k^{n+m} \le \# \cP_k^n \# \cP_k^m$, and also
$\# \hP_k^{n+m} \le \# \hP_k^n \# \hP_k^m$ (which can be proved as in 
\cite[Lemma~3.3]{max}), thus the two limits in part (c) exist by subadditivity.  
The fact that both limits are bounded by $h_*$ follows from
parts (a) and (b) of the theorem.

\medskip
\noindent
d)  Let $\mu$ denote a $T$-invariant probability measure.
The assumptions of uniform hyperbolicity imply that
both $T$ and $T^{-1}$ are expansive with respect to the metric
$\bar d$ defined in \eqref{eq:bar d}: 
\begin{equation}
\label{eq:expansive}
\mbox{There exists $\ve_0 >0$ such that if $\bar d(T^jx, T^jy) < \ve_0$ for all $j \in \mathbb{Z}$,
then $x=y$.}
\end{equation}
By \eqref{eq:exp def}, the uniform transversality of stable and unstable cones, and the
assumption that each $M_i^+$ is simply connected,
the maximum diameter of elements of $\cM_{-k}^k$ (and hence of $\cP_k$) is bounded
by $C \Lambda^{-k}$.  Choosing $k$ large enough that 
$C \Lambda^{-k} \le \ve_0$, we conclude that $\cP_k$ is a generator for
$T$ \cite[Theorem~5.23]{walters}.
Then by \cite[Theorem~4.22]{walters}, 
\[
h_\mu(T) = h_\mu(T, \cP_k) = \lim_{n \to \infty} \frac 1n H_\mu(\cP_k^n) 
\le \lim_{n \to \infty} \frac 1n \log (\# \cP_k^n) \le h_* \, ,
\]
applying part (c) of the present theorem.  Thus $h_\mu(T) \le h_*$.
\end{proof}

Next we state our main theorem, which requires the additional hypotheses (P1) and (P2).

\begin{theorem}
\label{thm:mu}
Let $T$ be a piecewise hyperbolic map as defined in Section~\ref{sec:pw}, 
satisfying conditions (P1) and (P2).

There exists a $T$-invariant probability measure $\mu_*$ 
with the following properties.
\begin{itemize}
  \item[a)] The measure $\mu_*$ has no atoms, and there exists $C>0$ such that for any $\ve>0$,
\[
\mu_*(\cN_\ve(\cS^{\pm})) \le C \ve^{1/p} \, ,
\]
where $p>1$ is from \eqref{eq:restrict} and $\cN_\ve(\cdot)$ denotes the $\ve$-neighborhood
of a set in the Riemannian metric on $M$.  This implies in particular, that $\mu_*$-a.e.
$x \in M$ has a stable and unstable manifold of positive length, and that $x$ approaches
$\cS^{\pm}$ at a subexponential rate.
  \item[b)] $\mu_*(O) > 0$ for any open set $O \subset M$.
  \item[c)] $(T^n, \mu_*)$ is ergodic for all $n \in \mathbb{Z}^+$.
  \item[d)] $\mu_*$ has exponential
decay of correlations against H\"older continuous functions.  
  \item[e)] The measure
$\mu_*$ is the unique $T$-invariant probability measure satisfying
$h_{\mu_*}(T) = h_*$.  
\end{itemize}
\end{theorem}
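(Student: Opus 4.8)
The proof rests on the spectral data for the weighted transfer operator $\cL$ produced in Section~\ref{sec:spec}: on the strong Banach space $\cB$, $\cL$ has a simple leading eigenvalue $\lambda_*$ with $\log\lambda_*=h_*$ (the identification being tied to the uniform bounds on $\#\cM_0^n$ of Proposition~\ref{prop:M0n}), and topological mixing from (P2) rules out any other eigenvalue of modulus $\lambda_*$, so there is a spectral gap: $\cL^n=\lambda_*^n\Pi+R^n$ with $\Pi$ a rank-one projector and $\|R^n\|_{\cB\to\cB_w}\le C\sigma^n\lambda_*^n$ for some $\sigma<1$. Let $\nu$ span the range of $\Pi$ (the right eigenvector) and $\tilde\nu$ span the range of $\Pi^*$ (the left eigenvector), normalized by $\tilde\nu(\nu)=\tilde\nu(1)=1$; both are positive on admissible leaves because $\cL$ preserves the corresponding cones of positive elements, as in \cite{GL2}. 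I then define $\mu_*(\vf):=\tilde\nu(\vf\,\nu)$ for continuous $\vf$, the product $\vf\,\nu$ lying in $\cB$ by the multiplication estimates; this is a positive linear functional with $\mu_*(1)=1$, hence a Borel probability measure, and it is $T$-invariant because $\mu_*(\vf\circ T)=\lambda_*^{-1}\tilde\nu\big(\cL((\vf\circ T)\nu)\big)=\lambda_*^{-1}\tilde\nu(\vf\,\cL\nu)=\tilde\nu(\vf\,\nu)=\mu_*(\vf)$, using the identity $\cL\big(g\cdot(\vf\circ T)\big)=\vf\cdot\cL g$.

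Properties (b)--(d) follow quickly. For (d): for H\"older $\vf,\psi$, write $\int(\vf\circ T^n)\,\psi\,d\mu_*=\lambda_*^{-n}\tilde\nu\big(\vf\,\cL^n(\psi\,\nu)\big)=\tilde\nu(\psi\nu)\,\tilde\nu(\vf\nu)+\lambda_*^{-n}\tilde\nu\big(\vf\,R^n(\psi\nu)\big)=\mu_*(\psi)\mu_*(\vf)+O(\sigma^n)$, the error bounded via the embedding Lemma~\ref{lem:embed} applied to $\psi\nu\in\cB$ and to the functional $g\mapsto\tilde\nu(\vf\,g)$. For (c): the spectral gap makes $\cL^n=\lambda_*^n\Pi+R^n$ with $\lambda_*^n$ the unique peripheral eigenvalue, so $(T^n,\mu_*)$ is mixing, a fortiori ergodic. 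For (b): $\mu_*(O)=\tilde\nu(\mathbf 1_O\,\nu)>0$ for open $O$ since $\nu$ gives positive mass to an admissible stable leaf crossing $O$ and $\tilde\nu$ is strictly positive; alternatively, use topological mixing and $T$-invariance to spread positivity from one ball to all of $M$. For (a): testing the pairing against $\mathbf 1_{\cN_\ve(\cS^\pm)}$ and using the strong-norm control of thin neighborhoods afforded by \eqref{eq:restrict} gives $\mu_*(\cN_\ve(\cS^\pm))\le C\ve^{1/p}$; atomlessness follows by letting $\ve\to0$ around a point (the diameter of $\cM_{-n}^n$-elements shrinking), and Borel--Cantelli along the orbit, summing $\mu_*(\cN_{e^{-\beta n}}(\cS^\pm))$, yields the subexponential approach to $\cS^\pm$ and hence stable/unstable manifolds of positive length $\mu_*$-a.e.

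For (e) I first show $h_{\mu_*}(T)=h_*$. By Theorem~\ref{thm:initial}(d) it suffices to prove $h_{\mu_*}(T)\ge h_*$, and I do this through the dynamical Bowen balls $B_n(x,\ve)=\{y:\bar d(T^jx,T^jy)<\ve,\ 0\le j\le n\}$ in the adapted metric \eqref{eq:bar d}. Disintegrating $\mu_*$ over a Bowen ball into a factor of $\tilde\nu$ along the stable direction times a factor of $\nu$ along the unstable direction and applying the eigenvalue relations together with the distortion bound \eqref{eq:distortion}, one obtains two-sided estimates $C(\ve)^{-1}e^{-\epsilon(n)}e^{-nh_*}\le\mu_*(B_n(x,\ve))\le C(\ve)e^{\epsilon(n)}e^{-nh_*}$ valid for all $x$, with $\epsilon(n)=o(n)$; the lower bound requires the uniform lower bound on $\#\cM_0^n$ from Proposition~\ref{prop:M0n} (which is precisely why it is proved only after the spectral gap), and the subexponentiality of $\epsilon(n)$ uses that the complexity (P1) is slow relative to $h_*$ by Lemma~\ref{lem:growth}(d). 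The Brin--Katok formula then gives $h_{\mu_*}(T)\ge\liminf_n-\tfrac1n\log\mu_*(B_n(x,\ve))\ge h_*$ for $\mu_*$-a.e. $x$, so $h_{\mu_*}(T)=h_*$.

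Uniqueness is the main obstacle. Let $\mu$ be $T$-invariant with $h_\mu(T)=h_*$; by affineness of entropy and ergodic decomposition we may take $\mu$ ergodic. Using Katok's entropy formula together with the expansiveness \eqref{eq:expansive} of $T$ in $\bar d$ and the fact that $\cP_k$ is a generator (Theorem~\ref{thm:initial}(d)), for small $\ve$ one finds a set of $\mu$-measure close to $1$ covered by $e^{n(h_*-\delta)}$ essentially disjoint $(n,\ve)$-Bowen balls, while Brin--Katok gives $\mu(B_n(x,\ve))=e^{-nh_*+o(n)}$; comparing with the two-sided bounds for $\mu_*$ yields $\mu(B_n(x,\ve))/\mu_*(B_n(x,\ve))=e^{o(n)}$ for $\mu$-a.e. $x$. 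A differentiation argument along the decreasing sequence $\{B_n(x,\ve)\}_n$ (a Vitali basis by uniform hyperbolicity and transversality) then shows $\mu\ll\mu_*$, after which $d\mu/d\mu_*$ is $T$-invariant, hence $\mu_*$-a.e. constant by ergodicity of $(T,\mu_*)$, so $\mu=\mu_*$. The genuinely hard part, running through the last two steps, is keeping the Bowen-ball error $\epsilon(n)$ subexponential: because the singularity set need not satisfy continuation of singularities (point (i) of the introduction), $\#\cM_0^n$ is not submultiplicative and the estimate must be routed through the spectral data for $\cL$ and the growth/fragmentation lemmas of Sections~\ref{sec:growth}--\ref{sec:lower} together with the super-multiplicativity of $\#\cM_0^n$ (Proposition~\ref{prop:super}); a secondary subtlety is that $\bar d$ jumps across singularity curves, so the metric-entropy tools (Brin--Katok, Katok, generator/expansiveness) must be checked to survive this, which they do by virtue of the doubling convention (Convention~\ref{con:pointwise}) and uniform transversality.
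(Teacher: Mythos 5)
Your construction of $\mu_*$ as the product of left and right eigenvectors of $\cL$, and the derivations of properties (a), (b), (d), agree in substance with the paper (Definition~\ref{def:mu_*}, Lemma~\ref{lem:control}, Corollary~\ref{cor:ergodic}(b), Proposition~\ref{prop:decay}). For (c), you deduce mixing from the spectral gap directly; the paper instead passes through absolute continuity of the unstable holonomy and the Hopf argument (Proposition~\ref{prop:cont}, Corollary~\ref{cor:ergodic}). Your shortcut for ergodicity per se is defensible, but the Hopf/absolute-continuity machinery is not an optional detour: the equivalence \eqref{eq:equivalence} of the conditional measures of $\mu_*$ on stable leaves with the leafwise measure $\nu_0$ is exactly what makes the crucial lower bound of Lemma~\ref{lem:long good} go through, so you will still need that structure for uniqueness.

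The genuine gap is in part (e). First, you invoke two-sided bounds $C(\ve)^{-1}e^{-\epsilon(n)}e^{-nh_*}\le\mu_*(B_n(x,\ve))\le C(\ve)e^{\epsilon(n)}e^{-nh_*}$ ``valid for all $x$''. The paper proves only the upper bound $\mu_*(B_n(x,\ve))\le C\,n\,e^{-nh_*}$ (Lemma~\ref{lem:bowen}), and a uniform lower bound is not available: Bowen balls can be cut into many small pieces by repeated close encounters with $\cS^\pm$, and there is no a priori floor on $\mu_*(B_n(x,\ve))$. The entire design of Lemma~\ref{lem:bowen} and Proposition~\ref{prop:entropy} is that only the upper bound is needed, combined with Brin--Katok and $h_{\mu_*}\le h_*$ from Theorem~\ref{thm:initial}(d). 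Second, even granting two-sided bounds, a ratio $\mu(B_n(x,\ve))/\mu_*(B_n(x,\ve))=e^{o(n)}$ does \emph{not} imply $\mu\ll\mu_*$; subexponential ratios may still tend to $0$ or $\infty$, and Bowen balls do not form a Vitali differentiation basis in any standard sense (they are not metric balls, no Besicovitch-type covering property is established). The paper's actual uniqueness argument is entirely different: it assumes $\mu\perp\mu_*$, approximates the invariant set $F$ by unions $\cC_n$ of elements of the dynamically defined partitions $\cQ_{2n}$, decomposes $\cM_{-2n}^0$ into ``good'' elements $G_{2n}$ and ``bad'' elements $B_{2n}$ (Lemma~\ref{lem:most good}), and uses the lower bound of Lemma~\ref{lem:long good} together with the maximal-triple bookkeeping (Sublemma~\ref{sub:disjoint}) to show that the entropy sum $-\sum\mu(Q)\log\mu(Q)$ is forced strictly below $2n\,h_*$. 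Your proposal, as stated, does not reach the conclusion and would need to be replaced by something along those lines.
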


Theorem~\ref{thm:mu} will be proved in Section~\ref{sec:max}.  In particular, items
(a)-(c) are proved in Section~\ref{sec:hyper}, item (d) is proved in Proposition~\ref{prop:decay},
and item (e) is proved in Sections~\ref{sec:entropy} and \ref{sec:unique}.

\begin{cor}
Let $T$ be a piecewise hyperbolic map as defined in Section~\ref{sec:pw}, 
satisfying conditions (P1) and (P2).

$T$ satisfies the following variational principle: For all $k \ge 0$,
\[
\lim_{n \to \infty} \frac 1n \log \left( \# \cM_{-k}^n \right) = h_* = \sup \{ h_\mu(T) : \mu \mbox{ is an invariant probability measure for $T$} \} . 
\]
\end{cor}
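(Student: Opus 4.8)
The plan is to derive the corollary as a bookkeeping consequence of the results already assembled, principally Theorem~\ref{thm:initial} and Theorem~\ref{thm:mu}. First I would record the easy inequalities. Theorem~\ref{thm:initial}(b) already gives $\limsup_{n\to\infty}\frac1n\log(\#\cM_{-k}^n)=h_*$ for every $k\ge0$, so the only issue in the displayed chain of equalities is to (i) upgrade the $\limsup$ to an honest limit, and (ii) insert $h_{\mu_*}(T)$ into the picture so as to close the loop with the variational supremum. For (ii), Theorem~\ref{thm:initial}(d) gives $h_*\ge\sup\{h_\mu(T):\mu \text{ invariant}\}$, while Theorem~\ref{thm:mu}(e) produces an invariant probability measure $\mu_*$ with $h_{\mu_*}(T)=h_*$; together these force the supremum to be attained and to equal $h_*$. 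So the variational principle $h_*=\sup\{h_\mu(T):\mu\}$ is immediate once (i) is in hand.

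The substantive point is therefore (i): that the $\limsup$ defining $h_*$ is actually a $\lim$, and likewise for each fixed $k$. This is exactly the content flagged in the excerpt as following from Proposition~\ref{prop:M0n} — the uniform exponential upper and lower bounds on $\#\cM_0^n$. Concretely, Proposition~\ref{prop:M0n} asserts constants $0<c\le C<\infty$ with $c\,e^{nh_*}\le \#\cM_0^n\le C\,e^{nh_*}$ for all $n$, which forces $\frac1n\log(\#\cM_0^n)\to h_*$. Since $\#\cM_{-k}^n=\#\cM_0^{n+k}$ (the observation used in the proof of Theorem~\ref{thm:initial}(b)), the same two-sided bound gives $\frac1n\log(\#\cM_{-k}^n)\to h_*$ for each $k\ge0$, which is the first equality in the corollary. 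I would state this step as a direct invocation of Proposition~\ref{prop:M0n}, noting that the super-multiplicativity of $\#\cM_0^n$ (Proposition~\ref{prop:super}) together with the spectral-gap estimates of Section~\ref{sec:spec} is what underlies that proposition; no new argument is needed here beyond citing it.

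Putting the pieces together: for any $k\ge0$,
\[
\lim_{n\to\infty}\frac1n\log(\#\cM_{-k}^n)=h_*\ge\sup\{h_\mu(T):\mu\text{ invariant prob. for }T\}\ge h_{\mu_*}(T)=h_*,
\]
where the first equality is Proposition~\ref{prop:M0n} combined with $\#\cM_{-k}^n=\#\cM_0^{n+k}$, the first inequality is Theorem~\ref{thm:initial}(d), the second inequality is trivial since $\mu_*$ is one such measure, and the final equality is Theorem~\ref{thm:mu}(e). Hence all quantities coincide and the chain of (in)equalities collapses to the claimed equalities, proving the corollary.

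The main obstacle, such as it is, is purely one of ordering and citation hygiene: the corollary depends on Proposition~\ref{prop:M0n} and Theorem~\ref{thm:mu}(e), both of which are proved only after the Banach-space machinery and the spectral gap are established, so the proof of the corollary must come at the very end and may only \emph{cite}, not re-derive, those facts. There is no genuine mathematical difficulty remaining at this stage; everything has been front-loaded into Theorem~\ref{thm:mu} and Proposition~\ref{prop:M0n}. I would therefore keep the proof short — essentially the displayed chain above with a one-line justification of each link.
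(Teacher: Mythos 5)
Your proposal is correct and matches the paper's own proof: both cite Proposition~\ref{prop:M0n} to upgrade the $\limsup$ to a genuine limit, Theorem~\ref{thm:initial}(b) for independence of $k$, and Theorem~\ref{thm:initial}(d) together with Theorem~\ref{thm:mu}(e) to close the variational chain.
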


\begin{proof}
The fact that the limit defining $h_*$ exists (rather than simply the $\limsup$ from
Definition~\ref{def:h_*}) follows from Proposition~\ref{prop:M0n}, and the independence
from $k$ follows from Theorem~\ref{thm:initial}(b).
The second equality follows from Theorem~\ref{thm:initial}(d) together with 
Theorem~\ref{thm:mu}(e).
\end{proof}

Theorem~\ref{thm:mu}(a) implies that $\int_M |\log d(x, \cS^{\pm})| \, d\mu_* < \infty$
(see Corollary~\ref{cor:atomic}(c)), so that $\mu_*$ is $T$-adapted in the language
of \cite{lima}.  This allows us to make the following connection to the growth of
periodic orbits of $T$.  Let $P_n(T) = \{ x \in M : \# \{T^kx : k \in \mathbb{Z} \} = n \}$ 
denote the set of
points of prime period $n$ for $T$.

\begin{cor}
\label{cor:per}
Under the assumptions of Theorem~\ref{thm:mu},
$\displaystyle \liminf_{n \to \infty} \# P_n(T) e^{-n h_*} = 1$. 
\end{cor}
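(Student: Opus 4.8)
The plan is to use the general machinery of Lima--Sarig / Buzzi that produces, for a $T$-adapted measure of maximal entropy on a nonuniformly hyperbolic surface map, a countable Markov shift extension carrying the same entropy, together with the fact that $\mu_*$ is the \emph{unique} measure of maximal entropy (Theorem~\ref{thm:mu}(e)) and is mixing for all powers (Theorem~\ref{thm:mu}(c), indeed exponentially mixing by (d)). First I would verify the hypotheses of \cite{lima}: by Theorem~\ref{thm:mu}(a) together with Corollary~\ref{cor:atomic}(c) we have $\int_M |\log d(x,\cS^{\pm})|\,d\mu_* < \infty$, so $\mu_*$ is $T$-adapted; combined with uniform hyperbolicity and the fact that $h_*>0$ (which follows from $\Lambda\kappa^{\alpha_0}>1$ and Lemma~\ref{lem:growth}(d), since $h_\mu>0$ for a measure of positive entropy), the coding theorem of \cite{lima} applies. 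This yields an irreducible countable Markov shift $(\Sigma,\sigma)$ and a finite-to-one factor map $\pi:\Sigma\to M$ such that every ergodic measure of entropy close to $h_*$, in particular $\mu_*$, lifts to $\Sigma$, and $\pi$ is injective on a set of full $\hat\mu_*$-measure for the lift $\hat\mu_*$.

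The second step is to transfer the counting problem to $\Sigma$. Because $\pi$ is finite-to-one and Lipschitz on cylinders, and because periodic points of $\sigma$ project to periodic points of $T$ with controlled multiplicity, the count $\#P_n(T)$ is comparable, up to the combinatorial bookkeeping of how many shift-periodic orbits sit over a given $T$-periodic orbit, to $\#\{\text{periodic orbits of }\sigma\text{ of period }n\}$ on the relevant irreducible component. Here the key input is Buzzi's result \cite{Bu} (the "entropy at infinity" / local complexity bound for surface maps), which guarantees that the Markov shift has a well-defined Gurevich entropy equal to $h_*$ and, crucially, that the \emph{discrepancy} between the number of periodic orbits and its exponential rate does not blow up: one gets $\liminf_{n\to\infty}\frac1n\log\#P_n(T)=h_*$ as a lower bound, and the sharper statement $\liminf_n \#P_n(T)e^{-nh_*}=1$ follows from combining the uniqueness of the m.m.e. (so there is a single dominant irreducible component) with the mixing of $\mu_*$ on that component. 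Concretely, on an irreducible positive-recurrent countable Markov shift that is topologically mixing (which the lift of $\mu_*$ forces on the relevant component, using Theorem~\ref{thm:mu}(c)), the Gurevich formula gives $\lim_n Z_n(\sigma)e^{-nh_*}$ along the period, where $Z_n$ counts $n$-periodic points meeting a fixed cylinder; a Fatou/renewal argument then produces $\liminf_n \#P_n(T)e^{-nh_*}=1$.

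More precisely, the "$=1$" rather than merely "$\ge 1$" comes from two opposing bounds. The lower bound $\liminf_n \#P_n(T)e^{-nh_*}\ge 1$ is the standard statement that periodic orbits grow at least at the rate of the topological entropy realized by a mixing m.m.e.\ on a countable Markov shift; it uses positive recurrence of the lift of $\mu_*$, which holds precisely because $\mu_*$ is a measure of maximal entropy. The upper bound $\liminf_n \#P_n(T)e^{-nh_*}\le 1$ uses that \emph{every} $T$-periodic orbit of period $n$ either lifts to $\Sigma$ (contributing to $Z_n$ with bounded multiplicity) or lives on the "bad" set where $\pi$ fails to be injective, which by the Lima--Sarig construction carries zero measure for any large-entropy measure and, by a volume/complexity argument as in \cite{Bu}, contains at most $e^{n(h_*-\epsilon)}$ periodic points for each $\epsilon>0$ and $n$ large; hence $\#P_n(T)\le (1+o(1))Z_n(\sigma) + e^{n(h_*-\epsilon)}$, and taking the liminf kills the error term while $\liminf_n Z_n e^{-nh_*}=1$ on the mixing component.

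The main obstacle I anticipate is the bookkeeping in step two: one must control the multiplicity of $\pi$ over periodic orbits and, more seriously, bound the periodic points that \emph{fail} to lift or that live over the non-injectivity set, uniformly in $n$, by something of strictly smaller exponential order. This is exactly the point where \cite{Bu} is invoked — Buzzi's local-complexity bound for $C^{1+\alpha}$ (here piecewise hyperbolic) surface maps bounds the "entropy at infinity" strictly below $h_*$ — but threading that bound through the singular set $\cS^{\pm}$ of the present maps requires the subexponential approach rate from Theorem~\ref{thm:mu}(a) and the uniform complexity control in assumption (P1). Once this is in place, everything else is the standard Gurevich-entropy/renewal argument for topologically mixing countable Markov shifts carrying a unique m.m.e.
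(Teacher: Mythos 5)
Your proposal takes essentially the same route as the paper: verify that $\mu_*$ is $T$-adapted and hyperbolic (via Corollary~\ref{cor:atomic}) so that the Lima--Matheus symbolic coding \cite{lima} applies, and then invoke Buzzi's theorem on the degree of Bowen factors \cite{Bu} to obtain the sharp asymptotic. The actual proof in the paper is a direct two-citation argument --- \cite[Corollary~1.2]{lima} gives $\#P_{np}(T) \ge C e^{np h_*}$ for some $p\ge 1$, and \cite[Main Theorem]{Bu}, applied as in \cite[Theorem~1.5]{Bu}, upgrades this to $p=1$ and $C=1$ asymptotically, which is precisely the statement of the corollary --- rather than re-deriving the Gurevich-entropy and degree estimates as you sketch. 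Your additional discussion of the upper-bound direction and the "entropy at infinity" phrasing is not quite how \cite{Bu} is organized (it is the bounded degree of the Bowen factor, not a separate counting of non-lifting orbits, that delivers the constant $1$), but those details live inside the cited results and the high-level strategy you give is the intended one.
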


\begin{proof}
The proof relies on the construction of a countable Markov partition for 
hyperbolic maps with singularities carried out in \cite{lima}.  The class of maps
in the present paper satisfy conditions (A1)-(A6) in \cite{lima}, which are general
enough to admit dispersing billiards.
Since $\mu_*$ is $T$-adapted and hyperbolic (see Corollary~\ref{cor:atomic}),
we may apply \cite[Corollary~1.2]{lima} to conclude that there exist $p \ge 1$ and $C>0$ such that
the number of points of period $np$ for $T$ is at least $C e^{np h_*}$ for all $n$
sufficiently large.

Next, applying \cite[Main~Theorem]{Bu} as in \cite[Theorem~1.5]{Bu}, we conclude that
we may take $p=1$ and asymptotically, $C=1$ for large $n$.
\end{proof}

In the course of proving the growth lemmas in Section~\ref{sec:banach},
we establish the following uniform bounds on the growth of $\# \cM_0^n$, which may
be of independent interest, and are needed for the proof of uniqueness in
Section~\ref{sec:unique}. 

\begin{proposition}
\label{prop:M0n}
There exists a constant $C_\#>0$ such that for all $n \ge 1$,
\[
C_\# e^{n h_*} \le \# \cM_0^n = \# \cM_{-n}^0 \le C_\#^{-1} e^{n h_*} \, .
\]
\end{proposition}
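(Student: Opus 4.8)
The plan is to derive both bounds from the spectral gap for the weighted transfer operator $\cL$ established in Section~\ref{sec:spec}, by relating $\# \cM_0^n$ to the action of $\cL^n$ on suitable test objects. Recall that the weight in $\cL$ is essentially $|J_WT|^{-1}$ adjusted so that the leading eigenvalue $\lambda$ of $\cL$ satisfies $\log \lambda = h_*$; this identification of $\lambda$ with $e^{h_*}$ is itself part of the payoff of the construction of $\mu_*$ in Section~\ref{sec:max}, so I would invoke it here. The key observation is that $\# \cM_0^n$ counts the smooth components of $T^{-n}$ applied to (a covering family of) admissible stable curves, up to a bounded multiplicative factor coming from the complexity constant $K(\cdot)$ and the finite number of domains $M_i^+$; more precisely, $\# \cM_0^n$ is comparable to the number of smooth components $K_n$ of $T^{-n}W$ summed over a fixed finite cover of $M$ by leaves $W \in \hW^s$, again up to bounded factors, since each element of $\cM_0^n$ is foliated by such components and each such component meets boundedly many elements.

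First I would fix a finite family $\cW_0 = \{W_1, \dots, W_r\} \subset \hW^s$ that covers $M$ (in the sense that every point lies on some $W_j$, possible because $\hW^s$ has leaves of length up to $\delta_0$ with tangents in a cone of positive measure), and consider the functional $n \mapsto \sum_j \cL^n(\mathbf{1})(W_j)$, interpreted via the integration of $\cL^n$ against the constant function on each leaf. On one hand, the definition of the weight and the growth lemmas of Sections~\ref{sec:growth}--\ref{sec:lower} give two-sided bounds relating this quantity to $\# \cM_0^n$: the lower growth lemma (controlling the prevalence of long components, as in Lemma~\ref{lem:long elements}) yields a uniform fraction of components of $T^{-n}W$ of length bounded below, and super-multiplicativity (Proposition~\ref{prop:super}) combined with the complexity bound (P1) and Convention~\ref{convention: n_0=1} controls the short ones, so that $\# \cM_0^n \asymp \lambda^n \cdot (\text{bounded})$ with the weight normalization chosen. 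On the other hand, the spectral gap says $\cL^n = \lambda^n \, \Pi + \cO(\sigma^n)$ with $\sigma < \lambda$ and $\Pi$ the rank-one projection onto the span of the maximal eigenvector, and property (P2) (topological mixing) together with positivity (Theorem~\ref{thm:mu}(b)) guarantees that $\Pi(\mathbf{1})$ pairs nontrivially and with a strictly positive value against each leaf $W_j$. Dividing by $\lambda^n = e^{n h_*}$ then yields that $e^{-n h_*} \# \cM_0^n$ converges to a positive constant, which in particular gives the claimed two-sided bounds with a single constant $C_\#$.

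The main obstacle I expect is the bookkeeping that converts the purely combinatorial count $\# \cM_0^n$ into an analytic quantity that the transfer operator sees — precisely the difficulty flagged in the introduction, namely that dynamical refinements of $\cM_0^n$ may have non-simply-connected elements and that submultiplicativity of $\# \cM_0^n$ can fail. This is why the argument cannot be self-contained within Section~\ref{setting} and must wait for Section~\ref{sec:banach}: one needs the super-multiplicativity estimate Proposition~\ref{prop:super} (which is where the symmetric form of (P1) on $\cS_n^+$ enters, via Lemma~\ref{lem:long elements}) to get the lower bound $\# \cM_0^{n+m} \gtrsim \# \cM_0^n \# \cM_0^m$ up to bounded factors, and one needs the fragmentation/growth lemmas to show that the transfer-operator norm of $\cL^n \mathbf{1}$, restricted to a covering family of leaves, is genuinely comparable to $\# \cM_0^n$ rather than merely bounding it from one side. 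Once that equivalence is in hand, the conclusion is immediate from the spectral gap: the sequence $e^{-nh_*}\#\cM_0^n$ is squeezed between two positive constants (indeed converges), and since $\#\cM_0^n = \#\cM_{-n}^0$ by Theorem~\ref{thm:initial}(b), the same bounds hold for $\#\cM_{-n}^0$, completing the proof.
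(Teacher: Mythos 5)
Your lower bound route is essentially the paper's: Lemma~\ref{lem:lower M} obtains $\# \cM_0^n \ge \bar c_1 e^{n h_*}$ exactly by combining the spectral gap, the representation of $\nu_0$ as $\lim_n e^{-nh_*}\cL^n 1$, the lower bound $\nu_0(W)>0$ from~\eqref{eq:nu positive}, and the fact (Lemma~\ref{lem:growth}(b)) that $\int_W \cL^n 1\, dm_W = \sum_{W_i \in \cG_n(W)} |W_i| \le C\#\cM_0^n$. So for that half you have the right mechanism.

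The upper bound as you propose it is circular. You want to extract $\#\cM_0^n \le C e^{nh_*}$ from the expansion $\cL^n = e^{nh_*}\Pi_0 + R^n$ with $\|R^n\|\lesssim \bar\sigma^n e^{nh_*}$. But the proof that the spectral radius of $\cL$ equals $e^{h_*}$ (Lemma~\ref{lem:radius}), that the peripheral spectrum has no Jordan blocks, and hence that $\|\cL^n\|_\cB \le Ce^{nh_*}$ and $\nu_0 = \lim_n \frac1n\sum_k e^{-kh_*}\cL^k 1$ is well-defined, all invoke Corollary~\ref{cor:upper M} — which \emph{is} the upper bound of this proposition. Corollary~\ref{cor:upper M} is also used inside the Lasota--Yorke estimates of Proposition~\ref{prop:LY} (via Proposition~\ref{prop:super}). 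In other words, the upper bound must be in hand \emph{before} one can even set up the spectral picture you want to invoke; it is an input, not an output, of the spectral gap. The paper therefore gets the upper bound by a purely combinatorial route: Proposition~\ref{prop:super} establishes super-multiplicativity $\#\cM_0^n \ge c_1 \#\cM_0^{n-j}\#\cM_0^j$, and a Fekete-type argument (Corollary~\ref{cor:upper M}, following \cite[Proposition~4.6]{max}) then yields $\#\cM_0^n \le 2c_1^{-1} e^{nh_*}$ for all $n$, with no reference to $\cL$. You do mention super-multiplicativity, but you fold it in as bookkeeping for a transfer-operator comparison; in fact it already gives the upper bound on its own, and it \emph{must} be used that way to avoid the circularity.

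Two smaller points. Your claim that $e^{-nh_*}\#\cM_0^n$ \emph{converges} is stronger than anything the paper proves and is not needed; the spectral gap gives convergence of the transfer operator applied to test objects (e.g. of $e^{-nh_*}\int_W \cL^n 1$, as in Corollary~\ref{cor:uniform growth}), not of the integer-valued combinatorial count itself. And the identification of the leading eigenvalue with $e^{h_*}$ is established in Lemma~\ref{lem:radius} of Section~\ref{sec:spec}, not as a by-product of the construction of $\mu_*$ in Section~\ref{sec:max} — and, again, it uses Corollary~\ref{cor:upper M}.
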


\begin{proof}
The upper bound is Corollary~\ref{cor:upper M}, while the lower bound is
Lemma~\ref{lem:lower M}.
\end{proof}

\begin{cor}
\label{cor:uniform growth}
There exists $\bar C>0$ such that for all stable curves $W \in \hW^s$ with $|W| \ge \delta_1/3$ and all $n \ge n_1$, where both $\delta_1>0$ and $n_1$ are from
\eqref{eq:delta_1}, we have
\[
\bar C e^{n h_*} \le |T^{-n}W| \le \bar C^{-1} e^{n h_*} \, .
\]
\end{cor}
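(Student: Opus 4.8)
The plan is to deduce this from the spectral gap for $\cL$, or more precisely from the norm estimates that go into it, in the same way that Proposition~\ref{prop:M0n} is obtained. The key point is that $|T^{-n}W|$ is, up to the weight, exactly what $\cL^n$ applied to the measure $\delta_W$ (Lebesgue on $W$) computes when tested against the constant function $1$: indeed $\int_{T^{-n}W} 1 = \sum_i |W_i^n|$ where $W_i^n$ are the smooth components of $T^{-n}W$, and the weighted version of this sum is controlled from above and below once one knows that $\cL^n$ has leading eigenvalue $e^{h_*}$ with a spectral gap. So first I would fix $W \in \hW^s$ with $|W| \ge \delta_1/3$, view $W$ (with arclength measure) as an element of the Banach space, and write $|T^{-n}W| = \langle \cL_0^n (\mathbf{1}_W \, d\text{leb}_W), 1\rangle$ where $\cL_0$ is the \emph{unweighted} transfer operator restricted to stable leaves; the relation between $\cL_0$ and the weighted operator $\cL$ (whose weight is a power of the stable Jacobian, tuned so that the leading eigenvalue is $e^{h_*}$) is the standard one, $J_WT^{-n}$ versus the weight, and it produces exactly the factor $e^{n h_*}$ after using bounded distortion \eqref{eq:distortion} and uniform expansion \eqref{eq:one grow}.

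Concretely, the upper bound should come from: $|T^{-n}W| \le \|\cL^n\|\cdot(\text{something})$ combined with $\|\cL^n\| \le \Cs e^{nh_*}$ from the spectral gap (the norm of $\cL$ is $e^{h_*}$, with the rest of the spectrum strictly inside), plus the embedding/Lasota--Yorke bounds that let one estimate the pairing $\langle \cL^n \nu_W, 1\rangle$ by the weak norm of $\cL^n\nu_W$, which in turn is bounded by $\Cs e^{nh_*}\|\nu_W\|$. The lower bound is the more delicate half: here I would use that the leading eigenvector $\nu_*$ (the right eigenvector, a distribution) pairs nontrivially and \emph{uniformly} with any sufficiently long admissible stable curve — this is essentially the content of the positivity/non-degeneracy of $\mu_*$, Theorem~\ref{thm:mu}(b), which says $\mu_*$ charges every open set — so that $\langle \cL^n\nu_W, 1\rangle \ge c\, e^{nh_*}\langle \nu_W, \text{test}\rangle \ge \bar C e^{nh_*}$ provided $|W|\ge \delta_1/3$, the threshold $\delta_1$ being exactly the one (from \eqref{eq:delta_1}) below which one can no longer guarantee a definite amount of mass survives fragmentation. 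The constant $n_1$ enters because one needs $n$ large enough for the spectral-gap asymptotics $\cL^n \nu_W \approx e^{nh_*}\langle \ell_*, \nu_W\rangle \nu_*$ to have kicked in with uniform multiplicative error.

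The main obstacle I anticipate is the \emph{uniform} lower bound on $\langle \ell_*, \nu_W \rangle$ (equivalently, on the $\nu_*$-``mass'' picked up by $W$) over all admissible $W$ with $|W| \ge \delta_1/3$: one must rule out that the left eigenfunctional $\ell_*$ degenerates on some family of long curves. This should follow from the growth/fragmentation lemmas of Sections~\ref{sec:growth}--\ref{sec:lower} — specifically from the fact, used to prove the lower bound $\#\cM_0^n \ge C_\# e^{nh_*}$ in Lemma~\ref{lem:lower M}, that a fixed proportion of $T^{-n}W$ consists of ``long'' components of length $\ge \delta_1/3$ — combined with the characterization of $\ell_*$ as (a constant times) integration of the density against $\musrb$ restricted to stable leaves, via the embedding Lemma~\ref{lem:embed}. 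Once that uniform non-degeneracy is in hand, both inequalities in the corollary follow by pairing the spectral decomposition of $\cL^n\nu_W$ with $1$ and reading off the $e^{nh_*}$ rate; the distortion bound \eqref{eq:distortion} is what guarantees the implied constants do not depend on $W$. A cross-check: applying the corollary with $n$ replaced by $n+m$ and comparing with one step of the dynamics recovers the submultiplicativity one expects of $|T^{-n}W|$ up to the uniform constant $\bar C$, consistent with Proposition~\ref{prop:M0n}.
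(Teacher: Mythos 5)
Your proposal attacks the corollary via the spectral gap for $\cL$ directly, which is a genuinely different route from the paper's. The paper's proof is purely combinatorial once the counting machinery is in hand: each piece of $\cG_n(W)$ has length at most $\delta_0$, giving $|T^{-n}W|\le\delta_0\,\#\cG_n(W)$; by \eqref{eq:delta_1} at least two thirds of the pieces in $\cG_n^{\delta_1}(W)$ have length at least $\delta_1/3$, giving $|T^{-n}W|\ge\tfrac{2\delta_1}{9}\#\cG_n(W)$; and then $\#\cG_n(W)$ is squeezed between constant multiples of $\#\cM_0^n$ by Lemma~\ref{lem:growth}(b) and Lemma~\ref{lem:lower}, with $\#\cM_0^n$ comparable to $e^{nh_*}$ by Proposition~\ref{prop:M0n}. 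No spectral decomposition of $\cL^n$ is invoked at this step, though Proposition~\ref{prop:M0n} does rest on it via Lemma~\ref{lem:lower M}. Your route --- decompose $\cL^n=e^{nh_*}\Pi_0+R^n$ for the upper bound, and use the uniform positivity $\nu_0(W)\ge c>0$ for $|W|\ge\delta_1/3$ from \eqref{eq:nu positive} for the lower bound --- also works, and your instinct that the lower bound is the delicate half and is tied to the fragmentation lemmas is correct. But \eqref{eq:nu positive} is itself proved from \eqref{eq:delta_1} and Lemma~\ref{lem:lower}, so the spectral packaging does not save any work; moreover the spectral gap only gives the lower bound once $\bar\sigma^n$ is small, so you still have to cover the finitely many $n$ between $n_1$ and that threshold separately, e.g.\ via \eqref{eq:one grow} and a reduction of $\bar C$.

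The one thing that must be repaired in your write-up is the claimed identity involving an ``unweighted'' $\cL_0$. There is no slack ``up to the weight'': with the paper's $\cL$ (potential $1/J^sT$), the identity $\int_W\cL^n 1\,dm_W=\sum_{W_i\in\cG_n(W)}|W_i|=|T^{-n}W|$ is \emph{exact} --- this is what \eqref{eq:lower spec} uses --- because the potential precisely cancels the stable Jacobian in the change of variables from $W$ to $T^{-n}W$. The weight does not ``produce the factor $e^{nh_*}$''; the value $e^{h_*}$ is the leading eigenvalue of $\cL$, established independently in Lemma~\ref{lem:radius}. If you instead pushed $m_W$ forward by $T^{-n}$ or used a genuinely weight-free composition operator, you would recover $|W|$, not $|T^{-n}W|$. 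Finally, $n_1$ in the statement is the threshold at which the fragmentation estimate \eqref{eq:delta_1} becomes available, not the index past which the spectral asymptotics dominate; these are different constants and the latter must be absorbed as noted above.
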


\begin{proof}
Let $W \in \hW^s$ with $|W| \ge \delta_1/3$.  We use the notation of Section~\ref{sec:growth} regarding the connected components $\cG_n(W)$ of $T^{-n}W$.
Lemma~\ref{lem:growth}(b), Lemma~\ref{lem:lower}
and Proposition~\ref{prop:M0n} together yield,
\[
c_0 C_\# e^{n h_*} \le c_0 \# \cM_0^n \le    \# \cG_n(W) \le C \delta_0^{-1} \# \cM_0^n \le C \delta_0^{-1} C_\#^{-1} e^{n h_*} \, .
\]
Then on the one hand, 
\[
|T^{-n}W| = \sum_{W_i \in \cG_n(W)} |W_i| \le \delta_0 \# \cG_n(W) \, ,
\]
since each element of $\cG_n(W)$ has length at most $\delta_0$,
completing the upper bound of the corollary.  On the other hand, by \eqref{eq:lower spec},
\[
|T^{-n}W| = \sum_{W_i \in \cG_n^{\delta_1}(W)} |W_i| \ge \tfrac{2\delta_1}{9} \# \cG_n(W) \, ,
\]
proving the lower bound.
\end{proof}


\section{Banach Spaces and Growth Lemmas}
\label{sec:banach}

In this section we define the Banach spaces we will use in the analysis of the transfer operator
and prove several key lemmas controlling the growth in complexity of $T^n$.


\subsection{Stable Curves}
\label{sec:admissible}

We begin with a definition of stable curves as graphs of functions in local charts,
following \cite{demers liv}.   We will use the fact that the uniform hyperbolicity
of $T$ guarantees the existence of stable $E^s(x)$ and unstable $E^u(x)$
directions in the tangent space $\cT_xM$ at Lebesgue-almost-every $x \in M$.

For $\tau$ sufficiently small, we define the stable cone at $x \in M$ by
\[
\hat C^s(x) = \{ u + v \in \cT_xM : u \in E^s(x), v \perp E^s(x), \| v \| \le \tau \| u \| \} \, .
\]
Define $\hat C^u(x)$ analogously.  These families of cones are strictly invariant,
$DT^{-1}(x) \hat C^s(x) \subsetneq \hat C^s(T^{-1}x)$ and $DT(x) \hat C^u(x) \subsetneq C^u(Tx)$.

For each $i$, we choose a finite number of coordinate charts
$\{ \chi_j \}_{j=1}^L$, whose domains $R_j$ are either $(-r_j, r_j)^2$ if $\chi_j$ maps only to the
interior of $M_i^+$, or $(-r_j, r_j)$ restricted to one side of a piecewise $\cC^1$ curve
(the preimage of a piece of $\partial M_i^+$) which we place so that it passes through the origin.
For each $j$, $R_j$ has a centroid $x_j$, and $\chi_j$ satisfies,
\begin{itemize}
  \item[(a)]  $D\chi_j(x_j)$ is an isometry;
  \item[(b)]  $D\chi_j(x_j) \cdot (\mathbb{R} \times 0) = E^s(\chi_j(x_j)$;
  \item[(c)]  The $\cC^2$-norm of $\chi_j$ and its inverse are bounded by $1+\tau$;
  \item[(d)]  There exists $c_j \in (\tau, 2\tau)$ such that the cone $C_j = \{ u+v \in \mathbb{R}^2 :
  u \in \mathbb{R} \times \{ 0 \}, v \in \{ 0 \} \times \mathbb{R}, \| v \| \le c_j \| u \| \}$ satisfies:
  For each $y \in R_j$ such that $\chi_j(y) \notin \cS^-$, $D\chi_j(y) C_j \supset \hat C^s(\chi_j(y))$,
  and $DT^{-1}(D\chi_j(y)C_j) \subset \hat C^s(T^{-1}(\chi_j(y)))$;
  \item[(e)]  $M_i^+ \subset \cup_{j=1}^L \chi_j(R_j \cap (-\frac{r_j}{2}, \frac{r_j}{2})^2)$.
\end{itemize}

Choose $r_0 \le \frac 12 \min_j r_j$; $r_0$ may be further reduced later, depending on $\delta$.  
Fix $B < \infty$
and consider the set of functions
\[
\Xi := \{ F \in \cC^2([-r,r], \mathbb{R}) : r \in (0, r_0], F(0)=0, |F|_{\cC^1} \le \tau, |F|_{\cC^2} \le B \} \, .
\]
Define $I_r = (-r,r)$.  For $x \in R_j \cap (-r_j/2, r_j/2)^2$ such that $x + (t, F(t)) \in R_j$ for 
$t \in I_r$, define $G(x,r,F)(t) := \chi_j(x + (t, F(t))$ for $t \in I_r$, i.e. $G(x,r,F)$ is the lift of the graph of $F$ to $M$.  To abbreviate notation, we will refer to $G(x,r,F)$ as $G_F$.
It follows from the construction that $|G_F|_{\cC^1} \le (1+\tau)^2$ and $G_F^{-1} \le 1 + \tau$.

Our set of admissible stable curves is defined by,
\[
\hW^s := \{ W = G(x,r,F)(I_r) : x \in R_j \cap (r_j/2, r_j/2)^2, r \le r_0, F \in \Xi \} \, .
\]
If necessary, we reduce $r_0$ so that $\sup_{W \in \hW^s} |W| \le \delta_0$, where $\delta_0$
is the length scale chosen in Convention~\ref{convention: n_0=1}.
Due to the uniform hyperbolicity of $T$, if $T^{-n}\hW^s$ represents the connected components of $T^{-n}W$ for
$W \in \hW^s$, then choosing $B$ large enough, it follows that $T^{-n}\hW^s \subset \hW^s$,
up to subdivision of long curves. With this choice of $B$, 
the set of real local stable manifolds of
length at most $\delta_0$, which we denote by $\cW^s$, satisfies $\cW^s \subset \hW^s$.

Next, we define two notions of distance\footnote{Neither of these distances will satisfy the
triangle inequality, but that is irrelevant for our purposes.} which are used in the definition of our norms, namely
the strong unstable norm.  For two curves $W_1(\chi_{i_1}, x_1, r_1, F_1)$ and 
$W_2(\chi_{i_2}, x_2, r_2, F_2)$, we define the distance between them to be,
\[
d_{\cW^s}(W_1, W_2) = \eta(i_1, i_2) + |x_1-x_2| + |r_1-r_2| + |F_1 - F_2|_{\cC^1(I_{r_1} \cap I_{r_2})},
\]
where $\eta(i_1,i_2) = 0$ if $i_1=i_2$ and $\eta(i_1, i_2) = \infty$ otherwise, i.e. we only compare
curves in the same chart.

Given $W_1, W_2$ with $d_{\cW^s}(W_1, W_2) < \infty$ and two functions 
$\psi_i \in \cC^0(W_i)$, we define the distance between them to be
\[
d_0(\psi_1, \psi_2) = |\psi_1 \circ G_{F_1} - \psi_2 \circ G_{F_2} |_{\cC^0(I_{r_1} \cap I_{r_2})} \, .
\]


\subsection{Transfer operator}
\label{sec:transfer}

The main tool we will use to construct the measure of maximal entropy is a weighted
transfer operator, $\cL$.  Because we do not have a conformal measure at our
disposal a priori, we will define the transfer operator acting on distributions defined
via local stable manifolds.  
Let $\widetilde \cW^s$ denote the set of maximal connected local
stable manifolds of $T$ restricted to each $M_i^+$.  
Note that such manifolds have uniformly bounded length due to the
the finite diameter of $M$ and the assumption that $M_i^+$ is simply connected.  
Due to the uniform hyperbolicity
of $T$, $\musrb$-almost every point in $M$ has a stable manifold of positive length.


For any local stable manifold $W$, and $\alpha \in (0,1]$, define the $\alpha$-H\"older norm of a test function 
$\psi : M \to \mathbb{C}$ by
\begin{equation}
\label{eq:holder def}
| \psi |_{\cC^\alpha(W)} = | \psi |_{\cC^0(W)} + H_W^\alpha(\psi)
:= \sup_W |\psi| + \sup_{x \neq y \in W} \frac{|\psi(x) - \psi(y)|}{d_W(x,y)^\alpha} \, ,
\end{equation}
where $d_W(\cdot, \cdot)$ denotes distance induced by the Riemannian metric
restricted to $W$.
Let $\tilde \cC^\alpha(W)$ denote the set of functions in $\cC^0(W)$ with finite
$| \cdot |_{\cC^\alpha(W)}$ norm.  With this notation, $\tilde \cC^1(W)$ denotes the
set of Lipschitz functions on $W$.

Analogously, for each $n \ge 0$, define 
$H^\alpha_{\widetilde \cW^s}(\psi) = \sup_{W \in \widetilde \cW^s} H^\alpha_W(\psi)$,
and
\[
\tilde \cC^\alpha(\widetilde \cW^s) = \{ \psi : M \to \mathbb{C} \mid |\psi|_\infty + H^\alpha_{\widetilde \cW^s}(\psi) < \infty \} \, .
\] 
The set $\tilde \cC^\alpha(\widetilde \cW^s)$ together with the norm 
$|\psi|_{\cC^\alpha(\widetilde \cW^s)} := |\psi|_\infty + H^\alpha_{\widetilde \cW^s}(\psi)$ is a Banach space.

Since stable manifolds cannot be cut under $T^n$, if $W \in \widetilde \cW^s$, then
$T^n W \subset V \in \widetilde \cW^s$ for each $n \ge 0$.
This together with the uniform hyperbolicity of $T$ and \eqref{eq:exp def} implies that if
$\psi \in \cC^\alpha(\widetilde \cW^s)$, then $\psi \circ T \in \cC^\alpha(\widetilde \cW^s)$
(see also \eqref{eq:holder}).

Then if $f \in (\tilde \cC^\alpha(\widetilde \cW^s))^*$ belongs to the dual of 
$\cC^\alpha(\widetilde \cW^s)$, the operator $\cL : (\tilde \cC^\alpha(\widetilde \cW^s))^* \to
(\tilde \cC^\alpha(\widetilde \cW^s))^*$ is defined by,
\begin{equation}
\label{eq:trans def}
\cL f (\psi) = f \left( \frac{\psi \circ T}{J^sT} \right) \quad \forall \psi \in \cC^\alpha(\widetilde \cW^s) \, ,
\end{equation}
where $J^sT$ denotes the stable Jacobian of $T$.  By 
\eqref{eq:distortion}, it follows that\footnote{For $x \in W \in \widetilde \cW^s$,
$J^sT(x) = J_WT(x)$.} $J^sT^n \in \tilde \cC^1(\widetilde \cW^s)$ for each $n \ge 1$.

If $f \in \cC^0(M)$, then we identify $f$ with a signed measure absolutely continuous with
respect to $\musrb$.  We denote this integration by,
\[
f(\psi) = \int_M \psi \, f \, d\musrb \, ,
\]
for $\psi \in \cC^0(M)$.  With this identification, we consider
$\cC^0(M) \subset (\tilde \cC^\alpha(\widetilde \cW^s))^*$.  Then also
by \eqref{eq:trans def}, for any $n \ge 1$,
$\cL^n f$ is absolutely continuous with respect to $\musrb$ with density,
\begin{equation}
\label{eq:trans}
\cL^n f = \frac{f \circ T^{-n}}{J^sT^n \circ T^{-n}} \, .
\end{equation}


\subsection{Definition of Norms}
\label{sec:norms}

Let $\cW^s$ denote those local stable manifolds having length at most $\delta_0$,
where $\delta_0$ is from Convention~\ref{convention: n_0=1}.  Note that
$\cW^s \subset \hW^s$, yet $\cW^s \not\subset \widetilde \cW^s$ since 
$\widetilde \cW^s$ contains only maximal local stable manifolds (which are necessarily disjoint), 
while $\cW^s$ contains stable manifolds of any length less than $\delta_0$, many of which
may overlap.
We will define our norms by integrating on elements of $\cW^s$ against
H\"older continuous test functions.  

For $W \in \cW^s$ and $\alpha >0$, let $\cC^\alpha(W)$ denote the closure of 
$\tilde \cC^1(W)$ in the $\cC^\alpha$ norm, defined in \eqref{eq:holder def}.\footnote{This
space is strictly smaller than the set of $\cC^\alpha$ functions, yet contains $\cC^{\alpha'}$
for each $\alpha' > \alpha$.  We adopt this usage in order that the embedding
of our strong space in our weak space is injective (Lemma~\ref{lem:include}).}
In this notation, then $\cC^1(W) = \tilde \cC^1(W)$.

Now given a function $f \in \cC^1(M)$, define the weak norm of $f$ by
\[
|f|_w = \sup_{W \in \cW^s} \sup_{\substack{\psi \in \cC^1(W) \\ |\psi|_{\cC^1(W)} \le 1}}
\int_W f \, \psi \, dm_W \, ,
\]
where $m_W$ denotes arc length along $W$.  Let $|W| = m_W(W)$.

Next, choose $\alpha, \beta < 1$ and $p > 1$ such that 
\begin{equation}
\label{eq:restrict}
0 < 2\beta \le 1/p \le 1 - \alpha \le \alpha_0,
\qquad
\mbox{and} \quad 1/p < \alpha \, .
\end{equation}
Define the strong stable norm of $f$ by
\[
\| f \|_s = \sup_{W \in \cW^s} \sup_{\substack{\psi \in \cC^\alpha(W) \\ |\psi|_{\cC^\alpha(W)} \le |W|^{-1/p}}} \int_W f \, \psi \, dm_W \, .
\]
Recalling the notion of distance $d_{\cW^s}(\cdot , \cdot)$
between curves $W \in \cW^s$ and the distance $d_0(\cdot, \cdot)$ between test functions on
nearby curves defined in Section~\ref{sec:admissible} and fixing $\ve_0 \le r_0$, we define the strong unstable norm of $f$ by,
\[
\| f \|_u = \sup_{\ve \le \ve_0} \sup_{\substack{W_1, W_2 \in \cW^s \\ d_{\cW^s}(W_1, W_2) \le \ve}} 
\sup_{\substack{|\psi_i|_{\cC^1(W_i)} \le 1 \\ d_0(\psi_1, \psi_2) = 0}} \ve^{-\beta}
\left| \int_{W_1} f \, \psi_1 \, dm_{W_1} - \int_{W_2} f \, \psi \, dm_{W_2} \right| \, . 
\]
Define the strong norm of $f$ by $\| f \|_{\cB} = \| f\|_s + c_u \| f \|_u$, where $c_u >0$
is a constant to be chosen in the proof of Lemma~\ref{lem:radius}.

Finally, our weak space $\cB_w$ is defined to be the completion of $\cC^1(M)$
in the weak norm, $| \cdot |_w$, while our strong space $\cB$ is defined to be the completion
of $\cC^1(M)$ in the strong norm $\| \cdot \|_{\cB}$.

\begin{remark}
\label{rem:why real}
The definition of our spaces $\cB$ and $\cB_w$ is nearly the same as that in 
\cite[Section~2.2]{demers liv}, the key difference being that the norms in 
\cite{demers liv} integrate along cone-stable curves $\hW^s$, while our norms here
integrate on local stable manifolds $\cW^s$.  This change is necessary since the
potential for our weighted transfer operator, $1/J^sT$, is H\"older continuous along
real stable manifolds, yet may only be measurable along arbitrary stable curves.  By
restricting our norms to this smaller set of curves, we are able to prove the
essential Lasota-Yorke inequalities, Proposition~\ref{prop:LY}.
\end{remark}


\subsection{Preliminary facts about the Banach spaces}
\label{sec:prelim}

\begin{lemma} 
\label{lem:piece}
Let $\cQ$ be a (mod 0 w.r.t. $\musrb$) finite partition of $M$ into open, simply connected sets such that
there exist constants $\bar K, C_{\cQ} > 0$ such that for each $Q \in \cQ$, and $W \in \cW^s$,
$Q \cap W$ comprises
at most $\bar K$ connected components and for any $\ve >0$, 
$m_W(\cN_\ve(\partial Q) \cap W) \le C_{\cQ} \ve^{1/2}.$

\begin{itemize}
  \item[a)] Let $\gamma > \beta/(1-\beta)$ and suppose $\vf$ is a function on $M$ such that
$\sup_{Q \in \cQ} |\vf|_{\cC^\gamma(Q)} < \infty$.  Then $\vf \in \cB$.
  \item[b)]  There exists $C>0$ such that if $\vf$ is such that $\sup_{Q \in \cQ} |\vf|_{\cC^1(Q)} < \infty$ and $f \in \cB$,
  then $\vf f \in \cB$ and $\| \vf f \|_{\cB} \le C \| f \|_{\cB} \sup_{Q \in \cQ} |\vf|_{\cC^1(Q)}$.
\end{itemize}
\end{lemma}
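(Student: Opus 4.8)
The plan is to prove the relevant norm estimates first for $\vf$ acting on $\cC^1(M)$ and then pass to $\cB$ by density, the recurring point being that the jump of $\vf$ across the curves $\partial Q$ costs only a factor $\ve^{1/2}$ on $\ve$-close stable curves (the hypothesis $m_W(\cN_\ve(\partial Q)\cap W)\le C_\cQ\ve^{1/2}$), a loss absorbed by the numerology of \eqref{eq:restrict}. The "at most $\bar K$ components" hypothesis is used to keep bounded the number of components of a curve $W$ lying in a single $Q$.

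\emph{Part (a).} Let $\vf_\delta\in\cC^\infty(M)\subset\cC^1(M)$ be a mollification of $\vf$ at scale $\delta$ (convolution in charts with a bump of width $\delta$, applied to any bounded measurable extension of $\vf$ across $\partial Q$). Then $|\vf-\vf_\delta|_\infty\le C\delta^\gamma$ and $|D\vf_\delta|\le C\delta^{\gamma-1}$ off $\cN_\delta(\partial Q)$, while on $\cN_\delta(\partial Q)$ one has only $|\vf-\vf_\delta|_\infty\le C$ and $|D\vf_\delta|\le C\delta^{-1}$. For $|\vf-\vf_\delta|_w$ and $\|\vf-\vf_\delta\|_s$ I would split each $W\in\cW^s$ at $\cN_\delta(\partial Q)$, whose trace on $W$ has length $\le C_\cQ\min\{|W|,\delta^{1/2}\}$: off it the integrand is $O(\delta^\gamma)$, and on it one uses $|\psi|_{\cC^0(W)}\le|W|^{-1/p}$ together with $|W|^{-1/p}\min\{|W|,\delta^{1/2}\}\le\delta^{(1-1/p)/2}$; since $1/p<1$, both norms tend to $0$. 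For $\|\vf-\vf_\delta\|_u$, pull the integrals over $\ve$-close $W_1,W_2$ back to the common base parameter and split that interval into the "straddling" part $\{t:G_{F_1}(t)\in\cN_{C\ve}(\partial Q)\}$, of length $\le C\ve^{1/2}$, on which the parametrised difference of integrands is only $O(1)$, and its complement, on which both points lie in one $Q$ and the difference is $\le C(\ve^\gamma+\delta^{\gamma-1}\ve)$; handling $\ve\le\delta$ by this pointwise bound and $\ve>\delta$ by bounding each integral separately gives $\|\vf-\vf_\delta\|_u\le C\delta^{\min\{\gamma,1/2\}-\beta}\to0$ (here $\beta<1/2$ and $\gamma>\beta$ follow from \eqref{eq:restrict}; the sharper requirement $\gamma>\beta/(1-\beta)$ provides the margin once the interpolation losses are written out in full). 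Thus $(\vf_\delta)\subset\cC^1(M)$ is $\|\cdot\|_\cB$-Cauchy, so converges to some $g\in\cB$, and since $|\vf-\vf_\delta|_w\to0$ the injection $\cB\hookrightarrow\cB_w$ (Lemma~\ref{lem:include}) identifies $g$ with $\vf$.

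\emph{Part (b).} I would first show, for $h\in\cC^1(M)$, that $\|\vf h\|_\cB\le CM\|h\|_\cB$ with $M:=\sup_{Q}|\vf|_{\cC^1(Q)}$. Writing $\int_W\vf h\,\psi\,dm_W=\sum_j\int_{W_j}h\,(\vf\psi)\,dm_{W_j}$ with $W_j\in\cW^s$ the components of $W$ inside a single $Q$ (boundedly many, after subdividing at crossings of $\partial Q$), on each one has $|\vf\psi|_{\cC^\alpha(W_j)}\le CM|\psi|_{\cC^\alpha(W_j)}$ and $|W_j|^{-1/p}\ge|W|^{-1/p}$, which yields the weak and strong-stable bounds. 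For $\|\vf h\|_u$, split the common base interval of $\ve$-close $W_1,W_2$ into a "good" set (both points in one $Q$, at distance $>C\ve$ from $\partial Q$) and a "bad" set $\subseteq\{G_{F_1}(t)\in\cN_{C\ve}(\partial Q)\}$. On a matched good pair $W_1',W_2'$ (inside one $Q$, where $\vf$ is $\cC^1$), introduce $\tilde\psi_2$ on $W_2'$ with $\tilde\psi_2\circ G_{F_2}=(\vf\circ G_{F_1})(\psi_1\circ G_{F_1})$: then $|\vf\psi_1|_{\cC^1(W_1')},\,|\tilde\psi_2|_{\cC^1(W_2')}\le CM$ and $d_0(\vf\psi_1,\tilde\psi_2)=0$, so $|\int_{W_1'}h(\vf\psi_1)-\int_{W_2'}h\tilde\psi_2|\le CM\ve^\beta\|h\|_u$, while the substitution error $\int_{W_2'}h(\tilde\psi_2-\vf\psi_2)$ is $\le CM\|h\|_s\ve^{1-\alpha-\beta}$ (as $\tilde\psi_2-\vf\psi_2$ has $\cC^0$-norm $\le CM\ve$ and $\cC^1$-norm $\le CM$, hence $\cC^\alpha$-norm $\le CM\ve^{1-\alpha}$, with $1-\alpha-\beta>0$ by \eqref{eq:restrict}). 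On the bad set, bound each $\int_{W_i'}h(\vf\psi_i)\le CM\|h\|_s|W_i'|^{1/p}$ over the boundedly many bad subcurves; since $\sum|W_i'|\le C\ve^{1/2}$, concavity of $t\mapsto t^{1/p}$ gives $\sum|W_i'|^{1/p}\le C\ve^{1/(2p)}$, so the bad part contributes $\le CM\|h\|_s\ve^{1/(2p)-\beta}$, bounded precisely because $2\beta\le1/p$. Combining gives $\|\vf h\|_\cB\le CM\|h\|_\cB$.

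Finally, for $f\in\cB$ choose $f_n\in\cC^1(M)$ with $f_n\to f$ in $\|\cdot\|_\cB$; each $\vf f_n$ is piecewise $\cC^1$, hence piecewise $\cC^\gamma$ for some $\gamma>\beta/(1-\beta)$ (which is $<1$, as $\beta<1/2$), so $\vf f_n\in\cB$ by part (a), and $\|\vf f_n-\vf f_m\|_\cB\le CM\|f_n-f_m\|_\cB$ by the $\cC^1$-estimate just obtained; hence $(\vf f_n)$ is $\cB$-Cauchy, its limit $g$ satisfies $\|g\|_\cB\le CM\|f\|_\cB$, and multiplication by $\vf$ thus extends from $\cC^1(M)$ to a bounded operator on $\cB$ agreeing with the honest product $\vf f$ whenever the latter lies in $\cB$. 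The main obstacle in both parts is the strong-unstable estimate near $\partial Q$, where $\vf$ (and $\vf-\vf_\delta$) genuinely jumps and the parametrised comparison of nearby curves breaks down; the resolution is that on $\ve$-close curves this bad region has length only $O(\ve^{1/2})$, and in (b) the $\tfrac1p$-th power built into the strong-stable norm together with $2\beta\le1/p$ exactly compensates that $\ve^{1/2}$ loss.
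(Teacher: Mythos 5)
Your approach is the same as the paper's: the paper's proof of this lemma is in fact just a citation to \cite[Lemmas~3.5 and 5.3]{demzhang14}, which run exactly the mollification argument for part (a) and the Leibniz-type multiplication argument for part (b) that you spell out, and your identification of the key mechanisms (the weak transversality of $\partial\cQ$ costing only $\ve^{1/2}$, absorbed by $2\beta\le 1/p$; matching via $\tilde\psi_2$ on good pieces; and $\cC^\alpha$-interpolation of $\tilde\psi_2-\vf\psi_2$ with $1-\alpha>\beta$) is correct. The one point you flag but do not close is the precise threshold $\gamma>\beta/(1-\beta)$ in part (a): your bookkeeping only uses $\gamma>\beta$ (and $\beta<1/2$), and the stronger requirement comes from the finer optimization over $\ve$ in the strong unstable estimate that is carried out in \cite[Lemma~3.5]{demzhang14}; since you acknowledge this, it is a gap in completeness rather than in the strategy, and the remainder of your write-up (including the density/Cauchy-sequence extension of multiplication from $\cC^1(M)$ to $\cB$) matches the intended argument.
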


\begin{proof}
To prove (a), a function $\vf$ as in the statement of the lemma can be approximated by $\cC^1$ functions
using mollification precisely as in \cite[Lemma~3.5]{demzhang14}.  Part (b) follows along
similar lines using \cite[Lemma~5.3]{demzhang14}.  Both proofs use the restrictions in
\eqref{eq:restrict}
we have assumed for the parameters appearing in the norms.  In particular.
we need $\beta \le 1/(2p)$, rather than simply $\beta \le 1/p$, due to the weak transversality
condition assumed on $\partial \cQ$.
\end{proof}

\begin{lemma}
\label{lem:embed}
Let $f \in \cC^1(M)$ and $\psi \in \tilde \cC^1(\widetilde \cW^s)$.  Then,
\[
|f(\psi)| = \left| \int_M f \, \psi \, d\musrb \right| \le C |f|_w (|\psi|_\infty + H^1_{\widetilde \cW^s}(\psi)) \, .
\]
\end{lemma}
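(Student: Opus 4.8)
The plan is to reduce the integral against $\musrb$ to a family of one-dimensional integrals along local stable manifolds, so that the weak norm $|f|_w$ can be brought to bear. The mechanism for this is a disintegration of $\musrb$ into conditional measures on the leaves of a measurable foliation by stable manifolds. Concretely, I would first fix a cover of $M$ (mod $0$ with respect to $\musrb$) by finitely many ``boxes'' $\{B_k\}$, each foliated by a family $\{W_\xi\}_{\xi \in \Xi_k}$ of local stable manifolds in $\cW^s$ — this uses that $\musrb$-a.e.\ point has a stable manifold of positive length, together with absolute continuity of $\musrb$ and the standard construction of such foliations for uniformly hyperbolic maps (the chart structure from Section~\ref{sec:admissible} makes the local product structure explicit). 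On each box one gets, via Fubini/disintegration and the smoothness of $\fsrb$, a representation
\[
\int_{B_k} f \, \psi \, d\musrb = \int_{\Xi_k} \left( \int_{W_\xi} f \, (\psi \rho_\xi) \, dm_{W_\xi} \right) d\lambda_k(\xi),
\]
where $\rho_\xi$ is the (bounded, with bounds uniform in $\xi$ and $k$) conditional density of $\musrb$ along $W_\xi$ relative to arc length, and $\lambda_k$ is the factor measure, which is finite with total mass bounded in terms of $\musrb(B_k) \le 1$.

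Next I would estimate the inner integral by the definition of $|f|_w$: it equals $\int_{W_\xi} f\,\phi_\xi \, dm_{W_\xi}$ with $\phi_\xi = \psi \rho_\xi$, hence is bounded by $|f|_w \, |\phi_\xi|_{\cC^1(W_\xi)}$. The point is to control $|\psi \rho_\xi|_{\cC^1(W_\xi)}$ by $|\psi|_\infty + H^1_{\widetilde\cW^s}(\psi)$. Since $W_\xi$ is a sub-curve of a maximal stable manifold in $\widetilde\cW^s$, the Lipschitz constant of $\psi$ along $W_\xi$ is at most $H^1_{\widetilde\cW^s}(\psi)$, and the sup of $|\psi|$ is at most $|\psi|_\infty$. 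The density $\rho_\xi$ is Lipschitz along $W_\xi$ with a constant uniform in $\xi, k$: this follows from the bounded-distortion estimate \eqref{eq:distortion} for the stable Jacobian combined with the $\cC^1$-smoothness of $\fsrb$ (the conditional density is, up to normalization, an infinite product of Jacobian ratios, which converges and is Lipschitz by \eqref{eq:distortion}). Therefore $|\psi\rho_\xi|_{\cC^1(W_\xi)} \le C(|\psi|_\infty + H^1_{\widetilde\cW^s}(\psi))$ by the product rule, with $C$ depending only on $T$ and $\musrb$.

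Putting the pieces together, summing over the finitely many boxes $B_k$ and integrating the uniform bound on the inner integral against $d\lambda_k$ gives
\[
\left| \int_M f\, \psi\, d\musrb \right| \le \sum_k \int_{\Xi_k} |f|_w \, C\big(|\psi|_\infty + H^1_{\widetilde\cW^s}(\psi)\big) \, d\lambda_k(\xi) \le C' |f|_w \big(|\psi|_\infty + H^1_{\widetilde\cW^s}(\psi)\big),
\]
since $\sum_k \lambda_k(\Xi_k) \le \sum_k \musrb(B_k) \cdot \mathrm{(const)}$ is finite. This is the claimed inequality, and by density of $\cC^1(M)$ in $\cB_w$ it extends from $f \in \cC^1(M)$ to all of $\cB_w$ (though the statement as given only asks for $f \in \cC^1(M)$). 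The main obstacle is the construction of the measurable stable foliation with the uniform Lipschitz control on the conditional densities $\rho_\xi$ — i.e.\ verifying that absolute continuity of $\musrb$ plus \eqref{eq:distortion} genuinely yields conditional densities that are Lipschitz along leaves with constants independent of the leaf; everything else is bookkeeping with the weak norm. I expect the paper either cites a standard reference (e.g.\ the treatment of SRB conditional measures in Pesin theory, or the analogous lemma in \cite{demers liv}) for this foliation step or exploits that $\widetilde\cW^s$ consists of \emph{maximal} stable manifolds, which are pairwise disjoint and hence already form a genuine (not merely measurable-up-to-overlap) partition of a full-measure subset of $M$, simplifying the disintegration considerably.
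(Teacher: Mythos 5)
Your overall plan---disintegrate $\musrb$ into conditional measures along a measurable family of stable manifolds, apply the weak norm leafwise, and use the bounded-distortion estimate to get uniform Lipschitz control on the conditional densities---is exactly the paper's approach, and your treatment of the leafwise estimate (the $\cC^1$ control of $\psi\rho_\xi$) is correct. But there is a genuine gap where you treat the factor measure as automatically finite.

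You claim ``the factor measure $\lambda_k$ is finite with total mass bounded in terms of $\musrb(B_k)\le 1$,'' and later dismiss this as bookkeeping. It is not. With your convention (conditional density $\rho_\xi$ uniformly bounded with respect to arc length), the identity $\musrb(B_k)=\int_{\Xi_k}\int_{W_\xi}\rho_\xi\,dm_{W_\xi}\,d\lambda_k(\xi)$ only gives you $\int_{\Xi_k}|W_\xi|\,d\lambda_k(\xi)\le C$, which does \emph{not} bound $\lambda_k(\Xi_k)$ unless $|W_\xi|$ has a uniform positive lower bound --- and for piecewise hyperbolic maps with singularities it does not: arbitrarily short stable manifolds occur with positive probability. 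Equivalently, in the paper's normalization (where $\musrb^\xi$ is a probability measure and $d\musrb^\xi=|W_\xi|^{-1}g_\xi\,dm_{W_\xi}$), the quantity that must be shown finite is $\int_\Xi |W_\xi|^{-1}\,d\hatmusrb(\xi)$, i.e.\ the $\cZ$-function $\cZ_1(\cF)$ governing the average reciprocal length of stable leaves. This is the nontrivial content of the lemma, and the paper obtains it from \cite[Lemma~4]{chernov pw}, whose hypothesis (the one-step expansion condition \eqref{eq:one step}) is precisely what (P1) and Convention~\ref{convention: n_0=1} deliver via the inequality $K_1\Lambda^{-1}\le\rho<1$. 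You identified the Lipschitz control of $\rho_\xi$ as the main obstacle, but that part is in fact routine from \eqref{eq:distortion}; the real obstacle is the fragmentation-of-short-leaves estimate you implicitly assumed away.
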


\begin{proof}
Let $f \in \cC^1(M)$ and $\psi \in \tilde \cC^1(\widetilde \cW^s)$.
We will estimate
\[
f(\psi) = \int_M f \, \psi \, d\musrb \, .
\]
To this end, we choose a foliation $\cF = \{ W_\xi \}_{\xi \in \Xi} \subset \cW^s$ of maximal local stable manifolds subdivided according to the length scale $\delta_0$.  We then disintegrate the 
measure $\musrb$ into conditional measures
$\musrb^\xi$ on $W_\xi \in \cF$ and a factor measure $\hatmusrb(\xi)$ on the index set
$\Xi$ of stable manifolds.  Since $\musrb$ is smooth by assumption $(P2)$, it follows
from \cite[Proposition~6]{Pesin1} (see also \cite[eq.~(3.7)]{chernov pw}) 
that the conditional measures $\musrb^\xi$ are absolutely continuous with respect to arc length,
$d\musrb^\xi = |W_\xi|^{-1} g_\xi dm_{W_\xi}$, where $g_\xi$ is given by\footnote{Both \cite{Pesin1} and \cite{chernov pw} give the analogous formula for the conditional measures of 
$\musrb$ on unstable manifolds.  Yet, due to our assumption (P2), $\musrb$ is an SRB measure
for $T^{-1}$ as well, and so enjoys the analogous properties on stable manifolds of $T$.}
\[
\frac{g_\xi(x)}{g_\xi(y)} = \lim_{n \to \infty} \frac{J_{W_\xi}T^n(x)}{J_{W_\xi}T^n(y)}
\quad \mbox{for all } x,y \in W_\xi \, .
\]
This characterization, plus the normalization $\musrb^\xi(W_\xi) =1$, uniquely determines
$g_\xi$.
It follows from a standard estimate\footnote{Note $J_{W_\xi}T^n(z) = \prod_{j=0}^{n-1} J_{T^jW_\xi}T(T^jz)$ and for brevity let $g_n = J_{W_\xi}T^n$.   
The limit of $g_n(x)/g_n(y)$ exists if the limit of $\log (g_n(x)/g_n(y))$ exists.  Now for $n, k \ge 1$, 
we may estimate using \eqref{eq:one grow} and \eqref{eq:distortion}, 
\[
\left| \log \frac{g_n(x)}{g_n(y)} - \log \frac{g_{n+k}(x)}{g_{n+k}(y)} \right|
= \log \frac{J_{T^nW_\xi}T^k(T^nx)}{J_{T^nW_\xi}T^k(T^ny)} 
\le C_d d_{T^nW_\xi}(T^nx, T^ny) \le C_d C_e^{-1} \Lambda^{-n} d_{W_\xi}(x,y) \, ,
\]
so that the sequence $\log (g_n(x)/g_n(y))$ is Cauchy and therefore converges. 
Thus the limit defining $g_\xi$ exists.  A similar estimate shows that
$g_n$ is log-Lipschitz with Lipschitz constant at most $C_d$, bounded independently of $n$, and so this bound carries over to $g_\xi$.}  
and \eqref{eq:distortion}  that $g_\xi$ is uniformly
log-Lipschitz continuous
on $W_\xi$, i.e. there exists $C_g \ge 1$ such that
\begin{equation}
\label{eq:log g}
0 < C_g^{-1} \le \inf_{\xi \in \Xi} \inf_{W_\xi} g_\xi \le  \sup_{\xi \in \Xi} |g_\xi|_{\cC^1(W_\xi)} \le C_g < \infty \, .
\end{equation}

Using this disintegration, we write, 
\begin{equation}
\label{eq:decomp}
\begin{split}
|f(\psi)| & = \left| \int_{\xi \in \Xi} \int_{W_\xi} f \, \psi \, g_\xi \, |W_\xi|^{-1} dm_{W_\xi} d\hatmusrb(\xi) \right| \\
& \le \int_{\xi \in \Xi} |f|_w |\psi|_{\cC^1(W_\xi)} |g_\xi|_{\cC^1(W_\xi)} |W_\xi|^{-1} d\hatmusrb(\xi) \\
& \le C_g |f|_w \big(|\psi|_\infty + H^1_{\widetilde \cW^s}(\psi) \big) \int_{\xi \in \Xi} |W_\xi|^{-1} d\hatmusrb(\xi) \, .
\end{split}
\end{equation}
To bound this last integral, we will apply some results of \cite{chernov pw}, which studies
hyperbolic maps with singularities in an axiomatic context (Assumptions (H.1)-(H.5) in that paper),
which include the class of maps in the present paper, in addition to many dispersing and
semi-dispersing billiards.  Indeed,
the final integral in \eqref{eq:decomp} is precisely the $\cZ$-function, $\cZ_1(\cF)$, defined in \cite[eq.~(4.7)]{chernov pw} 
which governs the average length of stable manifolds in the family $\cF$.  (See also
\cite[Exercise~7.15 and Proposition~7.17]{chernov book} for a similar application of these
ideas.)  The parameters $p$ and $q$ in \cite{chernov pw} are both equal to 1 in our context,
due to our property (P1) and Convention~\ref{convention: n_0=1}, 
which imply that  $T$ satisfies the one-step expansion condition, \cite[Condition~(H.5)]{chernov pw} with parameter $q=1$,
\begin{equation}
\label{eq:one step}
\sup_{W \in \cW^s} \sum_{V_i \subset T^{-1}W} \left( \frac{|W|}{|V_i|} \right)^q \, \frac{|TV_i|}{|W|} 
 \le K_1 \Lambda^{-1} \le \rho <1\, ,
\end{equation}
where $V_i$ are the maximal, connected components
of $T^{-1}W$.  The required bound on $\cZ_1(\cF)$ follows from \cite[Lemma~4]{chernov pw}
(again with $q=1$) since $\musrb$ is obtained as the limit of standard pairs with finite
valued $\cZ$-function.
\end{proof}


\begin{lemma}
\label{lem:include}
There is a sequence of continuous inclusions,
\[
\cC^1(M) \hookrightarrow \cB \hookrightarrow \cB_w \hookrightarrow (\cC^\alpha(\cW^s))^* \, .
\]
The first two inclusions are injective.
\end{lemma}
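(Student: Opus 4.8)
The plan is to establish the four inclusions left to right, checking continuity of each and injectivity of the first two. The inclusion $\cC^1(M) \hookrightarrow \cB$ follows from Lemma~\ref{lem:piece}(a) applied with $\cQ$ the partition $\cM_0^1 = \{M_i^+\}$, since a $\cC^1$ function certainly has finite $\cC^\gamma$ norm on each $Q$ for any $\gamma \le 1$; one must check that $\cM_0^1$ satisfies the weak-transversality hypothesis of Lemma~\ref{lem:piece}, which is immediate from the assumption that $\partial M_i^+$ consists of finitely many $\cC^1$ curves uniformly transverse to $C^s$ (so each $W \in \cW^s$ crosses $\partial M_i^+$ in boundedly many points and the $\ve$-neighborhood has $m_W$-measure $\lesssim \ve$, even better than the required $\ve^{1/2}$). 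Continuity $\|f\|_{\cB} \le C|f|_{\cC^1(M)}$ is the quantitative statement of the same lemma. For the second inclusion $\cB \hookrightarrow \cB_w$, the key point is simply that $|f|_w \le \|f\|_s$: given $W \in \cW^s$ and $\psi \in \cC^1(W)$ with $|\psi|_{\cC^1(W)} \le 1$, we have $|\psi|_{\cC^\alpha(W)} \le C|\psi|_{\cC^1(W)} \le C$; rescaling, $C^{-1}|W|^{-1/p}\cdot(C|W|^{1/p})$-normalization shows $\int_W f\psi\, dm_W$ is controlled by $\|f\|_s$ up to a factor involving $|W|^{1/p} \le \delta_0^{1/p}$, which is bounded. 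Hence $|f|_w \le C\|f\|_{\cB}$, and the inclusion extends to the completions.

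For the third inclusion $\cB_w \hookrightarrow (\cC^\alpha(\cW^s))^*$, I would interpret an element of $\cB_w$ as a functional on $\cC^\alpha(\cW^s)$ (the space of $\cC^\alpha$ functions, or rather $\cW^s$-families thereof, as in the weak norm's test class) by $f \mapsto (\psi \mapsto \int_W f\psi\, dm_W)$; continuity is again built into the definition of $|\cdot|_w$ after a rescaling to absorb the $|W|^{-1/p}$ versus unit normalization, together with the fact that $\cC^1$ is dense so the pairing extends. The substantive part of the lemma is the injectivity of the first two inclusions. Injectivity of $\cC^1(M) \hookrightarrow \cB$ is the easier of the two: if $f \in \cC^1(M)$ is nonzero, it is nonzero on some $M_i^+$, and choosing a short stable manifold $W$ through a point where $f \ne 0$ together with a bump test function $\psi$ supported near that point gives $\int_W f\psi\, dm_W \ne 0$, so $|f|_w > 0$ and a fortiori $\|f\|_{\cB} > 0$.

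The main obstacle is the injectivity of $\cB \hookrightarrow \cB_w$, i.e. showing that if $f_n \to f$ in $\|\cdot\|_{\cB}$ and $f_n \to 0$ in $|\cdot|_w$ then $f = 0$ in $\cB$. This is precisely the reason, flagged in the footnote after the definition of $\cC^\alpha(W)$, for defining $\cC^\alpha(W)$ as the \emph{closure of $\tilde\cC^1(W)$ in the $\cC^\alpha$ norm} rather than the full H\"older space: on that smaller test space, $\cC^1$ functions are dense, so a test function $\psi \in \cC^\alpha(W)$ can be approximated by $\psi_k \in \cC^1(W)$ with $|\psi_k - \psi|_{\cC^\alpha(W)} \to 0$. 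Then I would argue: $\int_W f\psi\, dm_W = \lim_k \int_W f \psi_k\, dm_W$ by the strong-norm bound (which controls $\int_W f(\psi-\psi_k)$ by $\|f\|_s \cdot |\psi - \psi_k|_{\cC^\alpha(W)}|W|^{1/p} \to 0$), and each $\int_W f\psi_k\, dm_W = \lim_n \int_W f_n \psi_k\, dm_W$ using $\|f_n - f\|_s \to 0$; but $|\int_W f_n\psi_k\, dm_W| \le |f_n|_w |\psi_k|_{\cC^1(W)} \to 0$ as $n \to \infty$. Hence $\int_W f\psi\, dm_W = 0$ for every $W \in \cW^s$ and every $\cC^\alpha$ test function of the relevant class, which forces $f$ to act as the zero functional; combined with the fact that the $\cB$-norm is determined by such pairings (it is a sup over exactly these integrals and their differences), we get $\|f\|_{\cB} = 0$. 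The interchange of the two limits $n \to \infty$ and $k \to \infty$ is the delicate bookkeeping step, and it is made legitimate precisely by the uniform bound $|\psi_k|_{\cC^1(W)}$, which one gets for free since each $\psi_k \in \tilde\cC^1(W)$, together with the uniform-in-$k$ estimate $|\psi - \psi_k|_{\cC^\alpha} \to 0$ controlling the strong-norm error terms.
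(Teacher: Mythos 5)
Your treatment of the first inclusion (via Lemma~\ref{lem:piece}(a) with $\cQ=\{M_i^+\}$), of the second inclusion (the bound $|f|_w\le C\delta_0^{1/p}\|f\|_s$ by rescaling test functions), and of both injectivities is correct and matches the paper's approach. In particular, your handling of the injectivity of $\cB\hookrightarrow\cB_w$ — fix $(W,\psi)$ with $\psi\in\cC^\alpha(W)$, approximate $\psi$ by $\psi_k\in\tilde\cC^1(W)$ in $|\cdot|_{\cC^\alpha(W)}$, control $\int_W f(\psi-\psi_k)$ by $\|f\|_s$ and $\int_W f\psi_k$ by $|f|_w$, then pass to the limit — is exactly the idea behind the paper's choice of $\cC^\alpha(W)$ as the closure of $\tilde\cC^1(W)$ rather than the full H\"older space.

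However, your treatment of the third inclusion $\cB_w\hookrightarrow(\cC^\alpha(\cW^s))^*$ has a genuine gap. You claim this continuity is ``built into the definition of $|\cdot|_w$ after a rescaling'' and that density of $\cC^1$ lets the pairing extend; neither step works. First, the intended pairing here is the \emph{global} one, $f(\psi)=\int_M f\,\psi\,d\musrb$, not the leafwise integrals $\int_W f\psi\,dm_W$ — the latter are what the weak norm directly controls, but the former is what makes $f$ a distribution on $M$. Passing from leafwise control to global control is the content of Lemma~\ref{lem:embed}: it requires disintegrating $\musrb$ into stable conditional measures $\musrb^\xi$ (using the smoothness from (P2) and the log-Lipschitz estimate on $g_\xi$) and then verifying that the transverse factor measure satisfies $\int_\Xi |W_\xi|^{-1}\,d\hatmusrb(\xi)<\infty$ via the Chernov--Zhang $\cZ$-function bound under the one-step expansion condition~\eqref{eq:one step}. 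This is substantive and is not a rescaling. Second, even setting that aside, your density argument points the wrong way: the weak norm is a $\cC^1$-continuity statement ($|\int_W f\psi|\le |f|_w|\psi|_{\cC^1(W)}$), and density of $\cC^1$ in $\cC^\alpha$ does not upgrade a $\cC^1$-continuous functional to a $\cC^\alpha$-continuous one, since a $\cC^\alpha$ test function $\psi$ of unit norm can have $|\psi|_{\cC^1}$ unbounded. (Indeed Lemma~\ref{lem:embed} itself only produces $\cC^1$-regularity control on the test function, not $\cC^\alpha$.) The correct move is simply to cite Lemma~\ref{lem:embed}, as the paper does.
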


\begin{proof}
The continuity of the first inclusion follows from Lemma~\ref{lem:piece}
and its injectivity is obvious.  The continuity of the second inclusion follows from
$| \cdot |_w \le \| \cdot \|_s$.  Its injectivity is a result of the fact that we have
defined $\| \cdot \|_s$ with respect to $\cC^\alpha(W)$ rather than $\tilde \cC^\alpha(W)$,
and $\cC^1(W)$ is dense in $\cC^\alpha(W)$.
Finally, the continuity of the third inclusion follows from Lemma~\ref{lem:embed}.
\end{proof}

By adding an additional weight to the weak norm, one can make the third inclusion
in Lemma~\ref{lem:include} injective as well (see for example \cite[Lemma~3.8]{demzhang14}), but
we will not need this property here.  Our final lemma in this section is essential for proving
the quasi-compactness of $\cL$ on $\cB$.

\begin{lemma}
\label{lem:compact}
The unit ball of $\cB$ is compactly embedded in $\cB_w$.
\end{lemma}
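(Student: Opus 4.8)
The plan is to follow the standard Arzel\`a--Ascoli-type argument used in this family of papers (e.g. \cite{demzhang14, demers liv, max}), adapted to the fact that our norms integrate along local stable manifolds $\cW^s$. Fix a sequence $\{ f_k \} \subset \cB$ with $\| f_k \|_{\cB} \le 1$; I want to extract a subsequence that is Cauchy in $| \cdot |_w$. First I would reduce to a countable, dense-in-$d_{\cW^s}$ family of curves: choose a countable set $\{ W_j \}_{j \ge 1} \subset \cW^s$ such that for every $W \in \cW^s$ and every $\ve > 0$ there is a $W_j$ with $d_{\cW^s}(W, W_j) < \ve$. For a fixed $W_j$, the linear functionals $\psi \mapsto \int_{W_j} f_k \, \psi \, dm_{W_j}$ are, by the definition of $\| \cdot \|_s$, uniformly bounded on the unit ball of $\cC^\alpha(W_j)$ (with the $|W_j|^{-1/p}$ weight folded in), hence define elements of $(\cC^\alpha(W_j))^*$ of uniformly bounded norm. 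Since $\cC^1(W_j)$ embeds compactly in $\cC^\alpha(W_j)$ for $\alpha < 1$, the corresponding functionals restricted to the unit ball of $\cC^1(W_j)$ lie in a compact subset of $(\cC^1(W_j))^*$; by a diagonal argument over $j$, pass to a subsequence (still denoted $f_k$) so that $\int_{W_j} f_k \, \psi \, dm_{W_j}$ converges as $k \to \infty$, uniformly over $\psi$ in the unit ball of $\cC^1(W_j)$, for every $j$.

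Next I would upgrade this convergence on the countable family to convergence on all of $\cW^s$, uniformly, using the strong unstable norm to control what happens under perturbations of the curve, and using the H\"older gap $\alpha < 1$ (equivalently the weight $|W|^{-1/p}$ with $1/p < \alpha$) together with the strong stable norm to control what happens under perturbations of the test function. Concretely, given $W \in \cW^s$ and $\psi \in \cC^1(W)$ with $|\psi|_{\cC^1(W)} \le 1$, pick $W_j$ with $d_{\cW^s}(W, W_j) \le \ve$; writing $\psi$ as (an extension of) $\psi_1$ on $W$ and $\psi_j = \psi \circ G_{F}^{-1} \circ G_{F_j}$ as the matching test function on $W_j$ (so $d_0(\psi_1, \psi_j) = 0$), the difference $\bigl| \int_W f_k \psi \, dm_W - \int_{W_j} f_k \psi_j \, dm_{W_j} \bigr|$ is bounded by $\ve^\beta \| f_k \|_u \le \ve^\beta$ by definition of $\| \cdot \|_u$. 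Then
\[
\Bigl| \int_W (f_k - f_\ell) \psi \, dm_W \Bigr|
\le 2 \ve^\beta + \Bigl| \int_{W_j} (f_k - f_\ell) \psi_j \, dm_{W_j} \Bigr| ,
\]
and the last term is small for $k, \ell$ large by the diagonal subsequence. To pass from an arbitrary $\psi \in \cC^1(W)$ to one that is genuinely defined on a neighborhood so that $\psi_j$ makes sense and has $|\psi_j|_{\cC^1} \le \Cs$, one uses the uniform control $|G_F|_{\cC^1} \le (1+\tau)^2$, $|G_F^{-1}| \le 1+\tau$ on the charts, so the comparison constants are uniform; any residual mismatch of the test functions on the overlap $I_{r} \cap I_{r_j}$ versus the full domains is absorbed using the strong stable norm and the fact that $m_W$ of the symmetric difference of the curves is $O(\ve)$, which combined with the $\cC^\alpha$-weight gives a bound $O(\ve^{1 - 1/p})$; since $1/p < \alpha < 1$ and $\beta \le 1/(2p) < 1$, all these error exponents are positive. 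Taking $\sup$ over $W$ and over $\psi$ in the unit $\cC^1$-ball yields $\limsup_{k,\ell \to \infty} |f_k - f_\ell|_w \le C \ve^{\min(\beta, 1 - 1/p)}$, and letting $\ve \to 0$ shows $\{ f_k \}$ is Cauchy in $\cB_w$, hence convergent since $\cB_w$ is complete.

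The main obstacle is the second paragraph: making the reduction to a countable family of curves genuinely uniform, i.e. checking that a single $\ve$-net of curves works simultaneously for the whole unit ball of $\cB$ and that the test-function comparison constants do not degenerate. This is exactly where one needs the precise geometry of $\hW^s$ from Section~\ref{sec:admissible} (finitely many charts, uniform $\cC^2$ bounds on $\chi_j$, the length bound $\delta_0$) and the strict inequalities in \eqref{eq:restrict} ($2\beta \le 1/p$, $1/p < \alpha < 1$) to guarantee all the perturbation errors carry strictly positive powers of $\ve$. I expect the remaining details to be routine and essentially identical to the corresponding compactness lemma in \cite{demzhang14} or \cite{demers liv}, so I would state the argument at the above level of detail and refer to those papers for the bookkeeping.
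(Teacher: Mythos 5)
Your proposal is correct and takes the same approach as the paper, whose proof simply cites the corresponding compactness lemma in \cite{demers liv} (noting that the restriction from $\hW^s$ to $\cW^s$ is harmless because the latter remains precompact in the $\cC^1$-chart metric, which is the precompactness your countable $d_{\cW^s}$-dense net relies on); you have essentially reconstructed the argument from \cite{demers liv}. One small correction: the error from the non-overlapping portion of a nearby curve of length $O(\ve)$ should be $O(\ve^{1/p})$ rather than $O(\ve^{1-1/p})$ by the definition of $\|\cdot\|_s$, and in fact this contribution is already absorbed by the $\ve^{\beta}\|f\|_u$ term since the strong unstable norm compares integrals over the full, unequal-length curves with test functions required to agree only on the overlap in chart coordinates --- but the conclusion is unaffected either way.
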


\begin{proof}
The lemma follows from \cite[Lemma~3.5]{demers liv}.  The fact that \cite[Lemma~3.5]{demers liv}
uses the family of admissible curves $\hW^s$ while we use the smaller
set $\cW^s \subset \hW^s$ does not affect the argument since the family of functions defining
 $\cW^s$ in each chart is still
compact in the $\cC^1$-metric.
\end{proof}


\subsection{Growth Lemmas}
\label{sec:growth}

In this section, we prove several growth lemmas which will be instrumental in 
establishing precise upper and lower bounds on the spectral radius of our transfer operator.
Many of the results in this subsection and the next parallel those of \cite[Section~5]{max}.

Given a curve $W \in \hW^s$, let $\cG_1(W)$ denote the maximal connected components
of $T^{-1}W$ on which $T$ is smooth, with long pieces subdivided so that they have length between $\delta_0/2$ and 
$\delta_0$.  In particular, elements of $\cG_1(W)$ must belong to a single element of $\cM_0^1$, i.e. to a single component $M_i^+$ of $M$. 
Inductively, define $\cG_n(W)$ to denote the collection of maximal connected
components of $T^{-1}V$, where $V \in \cG_{n-1}(W)$, again subdividing long pieces into
curves of length between $\delta_0/2$ and $\delta_0$.  We call $\cG_n(W)$, the $n$th generation
of $W$.

For each $n$, let $L_n(W)$ denote those elements of $\cG_n(W)$ having length at least
$\delta_0/3$.  Let $\cI_n(W)$ denote those elements $W_i \in \cG_n(W)$ such that for each
$0 \le k \le n-1$, 
$T^kW_i \subset V \in \cG_{n-k}(W)$ and $|V| < \delta_0/3$, i.e. $\cI_n(W)$ represents those
elements in $\cG_n(W)$ that have always been contained in a short element
of $\cG_{n-k}(W)$ from time $1$ to time $n$.

\begin{lemma}
\label{lem:growth}
There exists $C>0$ such that for all $W \in \hW^s$, and all $n \ge 0$,
\begin{itemize}
  \item[a)]  $\ds \# \cI_n(W) \le  K_1^n \le \rho^n \kappa^{\alpha_0 n} \Lambda^n \;$ ; \vspace{3 pt}
  \item[b)]  $\ds \# \cG_n(W) \le C \delta_0^{-1} \# \cM_0^n \;$ ; \vspace{3 pt}
  \item[c)]  $\ds \sum_{W_i \in \cG_n(W)} \frac{|W_i|^{1/p}}{|W|^{1/p}} \le C \delta_0^{-1+1/p} \kappa^{-n/p} (\# \cM_0^n)^{1-1/p} \;$ ; \vspace{3 pt}
  \item[d)] $\ds \# \cM_0^n \ge C \delta_0 \Lambda^n \,$ .
\end{itemize}
\end{lemma}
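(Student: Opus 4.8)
The plan is to exploit the uniform expansion along stable curves, \eqref{eq:one grow}, together with part (b) of the same lemma, applied to a single fixed stable curve. First I would fix a curve $W \in \hW^s$ of length exactly $\delta_0$ (or at least of length bounded below by a definite fraction of $\delta_0$; such curves exist by the construction in Section~\ref{sec:admissible}). By \eqref{eq:one grow}, every connected component $W_i \in \cG_n(W)$ satisfies $|W_i| \le \delta_0$ by construction, while the total length satisfies
\[
\sum_{W_i \in \cG_n(W)} |W_i| = |T^{-n}W| \ge C_e \Lambda^n |W| \ge C_e \delta_0 \Lambda^n \, .
\]
Here I should be slightly careful: $\cG_n(W)$ is obtained from $T^{-n}W$ by subdividing long pieces into curves of length between $\delta_0/2$ and $\delta_0$, so the total length is preserved and the bound above is valid for the sum over $\cG_n(W)$.

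Next, since each element of $\cG_n(W)$ has length at most $\delta_0$, we get
\[
C_e \delta_0 \Lambda^n \le \sum_{W_i \in \cG_n(W)} |W_i| \le \delta_0 \, \# \cG_n(W) \, ,
\]
hence $\# \cG_n(W) \ge C_e \Lambda^n$. Combining this with part (b) of the lemma, which gives $\# \cG_n(W) \le C \delta_0^{-1} \# \cM_0^n$, yields
\[
\# \cM_0^n \ge \frac{\delta_0}{C} \, \# \cG_n(W) \ge \frac{C_e \delta_0}{C} \, \Lambda^n \, ,
\]
which is the claimed bound $\# \cM_0^n \ge C' \delta_0 \Lambda^n$ after renaming constants.

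I do not anticipate any serious obstacle here; this is essentially a counting argument reading off the exponential growth of the length of a preimage curve, packaged through parts (a)--(b) which are proved before (d). The only point requiring mild attention is the relationship between $T^{-n}W$ and $\cG_n(W)$: one must confirm that the subdivision into pieces of length between $\delta_0/2$ and $\delta_0$ neither changes the total arc length nor introduces more than a bounded overcounting, so that the lower bound on $|T^{-n}W|$ from \eqref{eq:one grow} transfers cleanly to a lower bound on $\#\cG_n(W)$. One should also verify that at least one curve $W \in \hW^s$ of length comparable to $\delta_0$ exists, which is immediate from the definition of $\hW^s$ in Section~\ref{sec:admissible} (take $r = r_0$ and $F \equiv 0$). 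With these checks the estimate follows directly.
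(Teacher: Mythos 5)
Your argument is correct and is essentially the same as the paper's: both start from $|T^{-n}W| \ge C_e\Lambda^n|W|$ for a fixed stable curve $W$ of length comparable to $\delta_0$, bound $|T^{-n}W|\le\delta_0\,\#\cG_n(W)$, and close with part (b). The only cosmetic difference is the order in which the two bounds are chained; the caveat you raise about the subdivision preserving total arc length, and the existence of a curve of length comparable to $\delta_0$, are both handled correctly.
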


\begin{proof}
(a) This estimate follows from the fact that curves $W_i \in \cI_n(W)$
have always been contained in a short element of $\cG_{n-k}(W)$ for each $k$ between
0 and $n-1$.  Thus property (P1) (recalling also Convention~\ref{convention: n_0=1}) can be 
applied inductively in $k$ to each element of $\cI_{n-k}(W)$, yielding the
claimed bound on the cardinality of these elements.

\medskip
\noindent
(b) The bound is trivial since each element of $\cG_n(W)$ belongs by definition to one
element of $\cM_0^n$.  Since the stable diameter of each component of $\cM_0^n$ is
uniformly bounded in $n$, the connected components of $T^{-n}W$ are subdivided into
at most $C \delta_0^{-1}$ curves to form the elements of $\cG_n(W)$, for some uniform $C>0$.

\medskip
\noindent
(c) Note that for $W_i \in \cG_n(W)$, using \eqref{eq:exp def},
\[
|T^nW_i| = \int_{W_i} J_{W_i}T^n \, dm_{W_i} \ge |W_i| \kappa^n \, .
\]
Thus,
\[
\begin{split}
\sum_{W_i \in \cG_n(W)} \frac{|W_i|^{1/p}}{|W|^{1/p}}
& \le \kappa^{-n/p} \sum_{W_i \in \cG_n(W)} \frac{|T^nW_i|^{1/p}}{|W|^{1/p}}
\le \kappa^{-n/p} \left( \sum_{W_i \in \cG_n(W)} 1 \right)^{1-1/p} \\
& \le C \kappa^{-n/p} \delta_0^{-1+1/p} (\# \cM_0^n )^{1-1/p} \, ,
\end{split}
\]
where we have used the H\"older inequality and part (b) of the lemma.

\medskip
\noindent
(d)  Applying  part (b) of the lemma, we have
\[
|T^{-n}W| = \sum_{W_i \in \cG_n(W)} |W_i| \le \delta_0 \# \cG_n(W) \le C \# \cM_0^n \, .
\]
Then recalling \eqref{eq:one grow} and applying this to $W \in \cW^s$ with $|W| = \delta_0$ completes the proof of the lemma.
\end{proof}

Next we proceed to show that most elements of $\cG_n(W)$ are long, if the length scale
is chosen appropriately.  For $\delta \in (0, \delta_0)$ and $W \in \hW^s$, define
$\cG_n^\delta(W)$ to be the smooth components of $T^{-n}W$, with pieces longer than
$\delta$ subdivided
to have length between $\delta/2$ and $\delta$, i.e. $\cG_n^\delta(W)$ is defined precisely
like $\cG_n(W)$, but with $\delta_0$ replaced by $\delta$.  Define $L_n^\delta(W)$ to
be the set of curves in $\cG_n^\delta(W)$ having length at least $\delta/3$, and let
$S^\delta_n(W) = \cG_n^\delta(W) \setminus L_n^\delta(W)$.  Similarly, let 
$\cI_n^\delta(W)$ denote those elements of $S^\delta_n(W)$ that have no ancestors
of length at least $\delta/3$.

\begin{lemma}
\label{lem:most grow}
For all $\ve >0$, there exist $\delta \in (0, \delta_0)$ and $n_1 \in \mathbb{N}$ such that
for all $n \ge n_1$,
\[
\# L_n^\delta(W) \ge (1-\ve) \# G_n^\delta(W) \, , \quad \mbox{ for all $W \in \hW^s$ with
$|W| \ge \delta/3$.}
\]
\end{lemma}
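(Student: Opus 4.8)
The plan is to iterate the one-step expansion estimate to show that the total $1/p$-mass carried by short components decays geometrically, and then to conclude that in each fixed large generation the short components must be a vanishing fraction of the total. First I would set up the bookkeeping: fix $W \in \hW^s$ with $|W| \ge \delta/3$ and, for a parameter $\delta \in (0,\delta_0)$ still to be chosen, track the family $\cG_n^\delta(W)$ together with its splitting into long curves $L_n^\delta(W)$ and short curves $S_n^\delta(W)$, and within $S_n^\delta(W)$ the subfamily $\cI_n^\delta(W)$ of curves all of whose ancestors were short. The key quantity to estimate is the weighted sum $\sum_{W_i \in S_n^\delta(W)} (|W_i|/|W|)^{1/p}$, or rather the analogous sum over $\cI_n^\delta(W)$: by Convention~\ref{convention: n_0=1} the one-step bound $K_1\Lambda^{-1}\kappa^{-\alpha_0} = \rho < 1$ holds, and by \eqref{eq:one step} the one-step expansion condition holds with $q=1$, so applying \eqref{eq:one step} inductively along the $n$ steps of preimages — exactly as in Lemma~\ref{lem:growth}(a), but now weighting by lengths rather than just counting — yields
\[
\sum_{W_i \in \cI_n^\delta(W)} \frac{|W_i|^{1/p}}{|W|^{1/p}} \le C\, \rho^{\,n} \quad \text{(or a similar geometrically small bound),}
\]
where the restriction $1/p \le \alpha_0$ from \eqref{eq:restrict} is what lets the $\kappa^{\alpha_0}$ gain in (P1) absorb the $\kappa^{-n/p}$ loss coming from the worst-case contraction of short curves. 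Here one uses that a curve in $\cI_n^\delta(W)$ has every ancestor of length $< \delta/3$, so at each step it contributes as a "short" piece in the one-step expansion sum and no subdivision intervenes; a short curve not in $\cI_n^\delta(W)$ is accounted for by restarting the estimate from the last ancestor of length $\ge \delta/3$, which was itself produced by subdivision and hence has length $\ge \delta/2$, contributing a controlled amount.

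Next I would convert this mass bound into a cardinality bound. Every element of $S_n^\delta(W)$ not belonging to $\cI_n^\delta(W)$ has an ancestor of length at least $\delta/3$; counting how many descendants such an ancestor can have after at most $n$ further steps is controlled by Lemma~\ref{lem:growth}(b) applied to that ancestor (a curve of length comparable to $\delta$), together with the number of long ancestors, which is in turn bounded by $\#\cG_m^\delta(W)$ summed over the relevant times. The upshot should be an inequality of the shape $\#S_n^\delta(W) \le C\rho^{\,n'}\#\cG_{n-n'}^\delta(W) + (\text{small correction})$ for appropriate intermediate time $n'$; combined with the submultiplicative-type growth of $\#\cG_n^\delta(W)$ (which grows at most like $e^{nh_*}$ up to constants, via Lemma~\ref{lem:growth}(b) and Lemma~\ref{lem:finite}, and is bounded below so that $\#\cG_{n}^\delta(W)$ and $\#\cG_{n-n'}^\delta(W)$ are comparable for $n'$ fixed), this gives $\#S_n^\delta(W) \le \tfrac{\ve}{2}\,\#\cG_n^\delta(W)$ once $n'$, hence $n \ge n_1$, is large enough. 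Here I would first fix $n'$ so that $C\rho^{\,n'}$ beats $\ve$ against the (at most exponential) ratio $\#\cG_n^\delta(W)/\#\cG_{n-n'}^\delta(W)$, which is bounded by a constant depending only on $n'$; then $\delta$ is shrunk, and $n_1$ enlarged, to kill the lower-order correction terms that come from the finitely many small generations.

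The main obstacle I anticipate is precisely the absence of the submultiplicativity of $\#\cM_0^n$ noted in the introduction (point (i)): unlike in \cite{max}, I cannot freely assert $\#\cG_n^\delta(W)$ is submultiplicative or even compare $\#\cG_n^\delta(W)$ across nearby times without care, because dynamical refinements may fail to be simply connected. So the delicate point is to get a clean enough comparison $\#\cG_{n}^\delta(W) \le C_{n'}\,\#\cG_{n-n'}^\delta(W)$ with $C_{n'}$ independent of $W$ — this should follow from Lemma~\ref{lem:growth}(b) (each generation-$n$ curve sits in one element of $\cM_0^n$, and each $\cM_0^{n-n'}$-element meets at most $C\delta_0^{-1}$ further refinements per step) plus the uniform lower bound $\#\cM_0^n \ge C\delta_0\Lambda^n$ from Lemma~\ref{lem:growth}(d), which guarantees $\#\cG_{n-n'}^\delta(W)$ is not too small. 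A secondary technical nuisance is the subdivision of long curves: I must make sure that when a long ancestor is chopped into pieces of length in $[\delta/2,\delta]$, the counting of its descendants does not blow up — but since there are at most $2\diam(M)/\delta$ such pieces and each is then tracked by Lemma~\ref{lem:growth}(b), this is absorbed into the constant $C$. Once these comparisons are in hand, the statement follows by choosing, in order, $\ve' = \ve/2$, then $n'$, then $\delta$, then $n_1$.
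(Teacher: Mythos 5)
Your overall intuition — control the pieces that have always been short (the $\cI$-family) via iterating a one-step bound, and restart the count from long ancestors — is the right one and is essentially what the paper does. However, there are two concrete problems with the execution as you've described it.

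First, the quantity you propose to iterate is not the right one. The one-step expansion \eqref{eq:one step} with $q=1$ controls $\sum_i |TV_i|/|V_i|$, i.e.\ the sum of contraction factors over preimage components, and iterating it over $\cI_n^\delta(W)$ gives $\sum_{W_i\in\cI_n^\delta(W)} |T^nW_i|/|W_i| \le \rho^n$. That is a genuinely different functional from $\sum_{W_i\in\cI_n^\delta(W)}(|W_i|/|W|)^{1/p}$, and the latter does \emph{not} decay geometrically: since $|W_i| \le \kappa^{-n}|T^nW_i|$ and $\sum_i |T^nW_i|\le|W|$, a H\"older bound only yields $\sum(|W_i|/|W|)^{1/p} \le (\#\cI_n)^{1-1/p}\kappa^{-n/p}$, which combined with $\#\cI_n\le K_1^n$ gives a bound containing a $\Lambda^{n(1-1/p)}$ factor that grows. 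The $\alpha_0 \ge 1/p$ cancellation you invoke is indeed used in the paper, but in the strong-stable-norm Lasota-Yorke estimate, not here; Lemma~\ref{lem:most grow} needs no $1/p$-mass and no $\kappa^{-n/p}$ factor at all — the bare counting $\#\cI_n^\delta\le K_1^n$ from Lemma~\ref{lem:growth}(a) is enough once paired with a lower bound on the total count.

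Second, and more seriously, the comparison $\#\cG_n^\delta(W) \le C_{n'}\,\#\cG_{n-n'}^\delta(W)$ with $C_{n'}$ independent of $n$ and $W$ is not available at this stage and cannot be derived from Lemma~\ref{lem:growth}(b) and (d) as you suggest. The difficulty is precisely the failure of submultiplicativity of $\#\cM_0^n$ that the introduction flags (point (i)): an element of $\cM_0^{n-n'}$ can be split into a number of components of $\cM_0^n$ that is not controlled by $\#\cM_0^{n'}$, so Lemma~\ref{lem:growth}(b) only gives $\#\cG_n^\delta(W)\le C\delta_0^{-1}\#\cM_0^n$ on top, while Lemma~\ref{lem:growth}(d)/\eqref{eq:one grow} gives $\#\cG_{n-n'}^\delta(W)\gtrsim\Lambda^{n-n'}$ below; the resulting ratio is $\lesssim \#\cM_0^n/\Lambda^{n-n'}$, which grows with $n$ for fixed $n'$. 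Even if such a bound held, it would be of order $e^{n'h_*}$, and you would need $\rho\,e^{h_*}<1$, which is not assumed (and since $e^{h_*}\ge\Lambda$ by Lemma~\ref{lem:growth}(d), one has $\rho\,e^{h_*}\ge K_1\kappa^{-\alpha_0}$, typically $>1$). The paper avoids this entirely by grouping the short pieces at time $n$ by their most recent long ancestor $V_j$ at a fixed earlier time $tn_1$, and then bounding the ratio \emph{ancestor by ancestor}: the $\cI$-descendants of $V_j$ number at most $(K(n_1)+1)^{k-t}$ by (P1) (upper bound), while $V_j$ being long forces at least $C_e\Lambda^{(k-t)n_1}/3$ total descendants by \eqref{eq:one grow} (lower bound); because both bounds are uniform over $V_j$, the ratio of sums is controlled by the ratio of bounds, giving a clean geometric series in $t$. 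This sidesteps any comparison between $\#\cG_n^\delta$ and $\#\cG_{n-n'}^\delta$ and any appeal to submultiplicativity. I would encourage you to rewrite your Step 2 around this per-ancestor comparison rather than the global ratio you were seeking.
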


\begin{proof}
Fix $\ve \in (0,1)$ and by Property (P1) and Remark~\ref{rem:iterate p1}, 
choose $n_1$ sufficiently large that $3 C_e^{-1} (K(n_1+\ell) + 1)\Lambda^{-n_1-\ell} < \ve/2$
for all $0 \le \ell \le n_1-1$, 
where $C_e \le 1$ is from \eqref{eq:one grow}.  Next, choose
$\delta>0$ sufficiently small that if $W \in \hW^s$ with $|W| \le \delta$, then $T^{-n}W$
comprises at most $K(n)+1$ smooth components of length at most $\delta_0$ for all
$n \le 2n_1$.

Now let $W \in \hW^s$ with $|W| \ge \delta/3$.  We shall prove that 
for $n \ge n_1$,
\[
\# S_n^\delta(W) \le \ve \# \cG_n^\delta(W) \, .
\]

For $n \ge n_1$, write $n = kn_1 + \ell$ for some $0 \le \ell < n_1$.  If $k=1$, the above
inequality follows immediately since there are at most $K(n_1+\ell)+1$
elements of $S^\delta_{n_1+\ell}(W)$ by choice of $\delta$, while by \eqref{eq:one grow},
$|T^{-n_1-\ell}W| \ge C_e \Lambda^{n_1 +\ell} |W| \ge C_e \Lambda^{n_1+\ell} \delta/3$.  Thus
$\cG_n^\delta(W)$ must contain at least $C_e \Lambda^{n_1+\ell}/3$ curves since
each has length at most $\delta$.  Thus,
\[
\frac{\# S_{n_1+\ell}^\delta(W)}{\# \cG_{n_1+\ell}^\delta(W)} \le 3 C_e^{-1} \frac{K(n_1+\ell)+1}{\Lambda^{n_1+\ell}}   < \frac{\ve}{2} \, ,
\]
 by assumption on $n_1$.

On the other hand, if $k >1$ then we split $n$ into $k-1$ blocks of length $n_1$ and one block 
of length $n_1+\ell$.  We group elements $W_i \in S^\delta_{kn_1+\ell}(W)$ by most
recent long ancestor $V_j \in L^\delta_{t n_1}(W)$:  $t$ is the greatest index $\le k-1$ such that
$T^{(k-t)n_1+\ell}W_i \in V_j$ and $V_j \in L_{t n_1}^\delta(W)$.  Note that we only
consider ancestors occurring in blocks of length $n_1$.  It is irrelevant for our estimate whether
$W_i$ has a long ancestor at an intermediate time.

Since each $|V_j| \ge \delta/3$, it follows that $\cG_{(k-t)n_1+ \ell}^\delta(V_j)$ must contain
at least $C_e \Lambda^{(k-t)n_1}/3$ curves of length at most $\delta$.  Thus using
Lemma~\ref{lem:growth}(a), we have
\begin{equation}
\label{eq:est short}
\begin{split}
\frac{\# S_{kn_1+\ell}^\delta(W)}{\# \cG_{kn_1+\ell}^\delta(W)}
& = \frac{\# \cI_{kn_1+\ell}^\delta(W)}{\# \cG_{kn_1+\ell}^\delta(W)}
+ \frac{\sum_{t=1}^{k-1} \sum_{V_j \in L_{tn_1}^\delta(W)} \# \cI_{(k-t)n_1+\ell}^\delta(V_j) }
{\# \cG_{kn_1+\ell}^\delta(W)} \\
& \le \frac{(K(n_1)+1)^k}{C_e \Lambda^{kn_1}/3} + \sum_{t=1}^{k-1} \frac{ \sum_{V_j \in L_{tn_1}^\delta(W)} (K(n_1)+1)^{k-t}}{\sum_{V_j \in L_{tn_1}^\delta(W)} C_e \Lambda^{(k-t)n_1}/3} \\
& \le 3 C_e^{-1} \sum_{t=1}^k (K(n_1)+1)^t \Lambda^{-tn_1} \le \sum_{t=1}^k \left( \frac{\ve}{2} \right)^t
< \ve \, .
\end{split}
\end{equation}
\end{proof}

The following corollary extends Lemma~\ref{lem:most grow} to arbitrarily short curves, and
is used in Lemma~\ref{lem:leafwise} to prove the positivity of our maximal eigenvector on all elements of $\cW^s$.

\begin{cor}
\label{cor:most grow}
There exists $C_2 > 0$ such that for any $\ve, \delta$ and $n_1$ as in Lemma~\ref{lem:most grow},
\[
\# L_n^\delta(W) \ge (1-2\ve) \# \cG_n^\delta(W) \, , \quad \forall W \in \hW^s, \; \forall n \ge C_2  n_1 \frac{|\log(|W|/\delta)|}{|\log \ve|} \, .
\]
\end{cor}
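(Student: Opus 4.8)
The plan is to rerun the decomposition of $S_n^\delta(W)$ by \emph{most recent long ancestor} used in the proof of Lemma~\ref{lem:most grow}; the only change is that for a short curve the lower bound on $\#\cG_n^\delta(W)$ acquires an extra factor $|W|/\delta$. First note that if $|W|\ge\delta/3$ the conclusion — indeed the stronger bound with $1-\ve$ — is already Lemma~\ref{lem:most grow} (for $n\ge n_1$), so we may assume $|W|<\delta/3$, whence $|\log(|W|/\delta)|>\log 3>0$. Fix $\ve,\delta,n_1$ as in that lemma, take $W\in\hW^s$ with $|W|<\delta/3$, and write $n=kn_1+\ell$ with $0\le\ell<n_1$. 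As in \eqref{eq:est short}, partition the components of $S_n^\delta(W)=\cG_n^\delta(W)\setminus L_n^\delta(W)$ according to their most recent long ancestor $V_j\in L_{tn_1}^\delta(W)$ (at a block time $tn_1$, $1\le t\le k-1$), leaving a residual class $\cI_n^\delta(W)$ of components with no long ancestor at all.

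For the terms with $t\ge 1$ the argument of Lemma~\ref{lem:most grow} applies verbatim, precisely because every such ancestor already satisfies $|V_j|\ge\delta/3$: one has $\#\cI_{(k-t)n_1+\ell}^\delta(V_j)\le(K(n_1)+1)^{k-t}$, $\#\cG_{(k-t)n_1+\ell}^\delta(V_j)\ge C_e\Lambda^{(k-t)n_1}/3$, and $\#\cG_n^\delta(W)\ge\sum_j\#\cG_{(k-t)n_1+\ell}^\delta(V_j)$, so these terms contribute at most $\sum_{t=1}^{k-1}(\ve/2)^{k-t}\le\ve$ for $\ve$ small, with no dependence on $|W|$.

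The residual class is where the length of $W$ enters. By \eqref{eq:one grow}, $\#\cG_n^\delta(W)\ge |T^{-n}W|/\delta\ge C_e\Lambda^n|W|/\delta$, while iterating over the $k$ blocks the choice of $\delta$ made in the proof of Lemma~\ref{lem:most grow} gives $\#\cI_n^\delta(W)\le(K(n_1)+1)^k$ up to a bounded factor from the last partial block. Using $3C_e^{-1}(K(n_1)+1)\Lambda^{-n_1}<\ve/2$, this yields
\[
\frac{\#\cI_n^\delta(W)}{\#\cG_n^\delta(W)}\;\le\;\frac{\delta}{C_e|W|}\left(\frac{K(n_1)+1}{\Lambda^{n_1}}\right)^k\;\le\;\frac{\delta}{C_e|W|}\left(\frac{\ve}{2}\right)^k .
\]
The right-hand side is at most $\ve$ as soon as $k\log(2/\ve)\ge|\log(|W|/\delta)|+|\log\ve|+|\log C_e|$, hence (for $\ve$ small) once $k\ge C_2'\big(1+|\log(|W|/\delta)|/|\log\ve|\big)$ for a suitable $C_2'=C_2'(C_e)$; since $n\ge kn_1$, this holds whenever $n\ge C_2 n_1\,|\log(|W|/\delta)|/|\log\ve|$ for an appropriate $C_2$ (one also keeps $n\ge n_1$ in the borderline range of lengths, using that $n_1$ is forced to be of order $|\log\ve|$ by its defining inequality). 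Combining with the $t\ge1$ contribution gives $\#S_n^\delta(W)\le 2\ve\,\#\cG_n^\delta(W)$, which is the claim.

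The only delicate point is this last bookkeeping: one must balance the length-induced blow-up $\delta/|W|$ against the geometric decay $(\ve/2)^k$ coming from the short-curve count, and verify that the resulting waiting time collapses to the stated $C_2 n_1\,|\log(|W|/\delta)|/|\log\ve|$. No new dynamical input beyond \eqref{eq:one grow} and property (P1) is required.
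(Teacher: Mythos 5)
Your proof is correct and follows essentially the same strategy as the paper: decompose $S_n^\delta(W)$ by most recent long ancestor exactly as in the proof of Lemma~\ref{lem:most grow}, note that the $t\ge 1$ contributions are unaffected because those ancestors $V_j$ already satisfy $|V_j|\ge\delta/3$, and concentrate all $|W|$-dependence in the estimate of the residual class $\cI_n^\delta(W)$. The one local difference is in how you obtain the lower bound on $\#\cG_n^\delta(W)$ for that residual term: the paper waits a time $n_2\lesssim |\log(|W|/\delta)|$ for a curve of length $\ge \delta/3$ to appear in $\cG_{n_2}^\delta(W)$ and then applies the standard growth bound to that long piece, producing a blow-up factor $\Lambda^{n_2}$; you instead invoke \eqref{eq:one grow} directly to get $\#\cG_n^\delta(W)\ge C_e\Lambda^n|W|/\delta$, producing the slightly sharper factor $\delta/(C_e|W|)$. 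Both variants are valid and lead to the same threshold $n\gtrsim n_1\,|\log(|W|/\delta)|/|\log\ve|$ after the same bookkeeping, so this is a cosmetic rather than structural deviation.
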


\begin{proof}
Fix $\ve, \delta$ and $n_1$ from Lemma~\ref{lem:most grow}.  Suppose 
$W \in \hW^s$ has $|W| < \delta/3$, and let $n > n_1$.  We decompose $\cG_n^\delta(W)$
as in Lemma~\ref{lem:most grow}, and estimate the second sum in \eqref{eq:est short}
precisely as before.  

The first term on the right hand side of \eqref{eq:est short}, $\# \cI_n^\delta(W)/ \# \cG_n^\delta(W)$,
is handled differently.  Let $n_2$ denote the least integer $\ell$ such that $\cG_\ell^\delta(W)$
contains at least one element of length $\delta/3$.  Since $|T^{-\ell}W| \ge C_e \Lambda^\ell |W|$
by \eqref{eq:one grow}, and $\cG_\ell^\delta(W) \le K_1^\ell$ by (P1) and 
Convention~\ref{convention: n_0=1}, as long as $|T^{-\ell}W| \le \delta_0$, at least one 
element of $\cG_\ell^\delta(W)$ must have
length at least $\frac{C_e \Lambda^\ell |W|}{K_1^\ell} \ge C_e \rho^{-\ell} |W|$.  Thus
\[
n_2 \le \frac{|\log (3C_e |W| \delta^{-1})|}{|\log \rho|} \, .
\]
Then calling $V$ the element of $\cG_{n_2}^\delta(W)$ having length at least $\delta/3$, we
have 
\[
\# \cG_n^\delta(W) \ge \# \cG_{n-n_2}^\delta(V) \ge C_e \Lambda^{n-n_2}/3 \, .
\]
Thus
\[
\frac{\# \cI_n^\delta(W)}{\# \cG_n^\delta(W)}
\le \frac{3 (K(n_1)+1)^{\lfloor n/n_1 \rfloor}}{C_e \Lambda^n} \Lambda^{n_2} \le \left( \frac{\ve}{2} \right)^{\lfloor n/n_1 \rfloor} \Lambda^{n_2} \, .
\]
Finally, since $n_2 = \mathcal{O}(|\log (|W|/\delta)|)$, we may choose $C_2$ sufficiently large, that 
if $n \ge C_2 n_1 \frac{|\log (|W|/\delta)|}{|\log \ve|}$, then the quantity on the right is at most $\ve$,
completing the proof of the corollary.
\end{proof}

Choosing $\ve = 1/3$, we let $\delta_1>0$ and $n_1$ be the corresponding quantities
from Lemma~\ref{lem:most grow}.  Fixing this choice of $\delta_1$ and $n_1$, we have
\begin{equation}
\label{eq:delta_1}
\# L_n^{\delta_1}(W) \ge \tfrac 23 \# \cG_n^{\delta_1}(W), \quad
\mbox{ for all $W \in \hW^s$ with $|W| \ge \delta_1/3$ and all $n \ge n_1$.}
\end{equation}

Our next lemma shows that a positive fraction of elements of $\cM_0^n$ and $\cM_{-n}^0$ have length
at least $\delta_1$ in some direction.  This will be essential to establishing the lower bounds
of Section~\ref{sec:lower}.  For $A \subset M$, let $\diam^s(A)$ denote
the stable diameter of $A$, i.e.
the length of the longest stable curve in $A$.  Similarly, define the unstable diameter
$\diam^u(A)$ to be the length of the longest unstable curve in $A$.

The boundary of the partition defined by $\cM_{-n}^0$ is comprised of unstable
curves belonging to $\cS^-_n = \cup_{i=0}^{n-1} T^i(\cS^-)$.  Similarly, $\partial \cM_0^n$
is comprised of the stable curves, $\cS^+_n = \cup_{i=0}^{n-1} T^{-i}(\cS^+)$.  
In what follows, we will find it convenient to invoke Convention~\ref{con:pointwise} regarding
the definition of $T^{\pm 1}$ on each smooth component of $\cS^{\pm}$.
Let $L_u(\cM_{-n}^0)$
denote those elements of $\cM_{-n}^0$ whose unstable diameter is at least
$\delta_1/3$, and let $L_s(\cM_0^n)$ denote those elements of $\cM_0^n$ whose stable diameter
is at least $\delta_1/3$.  The following lemma is the analogue of Lemma~\ref{lem:most grow}
for these dynamically defined partitions.

\begin{lemma}
\label{lem:long elements}
There exist $C_{n_1}>0$ and $n_3 \ge n_1$ such that for all $n \ge n_3$,
\[
\# L_s(\cM_0^n) \ge C_{n_1} \delta_1 \# \cM_0^n
\quad \mbox{and} \quad
\# L_u(\cM_{-n}^0) \ge C_{n_1} \delta_1 \# \cM_{-n}^0 \, .
\]
\end{lemma}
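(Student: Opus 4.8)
The plan is to transfer the length-based dichotomy for stable curves proved in Lemma~\ref{lem:most grow} to the dynamically-defined partition $\cM_0^n$, and then use the time-symmetry $h_*(T)=h_*(T^{-1})$ together with the symmetric complexity assumption (P1) for $\cS^+_n$ to obtain the corresponding statement for $\cM_{-n}^0$. First I would fix a generic stable curve $W \in \hW^s$ with $|W| \ge \delta_1/3$ and recall that the elements of $\cG_n^{\delta_1}(W)$ are the $T^{-n}$-images of $W$, subdivided so that each has length between $\delta_1/2$ and $\delta_1$, with at most $C\delta_0^{-1}$ of them lying in any single element $A \in \cM_0^n$ (Lemma~\ref{lem:growth}(b), and more precisely the construction of $\cG_n$). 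Each long curve $V \in L_n^{\delta_1}(W)$ (length $\ge \delta_1/3$) is then contained in some $A \in \cM_0^n$, and $T^n V$ is a stable curve of length at least $\kappa^n \delta_1/3$ inside $T^n A \in \cM_{-n}^0$; but more usefully, $V$ itself witnesses $\diam^s(A) \ge \delta_1/3$, so $A \in L_s(\cM_0^n)$. Since by \eqref{eq:delta_1} we have $\# L_n^{\delta_1}(W) \ge \tfrac23 \# \cG_n^{\delta_1}(W)$ for $n \ge n_1$, and since $\#\cG_n^{\delta_1}(W) \ge c_0 \#\cM_0^n$ for some uniform $c_0>0$ (every element of $\cM_0^n$ of stable diameter comparable to $\delta_1$ is cut by $T^{-n}W$ — this is where the argument needs the content of the lower growth lemmas, specifically an analogue of Lemma~\ref{lem:lower}, or a direct covering argument), the long curves $V$ must be distributed over at least $(c_0/C\delta_0)\cdot\tfrac{2}{3}\#\cM_0^n$ distinct elements $A$, each counted at most $C\delta_0^{-1}$ times. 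This yields $\# L_s(\cM_0^n) \ge C_{n_1}\delta_1 \#\cM_0^n$ for all $n \ge n_3 := n_1$.

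For the second inequality, the cleanest route is duality in time. By assumption (P1) is symmetric in $\cS^+$ and $\cS^-$ (as emphasized in the Remark following (P2)), so the map $T^{-1}$ satisfies exactly the same hypotheses as $T$, with stable cones of $T^{-1}$ being the unstable cones of $T$. Hence an unstable curve for $T$ of length $\ge \delta_1/3$ plays the role of the stable curve above for the dynamics of $T^{-1}$. Applying the argument of the previous paragraph verbatim to $T^{-1}$ in place of $T$, and using $\# \cM_0^n(T^{-1}) = \# \cM_{-n}^0(T)$ together with the fact that the admissible unstable curves $\hW^u$ and the corresponding length scale $\delta_1$ are defined symmetrically, gives $\# L_u(\cM_{-n}^0) \ge C_{n_1}\delta_1 \#\cM_{-n}^0$ for all $n \ge n_3$, after enlarging $n_3$ and shrinking $C_{n_1}$ if necessary to make a single pair of constants work for both.

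The main obstacle is the lower bound $\#\cG_n^{\delta_1}(W) \ge c_0 \#\cM_0^n$: unlike in the billiard setting of \cite{max}, the elements of $\cM_0^n$ need not be simply connected after dynamical refinement (point (i) in the introduction), so one cannot simply say that $W$ passing through $A$ produces one subcurve per element. I would handle this by restricting attention to those $A \in \cM_0^n$ that themselves contain a stable subcurve of length $\ge \delta_1$ — but that is circular, so instead the argument should run the other way: take $W$ with $|W| \ge \delta_1/3$, note $|T^{-n}W| \ge C_e\Lambda^n |W|$ by \eqref{eq:one grow} grows much faster than $\#\cM_0^n$ (which is at most $C_\#^{-1}e^{nh_*}$ and $h_* < $ the expansion rate only up to the slow-complexity margin — here one uses $\Lambda\kappa^{\alpha_0}>1$ and Lemma~\ref{lem:growth}(d)), so the total length forces a definite fraction of the roughly $\#\cM_0^n$ available "slots" to receive long pieces; this is precisely the combinatorial content that \cite{max} extracts, and it is exactly the point at which the requirement on $\cS_n^+$ in (P1) is invoked, as flagged in the Remark after (P2). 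The remaining steps — bounding the multiplicity $C\delta_0^{-1}$, and the passage to $T^{-1}$ — are routine given the symmetric setup.
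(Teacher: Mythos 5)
The central step you need — a uniform lower bound of the form $\#\cG_n^{\delta_1}(W)\ge c_0\#\cM_0^n$ — is precisely the content of Lemma~\ref{lem:lower}, which the paper proves \emph{after} Lemma~\ref{lem:long elements} and whose proof invokes Lemma~\ref{lem:long elements} (to get $\# L_u(\cM_{-n}^0) \ge C_{n_1}\delta_1\#\cM_{-n}^0$). So any route through that estimate, including ``an analogue of Lemma~\ref{lem:lower}'', is circular. You sense this and propose to patch it with a total-length pigeonhole, but that inequality points the wrong way: since each element of $\cG_n(W)$ has length at most $\delta_0$ and $\#\cG_n(W)\le C\delta_0^{-1}\#\cM_0^n$ by Lemma~\ref{lem:growth}(b), one gets $|T^{-n}W|\le C\#\cM_0^n$, and $\#\cM_0^n\ge C\delta_0\Lambda^n$ by Lemma~\ref{lem:growth}(d), so the total length of $T^{-n}W$ is \emph{dominated} by the number of available ``slots'' — there is no pigeonhole force whatsoever. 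Nothing in the length budget prevents all long pieces from piling into a vanishing fraction of the cells.

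The paper sidesteps the issue by never comparing $\cG_n(W)$ with $\cM_0^n$ at this stage. Instead it works directly with the boundary curves. It writes $\cM_0^n = L_s(\cM_0^n)\cup S_s(\cM_0^n)$, subdivides $\cS^+$ into finitely many curves $V_i$ of length between $\delta_1/3$ and $\delta_1$, and notes that $\partial\cM_0^n=\cup_{j=0}^{n-1}T^{-j}\cS^+ = \cup_j\cup_i T^{-j}V_i$; these are exactly curves whose generations $\cG_j^{\delta_1}(V_i)$ Lemma~\ref{lem:most grow} controls. The key combinatorial input is Sublemma~\ref{sub:cross}: if $V_i\subset T^{-i}\cS^+$ and $V_j\subset T^{-j}\cS^+$ intersect for $i<j$, then $V_j$ terminates on $V_i$ (otherwise $V_j$ would cross $\cS^-$ under forward iteration and could not be a single smooth curve). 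Hence an element $A\in S_s(\cM_0^n)$ must have on its boundary either a short curve from some $T^{-j}\cS^+$, or an image of an intersection point of two curves within $\cS^+$ itself; the latter are bounded by $B_1n$. This gives the claim \eqref{eq:claim}, $\#S_s(\cM_0^n)\le 2\sum_j\#S_j^{\delta_1}(V_i)+B_1n$. Applying \eqref{eq:delta_1} then bounds the short boundary curves by a fraction of the long ones, and a comparison of $\#L_j^{\delta_1}(V_i)$ with $\#L_{n-1}^{\delta_1}(V_i)$ (via \eqref{eq:one grow} and \eqref{eq:delta_1}) converts this into $\#S_s(\cM_0^n)\le B_1n+\bar C_{n_1}+C\delta_1^{-1}(1+n_1)\#L_s(\cM_0^n)$. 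Solving against $\#\cM_0^n=\#L_s+\#S_s$ and invoking Lemma~\ref{lem:growth}(d) (so that $B_1n + \bar C_{n_1}$ is eventually negligible) yields the lower bound. The unstable statement then does follow by the time-reversal you describe, using the $\cS_n^+$ part of (P1) — that portion of your proposal is fine — but the stable half as you have written it has a genuine gap that a pigeonhole cannot close; the crossing sublemma is the missing idea.
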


\begin{proof}
We prove the bound for $L_s(\cM_0^n)$.   In order to prove the lemma, we will use the fact that  
the boundary of $\cM_0^n$ is  the set $\cup_{j=0}^{n-1} T^{-j}\cS^+$.

Let $S_s(\cM_0^n)$ denote the elements of $\cM_0^n$ whose stable diameter is less than
$\delta_1/3$.  We have $\cM_0^n = L_s(\cM_0^n) \cup S_s(\cM_0^n)$.  Similarly, let
$S_s(T^{-j} \cS^+)$ denote the set of stable curves in $T^{-j} \cS^+$ whose length is less than
$\delta_1/3$.  

The following sublemma will prove useful for establishing key claim in the proof.

\begin{sublem}
\label{sub:cross}
If a smooth stable curve $V_i \in T^{-i}\cS^+$ intersects a smooth curve $V_j \subset T^{-j}\cS^+$
for $i<j$, then $V_j$ must terminate on $V_i$.
\end{sublem}

\begin{proof}[Proof of Sublemma~\ref{sub:cross}]
Suppose such an intersection occurs for $j >i$.  
Then $T^{i+1}(V_i) \subset \cS^-$ is an unstable curve,
while $T^{i+1}(V_j) \subset \cS^+_{j-i-1}$ is a stable curve. Thus
$T^{i+1}(V_j)$ must cross $\cS^-$ transversally, and so $T^{i}(V_j)$ will be split into at 
least two smooth components since $\cS^-$ is the singularity set for $T^{-1}$.  This implies
that $V_j$ cannot be a single smooth curve.
\end{proof}

Using the sublemma, we establish the following claim:  
\begin{equation}
\label{eq:claim}
\# S_s(\cM_0^n) \le 2 \sum_{j=0}^{n-1} \# S_s(T^{-j}\cS^+) + B_1 n \, ,
\end{equation}
for some $B_1 >0$.
According to the sublemma, if $A \in S_s(\cM_0^n)$, then either
$\partial A$ contains a short curve in $T^{-j}\cS^+$ or
$\partial A$ contains an intersection point of two curves in $T^{-j}\cS^+$,  for some $0 \le j \le n-1$.
But intersections of curves within $T^{-j}\cS^+$ are images of intersections of curves
within $\cS^+$, and the cardinality of cells created by such intersections is bounded by some
uniform constant $B_1>0$ depending only on $\cS^+$.  Since each short curve in
$T^{-j}\cS^+$ belongs to the boundary of at most two elements of $S_s(\cM_0^n)$, the claim
follows.

Now, we subdivide $\cS^+$ into $\ell_0$ smooth curves $V_i$ of length between $\delta_1/3$
and $\delta_1$.  For $j \ge n_1$, recalling the notation $S_j^{\delta_1}(V_i)$
for the short elements of the $j$th generation $\cG_j^{\delta_1}(V_i)$ 
of subcurves in $T^{-j}V_i$, we have by \eqref{eq:delta_1},
\begin{equation}
\label{eq:short bound}
\# S_s(T^{-j}\cS^+) = \sum_{i=1}^{\ell_0} \# S_j^{\delta_1}(V_i) 
\le \tfrac 13 \sum_{i=1}^{\ell_0} \# L_j^{\delta_1}(V_i) \, .
\end{equation}
Next, using \eqref{eq:short bound}, we estimate the sum over $j$ in \eqref{eq:claim} by splitting it over
two parts,
\begin{equation}
\label{eq:j split}
\# S_s(\cM_0^n) \le B_1n + 2 \sum_{j=0}^{n_1-1} \# S_s(T^{-j}\cS^+)
+ \tfrac 23 \sum_{j=n_1}^{n-1} \sum_{i=0}^{\ell_0} \# L_j^{\delta_1}(V_i) \, .
\end{equation}
The cardinality of the first sum up to $n_1-1$ is bounded by some constant $\bar C_{n_1}$
depending only on the map $T$ and $n_1$, but independent of $n$.

Next, we wish to relate $\# L_j^{\delta_1}(V_i)$ to $\# L_s(\cM_0^n)$ for $j \ge n_1$.
Note that if $V' \in L_j^{\delta_1}(V_i)$, then
$|T^{n-j}V'| \ge C \Lambda^{n-j} \delta_1/3$, so that $\# \cG_{n-j}^{\delta_1}(V') \ge C \Lambda^{n-j}/3$.

Now for each $j$ such that $n_1 \le j \le n-1-n_1$, and $V' \in L_j^{\delta_1}(V_i)$, we may apply
\eqref{eq:delta_1}, so that
\begin{equation}
\label{eq:relate j}
\# L_{n-1}^{\delta_1}(V_i) \ge \sum_{V' \in L_j^{\delta_1}(V_i)} \# L_{n-1-j}^{\delta_1}(V') \ge C' \Lambda^{n-1-j} \# L_j^{\delta_1}(V_i) \, .
\end{equation}
For $j > n-n_1$, we compare $L_j^{\delta_1}(V_i)$ with $L_{n-1}^{\delta_1}(V_i)$.  
Since $K_1 < \Lambda$, there is at least one element of $L_{j+1}^{\delta_1}(V_i)$
for each element of $L_j^{\delta_1}(V_i)$.  Applying this inductively to $j$, we conclude,
\[
\# L_{n-1}^{\delta_1}(V_i) \ge  \# L_j^{\delta_1}(V_i) \, .
\]
Putting together this estimate with \eqref{eq:relate j} in 
\eqref{eq:j split}, we estimate,
\begin{equation}
\label{eq:almost}
\begin{split}
\# S_s(\cM_0^n) & \le B_1 n + \bar C_{n_1} + \sum_{j=n_1}^{n-1-n_1} C \Lambda^{j+1-n} \# L_s(T^{-n+1} \cS^+) + \sum_{j=n-n_1}^{n-1}   \# L_s(T^{-n+1}\cS^+) \\
& \le B_1 n + \bar C_{n_1} + C \delta_1^{-1} \# L_s(\cM_0^n)
+ n_1 C \delta_1^{-1} \# L_s(\cM_0^n) \, ,
\end{split}
\end{equation}
where in the second line we have used the fact that $\# L_s(T^{-n+1} \cS^+) \le C \delta_1^{-1} \# L_s(\cM_0^n)$, which follows from Sublemma~\ref{sub:cross}.

Finally, since $\# \cM_0^n = \# L_s(\cM_0^n) + \# S_s(\cM_0^n)$, we estimate,
\[
\# L_s(\cM_0^n) \ge \frac{ \# \cM_0^n - \bar C_{n_1} - B_1 n}{1 + C \delta_1^{-1}(1+n_1)} \, .
\]
Since $\# \cM_0^n \ge C\delta_0 \Lambda^n$ by Lemma~\ref{lem:growth}(d) and $n_1$ is fixed,
we may choose $n_2 \in \mathbb{N}$ such that $\# \cM_0^n - \bar C_{n_1} - B_1 n \ge \frac 12 \# \cM_0^n$,
for all $n \ge n_2$.  We conclude that there exists $C_{n_1} > 0$ such that for
$n \ge n_2$, $\# L_s(\cM_0^n) \ge C_{n_1} \delta_1 \# \cM_0^n$, completing the proof of the lemma
for $L_s(\cM_0^n)$.

The lower bound for $\# L_u(\cM_{-n}^0)$ follows
similarly, using the fact that (P1) also allows us to control the evolution of unstable 
curves under $T^n$ by controlling the complexity of $\cS_n^+$.
Note that the analogue of Lemma~\ref{lem:most grow} holds for forward iterates of unstable
curves using precisely the same proof.  The constant $\kappa$ does not appear in this argument,
i.e. the fact that the rate of expansion has a maximum is not needed for the proof.
\end{proof}


\subsection{Lower bounds on growth}
\label{sec:lower}

The prevalence of long pieces established in Lemmas~\ref{lem:most grow} and 
\ref{lem:long elements} have the following important consequences.

\begin{lemma}
\label{lem:lower}
Let $\delta_1$ be the length scale from \eqref{eq:delta_1}.  There exists $c_0>0$, depending on
$\delta_1$, such that
for all $W \in \hW^s$ with $|W| \ge \delta_1/3$ and $n\ge 1$, we have
$\# \cG_n(W) \ge c_0 \# \cM_0^n$.
\end{lemma}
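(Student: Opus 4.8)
The plan is to bound $\#\cG_n(W)$ from below by the number of cells of $\cM_{-(n-m)}^0$ that the preimage $T^{-m}W$ meets, for a suitable fixed $m$, and then to use the prevalence of long unstable pieces in $\cM_{-(n-m)}^0$ from Lemma~\ref{lem:long elements}, together with topological mixing, to show that $T^{-m}W$ meets a definite fraction of these cells. For the first step, fix $m\le n$ and let $\{W'_j\}_j$ be the smooth connected components of $T^{-m}W$. The set where $T^n$ fails to be smooth on $T^{-n}W$ splits according to whether the first $n-m$ or the last $m$ iterates hit $\cS^-$, so the smooth components of $T^{-n}W$ are exactly the union over $j$ of the smooth components of $T^{-(n-m)}W'_j$. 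Since $T^{n-m}$ is a diffeomorphism of each $A\in\cM_0^{n-m}$ onto an element $T^{n-m}A\in\cM_{-(n-m)}^0$ and $A\mapsto T^{n-m}A$ is a bijection $\cM_0^{n-m}\to\cM_{-(n-m)}^0$, the number of smooth components of $T^{-(n-m)}W'_j$ equals $\sum_{B\in\cM_{-(n-m)}^0}\#(\text{components of }W'_j\cap B)\ge\#\{B\in\cM_{-(n-m)}^0:W'_j\cap B\ne\emptyset\}$. Summing over $j$ and using that $\cG_n(W)$ only subdivides the smooth components of $T^{-n}W$ further, this yields
\[
\#\cG_n(W)\ \ge\ \#\{B\in\cM_{-(n-m)}^0:\ T^{-m}W\cap B\ne\emptyset\}\,.
\]

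The crux is the following covering property: there is $m_0=m_0(T,\delta_1)\ge n_1$ such that for every $W\in\hW^s$ with $|W|\ge\delta_1/3$ and every unstable curve $U$ with $|U|\ge\delta_1/3$, some connected component of $T^{-m_0}W$ crosses $U$. I would prove it as follows. Thicken $W$ to an open set $\widehat W$ by attaching to each point of $W$ a short unstable arc; since $|W|\ge\delta_1/3$, $\widehat W$ contains a ball of a fixed radius $\rho_1>0$, so covering $M$ by finitely many balls of radius $\rho_1/2$ and applying topological mixing of $T^{-1}$ (property (P2)) on each, together with the compactness of $M$, gives a uniform $m_0(\ve)$ with $T^{-m}\widehat W$ being $\ve$-dense for all $m\ge m_0(\ve)$ and all such $W$. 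Because $T^{-m}$ contracts unstable arcs by at least $\Lambda^{-m}$, the set $T^{-m}\widehat W$ lies within distance $C\Lambda^{-m}$ of $T^{-m}W$, so $T^{-m}W$ itself is $2\ve$-dense for $m\ge m_0(\ve)$; moreover, by \eqref{eq:delta_1} (valid since $m\ge n_1$ and $|W|\ge\delta_1/3$) a definite fraction of the total length of $T^{-m}W$ lies in components of length between $\delta_1/3$ and $\delta_1$. A standard strengthening of the density statement (of the kind underlying the covering lemmas in \cite{demers liv, max}) then shows that, for $\ve$ small and $m_0$ large, through every point of $M$ there runs a component of $T^{-m_0}W$ containing an arc of length $\ge\delta_1/3$ that passes within $\delta_1/100$ of that point; applying this at the midpoint of $U$ and using the uniform transversality of the stable and unstable cones, that component must cross $U$.

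To conclude, fix $m_0$ from the covering property and take $n\ge n_3+m_0$, with $n_3$ from Lemma~\ref{lem:long elements}. That lemma provides at least $C_{n_1}\delta_1\,\#\cM_{-(n-m_0)}^0$ cells $B\in\cM_{-(n-m_0)}^0$ of unstable diameter $\ge\delta_1/3$; each contains an unstable curve of length $\ge\delta_1/3$, which by the covering property $T^{-m_0}W$ crosses, so $T^{-m_0}W\cap B\ne\emptyset$. Combining with the counting reduction above and $\#\cM_{-(n-m_0)}^0=\#\cM_0^{n-m_0}$,
\[
\#\cG_n(W)\ \ge\ C_{n_1}\delta_1\,\#\cM_0^{n-m_0}\,.
\]
Finally, the crude bound in the proof of Lemma~\ref{lem:finite} gives $\#\cM_0^{k+1}\le C_T\,\#\cM_0^{k}$ for a constant $C_T$ depending only on $T$, hence $\#\cM_0^{n-m_0}\ge C_T^{-m_0}\#\cM_0^n$ and $\#\cG_n(W)\ge C_{n_1}\delta_1 C_T^{-m_0}\#\cM_0^n$. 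For the finitely many $1\le n<n_3+m_0$ one simply uses $\#\cG_n(W)\ge 1\ge(\#\cM_0^{n_3+m_0})^{-1}\#\cM_0^n$, so the lemma holds with $c_0=\min\{C_{n_1}\delta_1 C_T^{-m_0},(\#\cM_0^{n_3+m_0})^{-1}\}$.

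The main obstacle is the covering property: extracting from topological mixing a scale $m_0$ that is \emph{uniform over all long stable curves} and after which $T^{-m_0}W$ is not merely dense but dense "with long stable arcs through every window," so that it genuinely crosses every prescribed long unstable curve. The counting reduction and the conclusion are essentially bookkeeping; the only point of care in the reduction is that the cells of $\cM_{-(n-m)}^0$ are connected but need not be simply connected, which is harmless since we only need $W'_j\cap B$ to be nonempty whenever $T^{-m}W$ meets $B$.
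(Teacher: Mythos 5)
Your high-level strategy is the same as the paper's: reduce the count $\#\cG_n(W)$ to the number of cells of a backward partition that a fixed preimage $T^{-m}W$ meets, then argue that $T^{-m}W$ meets a definite fraction of the long cells supplied by Lemma~\ref{lem:long elements}. The counting reduction in your first paragraph is correct and is essentially what the paper does (up to a shift by a fixed number of iterates). The difference, and the gap, is in how you propose to guarantee that $T^{-m_0}W$ actually meets the long cells.

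Your ``covering property'' is where the argument breaks down. Thickening $W$ to $\widehat W$ and applying topological mixing plus compactness does give a uniform $m_0$ making $T^{-m_0}\widehat W$ dense at scale $\rho_1$, and contraction of unstable arcs does force the stable ``cores'' $T^{-m_0}W$ to come within $\ve$ of every point. But density of the union of all components of $T^{-m_0}W$ is much weaker than what you need: to cross a prescribed unstable curve $U$ of length $\delta_1/3$, you need a \emph{single connected component of $T^{-m_0}W$ of length at least roughly $\delta_1/3$} positioned near the midpoint of $U$, spanning in the stable direction. The density argument does not control which components are near a given point; the curves of $T^{-m_0}W$ near $U$ could all be short fragments produced by cuts along $\cS_{m_0}^+$, even though \eqref{eq:delta_1} says most components (counted globally) are long. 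The phrase ``a standard strengthening of the density statement then shows\ldots'' is precisely the missing ingredient, and it is not a formality. The paper handles this by working with the Cantor rectangles $\cR_{\delta_1}$ of \cite{Li1} rather than with arbitrary long unstable curves: the quantitative mixing of the SRB measure, $\musrb(R_i^*\cap T^{-n}R_j)\ge\ve$ for $n\ge n_4$, combined with \cite[Lemma~4.13]{Li1} (whose proof uses absolute continuity of the stable/unstable holonomies and the product structure of the rectangles), produces a component $V\in\cG_{n_4}^{\delta_1}(W)$ that \emph{properly crosses} $R_{i_n}$. This is exactly the controlled, long, well-positioned component your density argument cannot supply. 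Note also that the paper only needs the crossing statement for a finite, fixed family of Cantor rectangles (each with positive $\musrb$-measure core), not for all long unstable curves, which is what makes the measure-theoretic mixing argument tractable. Topological mixing alone, without the Cantor-rectangle and absolute-continuity machinery, does not close this step.
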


This lemma, in turn, implies the supermultiplicativity property for $\# \cM_0^n$.

\begin{proposition}
\label{prop:super}
There exists $c_1>0$ such that for all $j, n \in \mathbb{N}$ with $j \le n$, it holds,
\[
\# \cM_0^n \ge c_1 \# \cM_0^{n-j} \# \cM_0^j \, .
\]
\end{proposition}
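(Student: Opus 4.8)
The plan is to deduce Proposition~\ref{prop:super} from Lemma~\ref{lem:lower} by a fragmentation argument that tracks, separately, the many long descendants produced after the first $n-j$ steps. The starting point is the observation that $\cM_0^n$ refines the partition of $M$ obtained by intersecting $\cM_0^{n-j}$ with $T^{-(n-j)}\cM_0^j$; more precisely, each element $A \in \cM_0^{n-j}$ is subdivided at time $n$ according to the curves of $T^{-(n-j)}\cS^+_j$ that cross it, and distinct elements $A$ contribute disjoint families of elements of $\cM_0^n$. Hence $\# \cM_0^n \ge \sum_{A \in \cM_0^{n-j}} N(A)$, where $N(A)$ is the number of elements of $\cM_0^n$ contained in $A$. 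So it suffices to show that for a definite positive fraction of the elements $A$, we have $N(A) \ge c \, \# \cM_0^j$.

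First I would restrict attention to the long elements: by Lemma~\ref{lem:long elements}, at least $C_{n_1}\delta_1 \# \cM_0^{n-j}$ elements $A \in L_s(\cM_0^{n-j})$ have stable diameter at least $\delta_1/3$, so each such $A$ contains a stable curve $W_A \in \hW^s$ with $|W_A| \ge \delta_1/3$. (One should check $W_A$ can be taken in $\hW^s$ — its tangent lies in the stable cone and, after possibly shrinking, it has the graph form required; this is the kind of routine point the admissible-curve machinery of Section~\ref{sec:admissible} handles.) Applying Lemma~\ref{lem:lower} with the iterate count $j$ instead of $n$, the number of smooth connected components of $T^{-j}W_A$ satisfies $\# \cG_j(W_A) \ge c_0 \# \cM_0^j$. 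Each such component lies in a distinct element of $\cM_0^j$, hence in a distinct element of $T^{-(n-j)}\cM_0^j$ once we push forward into $A$ at time $n-j$; more to the point, distinct components of $T^{-j}(T^{-(n-j)}W_A) \subset T^{-n}(T^{j}W_A)$ lie in distinct cells of $\cM_0^n$. Thus $N(A) \ge \# \cG_j(W_A) - (\text{subdivision overhead}) \ge c_0 \# \cM_0^j / C'$ for some constant absorbing the bounded number of times a long curve is artificially subdivided in forming generations.

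Combining, $\# \cM_0^n \ge \sum_{A \in L_s(\cM_0^{n-j})} N(A) \ge (C_{n_1}\delta_1 \# \cM_0^{n-j}) \cdot (c_0 (C')^{-1} \# \cM_0^j)$, which is the claimed inequality with $c_1 = C_{n_1}\delta_1 c_0 (C')^{-1}$. One small caveat: Lemma~\ref{lem:lower} and Lemma~\ref{lem:long elements} carry a hypothesis $n \ge n_1$ (resp. $n \ge n_3$), so the argument as written gives the bound for $j, n-j \ge n_3$; for the finitely many remaining values of $j$ (or $n-j$) small, one adjusts $c_1$ downward using that $\# \cM_0^m \ge 1$ always and $\# \cM_0^m$ is bounded below on any finite range, so the inequality holds trivially after shrinking the constant.

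The main obstacle I anticipate is the bookkeeping in the claim that distinct components of $T^{-j}W_A$ (inside a fixed long cell $A$ of $\cM_0^{n-j}$) really do sit in distinct elements of $\cM_0^n$, as opposed to merely distinct elements of some coarser partition. This is exactly where the paper's recurring warning bites — because dynamical refinements of $\cM_0^n$ need not have simply connected (or even connected) pieces, one cannot glibly identify $\hP$-type refinements with $\cM$-type ones. The clean way around it is to argue on the level of stable curves directly: two points lying on different smooth components of $T^{-n}(T^j W_A)$ are separated by a curve of $\cS^+_n$, hence lie in different elements of $\cM_0^n$; so the count $N(A)$ genuinely dominates $\#\cG_j(W_A)$ up to the subdivision factor, and no simple-connectivity hypothesis is needed. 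Making that separation statement precise, and verifying the subdivision overhead is a uniform multiplicative constant, is the technical heart; everything else is assembling inequalities already proved.
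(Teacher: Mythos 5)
You have assembled the right tools—Lemma~\ref{lem:long elements} to produce a positive proportion of cells $A \in L_s(\cM_0^{n-j})$ each containing a long stable curve $W_A$, and Lemma~\ref{lem:lower} to get $\# \cG_j(W_A) \ge c_0 \#\cM_0^j$—and the overall shape of the argument (sum $N(A)$ over a definite fraction of long cells $A$, then patch the finitely many small $j$ or $n-j$ by shrinking $c_1$) matches the paper. But the step you yourself flag as the technical heart is where the argument, as set up, cannot be closed. In your framing, $A \in \cM_0^{n-j}$ is subdivided inside $\cM_0^n$ by the curves of $T^{-(n-j)}\cS_j^+$, and these are \emph{stable} curves: $\cS^+$ is bounded away from $C^u$, and the cone $\hat C^s$ is $DT^{-1}$-invariant, so iterated $T^{-1}$-preimages of $\cS^+$ lie in the stable cone. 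Your probe $W_A$ is also a stable curve, hence roughly parallel to the cuts of $A$, not transverse to them; it cannot detect the subdivision of $A$ into $\cM_0^n$-cells. Meanwhile $\# \cG_j(W_A)$ counts transversal intersections of $W_A$ with the \emph{unstable} curves $\cS_j^-$—a different family of singularities that, as you have set things up, has nothing directly to do with $N(A)$. The identity $T^{-j}(T^{-(n-j)}W_A) \subset T^{-n}(T^{j}W_A)$ is not valid, ``push forward into $A$ at time $n-j$'' is not a well-defined operation here, and the components of $T^{-j}W_A$ do not lie in $A$ at all (they lie in elements of $\cM_0^j$), so the claim that they ``sit in distinct elements of $\cM_0^n$'' does not even parse. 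Likewise, ``separated by a curve of $\cS_n^+$'' is exactly the kind of assertion that requires transversality, which is absent between two stable families.

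The missing idea is the time-shift the paper applies in the first line of its proof: since $T^{-j}(\cS_j^- \cup \cS_{n-j}^+) = \cS_n^+$, one has $\#\cM_0^n = \#\cM_{-j}^{n-j}$, and $\cM_{-j}^{n-j}$ is the common refinement of $\cM_0^{n-j}$ by the \emph{unstable} singularities $\cS_j^-$. After this reindexing, the cuts of $A$ one must count are transverse to the stable curve $W_A$, and these crossings are precisely what $\#\cG_j(W_A)$ controls (with the bounded subdivision overhead you correctly identify, since each element of $\cG_j(W_A)$ corresponds, up to subdividing long pieces, to a component of $W_A \setminus \cS_j^-$). With that reformulation your outline goes through and coincides with the paper's proof; without it, the transversality fails and the argument breaks.
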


In order to establish Lemma~\ref{lem:lower}, we recall the construction of Cantor rectangles.
For $x \in M$, let $W^s(x)$ and $W^u(x)$ denote the maximal smooth components of the local stable and
unstable manifolds of $x$ (which, by definition, belong to a single domain $M_i^+$).

We begin by defining a solid rectangle $D \subset M$ to be a closed region whose boundary
comprises exactly two stable manifolds and two unstable manifolds of positive length.
Given such a region $D$, define the {\em locally maximal Cantor rectangle $R$ in $D$}
to be the union of all points in $D$ whose local stable and unstable manifolds completely
cross $D$.  Locally maximal Cantor rectangles are endowed with a natural product structure:
for any $x,y \in R$, $W^u(x) \cap W^s(y)$ belongs to $R$.  Such rectangles are closed,
so their boundary coincides with the boundary of $D$.  In this case, we write
$D = D(R)$ to denote the fact that $D$ is the smallest solid rectangle containing $R$.

Following \cite{Li1}, for a Cantor rectangle $R$, we call the {\em core} of $R$ to be 
$R \cap D_{1/4}$, where $D_{1/4}$ is an approximately concentric rectangle in 
$D(R)$ with side lengths
$1/4$ the side lengths of $D$. 

For a locally maximal Cantor rectangle $R$, we say that a stable (respectively unstable) curve $W$
{\em properly crosses} $R$ if $W$ intersects the rectangle $D_{1/4}(R)$, 
but does not terminate in $D(R)$, and 
$W$ does not cross either of the stable (resp. unstable) boundaries of both $D(R)$ and
$D_{1/4}(R)$. 

\begin{proof}[Proof of Lemma~\ref{lem:lower}]
Applying \cite[Theorem~4.10]{Li1}, we may choose locally maximal Cantor 
rectangles $\cR_{\delta_1} = \{ R_1, \cdots, R_k \}$,
with $\musrb(R_i)>0$, whose stable and unstable boundaries have length at most 
$\frac{1}{10} \delta_1$ such that
any stable or unstable curve of length at least $\delta_1/3$ properly crosses at least one of 
them.\footnote{Once a Cantor rectangle of some size is constructed around $\musrb$-almost-every 
$x \in M$, the existence of such a finite family for any fixed length scale $\delta_1$
follows from the compactness of the set of stable (and also unstable) curves of length $\ge \delta_1/3$
in the Hausdorff metric, as in \cite[Lemma~7.87]{chernov book}.}
Furthermore, we may choose the rectangles sufficiently small that both $R_i$ and 
$R_i \cap D_{1/4}(R_i)$ have positive $\musrb$-measure for each $i$.
The number of rectangles $k$ depends on $\delta_1$. 

For brevity, denote by $R_i^* = R_i \cap D_{1/4}(R_i)$, the core of $R_i$.  
Due to the mixing property of $(T, \musrb)$, 
there exist $\ve>0$ and $n_4 \in \mathbb{N}$
such that for all $n \ge n_4$, and all $1 \le i,j, \le k$, $\musrb(R_i^* \cap T^{-n}R_j) \ge \ve$.

We claim that for each $n$, at least one Cantor rectangle $R_i \in \cR_{\delta_1}$ is fully
crossed in the unstable direction by at least $\frac 1k \#L_u(\cM_{-n}^0)$ elements of
of $\cM_{-n}^0$.  This is because if $A \in \cM_{-n}^0$, then $\partial A$ is comprised of
unstable curves belonging to $\cS_n^-$.  Since unstable manifolds cannot be cut under
iteration by $T^{-n}$, $\cS_n^-$ cannot intersect the unstable boundaries of $R_i$.
Thus if $A \cap R_i \neq \emptyset$, then either $\partial A$ terminates inside $R_i$ or
$A$ fully crosses $R_i$.  This implies that elements of $L_u(\cM_{-n}^0)$ fully cross at least
one $R_i$, and so at least one $R_i$ must be fully crossed by at least $\frac 1k$ such
elements.

With the claim established, for each $n$, let $R_{i_n}$ denote a Cantor rectangle that
is fully crossed by at least $\frac 1k \# L_u(\cM_{-n}^0)$ elements of $\cM_{-n}^0$.

Now take $W \in \hW^s$ with $|W| \ge \delta_1/3$.  By construction, there exists
$R_j \in \cR_{\delta_1}$ such that $W$ properly crosses $R_j$ in the stable direction.  For
each $n \in \mathbb{N}$, using mixing, we have $\musrb(R_{i_n}^* \cap T^{-n_4}R_j) \ge \ve$.
By \cite[Lemma~4.13]{Li1}, there is a curve $V \in \cG_{n_4}^{\delta_1}(W)$ that 
properly crosses $R_{i_n}$
in the stable direction.  By choice of $R_{i_n}$, this implies that 
$\# \cG_n(V) \ge \frac 1k \# L_u(\cM_{-n}^0)$.   
Thus,
\[
\# \cG_{n+n_4}(W) \ge \tfrac 1k \# L_u(\cM_{-n}^0) \implies 
\# \cG_n(W) \ge \tfrac{C'}{k} \# L_u(\cM_{-n}^0 ) \, ,
\]
where $(C')^{-1} = C \delta_0^{-1} \# \cM_0^{n_4}$ since 
$\# \cG_{n + n_4}(W) \le C \delta_0^{-1} \# \cM_0^{n_4} \# \cG_n(W)$ by Lemma~\ref{lem:growth}(b).

Finally, by Lemma~\ref{lem:long elements},
$\# L_u(\cM_{-n}^0) \ge C_{n_1} \delta_1 \# \cM_{-n}^0$, which proves the
lemma for $n \ge \max \{ n_3, n_4 \}$ since $\# \cM_{-n}^0 = \# \cM_0^n$.  The lemma
extends to all $n \in \mathbb{N}$ by possibly reducing the constant $c_0$ since there
are only finitely many values to correct for.
\end{proof}

\begin{proof}[Proof of Proposition~\ref{prop:super}]
Recall that since $T^{-j}(\cS^-_j \cup \cS^+_{n-j}) = \cS^+_n$, there is a one-to-one correspondence
between elements of $\cM_{-j}^{n-j}$ and $\cM_0^n$ for each $j < n$.  Thus
$\# \cM_0^n = \# \cM_{-j}^{n-j}$, and this latter partition is obtained by taking the maximal 
connected components of $\cM_{-j}^0 \bigvee \cM_0^{n-j}$.

To prove the lemma, we will show that a positive fraction, independent of $j$ and $n$,
of elements of $\cM_0^{n-j}$ intersect a positive fraction of elements of $\cM_{-j}^0$.
Recall that $L_u(\cM_{-j}^0)$ denotes those elements of $\cM_{-j}^0$ with unstable
diameter of length at least $\delta_1/3$ while $L_s(\cM_0^{n-j})$ denotes those elements
of $\cM_0^{n-j}$ with stable diameter of length at least $\delta_1/3$.

If $A \in L_s(\cM_0^{n-j})$ and $V \subset A$ is a stable curve with $|V| \ge \delta_1/3$, then
$\# \cG_j(V) \ge c_0 \# \cM_0^j$ by Lemma~\ref{lem:lower}.  Remark that up to subdivision
of long pieces, each component of $\cG_j(V)$ corresponds to one component of 
$V \setminus \cS^-_j$.  Thus $V$ intersects at least $c_0 \# \cM_0^j = c_0 \# \cM_{-j}^0$ elements
of $\cM_{-j}^0$.  Applying this estimate to each $A \in L_s(\cM_0^{n-j})$, we obtain
\[
\# \cM_0^n \ge \# L_s(\cM_0^{n-j}) \cdot c_0 \# \cM_0^j \ge C_{n_1}\delta_1 c_0 \# \cM_0^{n-j} \# \cM_0^j \, ,
\]
where we have applied Lemma~\ref{lem:long elements} in the second inequality.
This proves the lemma when $n-j \ge n_3$.  For $n-j \le n_3$, since $\# \cM_0^{n-j} \le \# \cM_0^{n_3}$,
we obtain the lemma by possibly decreasing the value of $c_1$ since there are only finitely many
values to correct for.
\end{proof}

\begin{cor}
\label{cor:upper M}
For all $n \in \mathbb{N}$, $\# \cM_0^n \le 2 c_1^{-1} e^{n h_*}$, where $c_1>0$ is from
Proposition~\ref{prop:super}.
\end{cor}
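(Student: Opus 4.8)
The plan is to apply Fekete's lemma in its supermultiplicative form to the sequence $a_n := c_1 \# \cM_0^n$, where $c_1 > 0$ is the constant furnished by Proposition~\ref{prop:super}. First observe that each $a_n$ is finite and strictly positive: finiteness of $\# \cM_0^n$ follows from the transversality hypotheses together with the finiteness of $\cS^{\pm}$ (as already noted in Section~\ref{sec:ent def}), and $\# \cM_0^n \ge 1$ trivially. With this notation, Proposition~\ref{prop:super} is exactly the statement that $(a_n)$ is supermultiplicative: for all $1 \le j \le n$,
\[
a_n = c_1 \# \cM_0^n \ge c_1^2 \, \# \cM_0^{n-j} \, \# \cM_0^j = a_{n-j} \, a_j \, .
\]

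Next I would run the elementary argument that supermultiplicativity forces $\tfrac 1n \log a_n$ to converge to its supremum. Set $L := \sup_{n \ge 1} \tfrac 1n \log a_n \in (-\infty, +\infty]$; since $\limsup_n \tfrac 1n \log a_n \le L$ holds by definition, it suffices to show $\liminf_m \tfrac 1m \log a_m \ge \tfrac 1n \log a_n$ for each fixed $n$. Fix such an $n$ and, for large $m$, write $m = qn + r$ with $0 \le r < n$. Iterating the displayed inequality gives $a_m \ge a_n^q \, a_r$ when $r \ge 1$ and $a_m \ge a_n^q$ when $r = 0$; in either case $a_m \ge c_n \, a_n^q$ where $c_n := \min\{1, a_1, \dots, a_{n-1}\} > 0$. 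Hence $\tfrac 1m \log a_m \ge \tfrac{q}{m} \log a_n + \tfrac 1m \log c_n$, and letting $m \to \infty$ with $n$ fixed (so $q/m \to 1/n$ and $\tfrac 1m \log c_n \to 0$) yields $\liminf_m \tfrac 1m \log a_m \ge \tfrac 1n \log a_n$. Since $n$ was arbitrary, $\liminf_m \tfrac 1m \log a_m \ge L$, so $\lim_m \tfrac 1m \log a_m$ exists and equals $L$.

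To conclude I would identify $L$ with $h_*$ and read off the bound. Since $\tfrac 1n \log a_n = \tfrac 1n \log c_1 + \tfrac 1n \log \# \cM_0^n$ with $\tfrac 1n \log c_1 \to 0$, the limit just established equals $\lim_n \tfrac 1n \log \# \cM_0^n$. A priori Definition~\ref{def:h_*} only asserts $\limsup_n \tfrac 1n \log \# \cM_0^n = h_*$, but now that the limit is known to exist it must coincide with the $\limsup$, so $L = h_*$ (in particular $h_* < \infty$, consistently with Lemma~\ref{lem:finite}). Therefore, for every $n \ge 1$,
\[
\tfrac 1n \log \big( c_1 \# \cM_0^n \big) = \tfrac 1n \log a_n \le L = h_* \, ,
\]
i.e.\ $\# \cM_0^n \le c_1^{-1} e^{n h_*} \le 2 c_1^{-1} e^{n h_*}$, which is the claim (the factor $2$ in the statement being a harmless slack, and the equality $\# \cM_0^n = \# \cM_{-n}^0$ being recorded in Definition~\ref{def:h_*}). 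I do not expect any genuine obstacle here: the substance of the corollary is entirely the supermultiplicativity provided by Proposition~\ref{prop:super}, and the only care required is to keep all quantities finite and positive so the logarithms make sense and to note that the nuisance constant $c_1$ washes out in passing to the rate.
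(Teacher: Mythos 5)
Your proof is correct and takes the same route as the paper, which simply cites the standard Fekete supermultiplicativity argument (via \cite[Proposition~4.6]{max}). Your version in fact yields the slightly sharper bound $\# \cM_0^n \le c_1^{-1} e^{n h_*}$, from which the stated estimate with the factor $2$ follows trivially; the only points worth keeping in mind are the ones you already flag, namely that $a_n = c_1 \# \cM_0^n$ is finite and $\ge c_1 > 0$ for every $n$ so the logarithms are well-defined, and that Lemma~\ref{lem:finite} ensures the supremum $L = h_*$ is finite.
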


\begin{proof}
The proof follows using Proposition~\ref{prop:super}, precisely as in
\cite[Proposition~4.6]{max}.
\end{proof}


\section{Spectral Properties of $\cL$}
\label{sec:spec}

In this section, we prove the following theorem.

\begin{theorem}
\label{thm:spectral}
The operator $\cL$ acting on $\cB$ is quasi-compact, with spectral radius equal to
$e^{h_*}$ and essential spectral radius bounded by $\max \{ \Lambda^{-\beta}, \rho \} e^{h_*}$.

Since $T$ is topologically mixing, $\cL$ has a spectral gap:  $e^{h_*}$ is a simple eigenvalue (multiplicity 1 and no Jordan blocks)
and the rest of the spectrum of $\cL$ is contained in a disk of radius strictly smaller than $e^{h_*}$.

Let $\nu_0 \in \cB$ be an eigenfunction for eigenvalue $e^{h_*}$ defined by
\[
\nu_0  := \lim_{n \to \infty} \frac 1n \sum_{k=0}^{n-1} e^{-k  h_* } \cL^k 1 \, .
\]
Then $\nu_0 \neq 0$ is a non-negative Radon measure on $M$.
\end{theorem}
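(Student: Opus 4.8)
The plan is to run the standard quasi-compactness machinery through a Lasota--Yorke inequality, to locate the spectral radius by combining the upper bound $\#\cM_0^n\lesssim e^{nh_*}$ with the lower growth estimates of Section~\ref{sec:lower}, and then to exploit topological mixing together with the positivity of $\cL$ on stable curves to promote quasi-compactness to a genuine spectral gap; the eigenfunction $\nu_0$ is then read off from the resulting spectral projection.

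\emph{Lasota--Yorke inequality and quasi-compactness.} First I would establish the iterated Lasota--Yorke (Doeblin--Fortet) inequalities (this will be Proposition~\ref{prop:LY}), using the growth and fragmentation lemmas of Sections~\ref{sec:growth}--\ref{sec:lower}. The weight in $\cL$ is designed so that on stable curves the stable Jacobian cancels: by \eqref{eq:trans} and the change of variables $x=T^n y$ on each smooth component $W_i$ of $T^{-n}W$, using that $J_{W_i}T^n=J^sT^n$ on a genuine stable manifold, one has the exact identity $\int_W (\cL^n f)\,\psi\,dm_W=\sum_i\int_{W_i} f\cdot(\psi\circ T^n)\,dm_{W_i}$. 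Estimating long components via their leafwise averages (which gains a factor $\Lambda^{-\alpha n}$ in the H\"older part, and $\Lambda^{-\alpha}<\Lambda^{-\beta}$ by \eqref{eq:restrict}) and short components via Lemma~\ref{lem:growth}(a),(c) (the factor $\rho^n$ arising since $K_1^n\kappa^{\alpha_0 n}\kappa^{-n/p}\le\rho^n$ by (P1) and $1/p\le\alpha_0$), while controlling the number of pieces through Lemma~\ref{lem:growth}(b),(d) and Corollary~\ref{cor:upper M}, one arrives at $\|\cL^n f\|_{\cB}\le C\,(\max\{\Lambda^{-\beta},\rho\})^n e^{nh_*}\,\|f\|_{\cB}+C_n\,|f|_w$ together with $|\cL^n f|_w\le C\,e^{nh_*}|f|_w$. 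Combining these with the compact embedding of the unit ball of $\cB$ into $\cB_w$ (Lemma~\ref{lem:compact}) and Hennion's theorem (as in \cite{demers liv, max}) yields that $\cL$ is quasi-compact on $\cB$, with $r(\cL)\le e^{h_*}$ and essential spectral radius at most $\max\{\Lambda^{-\beta},\rho\}\,e^{h_*}$.

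\emph{The spectral radius equals $e^{h_*}$.} For the matching lower bound I would test $\cL^n 1$ against the constant function on a long stable manifold. Taking $\psi\equiv 1$ and a manifold $W\in\cW^s$ with $|W|\ge\delta_1/3$ (furnished by the Cantor rectangle construction of \cite{Li1}) in the identity above gives $\int_W \cL^n 1\,dm_W=\sum_i|W_i|=|T^{-n}W|$; by \eqref{eq:delta_1} a definite fraction of the components of $T^{-n}W$ are long, so with Lemma~\ref{lem:lower} one gets $|\cL^n 1|_w\ge |T^{-n}W|\ge c\,\#\cM_0^n$ for all $n\ge n_1$. Hence $\|\cL^n\|_{\cB\to\cB}\ge \|\cL^n 1\|_{\cB}/\|1\|_{\cB}\ge |\cL^n 1|_w/\|1\|_{\cB}\ge (c/\|1\|_{\cB})\,\#\cM_0^n$, and by Gelfand's formula $r(\cL)=\lim_n\|\cL^n\|^{1/n}\ge\limsup_n(\#\cM_0^n)^{1/n}=e^{h_*}$. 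Thus $r(\cL)=e^{h_*}$; since $\max\{\Lambda^{-\beta},\rho\}<1$ this strictly exceeds the essential spectral radius, so $e^{h_*}$ is an eigenvalue of finite multiplicity and there are only finitely many eigenvalues of modulus $e^{h_*}$.

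\emph{The spectral gap.} The key structural fact is that $\cL$ preserves the cone $\cC_+\subset\cB$ of non-negative distributions: if $f(\psi)\ge 0$ for every $\psi\ge 0$, then $\cL f(\psi)=f(\psi\circ T/J^sT)\ge 0$ since $\psi\circ T\ge 0$ and $J^sT>0$. Following the Perron--Frobenius strategy used for this type of weighted operator in \cite{GL2} and \cite[Section~7]{max}, I would then: (a) produce a non-negative maximal eigenfunction; (b) use topological mixing of $T$ --- concretely, that the components of $T^{-n}W$ eventually spread over all of $M$, so that leafwise positivity propagates from long stable curves to arbitrarily short ones via Corollary~\ref{cor:most grow} --- to show that the maximal right eigenvector is \emph{strictly} positive (it charges every element of $\cW^s$; this is essentially Lemma~\ref{lem:leafwise}) and that the maximal left eigenvector $\ell_0$ is strictly positive on $\cC_+\setminus\{0\}$; (c) deduce from strict positivity, by a domination argument, that $e^{h_*}$ is the unique eigenvalue of modulus $e^{h_*}$, is algebraically simple, and carries no Jordan block. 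This is the spectral gap, and I expect it to be the main obstacle: the cone argument is classical in outline, but carrying out the strict-positivity and domination estimates inside the anisotropic norms --- reconciling ``positivity on a stable curve'' with ``positivity as a distribution'' via the disintegration of Lemma~\ref{lem:embed}, and propagating positivity across the whole of $\cW^s$ --- is where the real work lies.

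\emph{Identification of $\nu_0$.} With the spectral gap in hand, write $\cL=e^{h_*}\Pi+R$ with $\Pi$ the rank-one spectral projection onto $\ker(\cL-e^{h_*}I)$, $\Pi R=R\Pi=0$, and $R$ of spectral radius strictly smaller than $e^{h_*}$; then $e^{-nh_*}\cL^n=\Pi+e^{-nh_*}R^n$ with $\|e^{-nh_*}R^n\|\to 0$, so $\|e^{-nh_*}\cL^n\|_{\cB}$ is bounded and the Ces\`aro averages $\mu_n:=\tfrac1n\sum_{k=0}^{n-1}e^{-kh_*}\cL^k 1$ converge in $\cB$ to $\Pi 1=:\nu_0$, which satisfies $\cL\nu_0=e^{h_*}\nu_0$. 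By \eqref{eq:trans} each $e^{-kh_*}\cL^k 1$ is a non-negative function times $\musrb$, hence a non-negative finite measure, with total mass at most $C\|e^{-kh_*}\cL^k 1\|_{\cB}$ by the continuity of $f\mapsto f(1)$ on $\cB$ (Lemma~\ref{lem:include}); so each $\mu_n$ is a non-negative measure of uniformly bounded mass on the compact space $(M,\bar d)$, and any weak-$*$ limit point is a non-negative Radon measure which, paired against $\cC^1$ test functions, coincides with the $\cB_w$-limit $\nu_0$ (using $\cB_w\hookrightarrow(\cC^\alpha(\cW^s))^*$, Lemma~\ref{lem:include}). Hence $\nu_0$ is a non-negative Radon measure. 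Finally $\nu_0\neq 0$: since $\ell_0(\cL^k 1)=e^{kh_*}\ell_0(1)$ one has $\ell_0(\mu_n)=\ell_0(1)$ for every $n$, so $\ell_0(\nu_0)=\ell_0(1)>0$ because $1\in\cC_+\setminus\{0\}$ and $\ell_0$ is strictly positive, completing the argument.
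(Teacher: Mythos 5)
Your treatment of the Lasota--Yorke inequalities, the compact embedding, Hennion's theorem, and the lower bound on the spectral radius (testing $\cL^n 1$ against a long $W\in\cW^s$ and invoking \eqref{eq:delta_1} and Lemma~\ref{lem:lower}) matches the paper's Proposition~\ref{prop:LY} and Lemma~\ref{lem:radius} essentially line for line. The divergence is in the spectral gap, and there your outline has a genuine gap.

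The paper does \emph{not} run a Perron--Frobenius cone argument. It first proves $\nu_0=\Pi_0 1\neq 0$ \emph{before} establishing the gap, by a short trick that does not require any positive left eigenvector: quasi-compactness plus $r(\cL)=e^{h_*}$ forces some peripheral eigenvector $\nu\neq 0$ to exist; choosing $f\in\cC^1(M)$ with $\Pi_\theta f=\nu$, one bounds $|\nu(\psi)|\le |f|_\infty\,\Pi_0 1(|\psi|)$ (eq.~\eqref{eq:proj}), so $\Pi_0 1\neq 0$. The absence of Jordan blocks comes from the purely quantitative estimate $\|\cL^n\|_{\cB}\le Ce^{nh_*}$ (Proposition~\ref{prop:LY} together with Corollary~\ref{cor:upper M}), not from cone domination. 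The gap itself (Lemma~\ref{lem:gap}) is then obtained by showing that every peripheral eigenvector is absolutely continuous with respect to $\nu_0$ with density $f_\nu\in L^\infty(\nu_0)$ satisfying $f_\nu\circ T^{-1}=e^{2\pi i\theta}f_\nu$; for $\theta=0$ this forces $f_1$ to be constant on stable manifolds, and then the disintegration of $\nu_0$ in Lemma~\ref{lem:leafwise} (whose factor measure is equivalent to that of $\musrb$, by \eqref{eq:nu positive}) reduces the simplicity of the eigenvalue $e^{h_*}$ to the ergodicity of $\musrb$ guaranteed by (P2). Your sketch never mentions (P2), and this assumption is the actual engine of the paper's uniqueness argument; a Birkhoff-type domination in the anisotropic space $\cB$, which you correctly flag as ``where the real work lies,'' is not carried out and is not a small omission — it is the entire content of the step.

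A second, related issue is the claimed strict positivity of the left eigenvector $\ell_0$ on $\cC_+\setminus\{0\}$, which you use both for the domination and for $\nu_0\neq 0$. The paper never proves (nor needs) that $\ell_0(f)>0$ for \emph{every} nonzero non-negative $f\in\cB$; it only establishes $\tnu_0\ge 0$ and $\tnu_0(\nu_0)>0$ (in Section~\ref{sec:max}, after the gap). Since $\cB$ is a space of distributions, $\cC_+$ need not have interior and ``$f\neq 0$ in $\cB$'' does not visibly translate into a lower bound for $\ell_0(f)$; making this precise would again require the very contraction estimate you defer. So the proposal should either (i) adopt the paper's ordering — prove $\nu_0\neq 0$ and no Jordan blocks first, by the two elementary arguments above, and then get the gap from the density/ergodicity mechanism — or (ii) actually supply the projective contraction argument, which would be a substantially longer and different proof.
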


The quasi-compactness of $\cL$ is proved in Lemma~\ref{lem:radius}, following the Lasota-Yorke
inequalities of Proposition~\ref{prop:LY}.  The fact that $\cL$ has a spectral gap is proved
in Lemma~\ref{lem:gap}, while the characterization of $\nu_0$ is proved in 
Lemma~\ref{lem:peripheral}.


\subsection{Lasota-Yorke Inequalities}
\label{sec:LY}

The following proposition is the key component in establishing the quasi-compactness
of $\cL$.

\begin{proposition}
\label{prop:LY}
There exists $C>0$ such that for all $n \ge 0$ and $f \in \cB$,
\begin{eqnarray}
|\cL^n f |_w & \le & C \delta_0^{-1} (\# \cM_0^n) |f|_w \, , \label{eq:weak norm} \\
\| \cL^n f \|_s & \le & C \delta_0^{-2} (\# \cM_0^n) \big( (\Lambda^{-\alpha n} + \rho^n ) \| f \|_s +   \kappa^{-n/p} |f|_w \big) \, ,
\label{eq:stable norm} \\
\| \cL^n f \|_u & \le & C \delta_0^{-1} (\# \cM_0^n) (\Lambda^{-\beta n} \| f \|_u + \kappa^{-n/p}\| f \|_s )  \, .
\label{eq:unstable norm} 
\end{eqnarray}
\end{proposition}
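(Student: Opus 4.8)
The strategy is standard for this type of transfer-operator argument, following closely the scheme of \cite[Section~6]{max} and \cite[Section~3]{demers liv}, but with the combinatorial bounds from Section~\ref{sec:growth} replacing the submultiplicativity used in the billiard setting. For each inequality one fixes $f \in \cC^1(M)$ (density then extends the bound to $\cB$), a stable curve $W \in \cW^s$, and a test function $\psi$ admissible for the relevant norm, and then uses the change-of-variables formula \eqref{eq:trans}:
\[
\int_W \cL^n f \, \psi \, dm_W = \sum_{W_i \in \cG_n(W)} \int_{W_i} f \, (\psi \circ T^n) \, J_{W_i}T^n \, dm_{W_i} \, ,
\]
where $\cG_n(W)$ is the $n$th generation of $W$ from Section~\ref{sec:growth}. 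The point is that $J_{W_i}T^n \le \kappa^{-n}$ (wrong direction) but also $\sum_i |W_i|^{1/p} \le |W|^{1/p}$-type bounds from Lemma~\ref{lem:growth}(c) control the accumulated Jacobian weight, while the number of pieces is controlled by $\#\cG_n(W) \le C\delta_0^{-1}\#\cM_0^n$ from Lemma~\ref{lem:growth}(b). The factor $\#\cM_0^n$ in each estimate is exactly the price of cutting, reflecting that no conformal measure is available to normalize.

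For the weak norm \eqref{eq:weak norm}: since $|\psi|_{\cC^1(W)} \le 1$ implies $|\psi \circ T^n|_{\cC^1(W_i)}$ is controlled (contraction of stable curves under $T^n$ only helps) and $J_{W_i}T^n$ is log-Lipschitz by \eqref{eq:distortion} hence bounded on each piece in terms of $|T^nW_i|/|W_i|$, one writes $\int_{W_i} f(\psi \circ T^n)J_{W_i}T^n \le |f|_w |(\psi\circ T^n)J_{W_i}T^n|_{\cC^1(W_i)}$; distributing the Jacobian via distortion and summing $\sum_i |T^nW_i|/|W| \le \delta_0 \#\cG_n(W)/|W|\cdot|W|$... more carefully, $\sum_i |T^nW_i| \le$ (number of times $T^nW_i$ overlaps $W$) $\cdot |W|$, giving a bound $C\delta_0^{-1}\#\cM_0^n |f|_w$. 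For the strong stable norm \eqref{eq:stable norm}: here $\psi$ has $|\psi|_{\cC^\alpha(W)} \le |W|^{-1/p}$, and one must gain the contraction factor $\Lambda^{-\alpha n}$ from the Hölder seminorm of $\psi \circ T^n$ (stable curves contract by $\Lambda^{-n}$, so the $\cC^\alpha$-seminorm gains $\Lambda^{-\alpha n}$), split each integral into an average-of-$\psi$ term plus an oscillation term: the oscillation term contributes $\Lambda^{-\alpha n}\|f\|_s$ after redistributing weights (using $|W_i|^{1/p}/|W|^{1/p}$ bounds and Lemma~\ref{lem:growth}(c) to absorb $\kappa$), while the average term is handled by the weak norm, producing the $\kappa^{-n/p}|f|_w$ term; the $\rho^n$ alternative in the coefficient $(\Lambda^{-\alpha n}+\rho^n)$ comes from separately estimating the contribution of the pieces in $\cI_n(W)$ (those always contained in short ancestors), whose number is at most $K_1^n \le \rho^n\kappa^{\alpha_0 n}\Lambda^n$ by Lemma~\ref{lem:growth}(a) — this is where Convention~\ref{convention: n_0=1} and the restriction $1/p \le \alpha_0$ in \eqref{eq:restrict} enter. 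For the strong unstable norm \eqref{eq:unstable norm}: given two nearby curves $W_1, W_2$ with $d_{\cW^s}(W_1,W_2) \le \ve$ and matched test functions, one pairs up the components of $\cG_n(W_1)$ and $\cG_n(W_2)$ (the transversality hypotheses guarantee this matching exists for all but a controlled number of pieces), estimates the difference over matched pairs using the $\|\cdot\|_u$ norm and the fact that preimages of $\ve$-close curves are $C\Lambda^{-n}\ve$-close (giving the $\Lambda^{-\beta n}$ factor from the $\ve^{-\beta}$ weight), and estimates the unmatched pieces using $\|\cdot\|_s$ together with a bound of the form $\ve^\beta \#\cM_0^n$ on how many pieces can fail to match — yielding the $\kappa^{-n/p}\|f\|_s$ term.

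The main obstacle I expect is the strong unstable estimate \eqref{eq:unstable norm}, specifically the bookkeeping of unmatched components: in \cite{max} the continuation-of-singularities property keeps $\cM_0^n$-refinements connected, which makes the matching almost automatic, but here (by difficulty (i) noted in the introduction) preimages of $W_1$ and $W_2$ can be broken differently, so one must carefully bound the number and total length of the "orphan" pieces near the images of singularity curves, using the weak transversality of $\cS^+$ to $C^u$ and the length-scale choice of $\delta_0$, and show this extra contribution is still of the form $C\delta_0^{-1}\#\cM_0^n \kappa^{-n/p}\|f\|_s$ rather than something worse. A secondary technical point is ensuring all the distortion constants ($C_d$ from \eqref{eq:distortion}, $C_e$ from \eqref{eq:one grow}, $C_g$ from \eqref{eq:log g}) are genuinely uniform in $n$, which is already guaranteed by the bounded-derivative assumption — this is precisely the place where boundedness of $DT$ buys us a clean spectral gap, unlike in the billiard case.
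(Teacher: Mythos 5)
Your proposal follows the paper's overall scheme — change of variables to pieces $\cG_n(W)$, counting pieces with Lemma~\ref{lem:growth}(b)-(d), average-plus-oscillation split for the stable norm, the $\cI_n(W)$ pieces producing the $\rho^n$ term, and matched/unmatched pairing for the unstable norm — so the architecture is correct.  However, your very first display contains a genuine error that propagates.  You write
\[
\int_W \cL^n f \, \psi \, dm_W = \sum_{W_i \in \cG_n(W)} \int_{W_i} f \, (\psi \circ T^n) \, J_{W_i}T^n \, dm_{W_i} \, ,
\]
but the Jacobian factor should not be there.  From \eqref{eq:trans}, $\cL^n f = (f\circ T^{-n})/(J^sT^n\circ T^{-n})$, and changing variables $y = T^{-n}x$ along stable curves produces a factor $J_{W_i}T^n$ from $dm_W$ which exactly cancels the weight $1/J^sT^n$, leaving
\[
\int_W \cL^n f \, \psi \, dm_W = \sum_{W_i \in \cG_n(W)} \int_{W_i} f \, (\psi\circ T^n) \, dm_{W_i} \, .
\]
This cancellation is precisely the point of the weight $1/J^sT$ in the definition of $\cL$: each of the $\#\cG_n(W)$ pieces then contributes to the weak norm roughly a full factor of $|f|_w$ (with no length-shrinking Jacobian to compensate), which is why $\#\cM_0^n$ — rather than anything smaller — sets the spectral radius.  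If your formula were right, bounding $J_{W_i}T^n \le C|T^nW_i|/|W_i| \le C\Lambda^{-n}$ would yield $|\cL^n f|_w \le C\delta_0^{-1}\Lambda^{-n}\#\cM_0^n|f|_w$, an extra factor $\Lambda^{-n}$ that would force the spectral radius below $\Lambda^{-1}e^{h_*}$, contradicting the lower bound in Lemma~\ref{lem:radius}.  Relatedly, the sentence ``$\sum_i |T^nW_i| \le$ (number of times $T^nW_i$ overlaps $W$) $\cdot |W|$'' is not the mechanism: the $T^nW_i$ tile $W$, so that sum is just $|W|$; the quantity $\#\cM_0^n$ enters purely through counting pieces via Lemma~\ref{lem:growth}(b), not through any overlap count.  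Apart from this, the proposal is essentially sound; two small imprecisions are that the unmatched-piece contribution in \eqref{eq:unstable norm} carries a factor $\ve^{1/p}$ (from the strong stable norm applied to short pieces), not $\ve^\beta$, and that the exponent $\Lambda^{-\alpha n}$ in \eqref{eq:stable norm} is obtained by first getting $\Lambda^{-n(\alpha-1/p)}(\#\cM_0^n)^{1-1/p}$ and then converting the deficit $(\#\cM_0^n)^{-1/p}$ into an additional $\Lambda^{-n/p}$ via Lemma~\ref{lem:growth}(d).
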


By density, it suffices to prove the proposition for $f \in \cC^1(M)$.


\subsubsection{Weak norm bound}

Take $f \in \cC^1(M)$, $W \in \cW^s$ and $\psi \in \cC^1(W)$, $|\psi|_{\cC^1(W)} \le 1$.
Recalling that $\cG_n(W)$ denotes the decomposition of $T^{-n}W$ into elements of $\cW^s$, we
estimate for $n \ge 1$,
\[
\int_W \cL^n f \, \psi \, dm_W = \sum_{W_i \in \cG_n(W)}
\int_{W_i} f  \,  \psi \circ T^n \, dm_{W_i}
\le |f|_w \sum_{W_i \in \cG_n(W)}  |\psi \circ T^n|_{C^1(W_i)} \, ,
\]
where we have applied the weak norm of $f$ to the integral on each $W_i$.
Next, using the uniform contraction of $T$ along stable curves, we have
\begin{equation}
\label{eq:holder}
\frac{|\psi \circ T^n(x) - \psi \circ T^n(y)|}{d_{W_i}(x,y)} 
= \frac{|\psi \circ T^n(x) - \psi \circ T^n(y)|}{d_W(T^nx, T^ny)} \frac{d_W(T^nx, T^ny)}{d_{W_i}(x,y)}
\le C |J^sT^n|_{\cC^0(W_i)} H^1_W(\psi) \, ,
\end{equation}
for some uniform constant $C>0$, using \eqref{eq:exp def}.  
Then since $|\psi \circ T^n|_{\cC^0(W_i)} \le |\psi |_{\cC^0(W)}$, we have
$|\psi \circ T^n|_{\cC^1(W_i)} \le C |\psi|_{\cC^1(W)} \le C$.  Finally, applying
Lemma~\ref{lem:growth}(b) to the sum over $\cG_n(W)$ and taking the supremum over $\psi \in \cC^1(W)$ and 
$W \in \cW^s$ completes the proof of \eqref{eq:weak norm}.


\subsubsection{Strong stable norm bound}

Let $f \in \cC^1(M)$, $W \in \cW^s$ and $\psi \in \cC^\alpha(W)$ with 
$|\psi|_{\cC^\alpha(W)} \le |W|^{-1/p}$.  Let $n \ge 1$.  For each $W_i \in \cG_n(W)$,
define $\bpsi_i = |W_i|^{-1} \int_{W_i} \psi \circ T^n \, dm_{W_i}$.  Proceeding as before, we estimate
\begin{equation}
\label{eq:stable split}
\int_W \cL^n f \, \psi \, dm_W = \sum_{W_i \in \cG_n(W)} \int_{W_i} f \, (\psi \circ T^n - \bpsi_i) \, dm_{W_i} + \sum_{W_i \in \cG_n(W)} \bpsi_i \int_{W_i} f \, dm_{W_i} \, .
\end{equation}
To each term in the first sum on the right hand side, we apply the strong stable norm,
\[
\int_{W_i} f \, (\psi \circ T^n - \bpsi_i)
\le  \| f \|_s |W_i|^{1/p} |\psi \circ T^n - \bpsi_i|_{\cC^\alpha(W_i)}
\le C \| f \|_s  \frac{|W_i|^{1/p}}{|W|^{1/p}} |J^sT^n|_{\cC^0(W_i)}^\alpha \, ,
\]
where we have applied the analogous estimate to \eqref{eq:holder} to the difference
$\psi \circ T^n - \bpsi_i$ with the exponent $\alpha$. Since $\alpha > 1/p$, using
bounded distortion \eqref{eq:distortion}, we estimate
\[
|W_i|^{1/p} |J^sT^n|_{\cC^0(W_i)}^\alpha \le C |T^nW_i|^{1/p} \Lambda^{-n(\alpha - 1/p)} \, .
\]
Finally, summing over $W_i$, we obtain,
\begin{equation}
\label{eq:first stable}
\begin{split}
\sum_{W_i \in \cG_n(W)} & \int_{W_i} f \, (\psi \circ T^n - \bpsi_i)
\le C \| f\|_s \Lambda^{-n(\alpha - 1/p)} \sum_{W_i \in \cG_n(W)} \frac{|T^nW_i|^{1/p}}{|W|^{1/p}} \\
& \le C \| f\|_s \Lambda^{-n(\alpha - 1/p)} \left( \sum_i \frac{|T^nW_i|}{|W|} \right)^{1/p}
( \# \cG_n(W_i))^{1- 1/p} \\
& \le C \delta_0^{-1+1/p} \| f\|_s \Lambda^{-n(\alpha - 1/p)} (\# \cM_0^n)^{1-1/p} 
\le C \delta_0^{-1}  \Lambda^{- \alpha n } \| f \|_s \# \cM_0^n \, ,
\end{split}
\end{equation}
where in the second line we have used the H\"older inequality and in the third we have
used Lemma~\ref{lem:growth}(b) and (d).

Next, we estimate the second sum in \eqref{eq:stable split}.  For this estimate, we group
$W_i \in \cG_n(W)$ by most recent long ancestor as follows.  Recall that
$L_k(W)$ denotes those elements of $\cG_k(W)$ whose length is at least $\delta_0/3$.
If $V_j \in L_k(W)$ is such that $T^{n-k}(W_i) \subset V_j$ and $k \le n$ is the largest
such index with this property, then we say that $V_j$ is the most recent long ancestor of
$W_i$.  Let $\cI_{n-k}(V_j)$ denote those elements of $\cG_n(W)$ whose most recent long
ancestor is $V_j$.  If no such ancestor exists, then $W_i \in \cI_n(W)$.  Thus,
\[
 \sum_{W_i \in \cG_n(W)} \bpsi_i \int_{W_i} f \, dm_{W_i}
 = \sum_{k=1}^n \sum_{V_j \in L_k(W)} \sum_{W_i \in \cI_{n-k}(V_j)} \bpsi_i \int_{W_i} f \, dm_{W_i} 
 + \sum_{W_i \in \cI_n(W)} \bpsi_i \int_{W_i} f \, dm_{W_i}  \, .
 \]
We use the strong stable norm to estimate the terms in $\cI_n(W)$,
\begin{equation}
\label{eq:short stable}
\begin{split}
\sum_{W_i \in \cI_n(W)} & \bpsi_i \int_{W_i} f \, dm_{W_i}
\le \| f\|_s \sum_{W_i \in \cI_n(W)} \frac{|W_i|^{1/p}}{|W|^{1/p}}
\le \| f \|_s \kappa^{-n/p} \sum_{W_i \in \cI_n(W)} \frac{|T^nW_i|^{1/p}}{|W|^{1/p}} \\
& \le \|f \|_s \kappa^{-n/p} K_1^{n(1-1/p)}
\; \le \; \| f\|_s \kappa^{-n/p} \rho^n \kappa^{\alpha_0 n} \Lambda^n \;  \le \; \| f\|_s \rho^n C\delta_0^{-1} \# \cM_0^n \, ,
\end{split}
\end{equation}
where we have used \eqref{eq:exp def} for the second inequality, the H\"older inequality and
Lemma~\ref{lem:growth}(a) for the third and fourth inequalities, and the fact that 
$\alpha_0 \ge 1/p$ (from \eqref{eq:restrict}) and Lemma~\ref{lem:growth}(d) for the last inequality.

For the remainder of the terms, we use the weak norm of $f$, and sum using
Lemma~\ref{lem:growth}(a) from time $k$ to time $n$,
\[
\begin{split}
\sum_{k=1}^n & \sum_{V_j \in L_k(W)} \sum_{W_i \in \cI_{n-k}(V_j)} \bpsi_i \int_{W_i} f \, dm_{W_i}
\le \sum_{k=1}^n \sum_{V_j \in L_k(W)} \sum_{W_i \in \cI_{n-k}(V_j)} |W|^{-1/p} |f|_w \\
& \le \sum_{k=1}^n \sum_{V_j \in L_k(W)} 3 \delta_0^{-1/p} K_1^{n-k} \frac{|V_j|^{1/p}}{|W|^{1/p}} |f|_w
\le \sum_{k=1}^n C \delta_0^{-1} K_1^{n-k} \kappa^{-k/p} (\# \cM_0^k)^{1-1/p} |f|_w \\
& \le C \delta_0^{-1} |f|_w \kappa^{-n/p} \sum_{k=1}^n \rho^{n-k} \kappa^{\alpha_0(n-k)} \Lambda^{n-k} 
\# \cM_0^k \le C \delta_0^{-2} c_1^{-1} \kappa^{-n/p} \# \cM_0^n |f|_w \, ,
\end{split}
\]
where we have used Lemma~\ref{lem:growth}(c) to sum over $V_j \in L_k(W)$, as well as
the fact that 
\[
\Lambda^{n-k} \# \cM_0^k \le C \delta_0^{-1} \# \cM_0^{n-k} \# \cM_0^k \le C \delta_0^{-1} c_1^{-1} \# \cM_0^n \, , 
\]
by Proposition~\ref{prop:super}.
Putting this estimate together with \eqref{eq:short stable} and \eqref{eq:first stable} in
\eqref{eq:stable split} yields,
\[
\int_W \cL^n f \, \psi \, dm_W \le C \delta_0^{-2}  \big( (\Lambda^{-\alpha n} + \rho^n)  \|f \|_s + 
\kappa^{-n/p} |f|_w \big) \# \cM_0^n \, ,
\]
and taking the appropriate suprema over $W$ and $\psi$ completes the proof of \eqref{eq:stable norm}.


\subsubsection{Strong unstable norm bound}

Let $f \in \cC^1(M)$ and $\ve \in (0, \ve_0)$.  Take $W^1, W^2 \in \cW^s$ with $d_{\cW^s}(W^1, W^2) \le \ve$,
and $\psi_k \in \cC^1(W^k)$ such that $|\psi_k|_{\cC^1(W^k)} \le 1$ and $d_0(\psi_1, \psi_2) = 0$.  For $n \ge1$, we
subdivide $\cG_n(W^k)$ into matched and unmatched pieces as follows.

To each $W^1_i \in \cG_n(W^1)$, we associate a family of vertical (in the chart) segments
$\{ \gamma_x \}_{x \in W^1_i}$ of length at most $C \Lambda^{-n} \ve$ such that if 
$\gamma_x$ is not cut by an element of $\cS_n^+$, its image $T^n\gamma_x$ will have length
$C\ve$ and will intersect $W^2$.  Due to the uniform transversality of stable and unstable cones,
such a segment $T^i \gamma_x$ will belong to the unstable cone for each $i = 0, \ldots, n$, and so
undergo the uniform expansion due to \eqref{eq:exp def}.

In this way, we obtain a partition of $W^1$ into intervals for which $T^n\gamma_x$ is not cut
and intersects $W^2$ and subintervals for which this is not the case.  This defines an analogous
partition of $T^{-n}W^1$ and $T^{-n}W^2$.  We call two curves $U^1_j \subset T^{-n}W^1$ 
and $U^2_j \subset T^{-n}W^2$ {\em matched} if they are connected by the foliation
$\gamma_x$ and their images under $T^n$ are connected by $T^n\gamma_x$.  
We call the remaining components of $T^{-n}W^k$ {\em unmatched} and denote them
by $V^k_i$.  With this decomposition, there is at most one matched piece and two unmatched
pieces for each $W^k_i \in \cG_n(W^k)$, and we may write $T^{-n}W^k = (\cup_j U^k_j) \cup (\cup_i V^k_i)$.

We proceed to estimate,
\begin{equation}
\label{eq:unstable split}
\left| \int_{W^1} \cL^n f \, \psi_1 - \int_{W^2} \cL^n f \, \psi_2 \right|
\le \sum_j \left| \int_{U^1_j} f \, \psi_1 \circ T^n - \int_{U^2_j} f \, \psi_2 \circ T^n \right|
+ \sum_{k,i} \left| \int_{V^k_i} f \, \psi_k \circ T^n \right| \, .
\end{equation}

We begin by estimating the contribution from unmatched pieces.  We say a curve
$V^1_i$ is created at time $j$, $1 \le j \le n$, if $j$ is the first time that $T^{n-j}V^1_i$
is not part of a matched curve in $T^{-j}W^1$.  Define,
\[
\cV_{j,\ell} = \{ i : V^1_i \mbox{ is created at time $j$ and $T^{n-j}V^1_i \subset W^1_\ell \in \cG_j(W^1)$} \} \, .
\]
Note that $\cup_{i \in \cV_{j,\ell}} V^1_i = W^1_\ell$.
Due to the expansion of $T$ in the unstable cone and the uniform transversality of
$\cS_j^-$ with the stable cone, it follows that $|W^1_\ell| \le C \Lambda^{-j} \ve$.
Now applying the strong stable norm to each such curve at the time it is created,
\begin{equation}
\label{eq:unmatched}
\begin{split}
\sum_i  \int_{V^1_i} f \, \psi_1 \circ T^n 
& = \sum_{j=1}^n \sum_{W^1_\ell \in \cG_j(W)} 
 \int_{W^1_\ell} \cL^{n-j} f \, \psi_i \circ T^{n-j} \\
 & \le  \sum_{j=1}^n \sum_{W^1_\ell \in \cG_j(W)} 
|W^1_\ell|^{1/p} \| \cL^{n-j} f \|_s |\psi \circ T^{n-j}|_{\cC^\alpha(W^1_\ell)} \\
& \le \sum_{j=1}^n \sum_{W^1_\ell \in \cG_j(W)} C \Lambda^{-j/p} \ve^{1/p} 
\delta_0^{-1} \kappa^{-(n-j)/p} (\# \cM_0^{n-j}) \| f \|_s \\
& \le C \delta_0^{-1} \ve^{1/p} \| f \|_s \kappa^{-n/p} \sum_{j=1}^n \Lambda^{-j/p} \# \cM_0^j \# \cM_0^{n-j} 
\le C \delta_0^{-1} \ve^{1/p} \| f \|_s \kappa^{-n/p} \# \cM_0^n \, ,
\end{split}
\end{equation}
where we have applied \eqref{eq:stable norm} in the second inequality (actually, a simpler
version suffices with no need to subtract the average of the test function on each $W_i$), 
and Proposition~\ref{prop:super} in the fourth.  A similar estimate holds over the curves $V^2_i$.

Next, we estimate the matched pieces.  Recall that according to our notation in Section~\ref{sec:admissible}
the curve $U^1_j$ is associated with the quadruple $(i_j, x_j, r_j, F^1_j)$ so that $F^1_j$
is defined in the chart $\chi_{i_j}$ and
$U^1_j = G(x_j, r_j, F^1_j)(I_{r_j})$.  By definition of our matching
process, it follows that $U^2_j = G(x_j, r_j, F^2_j)(I_{r_j})$ for some function
$F^2_j$ defined in the same chart, so that the point $x_j + (t, F^1_j(t))$ is associated with the point
$x_j + (t, F^2_j(t))$ by the vertical line ${(0,s)}_{s \in \mathbb{R}}$ in the chart.  

Recall that
$G_{F^k_j} = \chi_{i,j}(x_j + (t, F^k_j(t))$, for $t \in I_{r_j}$.  Define
\[
\tpsi_j = \psi_1 \circ T^n \circ G_{F^1_j} \circ G_{F^2_j}^{-1} \, . 
\]
The function $\tpsi_j$ is well-defined on $U^2_j$ and $d_0(\tpsi_j , \psi_1 \circ T^n) = 0$.
We can then estimate,
\begin{equation}
\label{eq:unstable second split}
\sum_j \left| \int_{U^1_j} f \, \psi_1 \circ T^n - \int_{U^2_j} f \, \psi_2 \circ T^n \right|
\le \sum_j \left| \int_{U^1_j} f \, \psi_1 \circ T^n - \int_{U^2_j} f \, \tpsi_j \right| +
 \left| \int_{U^2_j} f \, (\tpsi_j - \psi_2 \circ T^n) \right| .
\end{equation}

We estimate the first term on the right side of \eqref{eq:unstable second split} using the strong
unstable norm.  It follows from the uniform hyperbolicity of $T$ and the usual graph transform
arguments (see \cite[Section~4.3]{demers liv}), that
\[
d_{\cW^s}(U^1_j, U^2_j) \le C \Lambda^{-n} \ve \, .
\]
Moreover, by definition $G_{F^1_j}, G_{F^2_j}^{-1} \in \cC^1$ so that by \eqref{eq:holder},
$|\tpsi_j |_{\cC^1(U^2_j)} \le C |\psi_1|_{\cC^1(W^1)}$ for some uniform constant $C$.
Thus,
\begin{equation}
\label{eq:match unstable}
\sum_j \left| \int_{U^1_j} f \, \psi_1 \circ T^n - \int_{U^2_j} f \, \tpsi_j \right| 
\le C \ve^\beta \Lambda^{-\beta n} \| f\|_u \delta_0^{-1} \# \cM_0^n \, ,
\end{equation}
where we have used Lemma~\ref{lem:growth}(b) to sum over the matched pieces since there
is at most one matched piece per element of $\cG_n(W^1)$.

We estimate the second term on the right side of \eqref{eq:unstable second split}
using the strong stable norm,
\[
\sum_j  \left| \int_{U^2_j} f \, (\tpsi_j - \psi_2 \circ T^n) \right|
\le \sum_j \| f \|_s |U^2_j|^{1/p} |\tpsi_j - \psi_2 \circ T^n|_{\cC^\alpha(U^2_j)} \, .
\]
It follows from \cite[Lemma~4.2 and eq. (4.20)]{demers liv} that,
\[
|\tpsi_j - \psi_2 \circ T^n|_{\cC^\alpha(U^2_j)} \le C \ve^{1-\alpha} \, .
\]
Putting this together with the above estimate and summing over $j$ yields,
\[
\sum_j  \left| \int_{U^2_j} f \, (\tpsi_j - \psi_2 \circ T^n) \right|
\le C \ve^{1-\alpha} \| f \|_s \delta_0^{-1} \# \cM_0^n \, .
\]

Finally, collecting the above estimate with \eqref{eq:match unstable} in \eqref{eq:unstable second split}
and adding the estimate over unmatched pieces from \eqref{eq:unmatched}, yields by
\eqref{eq:unstable split},
\[
\left| \int_{W^1} \cL^n f \, \psi_1 - \int_{W^2} \cL^n f \psi_2 \right|
\le C \delta_0^{-1} \big(\ve^\beta \Lambda^{-\beta n} \| f \|_u + \ve^{1-\alpha} \| f\|_s + \ve^{1/p} \kappa^{-n/p} \|f \|_s \big) \# \cM_0^n \, .
\]
Then, since $\beta \le \min \{ 1- \alpha, 1/p \}$ according to \eqref{eq:restrict}, 
we may divide through by $\ve^\beta$, and
take the appropriate suprema to complete the proof of \eqref{eq:unstable norm}.


\subsection{A spectral gap for $\cL$}
\label{sec:spectral}

We prove that $\cL$ has a spectral gap in a series of lemmas, first establishing its
quasi-compactness, Lemma~\ref{lem:radius}, then characterizing elements of its peripheral
spectrum, Lemmas~\ref{lem:peripheral} and \ref{lem:leafwise}, and finally concluding the
existence of a spectral gap, Lemma~\ref{lem:gap}.  These are all the items of Theorem~\ref{thm:spectral}.

\begin{lemma}
\label{lem:radius}
The spectral radius of $\cL$ on $\cB$ is $e^{h_*}$, while its essential spectral radius is at
most $\sigma e^{h_*}$ for any $\sigma > \max \{ \Lambda^{-\beta}, \rho \}$.  Thus $\cL$ is quasi-compact on $\cL$.
Moreover, the peripheral spectrum of $\cL$ contains no Jordan blocks.
\end{lemma}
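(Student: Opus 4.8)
The strategy is to distill from the Lasota--Yorke inequalities of Proposition~\ref{prop:LY} a genuine Lasota--Yorke inequality on $\cB$ for a suitably large, \emph{fixed} power of $\cL$, and then feed it into the standard functional-analytic machinery using the compact embedding $\cB \hookrightarrow \cB_w$ of Lemma~\ref{lem:compact}. The only quantitative input on complexity is Corollary~\ref{cor:upper M}, which gives $\# \cM_0^n \le 2 c_1^{-1} e^{n h_*}$ for every $n$, together with $\limsup_n \tfrac1n \log \# \cM_0^n = h_*$ (Definition~\ref{def:h_*}). First I would record the uniform weak bound $|\cL^n f|_w \le C \delta_0^{-1} (\# \cM_0^n) |f|_w \le C' e^{n h_*} |f|_w$, immediate from \eqref{eq:weak norm} and Corollary~\ref{cor:upper M}. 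The substantive step is the following. Fix $\sigma' \in (\max\{\Lambda^{-\beta}, \rho\}, 1)$. Since $\Lambda^{-\alpha}, \Lambda^{-\beta}, \rho < \sigma'$ and $\# \cM_0^{N} \le 2 c_1^{-1} e^{N h_*}$, one may choose $N_0$ so large that both $C \delta_0^{-2} (\# \cM_0^{N_0})(\Lambda^{-\alpha N_0} + \rho^{N_0})$ and $C \delta_0^{-1} (\# \cM_0^{N_0}) \Lambda^{-\beta N_0}$ are at most $\tfrac13 (\sigma' e^{h_*})^{N_0}$; having frozen $N_0$, one then picks the constant $c_u$ in $\| \cdot \|_\cB$ small enough that $c_u C \delta_0^{-1} (\# \cM_0^{N_0}) \kappa^{-N_0/p} \le \tfrac13 (\sigma' e^{h_*})^{N_0}$ as well. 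Adding \eqref{eq:stable norm} to $c_u$ times \eqref{eq:unstable norm} at $n = N_0$, the strong-stable contraction term, the $c_u \Lambda^{-\beta N_0}$ unstable term and the $c_u \kappa^{-N_0/p}$ cross-term (which carries $\|f\|_s$) are each absorbed into $(\sigma' e^{h_*})^{N_0} \| f \|_\cB$, while the surviving $\kappa^{-N_0/p}$-weighted terms, multiplied only by the \emph{fixed} number $\# \cM_0^{N_0}$, collapse into a single finite constant $D$ times $|f|_w$:
\[
\| \cL^{N_0} f \|_\cB \le (\sigma' e^{h_*})^{N_0} \| f \|_\cB + D \, |f|_w \, .
\]
This is the crux, and I expect it to be the main obstacle: the factors $\kappa^{-n/p}$ in Proposition~\ref{prop:LY} grow with $n$ and, used for all $n$ at once, would produce the spurious bound $e^{h_*} \kappa^{-1/p}$ for the spectral radius; passing to a fixed power $N_0$ (and choosing $c_u$ afterwards) confines them to the harmless constant $D$ in front of the compact seminorm.

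With the displayed inequality established, the rest is routine. By Lemma~\ref{lem:compact} the unit ball of $\cB$ is precompact in $\cB_w$, so Hennion's theorem (equivalently Nussbaum's formula for the measure of noncompactness) applied to $\cL^{N_0}$ shows that $\cL^{N_0}$, hence $\cL$, is quasi-compact with essential spectral radius at most $\big((\sigma' e^{h_*})^{N_0}\big)^{1/N_0} = \sigma' e^{h_*}$; letting $\sigma' \downarrow \max\{\Lambda^{-\beta}, \rho\}$ gives the claimed bound for every admissible $\sigma$. Iterating the displayed inequality and inserting the uniform weak bound yields, because $\sigma' e^{h_*} < e^{h_*}$, a constant $C_\#$ with $\| \cL^m \|_\cB \le C_\# e^{m h_*}$ for all $m$ (first along multiples of $N_0$, then for every $m$ after absorbing $\max_{0 \le r < N_0} \| \cL^r \|_\cB$); in particular the spectral radius of $\cL$ is $\le e^{h_*}$.

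For the reverse inequality I would test $\cL^n 1$ against the constant function on a curve $W \in \cW^s$ with $|W| \ge \delta_1/3$: the change-of-variables identity of Section~\ref{sec:LY} gives $|\cL^n 1|_w \ge \int_W \cL^n 1 \, dm_W = |T^{-n} W|$, and Lemma~\ref{lem:lower} together with the fragmentation estimate \eqref{eq:delta_1} bounds $|T^{-n}W|$ below by $c \, \# \cM_0^n$ for large $n$, whence $r(\cL) \ge \limsup_n (\# \cM_0^n)^{1/n} = e^{h_*}$ and so $r(\cL) = e^{h_*}$. Finally, the absence of Jordan blocks in the peripheral spectrum follows at once from the uniform bound $\| \cL^m \|_\cB \le C_\# e^{m h_*}$: a nontrivial Jordan block at an eigenvalue of modulus $e^{h_*}$ would force $\| \cL^m \|_\cB$ to grow at least like $m \, e^{(m-1) h_*}$, a contradiction. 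Once the single clean inequality above is in hand, quasi-compactness, the exact value of the spectral radius, and the triviality of the peripheral Jordan structure are all standard.
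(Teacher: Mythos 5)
Your proposal is correct and follows essentially the same route as the paper: combine the three Lasota--Yorke estimates of Proposition~\ref{prop:LY} with the upper bound on $\#\cM_0^n$ from Corollary~\ref{cor:upper M} at a single fixed power $N_0$ (choosing $c_u$ afterward to absorb the $\kappa^{-N_0/p}\|f\|_s$ cross-term), then invoke Lemma~\ref{lem:compact} and Hennion's theorem for quasi-compactness, use Lemma~\ref{lem:lower} together with \eqref{eq:delta_1} to test $\cL^n 1$ on a long stable curve for the matching lower bound on the spectral radius, and read off the absence of Jordan blocks from the resulting uniform bound $\|\cL^m\|_{\cB}\le C e^{mh_*}$. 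The only cosmetic differences are the $\tfrac13$-splitting in place of the paper's $\tfrac12$-splitting and your more explicit spelling-out of how iteration plus the uniform weak-norm bound yields $\|\cL^m\|_{\cB}\le C_\# e^{mh_*}$ for all $m$.
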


\begin{proof}
First we establish the upper bound on the spectral radius of $\cL$ using Proposition~\ref{prop:LY}
and Corollary~\ref{cor:upper M}.  Fix $\sigma <1$ such that 
$\sigma > \max \{ \Lambda^{-\beta }, \rho \}$.  Next, choose $N>0$ such that
$C \delta_0^{-2} 2 c_1^{-1} \max \{ \Lambda^{-\beta N}, \rho^N \} \le \frac 12 \sigma^N$.  Finally, choose $c_u >0$ such that $c_u C \delta_0^{-2} 2 c_1^{-1} \kappa^{-N/p} \le \frac 12 \sigma^N$.  Then,
\[
\begin{split}
\| \cL^N f \|_{\cB}  & = \| \cL^N f \|_s + c_u \| \cL^N f \|_u \\
& \le \left( \tfrac 12 \sigma^N \| f \|_s + C \delta_0^{-2} 2 c_1^{-1} \kappa^{-N/p} |f|_w
+ c_u \tfrac 12 \sigma^N \| f \|_u + c_u C \delta_0^{-1} 2 c_1^{-1} \kappa^{-N/p} \| f \|_s \right) e^{N h_*} \\
& \le \left( \sigma^N \| f \|_{\cB} + C' \delta_0^{-2} \kappa^{-N/p} |f|_w \right) e^{N h_*} \, .
\end{split}
\]
This is the standard Lasota-Yorke inequality for $\cL$, which, coupled with the compactness
of the unit ball of $\cB$ in $\cB_w$ (Lemma~\ref{lem:compact}), is sufficient to conclude \cite{hennion} that the essential spectral
radius of $\cL$ is at most $\sigma e^{h_*}$, and its spectral radius is at most $e^{h_*}$.

To prove the lower bound on the spectral radius, we estimate using \eqref{eq:delta_1} and Lemma~\ref{lem:lower}.
Take $W \in \cW^s$ with $|W| \ge \delta_1/3$.  Then for $n \ge n_1$ we have,
\begin{equation}
\label{eq:lower spec}
\begin{split}
\| \cL^n 1\|_{\cB} & \ge \int_W \cL^n 1 \, dm_W  = \sum_{W_i \in \cG_n^{\delta_1}} |W_i|
\ge \sum_{W_i \in L_n^{\delta_1}(W)} \delta_1/3 \\
& \ge \frac{2 \delta_1}{9} \# \cG_n(W) \ge \frac{2 \delta_1}{9} c_0 \# \cM_0^n \, .
\end{split}
\end{equation}
Then taking the limit as $n \to \infty$ and using the definition of $h_*$,
\[
\limsup_{n \to \infty} \frac 1n \log \| \cL^n \|_{\cB}
\ge \limsup_{n \to \infty} \frac 1n \log \big( \| \cL^n 1 \|_{\cB} / \| 1 \|_{\cB} \big)
\ge \limsup_{n \to \infty} \frac 1n \log \big( \# \cM_0^n \big) = h_* \, ,
\]
which proves that  the spectral radius of $\cL$ is at least $e^{h_*}$.  We conclude that the
spectral radius of $\cL$ is in fact $e^{h_*}$ and so $\cL$ is quasi-compact since its 
essential spectral
radius is bounded by $\sigma e^{h_*}$.

Finally, the lack of Jordan blocks stems from Corollary~\ref{cor:upper M} and
Proposition~\ref{prop:LY}, which together imply $\| \cL^n \|_{\cB} \le C e^{n h_*}$ for
all $n \ge 0$.
\end{proof}

Let $\mathbb{V}_\theta$ denote the eigenspace associated to the eigenvalue
$e^{h_* + 2\pi i \theta}$.  Due to the quasi-compactness of $\cL$ and the absence of
Jordan blocks, the spectral projector $\Pi_\theta : \cB \to \mathbb{V}_\theta$ is well-defined
in the uniform topology of $L(\cB, \cB)$ and can be realized as,
\begin{equation}
\label{eq:theta}
\Pi_\theta = \lim_{n \to \infty} \frac 1n \sum_{k=0}^{n-1} e^{-k h_* } e^{-2\pi i \theta k} \cL^k \, .
\end{equation}
Let $\bV = \oplus_{\theta} \bV_\theta$, where the sum is taken over $\theta$ corresponding
to eigenvalues of $\cL$.  Note that $\bV$ is finite dimensional by the quasi-compactness of
$\cL$.
Analogously, and as in the statement of Theorem~\ref{thm:spectral}, define 
\begin{equation}
\label{eq:nu def}
\nu_0 = \Pi_0 1 := \lim_{n \to \infty} \frac 1n \sum_{k=0}^{n-1} e^{-k  h_* } \cL^k 1 \, .
\end{equation}
Since we have proved uniform bounds of the form $\| \cL^k \|_{\cB} \le C e^{kh_*}$, the limit
above exists and satisfies $\cL \nu_0 = e^{h_*} \nu_0$.
A priori, however, $\nu_0$ may be 0 (if $e^{h_*}$ is not in the spectrum of $\cL$).  
The following lemma shows this is not the case, and provides
an important characterization of the peripheral spectrum of $\cL$.

\begin{lemma} (Peripheral spectrum of $\cL$)
\label{lem:peripheral}
\begin{itemize}
  \item[a)]   The distribution $\nu_0 = \Pi_0 1 \neq 0$ is a non-negative Radon measure and $e^{h_*}$ is in the spectrum of $\cL$.  
  \item[b)]  All elements of $\bV$ are signed measures, absolutely continuous
  with respect to $\nu_0$.
  \item[c)]  The spectrum of $e^{-h_*} \cL$ consists of a finite number of cyclic groups; in particular,
  each $\theta$ is rational.  
\end{itemize}
\end{lemma}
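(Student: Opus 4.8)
The plan is to establish all three items simultaneously by exploiting the following principle: the spectral projectors $\Pi_\theta$ are limits of Cesàro averages of $e^{-kh_*}e^{-2\pi i\theta k}\cL^k$, and the operator $\cL$ is built from a genuine (positive) transfer-operator action on test functions via formula \eqref{eq:trans}. First I would prove part (a). Since $\cL$ has spectral radius $e^{h_*}$, no Jordan blocks on the peripheral circle (Lemma~\ref{lem:radius}), and $\|\cL^k\|_{\cB}\le Ce^{kh_*}$, the Cesàro average defining $\nu_0 = \Pi_0 1$ converges in $\cB$. To see $\nu_0\neq 0$, I would test against a suitable admissible stable curve: by \eqref{eq:lower spec}, $\int_W \cL^n 1\, dm_W \ge \tfrac{2\delta_1}{9}c_0\,\#\cM_0^n \ge c\, e^{nh_*}$ for $W$ with $|W|\ge\delta_1/3$ and $n\ge n_1$ (using Proposition~\ref{prop:M0n} for the lower bound $\#\cM_0^n \ge C_\# e^{nh_*}$); Cesàro-averaging the normalized quantities $e^{-nh_*}\int_W\cL^n 1\, dm_W$, which are bounded below by a positive constant, shows that $\nu_0$ pairs to a strictly positive value against the constant test function on $W$, hence $\nu_0\neq 0$ and $e^{h_*}$ is in the spectrum. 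Non-negativity follows because each $e^{-kh_*}\cL^k 1$, viewed via \eqref{eq:trans} as the density $(J^sT^k\circ T^{-k})^{-1}e^{-kh_*}$ pushed forward, is a non-negative measure, $\cL$ preserves non-negativity, and the Cesàro limit of non-negative Radon measures (with uniformly bounded mass, by Lemma~\ref{lem:embed}) is a non-negative Radon measure. That $\nu_0$ extends to a Radon measure rather than a mere distribution in $\cB$ is exactly the content of Lemma~\ref{lem:embed}: its pairing with $\tilde\cC^1(\widetilde\cW^s)$ test functions is bounded by $C|\nu_0|_w$ times the sup-norm plus Hölder seminorm of the test function, which one upgrades to a bound by the sup-norm alone on the non-negative cone.

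Next, for part (b), take any eigenvalue $e^{h_*+2\pi i\theta}$ and any $f\in\bV_\theta$, so $\cL^n f = e^{n(h_*+2\pi i\theta)}f$. Writing $f = f_1 - f_2 + i(f_3-f_4)$ with each $f_j\in\cC^1(M)$ bounded (by density, approximate $f$ in $\cB$ and note the projector $\Pi_\theta$ is bounded), one applies $\Pi_\theta$ and uses that $e^{-nh_*}e^{-2\pi i\theta n}\cL^n f_j$ is dominated in absolute value, as a measure, by $e^{-nh_*}\cL^n |f_j| \le |f_j|_\infty\, e^{-nh_*}\cL^n 1$. Cesàro-averaging, the measure $|f_j|_\infty\cdot e^{-nh_*}\cL^n 1$ averages to $|f_j|_\infty\nu_0$, so $|\Pi_\theta f_j| \le |f_j|_\infty \nu_0$ as measures; hence every element of $\bV_\theta$ is a signed (or complex) measure absolutely continuous with respect to $\nu_0$, with bounded Radon–Nikodym derivative. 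This is the standard argument (as in \cite{GL2} and \cite{max}) that the peripheral eigenspace embeds into $L^\infty(\nu_0)$.

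Finally, for part (c): having identified $\bV$ as a finite-dimensional space of measures $g\,d\nu_0$ with $g\in L^\infty(\nu_0)$, the operator $e^{-h_*}\cL$ restricted to $\bV$ acts on the densities $g$ by a ``twisted'' composition with $T^{-1}$ (concretely, $g\mapsto g\circ T$ up to the $\nu_0$-invariance), which preserves both $|g|$ and, since it is an isometry of $L^\infty(\nu_0)$ with modulus-one spectrum, the set $\{|g|\le 1\}$; a standard argument (cf. \cite{GL2}, \cite{max}) shows the peripheral eigenfunctions can be taken of constant modulus, that the corresponding $g$'s form a group under pointwise multiplication, and that the eigenvalues $e^{2\pi i\theta}$ therefore form a finite subgroup of the circle — forcing each $\theta$ to be rational and the peripheral spectrum to be a finite union of cyclic groups. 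The topological mixing hypothesis (P2) will ultimately collapse this group to $\{1\}$, giving the simple eigenvalue asserted in Theorem~\ref{thm:spectral}, but that refinement belongs to Lemmas~\ref{lem:leafwise} and \ref{lem:gap} rather than here.

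The main obstacle I anticipate is the second half of part (a): showing $\nu_0$ is genuinely a Radon \emph{measure} and not merely a continuous linear functional on $\cC^\alpha(\cW^s)$ or an element of the abstract completion $\cB$. The delicate point is that the Cesàro averages converge in the $\cB$-norm, but to conclude the limit is a measure one must control the pairing of $\cL^k 1$ against \emph{all} continuous test functions uniformly — this is precisely where Lemma~\ref{lem:embed} (and hence the smoothness of $\musrb$ in (P2), via the $\cZ$-function bound on average lengths of stable manifolds) is indispensable. Once the uniform-mass bound from Lemma~\ref{lem:embed} is in hand, non-negativity and the Riesz representation theorem finish the identification, and parts (b) and (c) are then routine applications of the positivity of the transfer operator and finite-dimensionality of $\bV$.
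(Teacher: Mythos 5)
Your parts (b) and (c) are essentially the paper's argument: (b) follows by dominating $\Pi_\theta f$ pointwise by $|f|_\infty\Pi_0 1$ using positivity of $\cL$, and (c) identifies the action on densities $f_\nu = d\nu/d\nu_0$ as $f_\nu\circ T^{-1}=e^{2\pi i\theta}f_\nu$, takes integer powers $(f_\nu)^k$ to generate new peripheral eigenvalues $e^{2\pi ik\theta}$, and invokes quasi-compactness to force finiteness. Your sketch in (c) is lighter on the detail that one must verify $(f_\nu)^k\nu_0\in\cB$ (the paper cites \cite[Lemma~5.5]{demers liv} for this), but the idea is the same.

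The serious problem is in part (a), in the argument that $\nu_0\neq0$. You invoke the lower bound $\#\cM_0^n\ge C_\#\,e^{nh_*}$ from Proposition~\ref{prop:M0n}, but in the paper that lower bound is Lemma~\ref{lem:lower M}, which is proved only in Section~\ref{sec:max}, and its proof rests on $\nu_0(W)>0$ (from \eqref{eq:nu positive} in Lemma~\ref{lem:leafwise}), which in turn rests on $\nu_0\neq0$ — the very fact you are trying to establish. At the point where Lemma~\ref{lem:peripheral} is proved, only the upper bound $\#\cM_0^n\le 2c_1^{-1}e^{nh_*}$ (Corollary~\ref{cor:upper M}) and the $\limsup$ definition of $h_*$ are available; these do not preclude $\#\cM_0^n e^{-nh_*}\to 0$, in which case your Ces\`aro averages of $e^{-nh_*}\int_W\cL^n1\,dm_W$ could vanish in the limit. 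So the argument as written is circular.

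The paper's route avoids this entirely: it never estimates $\Pi_0 1$ directly against a curve. Instead, quasi-compactness plus the fact that the spectral radius equals $e^{h_*}$ (Lemma~\ref{lem:radius}, established using only the $\limsup$ definition of $h_*$ via \eqref{eq:lower spec}) guarantees $\bV\neq\{0\}$. Then, picking any nonzero $\nu=\Pi_\theta f\in\bV_\theta$ with $f\in\cC^1(M)$, the domination $|\nu(\psi)|\le|f|_\infty\Pi_0 1(|\psi|)$ of \eqref{eq:proj} forces $\Pi_0 1\neq0$. You should replace your lower-bound argument with this domination-from-nontriviality argument; the rest of your proof can stay as is.
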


\begin{proof}
(a)  By density of $\cC^1(M)$ in $\cB$, since $\mathbb{V}_\theta$ is finite-dimensional,
it follows that $\Pi_\theta \cC^1(M) = \mathbb{V}_\theta$.  Thus for each $\nu \in \bV$, $\nu \neq 0$, there
exists $f \in \cC^1(M)$ such that $\Pi_\theta f = \nu$.  Moreover, for every $\psi \in \cC^1(M)$, we have
\begin{equation}
\label{eq:proj}
| \nu(\psi) | = | \Pi_\theta f(\psi) | \le \lim_{n \to \infty} \frac 1n \sum_{k=0}^{n-1} e^{-h_* k} |\cL^k f(\psi)| 
\le |f|_\infty \Pi_0 1(|\psi|) \, ,
\end{equation}
so that $\Pi_0 1 \neq 0$ since $\nu \neq 0$. 
In particular, $e^{h_*}$ is an eigenvalue of $\cL$.  Moreover, since $\Pi_0 1$ is positive
as an element of $(\cC^1(M))^*$, it follows from \cite[Sect. I.4]{Sch} that $\nu_0 = \Pi_0 1$ is a 
non-negative Radon
measure on $M$.  

\medskip
\noindent
(b)  Applying \eqref{eq:proj} again to $\nu \in \bV_\theta$, we conclude that every
element of $\bV_\theta$ is a signed measure, absolutely continuous with respect to
$\nu_0$.  Moreover,  setting $f_\nu = \frac{d\nu}{d(\nu_0)}$, 
it follows that $f_\nu \in L^\infty(M, \nu_0)$.

\medskip
\noindent
(c)  Suppose $\nu \in \bV_\theta$.  Then using part (b), for any $\psi \in \cC^1(M)$,
\begin{equation}
\label{eq:density}
\begin{split}
\int_M \psi \, f_\nu \, d\nu_0 & = \nu(\psi) = e^{-h_*} e^{-2\pi i \theta} \cL \nu (\psi)
= e^{-h_*} e^{-2\pi i \theta} \nu (\frac{\psi \circ T}{J^sT}) \\
& = e^{-h_*} e^{-2\pi i \theta} \nu_0 (f_\nu \frac{\psi \circ T}{J^sT})
= e^{-h_*} e^{-2\pi i \theta} \cL \nu_0 (\psi f_\nu \circ T^{-1}) \\
& = e^{-2 \pi i \theta} \int_M \psi \, f_\nu \circ T^{-1} \, d\nu_0 \, .
\end{split}
\end{equation}
Thus $f_\nu \circ T^{-1} = e^{2 \pi i \theta} f_\nu$, $\nu_0$-a.e.  Define
$f_{\nu, k} = (f_\nu)^k \in L^\infty(\nu_0)$.  It follows as in \cite[Lemma~5.5]{demers liv},
that $d\nu_k := f_{\nu,k} d\nu_0 \in \cB$ for each $k \in \mathbb{N}$.  Then since
$\cL \nu_k = e^{2\pi i k \theta} \nu_k$, it follows that $e^{2 \pi i k \theta}$ is in the
peripheral spectrum of $\cL$ for each $k$.  By the quasi-compactness of $\cL$, this
set must be finite, and so $\theta$ must be rational.
\end{proof}

We remark that elements of $\cB_w$ can be viewed as both distributions on $M$, as well
as families of {\em leafwise distributions} on stable manifolds as follows 
(cf. \cite[Definition~7.5]{max}).  For
$f \in \cC^1(M)$, the map defined by 
\[
\cK_{(W, f)}(\psi) = \int_W f \psi \, dm_W, \qquad \psi \in \cC^1(W) \, ,
\]
can be viewed as a distribution of order 1 on $W$.  Since $\cK_{(W,f)}(\psi) \le |f|_w |\psi|_{\cC^1(W)}$,
$\cK_{(W, \, \cdot \,)}$ can be extended to $f \in \cB_w$.  We denote this extension by
$\int_W \psi f$, and we call the associated family of distributions the {\em leafwise distribution}
$(f, W)_{W \in \cW^s}$ corresponding to $f$.  If, in addition, $f \in \cB_w$ satisfies
$\int_W \psi f \ge 0$ for all $\psi \ge 0$, then by \cite[Section~I.4]{Sch}, the
leafwise distribution is in fact a leafwise measure.

Recall the disintegration of $\musrb$ used in the proof of Lemma~\ref{lem:embed} into
conditional measures $\musrb^\xi$ on the family of stable manifolds $\cF = \{ W_\xi \}_{\xi \in \Xi}$,
and a factor measure $\hatmusrb$ on the index set $\Xi$.  We have
$d\musrb^\xi = |W_\xi|^{-1} g_\xi dm_{W_\xi}$, where $g_\xi$ is uniformly log-H\"older continuous
by \eqref{eq:log g}.

\begin{lemma}
\label{lem:leafwise}
Let $\nu_0^\xi$ and $\hat \nu_0$ denote the conditional measures on $W_\xi$ and factor measure
on $\Xi$, respectively, obtained by disintegrating $\nu_0$ on the family of stable manifolds
$\cF$.  For all $\psi \in \cC^1(M)$,
\[
\int_{W_\xi} \psi \, d\nu^\xi_0 = \frac{\int_{W_\xi} \psi \, g_\xi \, \nu_0}{\int_{W_\xi} g_\xi \, \nu_0}
\quad \mbox{for all $\xi \in \Xi$, and} \quad
d\hat\nu_0(\xi) = |W_\xi|^{-1} \Big( \int_{W_\xi} g_\xi \, \nu_0 \Big) \, d\hatmusrb(\xi) \, .
\]
Moreover, viewed as a leafwise measure, $\nu_0(W) > 0$ for all $W \in \cW^s$.
\end{lemma}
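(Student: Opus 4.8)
The point of departure is the elementary identity, valid for all $f\in\cC^1(M)$ and $\psi\in\cC^1(M)$,
\[
f(\psi)=\int_M\psi\,f\,d\musrb=\int_{\xi\in\Xi}|W_\xi|^{-1}\Big(\int_{W_\xi}\psi\,g_\xi\,f\Big)\,d\hatmusrb(\xi),
\]
namely the disintegration of $\musrb$ over the foliation $\cF=\{W_\xi\}_{\xi\in\Xi}$ used in the proof of Lemma~\ref{lem:embed}, where $\int_{W_\xi}\psi g_\xi f$ denotes the leafwise distribution attached to $f$ evaluated at the test function $\psi g_\xi\in\cC^1(W_\xi)$ (legitimate since $g_\xi$ is Lipschitz by \eqref{eq:log g}). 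Both sides are continuous in $f$ for the weak norm: the left by Lemma~\ref{lem:embed}, and the right because $|g_\xi|_{\cC^1(W_\xi)}\le C_g$ and $\int_\Xi|W_\xi|^{-1}\,d\hatmusrb(\xi)<\infty$ (the $\cZ$-function bound from the proof of Lemma~\ref{lem:embed}). Hence, by density of $\cC^1(M)$ in $\cB_w$, the identity persists for $f=\nu_0\in\cB\hookrightarrow\cB_w$. Since $\nu_0=\Pi_01$ is a non-negative measure and each $\cL^k1\ge0$, the leafwise distributions $\int_{W_\xi}\,\cdot\;\nu_0$ are non-negative measures; writing $\int_{W_\xi}g_\xi\nu_0$ for their $g_\xi$-weighted total mass and normalizing, the displayed formula becomes $\nu_0(\psi)=\int_\Xi\int_{W_\xi}\psi\,d\nu_0^\xi\,d\hat\nu_0(\xi)$ with $d\nu_0^\xi$ and $d\hat\nu_0$ exactly as in the statement, and with $\nu_0^\xi$ a probability measure for $\hat\nu_0$-a.e.\ $\xi$. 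By uniqueness of the disintegration over the measurable partition $\cF$, these are the conditional and factor measures of $\nu_0$.

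\textbf{Part 2 (positivity on $\cW^s$), reductions.} Combining $\cL\nu_0=e^{h_*}\nu_0$ with the leafwise identity $\int_W\cL^nf\,\psi\,dm_W=\sum_{W_i\in\cG_n(W)}\int_{W_i}f\,\psi\circ T^n\,dm_{W_i}$ (proved for $f\in\cC^1(M)$ in the weak-norm step of Proposition~\ref{prop:LY} and extended to $f\in\cB_w$ exactly as above), with $\psi\equiv1$, gives the reproducing relation
\[
e^{nh_*}\,\nu_0(W)=\sum_{W_i\in\cG_n(W)}\nu_0(W_i),\qquad W\in\cW^s,\ n\ge0,
\]
and likewise with $\cG_n(W)$ replaced by the finer subdivision $\cG_n^{\delta_1}(W)$. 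As $\nu_0\ge0$, this shows $\nu_0(W)>0$ as soon as some descendant of $W$ has positive $\nu_0$-measure. By Corollary~\ref{cor:most grow} (with $\ve=1/3$), for every $W\in\cW^s$ and all large $n$ the collection $\cG_n^{\delta_1}(W)$ contains an element of length $\ge\delta_1/3$; hence it suffices to prove $\nu_0(V)>0$ for every $V\in\cW^s$ with $|V|\ge\delta_1/3$.

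\textbf{Part 2, the core estimate.} From $\nu_0\ne0$ (Lemma~\ref{lem:peripheral}(a)) and Part~1, $\nu_0(W_\xi)>0$ on a set of $\xi$ of positive $\hatmusrb$-measure; fix such a leaf, a sub-curve $U_*\in\cW^s$ of it with $\nu_0(U_*)>0$, and a point $y_0$ in the support of the leafwise measure $\nu_0|_{U_*}$ admitting a Cantor rectangle (available for $\nu_0$-a.e.\ point by a Borel--Cantelli argument from the bound $\nu_0(\cN_\ve(\cS^-))\le C\ve^{1/p}$, which in turn follows from the strong stable norm together with Part~1). Given any $V\in\cW^s$ with $|V|\ge\delta_1/3$ and any $\ve'>0$, run the mixing argument from the proof of Lemma~\ref{lem:lower} with a Cantor-rectangle family at scale $\ve'$: topological mixing of $(T,\musrb)$ and \cite[Lemma~4.13]{Li1} produce a descendant $V'$ of $V$ that properly crosses the $\ve'$-rectangle trapping a sub-arc $U_*'\ni y_0$ of $U_*$ with $\nu_0(U_*')>0$, and since backward iterates contract the stable cone in the $\cC^1$ sense, after finitely many further preimages this descendant $W_i\in\cG_n(V)$ may be taken $d_{\cW^s}$-within $C\ve'$ of $U_*'$. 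Applying the strong unstable norm of $\nu_0\in\cB$ to this matched pair,
\[
\nu_0(W_i)\ \ge\ \nu_0(U_*')-C\,\ve'^{\,\beta}\,\|\nu_0\|_u\ >\ 0
\]
for $\ve'$ small, and the reproducing relation then forces $\nu_0(V)>0$, completing the proof.

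\textbf{Main obstacle.} The delicate part is the core estimate: producing, for an arbitrary long curve $V$, descendants that approximate a \emph{fixed} sub-arc of the seed $U_*$ in the full metric $d_{\cW^s}$ (and not merely in Hausdorff distance), which needs the $\cC^1$-convergence of tangent directions of deep preimages to $E^s$ and the Hölder continuity of $x\mapsto E^s(x)$; and arranging that $y_0$, hence $U_*'$, genuinely carries positive $\nu_0$-mass while sitting inside arbitrarily small Cantor rectangles. The loss $\beta<1$ in $\|\cdot\|_u$ is harmless precisely because $W_i$ can be kept of definite length (it crosses the $\ve'$-rectangle) while the matching scale $\ve'$ is sent to $0$.
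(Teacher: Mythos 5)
\textbf{Part 1} (the disintegration formulas) is essentially the paper's own argument: approximate $\nu_0$ by $\cC^1$ functions in the weak norm and pass the disintegration identity for $\musrb$ to the limit. This part is fine.

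\textbf{Part 2 is a genuinely different route, and it has a quantitative gap that I do not think can be repaired without importing the paper's actual argument.} The difficulty is the scaling in the ``core estimate.'' To make the descendant $W_i \in \cG_n(V)$ lie $d_{\cW^s}$-within $C\ve'$ of a sub-arc $U_*'$ of the seed, you must work with a Cantor rectangle of scale $\ve'$, so the portions $U_*'$ and $W_i$ of curves that actually cross that rectangle have length $\lesssim \ve'$. They are \emph{not} ``of definite length'' as you assert in the final paragraph --- their length shrinks with the matching scale. Consequently, by the leafwise H\"older bound coming from the strong stable norm ($\nu_0$ has $\|\nu_0\|_s<\infty$, so $\nu_0(U_*') \lesssim |U_*'|^{1/p} \lesssim (\ve')^{1/p}$), the main term $\nu_0(U_*')$ decays at least like $(\ve')^{1/p}$, while the matching error is $C(\ve')^\beta\|\nu_0\|_u$. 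The parameter constraints \eqref{eq:restrict} force $\beta \le 1/(2p) < 1/p$, so $(\ve')^\beta \gg (\ve')^{1/p}$ as $\ve' \to 0$, and the error term dominates: the inequality $\nu_0(W_i) \ge \nu_0(U_*') - C(\ve')^\beta\|\nu_0\|_u$ does \emph{not} yield positivity. (If instead you keep the Cantor rectangle at definite size $\delta_1$ to keep $U_*'$ of definite length, then crossing that rectangle controls only the Hausdorff distance, not $d_{\cW^s}$, and the strong unstable norm cannot be applied at scale $\ve'$.) A secondary issue: the Borel--Cantelli step intended to place a Cantor rectangle around a point $y_0$ in $\operatorname{supp}(\nu_0|_{U_*})$ proves only that $y_0$ stays far from $\cS^{\pm}$ along its orbit, i.e.\ that $y_0$ has local manifolds of positive length; producing a locally maximal Cantor rectangle of controllable size around that specific $y_0$ is an additional, non-trivial step.

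The paper's proof of positivity is much more elementary and avoids mixing, Cantor rectangles, and the strong unstable norm entirely. It exploits the Cesaro average defining $\nu_0$: for $W\in\cW^s$, after using Corollary~\ref{cor:most grow} to produce a long descendant $V\in\cG_{n_2}^{\delta_1}(W)$ in $n_2 = O(|\log(|W|/\delta_1)|)$ steps,
\[
\int_W \nu_0 \;=\; \lim_{n\to\infty}\frac1n\sum_{k=0}^{n-1}e^{-kh_*}\int_W\cL^k1\,dm_W
\;\ge\; \tfrac{2c_0\delta_1}{9}\,e^{-(n_1+n_2)h_*}\lim_{n\to\infty}\frac1n\sum_{k=0}^{n-1}e^{-kh_*}\#\cM_0^k,
\]
where the lower bound comes from \eqref{eq:delta_1} and Lemma~\ref{lem:lower} applied to $\cG_{k-n_2}(V)$. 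One then shows the last Cesaro limit cannot vanish: if it did, the upper bound from Lemma~\ref{lem:growth}(b) would force $\int_W\psi\,\nu_0 = 0$ for \emph{every} $W,\psi$, i.e.\ $\nu_0 = 0$, contradicting Lemma~\ref{lem:peripheral}(a). This delivers not just positivity but the explicit lower bound $\nu_0(W) \ge C'|W|^{h_*\bar C_2}$ of \eqref{eq:nu positive}, which is used again downstream. I recommend you replace Part 2 of your argument with this averaging argument.
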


\begin{proof}
We prove the last claim first.  For $W \in \cW^s$, let $n_2 \le \bar C_2 |\log(|W|/\delta_1)|$
be the constant from the proof of Corollary~\ref{cor:most grow} applied in the case
$\ve = 1/3$ and $\delta_1$ as chosen in \eqref{eq:delta_1}.  Let $V \in \cG_{n_2}^{\delta_1}(W)$
have $|V| \ge \delta_1/3$.  Then using \eqref{eq:delta_1} and Lemma~\ref{lem:lower},
\[
\begin{split}
\int_W \nu_0 & = \lim_{n \to \infty} \frac 1n \sum_{k=0}^{n-1} e^{-k h_*} \int_W \cL^k 1 \, dm_W 
\ge \lim_{n \to \infty} \frac 1n \sum_{k=n_1+n_2}^{n-1} e^{-k h_*} \sum_{W_i \in \cG_{k-n_2}(V)} |W_i| \\
& \ge \lim_{n \to \infty} \frac 1n \sum_{k=n_1+n_2}^{n-1} e^{-k h_*} \tfrac{2\delta_1}{9}  c_0 \# \cM_0^{k-n_2}
= \tfrac{2c_0 \delta_1}{9} e^{-(n_1+n_2)h_*}\lim_{n \to \infty} \sum_{k=0}^\infty e^{-k h_*} \# \cM_0^k \, .
\end{split}
\]
We claim that the last limit cannot be 0.  For suppose it were 0.  Then for any $W \in \cW^s$,
$\psi \in \cC^1(W)$, we would have by Lemma~\ref{lem:growth}(b),
\[
\begin{split}
\int_W \psi \, \nu_0 & = \lim_{n \to \infty} \frac 1n \sum_{k=0}^{n-1} e^{-k h_*} \int_W \psi \, \cL^k 1 \, dm_W \le \lim_{n \to \infty} \frac 1n \sum_{k=0}^{n-1} e^{-k h_*}  \sum_{W_i \in \cG_k(W)} |\psi|_\infty |W_i| \\
& \le  \lim_{n \to \infty} \frac 1n \sum_{k=0}^{n-1} e^{-k h_*} C \# \cM_0^k = 0 \, , 
\end{split}
\]
which would imply $\nu_0 = 0$, a contradiction.  This proves the claim, and recalling the
definition of $n_2$, we conclude that
\begin{equation}
\label{eq:nu positive}
\nu_0(W) \ge C' |W|^{h_* \bar C_2} \qquad \mbox{for all $W \in \cW^s$.}
\end{equation}
With \eqref{eq:nu positive} established, the remainder of the proof follows from the definition of
convergence in the weak norm, precisely as in \cite[Lemma~7.7]{max}.
\end{proof}

We are finally ready to prove the final point of our characterization of the peripheral
spectrum of $\cL$.

\begin{lemma}
\label{lem:gap}
$\cL$ has a spectral gap on $\cB$.
\end{lemma}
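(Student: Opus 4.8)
The plan is to combine the quasi-compactness of Lemma~\ref{lem:radius} with the positivity statements of Lemmas~\ref{lem:peripheral} and \ref{lem:leafwise} and the topological mixing of $T$. By Lemma~\ref{lem:radius}, $\cL$ is quasi-compact on $\cB$ with spectral radius $e^{h_*}$, strictly smaller essential spectral radius, and no Jordan blocks on the peripheral circle, so it remains only to prove (i) $\dim \bV_0 = 1$, and (ii) $\bV_\theta = \{0\}$ whenever $e^{2\pi i\theta} \neq 1$. By Lemma~\ref{lem:peripheral}(c) every peripheral eigenvalue has the form $e^{h_*}e^{2\pi i j/q}$ for integers $j,q$, and I would first reduce (ii) to (i) applied to an iterate. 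The map $T^q$ again satisfies (P1) (Remark~\ref{rem:iterate p1}) and (P2) (topological mixing and uniqueness of the smooth invariant measure pass to powers); its complexity exponent is $q\,h_*$ since the smooth domains of $(T^q)^n$ coincide with those of $T^{qn}$; and, by the chain rule for the stable Jacobian, the transfer operator associated with $T^q$ is exactly $\cL^q$. If $\cL$ had a peripheral eigenvalue $e^{h_*}e^{2\pi i j/q}\neq e^{h_*}$, then $\cL^q$ would have $e^{qh_*}$ as an eigenvalue of (geometric) multiplicity at least $2$, with eigenvectors coming both from $\bV_0$ and from $\bV_{j/q}$ — contradicting (i) for $T^q$. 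Thus the whole statement reduces to proving (i) for an arbitrary map in our class.

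To prove (i), take $0 \neq \nu \in \bV_0$, write $\nu = f_\nu\,\nu_0$ with $f_\nu \in L^\infty(M,\nu_0)$ as in Lemma~\ref{lem:peripheral}(b), and observe from the computation \eqref{eq:density} with $\theta = 0$ that $f_\nu\circ T^{-1} = f_\nu$ $\nu_0$-a.e.; passing to real and imaginary parts we may assume $f_\nu$ real. It then suffices to show that every $\nu_0$-measurable $T$-invariant function is $\nu_0$-a.e.\ constant. The key input is the positivity from Lemma~\ref{lem:leafwise}: as a leafwise measure, $\nu_0(W) > 0$ for every $W \in \cW^s$, and together with the disintegration formula there (and the fact that $\hatmusrb$ charges the set of stable curves meeting any open set) one gets that $\nu_0$ is a Radon measure with $\nu_0(O) > 0$ for every open $O$, so $\operatorname{supp}\nu_0 = M$. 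With this in hand I would run a Hopf-type argument with respect to $\nu_0$ and its conditional measures $\nu_0^\xi$ on stable manifolds: since $T^n$ contracts stable manifolds uniformly by \eqref{eq:exp def}, a bounded $T$-invariant function is $\nu_0$-a.e.\ constant along stable leaves; applying the same reasoning to $T^{-1}$ and unstable leaves, it is also $\nu_0$-a.e.\ constant along unstable leaves; and finally, using the product structure of the Cantor rectangles $\cR_{\delta_1}$ (from the proof of Lemma~\ref{lem:lower}) together with topological mixing to connect different rectangles, a function that is a.e.\ constant along both families must be $\nu_0$-a.e.\ globally constant. Alternatively, one could first upgrade $f_\nu$ to be H\"older along stable and unstable manifolds, as in \cite[Section~5]{demers liv}, and then invoke topological transitivity directly. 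Either way $\dim \bV_0 = 1$, which with the reduction above and Lemma~\ref{lem:radius} establishes the spectral gap.

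The main obstacle is precisely this last zero--one law for $T$-invariant sets with respect to $\nu_0$: the Hopf argument has to be carried out carefully at the level of the leafwise conditional measures $\nu_0^\xi$ — one must control the families of stable (resp.\ unstable) leaves to be discarded and use absolute continuity of the stable/unstable holonomy inside a fixed Cantor rectangle, together with topological mixing, to pass from local to global constancy. This is where the hyperbolicity of $T$, the quantitative positivity \eqref{eq:nu positive} of $\nu_0$ on stable curves, and hypothesis (P2) all enter, and it closely parallels the corresponding arguments in \cite[Section~7]{max}.
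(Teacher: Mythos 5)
Your reduction to the simplicity of the peripheral eigenvalue $e^{h_*}$ by passing to the iterate $T^q$ is exactly the paper's first step. However, your argument for simplicity takes a genuinely different --- and substantially harder --- route, and it misses the paper's key shortcut.

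The paper's proof of simplicity is short. Given $\nu_1 \in \bV_0$ with density $f_1 = d\nu_1/d\nu_0 \in L^\infty(\nu_0)$, $f_1\circ T = f_1$ $\nu_0$-a.e., it first notes (via the identity $\tfrac1n S_n f_1 = f_1$ and stable contraction) that $f_1$ is constant along stable manifolds. The crucial step you do not see is the next one: by Lemma~\ref{lem:leafwise} and \eqref{eq:nu positive}, the factor measure $\hat\nu_0$ is \emph{equivalent} to $\hatmusrb$ on the index set $\Xi$, which converts the $\nu_0$-a.e.\ invariance $f_1\circ T = f_1$ into a $\musrb$-a.e.\ invariance. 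The ergodicity of $\musrb$ (provided by (P2)) then forces $f_1$ to be $\musrb$-a.e.\ constant, and the same factor-measure equivalence transfers this back to $\nu_0$-a.e.\ constancy. No Hopf argument and no absolute continuity of the unstable holonomy for $\nu_0$ are needed; the ergodicity is \emph{borrowed} from $\musrb$.

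Your plan --- a Hopf argument directly for $\nu_0$, or alternatively upgrading $f_\nu$ to H\"older regularity and invoking transitivity --- would require absolute continuity of the stable/unstable holonomies with respect to the conditional measures $\nu_0^\xi$. In this paper that machinery appears only in Section~\ref{sec:hyper} (Proposition~\ref{prop:cont}), where it is established for $\mu_*$ using $\tnu_0$ and the equivalence \eqref{eq:equivalence}; both $\tnu_0$ and $\mu_*$ are constructed \emph{after} the spectral gap. So running your argument at the stage of Lemma~\ref{lem:gap} would either be circular or would require you to independently prove the holonomy estimate for $\nu_0$ from the strong-norm control alone, a nontrivial additional piece of work that the paper deliberately avoids. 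In short, the reduction to the iterate is right, but the proposal has a real gap in the simplicity step: it does not identify the passage through $\musrb$'s ergodicity via the factor-measure equivalence, and the Hopf-type replacement you sketch is not available at this point in the development.
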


\begin{proof}
Recalling Lemma~\ref{lem:peripheral}(c), suppose $\nu_q \in \bV_{p/q}$.  Then
$\cL^q \nu_q = e^{q h_*}\nu_q$ and $\cL^q \nu_0 = e^{q h_*} \nu_0$.  Since $T^q$ is also mixing
and the spectral radius of $\cL^q$ is $e^{q h_*}$, it suffices
to prove that mixing implies the eigenspace corresponding to $e^{h_*}$ is simple in order to 
conclude that $\cL$ can have no other eigenvalues of modulus $e^{h_*}$, i.e. $\cL$ has a
spectral gap.  We proceed to prove this claim.

Suppose $\nu_1 \in \bV_0$.  We will show that $\nu_1 = c \nu_0$ for some constant
$c >0$.  By \eqref{eq:density}, there exists $f_1 \in L^\infty(\nu_0)$ such that
$f_1 \nu_1 = \nu_0$ and 
$f_1 \circ T = f_1$, $\nu_0$-a.e.  Letting 
\[
S_nf_1(x) = \sum_{k=0}^{n-1} f_1 \circ T^k(x) \, ,
\]
it follows that the ergodic average $\frac 1n S_nf_1 = f_1$ for all $n \ge 0$. 
This implies that $f_1$ is constant on stable manifolds.  In addition, since 
by Lemma~\ref{lem:leafwise} and \eqref{eq:nu positive},
the factor measure $\hat \nu_0$ is equivalent to $\hatmusrb$ on the index set $\Xi$, we have that 
$f_1 = f_1 \circ T$ on $\hatmusrb$ a.e. $W_\xi \in \cF$, i.e. 
$f_1 = f_1 \circ T$, $\musrb$-a.e.  By the ergodicity of $\musrb$, $f_1=$ constant
$\musrb$-a.e.  But since this constant value holds on each stable manifold
$W_\xi \in \cF$, using again the equivalence of $\hat\nu_0$ and $\hatmusrb$, we conclude
that $f_1$ is constant $\nu_0$-a.e.
\end{proof}


\section{Construction and Properties of the Measure of Maximal Entropy}
\label{sec:max}

Since $\cL : \cB \to \cB$ has a spectral gap, we may decompose $\cL$ as
\begin{equation}
\label{eq:L decomp}
\cL^n f = e^{n h_*} \Pi_0 f + R^n f \, \mbox{ for any $n \ge 1$, $f \in \cB$},
\end{equation}
where $\Pi_0^2 = \Pi_0$, $\Pi_0 R = R \Pi_0 = 0$ and there exists $\bar \sigma <1$ 
and $C>0$ such that $\| e^{-n h_*} R^n \|_{\cB} \le C \bar \sigma^n$.  Indeed, we may
recharacterize the definition of the spectral projector $\Pi_0$ in \eqref{eq:theta} as,
\[
\Pi_0 f = \lim_{n \to \infty} e^{-n h_*} \cL^n f \, ,
\]
where convergence is in the $\cB$ norm.  Indeed, letting $W \in \cW^s$ with
$|W| \ge \delta_1/3$, we have by Lemma~\ref{lem:growth}(b) and \eqref{eq:nu positive},
\[
\begin{split}
0 < \nu_0(W) & = \lim_{n \to \infty} e^{-n h_*} \int_W \cL^n 1 \, dm_W
= \lim_{n \to \infty} e^{-n h_*} \sum_{W_i \in \cG_n(W)} |W_i| \\
& \le \liminf_{n \to \infty} C e^{-n h_*} \# \cM_0^n \, .
\end{split}
\]
This implies the final limit cannot be 0.  We have proved the following.

\begin{lemma}
\label{lem:lower M}
There exists $\bar c_1>0$ such that $\# \cM_0^n \ge \bar c_1 e^{n h_*}$ for all
$n \ge 1$.
\end{lemma}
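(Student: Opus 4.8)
The plan is to extract the bound directly from the convergence $e^{-n h_*}\cL^n 1 \to \nu_0$ in $\cB$ together with the strict positivity of $\nu_0$ as a leafwise measure, so that the lemma becomes essentially a restatement of the display preceding it.

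First I would fix a stable curve $W \in \cW^s$ with $|W| \ge \delta_1/3$, so that \eqref{eq:nu positive} guarantees $\nu_0(W) > 0$ when $\nu_0$ is viewed as a leafwise measure. The uniform bounds $\| \cL^n \|_{\cB} \le C e^{n h_*}$ (Proposition~\ref{prop:LY} with Corollary~\ref{cor:upper M}, as recorded in Lemma~\ref{lem:radius}) ensure that $e^{-n h_*}\cL^n 1$ converges in $\cB$, hence in the weak norm, to $\Pi_0 1 = \nu_0$. Since $\int_W \cdot \, dm_W$ is a bounded functional in the weak norm, and since $\int_W \cL^n 1 \, dm_W = \sum_{W_i \in \cG_n(W)} |W_i|$ by a change of variables (as in \eqref{eq:lower spec}), I obtain
\[
\nu_0(W) = \lim_{n \to \infty} e^{-n h_*} \sum_{W_i \in \cG_n(W)} |W_i| .
\]

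Next I would bound the right-hand side above using $|W_i| \le \delta_0$ for each $W_i \in \cG_n(W)$ together with $\# \cG_n(W) \le C \delta_0^{-1} \# \cM_0^n$ from Lemma~\ref{lem:growth}(b), which gives $\sum_{W_i \in \cG_n(W)} |W_i| \le C \# \cM_0^n$, and therefore $\nu_0(W) \le \liminf_{n \to \infty} C e^{-n h_*} \# \cM_0^n$. Hence $\# \cM_0^n \ge \tfrac{1}{2C}\nu_0(W)\, e^{n h_*}$ for all $n$ sufficiently large. Finally, since $\# \cM_0^n \ge 1$ while $e^{n h_*}$ stays bounded on any finite range of $n$, decreasing the constant yields $\# \cM_0^n \ge \bar c_1 e^{n h_*}$ for all $n \ge 1$.

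There is no serious obstacle left: the two inputs — leafwise positivity of $\nu_0$ and the a priori operator bound $\| \cL^n \|_{\cB} \le C e^{n h_*}$ — are already available from Lemma~\ref{lem:leafwise} and Lemma~\ref{lem:radius}, and the only point requiring (routine) care is that $\cB$-convergence passes to convergence of the leafwise integrals $\int_W \cL^n 1 \, dm_W$, which is immediate from the estimate $\big| \int_W f \, dm_W \big| \le |f|_w$ valid for every $W \in \cW^s$.
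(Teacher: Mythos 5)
Your proof is correct and is essentially the paper's own argument: the paper establishes the lemma in the display immediately preceding its statement, using exactly the same chain $\nu_0(W)=\lim_n e^{-nh_*}\int_W \cL^n 1\,dm_W = \lim_n e^{-nh_*}\sum_{W_i\in\cG_n(W)}|W_i| \le \liminf_n C e^{-nh_*}\#\cM_0^n$, with positivity of $\nu_0(W)$ from \eqref{eq:nu positive} and the cardinality bound from Lemma~\ref{lem:growth}(b). The only addition you make (adjusting the constant to cover finitely many small $n$) is the routine step the paper leaves implicit.
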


Next, consider the dual operator, $\cL^* : \cB^* \to \cB^*$, which also has a spectral
gap.
Recalling our identification of $f \in \cC^1(M)$ with the measure $f d\musrb$ from 
Section~\ref{sec:transfer}, define
\begin{equation}
\label{eq:tnu def}
\tnu_0 := \lim_{n \to \infty} e^{-n h_*} (\cL^*)^n d\musrb \, ,
\end{equation}
where convergence is in the dual norm, $\| \cdot \|_{\cB^*}$.  Clearly, $\tnu_0 \in \cB^*$, and
$\cL^* \tnu_0 = e^{h_*} \tnu_0$.  By the positivity of the operator $\cL^*$, we have
$\tnu_0(f) \ge 0$ for each $f \in \cC^1(M)$ with $f \ge 0$ (recalling $\cC^1(M) \subset \cB$).  Thus again
applying \cite[Section~I.4]{Sch}, we conclude that $\tnu_0$ is a Radon measure on $M$.

Next, defining $f_n = e^{-n h_*} \cL^n 1 \in \cB$ for $n \ge 1$, we have,
\[
\tnu_0(f_n) = \lim_{k \to \infty} e^{-k h_*} \langle f_n, (\cL^*)^k d\musrb \rangle 
=  \lim_{k \to \infty} e^{-k h_*} \langle \cL^k f_n, d\musrb \rangle  \, ,
\]
where $\langle \cdot, \cdot \rangle$ denotes the pairing between an element of
$\cB$ and an element of $\cB^*$.  Then, decomposing $\musrb$ into its
conditional measures $\musrb^\xi$ and factor measure $\hatmusrb$ on
$W_\xi$, $\xi \in \Xi$, as in the proof of Lemma~\ref{lem:embed}, 
and letting $\Xi^{\delta_1} \subset \Xi$ denote the set of indices such that $|W_\xi| \ge \delta_1/3$,
we estimate
\[
\begin{split}
\tnu_0(f_n) & = \lim_{k \to \infty} \int_M f_{n+k} \, d\musrb
= \lim_{k \to \infty} \int_{\Xi} d\hatmusrb(\xi) \, e^{-(n+k)h_*} \int_{W_\xi} \cL^{n+k} 1 \, g_\xi \, dm_{W_\xi} |W_\xi|^{-1} \\
& \ge \lim_{k \to \infty} \int_{\Xi^{\delta_1}} d\hatmusrb(\xi) \, e^{-(n+k) h_*} 
\sum_{W_{\xi, i} \in L_{n+k}^{\delta_1}(W_\xi)} \inf_{W_\xi} g_\xi \; |W_{\xi, i}| |W_\xi|^{-1} \\
& \ge  \lim_{k \to \infty} \int_{\Xi^{\delta_1}} d\hatmusrb(\xi) \, e^{-(n+k) h_*} 
C_g^{-1} \tfrac{2 c_0}{9} \# \cM_0^{n+k}
\ge \hatmusrb(\Xi^{\delta_1}) C_g^{-1} \tfrac{2 c_0}{9} \bar c_1 \, ,
\end{split}
\]
for all $n \ge 1$, where we have used \eqref{eq:delta_1} and Lemma~\ref{lem:lower}
for the second inequality, and Lemma~\ref{lem:lower M} for the third.
Since this lower bound is independent of $n$, we have $\tnu_0(\nu_0) > 0$. 

We can at last formulate the following definition, which is our candidate for the measure of
maximal entropy.

\begin{defin}
\label{def:mu_*}
For $\psi \in \cC^1(M)$, define,
\[
\mu_*(\psi) := \frac{\langle \psi \nu_0 , \tnu_0 \rangle}{\langle \nu_0, \tnu_0 \rangle} \, .
\]
\end{defin}
The measure $\mu_*$ is a probability measure on $M$ due to the positivity of $\nu_0$ and 
$\tnu_0$, and since $\langle \nu_0, \tnu_0 \rangle \neq 0$.
Moreover, $\mu_*(\psi \circ T) = \mu_*(\psi)$ so that $\mu_*$ is an invariant measure for $T$.

We may also characterize the spectral projector $\Pi_0$ in terms of this pairing:
for any $f \in \cB$, it follows from \eqref{eq:L decomp} and \eqref{eq:tnu def} that,
\begin{equation}
\label{eq:projector}
\Pi_0 f = \frac{\langle f, \tnu_0 \rangle}{\langle \nu_0, \tnu_0 \rangle } \nu_0 \, .
\end{equation}

It follows immediately from the spectral gap of $\cL$ that $\mu_*$ has exponential
decay of correlations.

\begin{proposition}
\label{prop:decay}
For all $q>0$, there exist constants $C = C(q)$ and $\gamma = \gamma(q) >0$ such that
for all $\vf, \psi \in \cC^q(M)$,
\[
\left| \int_M \vf \, \psi \circ T^n \, d\mu_* - \int_M \vf \, d\mu_* \int_M \psi \, d\mu_* \right| 
\le C |\vf|_{\cC^q(M)} |\psi|_{\cC^q(M)} e^{-\gamma n}
\quad \mbox{ for all $n \ge 0$.}
\]
\end{proposition}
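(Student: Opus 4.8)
The plan is to read off exponential mixing from the spectral gap established in Lemma~\ref{lem:gap}: by \eqref{eq:L decomp} we have $\cL^n f = e^{nh_*}\Pi_0 f + R^n f$ with $\|e^{-nh_*}R^n\|_{\cB}\le C\bar\sigma^n$, and \eqref{eq:projector} identifies $\Pi_0 g = \langle\nu_0,\tnu_0\rangle^{-1}\langle g,\tnu_0\rangle\,\nu_0$. The only algebraic input beyond this is the conjugation identity
\begin{equation*}
\cL^n\bigl((\psi\circ T^n)\,g\bigr) \;=\; \psi\,\cdot\,\cL^n g ,
\end{equation*}
valid for any distribution $g$ on which $\cL^n$ acts (test against $\phi\in\cC^1(M)$ and use \eqref{eq:trans def}); its purpose is to push the factor $\psi\circ T^n$ --- which is only piecewise $\cC^1$, with regularity degrading with $n$ --- through the operator, where it reappears as the harmless multiplier $\psi$. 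First I would prove the proposition for $\vf,\psi\in\cC^1(M)$ and then pass to $\cC^q$ by mollification.

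For $\vf,\psi\in\cC^1(M)$, Lemma~\ref{lem:piece}(b) gives $\vf\nu_0\in\cB$, and Definition~\ref{def:mu_*} together with \eqref{eq:projector} gives $\Pi_0(\vf\nu_0)=\mu_*(\vf)\,\nu_0$. Writing the correlation via Definition~\ref{def:mu_*}, inserting $\tnu_0 = e^{-nh_*}(\cL^*)^n\tnu_0$, and applying the conjugation identity and \eqref{eq:L decomp}, I obtain the exact formula
\begin{equation*}
\int_M \vf\,(\psi\circ T^n)\,d\mu_* - \mu_*(\vf)\,\mu_*(\psi)
\;=\; \frac{e^{-nh_*}\,\bigl\langle\, \psi\,R^n(\vf\nu_0)\,,\ \tnu_0\,\bigr\rangle}{\langle\nu_0,\tnu_0\rangle}\,,
\end{equation*}
whose rigorous justification is a routine approximation argument using that $\mu_*(\cS_n^{\pm})=0$ by Theorem~\ref{thm:mu}(a). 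Bounding the right-hand side is then immediate: Lemma~\ref{lem:piece}(b) gives $\psi\,R^n(\vf\nu_0)\in\cB$ with $\|\psi\,R^n(\vf\nu_0)\|_{\cB}\le C|\psi|_{\cC^1(M)}\,\|R^n(\vf\nu_0)\|_{\cB}\le C\bar\sigma^n e^{nh_*}|\psi|_{\cC^1(M)}\,|\vf|_{\cC^1(M)}\,\|\nu_0\|_{\cB}$ by \eqref{eq:L decomp} and a second use of Lemma~\ref{lem:piece}(b) on $\vf\nu_0$; pairing with $\tnu_0\in\cB^*$ yields the bound $C\bar\sigma^n|\vf|_{\cC^1(M)}|\psi|_{\cC^1(M)}$.

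To reach $\vf,\psi\in\cC^q(M)$ with $0<q\le 1$, I would mollify in local charts: for $\eta\in(0,1)$ pick $\vf_\eta,\psi_\eta\in\cC^1(M)$ with $|\vf-\vf_\eta|_{\cC^0(M)}\le C\eta^q|\vf|_{\cC^q(M)}$, $|\vf_\eta|_{\cC^1(M)}\le C\eta^{q-1}|\vf|_{\cC^q(M)}$, and similarly for $\psi$. Since $\mu_*$ is a probability measure and $|(\vf-\vf_\eta)\circ T^n|_{\cC^0(M)}=|\vf-\vf_\eta|_{\cC^0(M)}$, replacing $\vf,\psi$ by $\vf_\eta,\psi_\eta$ in both $\int_M\vf\,(\psi\circ T^n)\,d\mu_*$ and in $\mu_*(\vf)\mu_*(\psi)$ costs at most $C\eta^q|\vf|_{\cC^q(M)}|\psi|_{\cC^q(M)}$, while the $\cC^1$ bound above controls the mollified correlation by $C\bar\sigma^n\eta^{2q-2}|\vf|_{\cC^q(M)}|\psi|_{\cC^q(M)}$. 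Hence the whole quantity is at most $C|\vf|_{\cC^q(M)}|\psi|_{\cC^q(M)}\bigl(\eta^q+\bar\sigma^n\eta^{2q-2}\bigr)$; taking $\eta=\bar\sigma^{\,n/(2-q)}$ balances the two terms and gives the claim with $\gamma=\tfrac{q}{2-q}\,|\log\bar\sigma|$. For $q\ge 1$ one applies the $\cC^1$ estimate directly (as $|\cdot|_{\cC^1(M)}\le C|\cdot|_{\cC^q(M)}$), and $n=0$ is trivial after enlarging $C$.

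I do not expect a genuine obstacle here: the substance --- the spectral gap and the construction and positivity of $\nu_0,\tnu_0$ --- is already in hand. The points requiring care are purely bookkeeping, and the crucial one is that $\psi\circ T^n$ must never be allowed to act as a multiplier on an element of $\cB$: it is only piecewise $\cC^1$ relative to the partition $\cM_0^n$, and the curves bounding those pieces need not be transverse to the stable cone, so the hypotheses of Lemma~\ref{lem:piece} fail for $\cM_0^n$; the conjugation identity is precisely what circumvents this by converting $\psi\circ T^n$ into $\psi$. The only remaining care is to verify that Lemma~\ref{lem:piece}(b) does apply to $\vf\nu_0$ and to $\psi\,R^n(\vf\nu_0)$, which it does because $\vf,\psi$ (or their mollifications) are globally $\cC^1$ on $M$, hence piecewise $\cC^1$ with respect to a fixed admissible partition independent of $n$.
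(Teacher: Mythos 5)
Your argument is correct and follows essentially the same route as the paper: you use the spectral decomposition \eqref{eq:L decomp}, convert $\psi\circ T^n$ into $\psi$ by passing it through $\cL^n$ (you state the primal identity $\cL^n\bigl((\psi\circ T^n)g\bigr)=\psi\,\cL^n g$, whereas the paper encodes the same fact dually by defining $\psi\circ T^n\,\tnu_0\in\cB^*$ via $\langle f,\psi\circ T^n\tnu_0\rangle:=\langle\psi\,e^{-nh_*}\cL^n f,\tnu_0\rangle$), and then bound the remainder term with Lemma~\ref{lem:piece}(b). Your explicit mollification argument for $\cC^q$ with the balance $\eta=\bar\sigma^{n/(2-q)}$ is exactly the ``standard approximation argument'' the paper invokes without spelling out, and your observation that $\psi\circ T^n$ cannot be used directly as a multiplier on $\cB$ (since $\partial\cM_0^n$ is tangent to the stable cone) correctly identifies why the conjugation step is necessary.
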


\begin{proof}
We prove the proposition for $\vf, \psi \in \cC^1(M)$.  The result for $q \in (0,1)$ then follows
by a standard approximation argument.

First we verify that $\psi \circ T^n \tnu_0$ is an element of $\cB^*$ for $\psi \in \cC^1(M)$ and
$n \ge 1$.  
We do this by noting that for any $\psi \in \cC^1(M)$, $\psi \tnu_0 \in \cB^*$ by simply
defining,
\[
\langle f, \psi \tnu_0 \rangle := \langle \psi f, \tnu_0 \rangle
\quad \mbox{ for any $f \in \cB$,}
\]
and the expression on the right is bounded by  $|\psi|_{\cC^1} \| f \|_{\cB} \| \nu_0 \|_{\cB^*}$ by Lemma~\ref{lem:piece}(b), and so the pairing defines a bounded, linear functional on $\cB$,
with norm at most $| \psi|_{\cC^1} \| \tnu_0 \|_{\cB^*}$.
Next, define for $n \ge 1$,
\begin{equation}
\label{eq:dual}
\langle f , \psi \circ T^n \tnu_0 \rangle := \langle e^{-n h_*} \cL^n f, \psi \nu_0 \rangle 
= \langle \psi \, e^{-n h_*} \cL^n f, \nu_0 \rangle \, .
\end{equation}
The expression on the right is bounded by 
\[
\| \psi e^{-n h_*} \cL^n f \|_{\cB} \| \tnu_0 \|_{\cB^*}  
\le |\psi|_{\cC^1(M)} e^{-n h_*} \| \cL^n f \|_{\cB} \| \tnu_0 \|_{\cB^*}
\le C  |\psi|_{\cC^1(M)}  \| f \|_{\cB} \| \tnu_0 \|_{\cB^*} \, ,
\]
where we have used Lemma~\ref{lem:piece}(b) for the first inequality and \eqref{eq:L decomp}
for the second, since in particular, $e^{-n h_*} \| \cL^n f \|_{\cB} \le C$.  Thus \eqref{eq:dual}
defines a bounded, linear functional on $\cB$, so $\psi \circ T^n \tnu_0 \in \cB^*$.

Finally, using Definition~\ref{def:mu_*} and \eqref{eq:dual}, noting that $\vf \nu_0 \in \cB$ by Lemma~\ref{lem:piece}(b), and recalling again \eqref{eq:L decomp}, we write
\[
\begin{split}
 \int_M \vf \, \psi \circ T^n & \, d\mu_*  
=  \frac{\langle \vf  \, \nu_0, \psi \circ T^n \tnu_0 \rangle }{\langle \nu_0, \tnu_0 \rangle}
 = \frac{ \langle e^{-n h_*} \cL^n( \vf \, \nu_0) , \psi \tnu_0 \rangle }{\langle \nu_0, \tnu_0 \rangle} \\
& = \frac{\langle \Pi_0(\vf \nu_0) + e^{-n h_*} R^n(\vf \nu_0), \psi \tnu_0 \rangle }{\langle \nu_0, \tnu_0 \rangle}     
= \frac{\langle \vf \nu_0, \tnu_0 \rangle}{\langle \nu_0, \tnu_0 \rangle} 
\frac{\langle \nu_0, \psi \tnu_0 \rangle}{\langle \nu_0, \tnu_0 \rangle}
+ \frac{ \langle e^{-n h_*} R^n(\vf \nu_0), \psi \tnu_0 \rangle }{\langle \nu_0, \tnu_0 \rangle} \, ,
\end{split}
\]
where we have used \eqref{eq:projector}.
The first term on the right is simply $\int_M \vf \, d\mu_* \int_M \psi \, d\mu_*$.  The second term
is bounded by,
\[
C e^{-n h_*} \| R^n(\vf \nu_0) \|_{\cB} \| \psi \tnu_0 \|_{\cB^*}
\le C' \bar \sigma^n \| \vf \nu_0 \|_{\cB} | \psi |_{\cC^1} \| \tnu_0 \|_{\cB^*}
\le C'' \bar \sigma^n |\vf|_{\cC^1} |\psi|_{C^1} \, , 
\]
where we have used Lemma~\ref{lem:piece}(b) and $C''$ depends on $\| \nu_0 \|_{\cB}$,
$\| \tnu_0 \|_{\cB^*}$, and $\langle \nu_0, \tnu_0 \rangle$.
\end{proof}


\subsection{Hyperbolicity and Ergodicity of $\mu_*$}
\label{sec:hyper}

We begin by showing that $\mu_*$ gives small measure to $\ve$-neighborhoods of the
singularity sets $\cS_n^{\pm}$.

\begin{lemma}
\label{lem:control}
For any $k \in \mathbb{N}$, there exists $C_k >0$ such that 
\[
\mu_*(\cN_\ve(\cS_k^{\pm})) \le C_k \ve^{1/p} \, .
\]
In particular, for any $\gamma > p$ and $k \in \mathbb{N}$, for $\mu_*$-a.e. $x \in M$,
there exists $C>0$ such that
\begin{equation}
\label{eq:approach}
d(T^nx, \cS_k^{\pm}) \ge C n^{-\gamma} \, , \quad \mbox{for all $n \ge 0$.}
\end{equation}
\end{lemma}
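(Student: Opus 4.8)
The plan is to prove the neighborhood bound first, reduce it to the single estimate $\mu_*(\cN_\ve(\cS^-)) \le C\ve^{1/p}$, and then obtain \eqref{eq:approach} by Borel--Cantelli. For the \emph{reduction}: both $\|DT\|$ and $\|DT^{-1}\|$ are bounded, say by some $C_T\ge 1$ (the bound on $\|DT^{-1}\|$ follows from uniform hyperbolicity together with the $\cC^2$ extension of $T$ to the closures of the $M_i^+$). Since $\cS^+ = T^{-1}\cS^-$, we have $\cS_k^+ = \bigcup_{i=1}^k T^{-i}\cS^-$ and $\cS_k^- = \bigcup_{i=0}^{k-1}T^i\cS^-$, and $\mu_*(\cN_\ve(T^{\pm i}\cS^-)) \le \mu_*(\cN_{C_T^{\,i}\ve}(\cS^-))$ for $\ve$ below a fixed threshold, by $T$-invariance of $\mu_*$ and the derivative bounds (small $\ve$ keeps the relevant iterates inside a single domain of smoothness, invoking Convention~\ref{con:pointwise} at boundary points). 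Subadditivity over these unions reduces the first assertion to $\mu_*(\cN_\ve(\cS^-)) \le C\ve^{1/p}$ for all $\ve>0$; for $\ve$ above the threshold this is trivial as $\mu_*(M)=1$.

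\emph{The estimate for $\cS^-$.} Choose a smooth $\psi_\ve : M\to[0,1]$ equal to $1$ on $\cN_\ve(\cS^-)$ with $\operatorname{supp}\psi_\ve\subseteq\cN_{2\ve}(\cS^-)$; then $\psi_\ve\nu_0\in\cB$ by Lemma~\ref{lem:piece}(b), and $\mu_*(\cN_\ve(\cS^-)) \le \int_M\psi_\ve\,d\mu_* = \langle\psi_\ve\nu_0,\tnu_0\rangle/\langle\nu_0,\tnu_0\rangle$ by Definition~\ref{def:mu_*}. Using \eqref{eq:tnu def} and the identification of $\musrb$ as an element of $\cB^*$ (Lemma~\ref{lem:embed}), write $\langle\psi_\ve\nu_0,\tnu_0\rangle = \lim_{k\to\infty} e^{-kh_*}\int_M\cL^k(\psi_\ve\nu_0)\,d\musrb$, and disintegrate $\musrb$ along the foliation $\cF=\{W_\xi\}_{\xi\in\Xi}$ as in the proof of Lemma~\ref{lem:embed}:
\[
e^{-kh_*}\int_M\cL^k(\psi_\ve\nu_0)\,d\musrb = e^{-kh_*}\int_\Xi |W_\xi|^{-1}\Big(\int_{W_\xi} g_\xi\,\cL^k(\psi_\ve\nu_0)\Big)\,d\hatmusrb(\xi),
\]
valid for $\cL^k(\psi_\ve\nu_0)\in\cB$ by density. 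The heart of the matter is the leafwise identity $\int_{W_\xi} g_\xi\,\cL^k(\psi_\ve\nu_0) = \sum_{W_i\in\cG_k(W_\xi)}\int_{W_i}(g_\xi\circ T^k)\,\psi_\ve\,\nu_0$: on each $W_i$ the weight $\psi_\ve$ vanishes off $W_i\cap\cN_{2\ve}(\cS^-)$, which by the uniform transversality of $\cS^-$ to the stable cone is a union of at most $N$ subcurves of length $\le C\ve$, with $N$ depending only on $\#\cS^-$, $\delta_0$ and the transversality constant. Since $\nu_0$ is a nonnegative leafwise measure (Lemma~\ref{lem:leafwise}) and, testing the strong stable norm against the constant function, $\nu_0(V)\le|V|^{1/p}\|\nu_0\|_s$ for every $V\in\cW^s$, each $W_i$ contributes at most $C_g\,N\,(C\ve)^{1/p}\|\nu_0\|_s$; summing over $\cG_k(W_\xi)$ (of cardinality $\le C\delta_0^{-1}\#\cM_0^k$ by Lemma~\ref{lem:growth}(b)) and using $e^{-kh_*}\#\cM_0^k\le C$ from Proposition~\ref{prop:M0n} gives $e^{-kh_*}\int_{W_\xi}g_\xi\,\cL^k(\psi_\ve\nu_0) \le C\ve^{1/p}$ uniformly in $\xi$ and $k$. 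Integrating against $|W_\xi|^{-1}d\hatmusrb(\xi)$, finite by the $\cZ$-function bound in the proof of Lemma~\ref{lem:embed}, and letting $k\to\infty$ yields $\langle\psi_\ve\nu_0,\tnu_0\rangle\le C\ve^{1/p}$; since $\langle\nu_0,\tnu_0\rangle>0$ this gives $\mu_*(\cN_\ve(\cS^-))\le C'\ve^{1/p}$.

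\emph{The pointwise bound.} Fix $k\in\bN$ and $\gamma>p$, and put $A_n=\{x : d(T^nx,\cS_k^\pm)<n^{-\gamma}\}$ for $n\ge1$. By $T$-invariance and the estimate just proved, $\mu_*(A_n)=\mu_*(\cN_{n^{-\gamma}}(\cS_k^\pm))\le C_k n^{-\gamma/p}$, and $\sum_n n^{-\gamma/p}<\infty$ because $\gamma/p>1$; hence by Borel--Cantelli, for $\mu_*$-a.e.\ $x$ there is $N(x)$ with $d(T^nx,\cS_k^\pm)\ge n^{-\gamma}$ for all $n\ge N(x)$. Since $\mu_*(\cS_k^\pm)=0$ (let $\ve\to0$ in the neighborhood bound), $\mu_*$-a.e.\ orbit stays off $\cS_k^\pm$, so the finitely many indices $n<N(x)$ only require shrinking the constant, which gives \eqref{eq:approach}.

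\emph{Main obstacle.} The delicate step is the $\cS^-$ estimate: bounding $\|\psi_\ve\nu_0\|_{\cB}$ head-on fails (it grows like $\ve^{-\alpha}$), so the $\ve^{1/p}$ gain must be harvested only after expanding $\cL^k(\psi_\ve\nu_0)$ leafwise, combining (i) that $\nu_0$ is a genuine leafwise measure with $\nu_0(V)\le|V|^{1/p}\|\nu_0\|_s$ on short curves, (ii) transversality of $\cS^-$ to stable curves, which localizes $\psi_\ve$ onto $O(\ve)$-short pieces of every preimage leaf, and (iii) the subexponential complexity bound $e^{-kh_*}\#\cM_0^k\le C$, which prevents the gain from being eaten up as $k\to\infty$. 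Checking that the leafwise identity for $\cL^k$ and the disintegration of $\musrb$ extend from $\cC^1(M)$ to $\psi_\ve\nu_0\in\cB$ is routine density/continuity but should be carried out carefully.
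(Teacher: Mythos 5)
Your proof is correct in its main thrust, but takes a genuinely different and longer route than the paper, and the opening reduction step has a gap.

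\emph{Comparison of routes.} The paper establishes the key observation \eqref{eq:weak dual}, namely $|\tnu_0(f)|\le C|f|_w$ for $f\in\cB_w$, which follows directly from the definition \eqref{eq:tnu def} of $\tnu_0$ together with Lemma~\ref{lem:embed}. With this in hand, the estimate is nearly immediate: $\mu_*(\cN_\ve(\cS_k^-)) = \tnu_0(1_{k,\ve}\nu_0)/\tnu_0(\nu_0)\le C|1_{k,\ve}\nu_0|_w$, where $1_{k,\ve}$ is the indicator of $\cN_\ve(\cS_k^-)$, and the weak norm of $1_{k,\ve}\nu_0$ is bounded by $C_k\ve^{1/p}$ by exactly the transversality-plus-stable-norm computation you perform on each leaf. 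You do not invoke this shortcut, and instead unwind $\tnu_0$ by its limit definition, disintegrate $\musrb$ as in Lemma~\ref{lem:embed}, push $\cL^k(\psi_\ve\nu_0)$ down onto each $W_\xi$, and pass to the limit in $k$ using the uniform complexity bound $e^{-kh_*}\#\cM_0^k\le C$. Both routes rest on the same three ingredients --- transversality of the singularity curves with the stable cone, positivity of $\nu_0$ as a leafwise measure combined with $\nu_0(V)\le\|\nu_0\|_s|V|^{1/p}$, and the boundedness of $e^{-kh_*}\#\cM_0^k$ --- but the paper's observation that $\tnu_0$ is controlled by the weak norm makes the disintegration, the leafwise expansion of $\cL^k$, and the $\cZ$-function bound unnecessary.

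\emph{Gap in the reduction.} Your claim that $\mu_*(\cN_\ve(T^{\pm i}\cS^-))\le\mu_*(\cN_{C_T^i\ve}(\cS^-))$ holds ``for $\ve$ below a fixed threshold'' because ``small $\ve$ keeps the relevant iterates inside a single domain of smoothness'' is not correct as stated. No matter how small $\ve$ is, a point $y\in\cN_\ve(T^{\pm i}\cS^-)$ may be arbitrarily close to $\cS^{\mp}$ as well, so the geodesic connecting $y$ to $T^{\pm i}\cS^-$ can straddle singularity curves and $T^{\mp i}$ does not contract it by $C_T^i$. A correct version iterates one step at a time: for example $T^{-1}\cN_\ve(T^i\cS^-)\subset\cN_{C_T\ve}(T^{i-1}\cS^-)\cup T^{-1}\cN_\ve(\cS^-)$, with the analogous inclusion for $T$, and the measure bound then follows by a strong induction on $i$ in which the extra neighborhoods are controlled by the already-established bounds. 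Simpler still, and closer to the paper: run your leafwise estimate directly against $\cS_k^-$ (the transversality is uniform in $k$ and yields $N_k$ intersections of length at most $C\ve$, exactly as in the paper's bound on $|1_{k,\ve}\nu_0|_w$), and then handle $\cS_k^+$ as the paper does, using invariance of $\mu_*$ together with the inclusion $T^k(\cN_\ve(\cS_k^+))\subset\cN_{C\kappa_+^k\ve}(\cS_k^-)$, where $\kappa_+$ is the maximum expansion in the unstable cone. The Borel--Cantelli step at the end is correct and matches the paper.
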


\begin{proof}
First we prove the claimed bounds with respect to $\nu_0$ for each $\cS^-_k$, $k \ge 1$.  Let
$1_{k, \ve}$ denote the indicator function of the set $\cN_\ve(\cS^-_k)$.  Since
$\cS_k^-$ comprises finitely many smooth curves, all uniformly transverse to the
stable cone, by Lemma~\ref{lem:piece}(b), $1_{k, \ve} \nu_0 \in \cB$, and as a consequence,
$1_{k, \ve} \nu_0 \in \cB_w$.  We claim that,
\begin{equation}
\label{eq:nu bound}
\nu_0(\cN_\ve(\cS_k^-)) \le C |1_{k, \ve} \nu_0|_w \le C_k \ve^{1/p} \, .
\end{equation}
Indeed, the first inequality follows from Lemma~\ref{lem:embed}.  To prove the second
inequality, let $W \in \cW^s$ and $\psi \in \cC^1(W)$ with $| \psi|_{\cC^1(W)} \le 1$.
Due to the transversality of $\cS_k^-$ with the stable cone, $W \cap \cN_\ve(\cS_k^-)$
comprises at most a finite number $N_k$ of curves, depending only on
$\cS_k^-$ and $\delta_0$, and not on $W$, each having length at most $C\ve$.  Thus,
\[
\int_W  1_{k, \ve} \, \psi \, \nu_0 = \sum_i \int_{W_i}  \psi \, \nu_0
\le \sum_i \| \nu_0 \|_{\cB} |W_i|^{1/p} |\psi|_{\cC^\alpha(W_i)} \le C N_k \ve^{1/p} \, ,
\]
and taking the supremum over $\psi$ and $W$ proves the second inequality in
\eqref{eq:nu bound}.

Next, it follows from \eqref{eq:tnu def} and Lemma~\ref{lem:embed} that
\begin{equation}
\label{eq:weak dual}
|\tnu_0(f)| \le C |f|_w, \qquad \mbox{for all $f \in \cB_w$,}
\end{equation}
so that in fact $\tnu_0 \in \cB_w^* \subset \cB^*$.
Thus for each $k \ge 1$, by \eqref{eq:nu bound},
\[
\mu_*(\cN_\ve(\cS_k^-)) = \frac{\tnu_0(1_{k,\ve} \nu_0)}{\tnu_0(\nu_0)} \le C |1_{k,\ve} \nu_0|_w \le C C_k \ve^{1/p} \, .
\]
To prove the bound for $\cS_k^+$, we use the invariance of $\mu_*$ together with the
fact that $T^{-k}\cS_k^- = \cS_k^+$.  Moreover, we have
$T^k(\cN_\ve(\cS_k^+)) \subset \cN_{C \kappa_+^k \ve}(\cS_k^-)$, where $\kappa_+$
is the maximum rate of expansion in the unstable cone. 

Finally, to prove \eqref{eq:approach}, we fix $\gamma > p$ and estimate for each $k \in \mathbb{N}$,
\[
\sum_{n \ge 1} \mu_*(\cN_{n^{-\gamma}}(\cS_k^{\pm})) \le C_k \sum_{n \ge 1} n^{-\gamma/p} < \infty \, .
\]
Thus by the Borel-Cantelli Lemma, $\mu_*$-a.e. $x \in M$ visits $\cN_{n^{-\gamma}}(\cS_k^{\pm})$
only finitely many times along its orbit, completing the proof of the lemma.
\end{proof}

Lemma~\ref{lem:control} immediately implies the following corollary.

\begin{cor}  
\label{cor:atomic}
The following items establish the hyperbolicity of the measure $\mu_*$.
\begin{itemize}
  \item[a)] For any $\cC^1$ curve $V$ uniformly transverse to the stable cone, there exists
  $C>0$ such that $\nu_0(\cN_{\ve}(V)) \le C\ve$ for all $\ve>0$.
  \item[b)]  The measures $\nu_0$ and $\mu_*$ have no atoms, and $\mu_*(W) = 0$
  for all local stable and unstable manifolds, $W$.
  \item[c)] $\int_M |\log d(x, \cS_1^{\pm})| \, d\mu_* < \infty$.
  \item[d)] $\mu_*$-a.e. $x \in M$ has a stable and an unstable manifold of positive length.
\end{itemize}
\end{cor}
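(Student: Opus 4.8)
The plan is to read all four items off Lemma~\ref{lem:control} together with the leafwise description of $\nu_0$ from Lemma~\ref{lem:leafwise}, treating (c) and (d) quickly and reserving the real work for (a), from which the assertions of (b) then follow. Item (c) is immediate from the layer-cake formula applied with $k=1$: since $\mu_*(\cN_\ve(\cS_1^\pm))\le C_1\ve^{1/p}$,
\[
\int_M |\log d(x,\cS_1^\pm)|\,d\mu_* = \int_0^\infty \mu_*\big(d(\cdot,\cS_1^\pm)<e^{-t}\big)\,dt \le \int_0^\infty \min\{1,\,C_1 e^{-t/p}\}\,dt < \infty .
\]
For item (d) I use the Borel--Cantelli consequence \eqref{eq:approach} of Lemma~\ref{lem:control}: for $\mu_*$-a.e.\ $x$ the forward orbit stays at distance $\ge C n^{-\gamma}$ from $\cS^+=\cS_1^+$, and, applying the same statement to $T^{-1}$ and using $T$-invariance of $\mu_*$, the backward orbit stays at distance $\ge Cn^{-\gamma}$ from $\cS^-=\cS_1^-$. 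Thus $\mu_*$-a.e.\ $x$ approaches $\cS^\pm$ subexponentially (indeed polynomially), while the hyperbolicity constants are uniform ($\Lambda>1$, and the contraction rate on the stable cone is $\le\Lambda^{-1}<1$); this is exactly the situation covered by the classical stable/unstable manifold theorem for hyperbolic maps with singularities (cf.\ \cite{Pesin1,lima}), so $\mu_*$-a.e.\ point carries a local stable and a local unstable manifold of positive length. Since all Lyapunov exponents of $\mu_*$ are uniformly bounded away from $0$, $\mu_*$ is in particular a hyperbolic measure, which is what is needed in Corollary~\ref{cor:per}.

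The heart of the matter is item (a). I would disintegrate $\nu_0$ on a measurable partition of a neighbourhood of $V$ into (maximal) local stable manifolds, as in Lemma~\ref{lem:leafwise}; because $V$ is uniformly transverse to the stable cone, each such leaf $W_\xi$ meets $\cN_\ve(V)$ in a uniformly bounded number of subarcs of total arc length $O(\ve)$. Hence (a) reduces to controlling the leafwise $\nu_0$-mass of a short stable subarc linearly in its length, the conditional measures $\nu_0^\xi$ being related to these leafwise measures through the Radon--Nikodym factors $g_\xi$ (bounded by \eqref{eq:log g}), after which one integrates against the finite factor measure $\hat\nu_0$. The linear (rather than the weaker $\ve^{1/p}$) bound is what makes this delicate: the naive strong-stable-norm estimate only gives $O(\ell^{1/p})$ for an $\ell$-subarc (this is precisely what yields the exponent $1/p$ for the much larger sets $\cS^\pm_k$ in Theorem~\ref{thm:mu}(a)), so one must instead exploit the eigenvector relation $\cL\nu_0 = e^{h_*}\nu_0$ together with the sharp growth control of Section~\ref{sec:lower} (Proposition~\ref{prop:M0n}, Corollary~\ref{cor:uniform growth}) and bounded distortion \eqref{eq:distortion}. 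I expect this leafwise estimate to be the main obstacle in the corollary.

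Granting (a), the remaining assertions of (b) follow routinely. Non-atomicity of $\nu_0$, hence of $\mu_*$ (since $\mu_*(\{x\})\le\mu_*(\cN_\ve(V))$ for any short $\cC^1$ curve $V$ through $x$ transverse to the stable cone), is obtained by letting $\ve\to0$: concretely $1_{\cN_\ve(V)}\nu_0\in\cB$ by Lemma~\ref{lem:piece}(b) (the boundary of $\cN_\ve(V)$ is transverse to the stable cone), $\tnu_0\in\cB_w^*$ by \eqref{eq:weak dual}, and $|1_{\cN_\ve(V)}\nu_0|_w\to0$ exactly as in \eqref{eq:nu bound}, so $\mu_*(\cN_\ve(V)) = \tnu_0(1_{\cN_\ve(V)}\nu_0)/\tnu_0(\nu_0)\to0$. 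For a local unstable manifold $W$ the same computation applies directly, since $W$ is transverse to the stable cone; for a local stable manifold $W$ one invokes the symmetry of the construction under $T\mapsto T^{-1}$ (which exchanges $\nu_0$ with $\tnu_0$ and the stable and unstable cones) and applies the previous case to $T^{-1}$, using that a local stable manifold is uniformly transverse to the unstable cone. Outer regularity of the Borel measure $\mu_*$ then gives $\mu_*(W) = \lim_{\ve\to0}\mu_*(\cN_\ve(W)) = 0$, completing (b).
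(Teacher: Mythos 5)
Your treatment of (c) and (d) is correct and matches the route the paper intends: both read off Lemma~\ref{lem:control}, the former via the layer-cake formula and the latter via the Borel--Cantelli consequence \eqref{eq:approach}. In (b), non-atomicity of $\nu_0$ and $\mu_*$, and the vanishing $\mu_*(W)=0$ for \emph{unstable} manifolds $W$, all follow from the $\ve^{1/p}$ decay of \eqref{eq:nu bound} as you say; note these conclusions do not actually need the linear bound of (a). For \emph{stable} manifolds $W$ your appeal to ``symmetry of the construction under $T\mapsto T^{-1}$'' is the right idea, but you should be explicit that $1_{\cN_\ve(W)}\nu_0$ does \emph{not} lie in $\cB$ via Lemma~\ref{lem:piece}: the boundary of $\cN_\ve(W)$ is nearly tangent to the stable cone, so the hypothesis $m_{W'}(\cN_\ve(\partial Q)\cap W')\le C_\cQ\ve^{1/2}$ fails for stable test leaves $W'$ near $W$. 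The correct route is to view $\tnu_0$ as a leafwise measure on unstable manifolds, coming from the transfer operator for $T^{-1}$ on the analogously defined Banach space with the roles of $C^s$ and $C^u$ reversed, and to run the previous argument there; this dual construction needs to be set up, not just gestured at.

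The genuine gap is in item (a), and you flag it yourself. Your diagnosis that the strong stable norm only yields $\nu_0(W')\lesssim|W'|^{1/p}$ for a short stable subarc $W'$ is correct, and the ingredients you name (the eigenvector identity $\cL\nu_0=e^{h_*}\nu_0$, Proposition~\ref{prop:M0n}, Corollary~\ref{cor:uniform growth}, bounded distortion \eqref{eq:distortion}) are the right ones, but you stop at a sketch. The missing step is to iterate $\nu_0=e^{-nh_*}\cL^n\nu_0$ on a subarc $W'$ of length $\ell$ up to the first time $n_\ell\sim|\log(\ell/\delta_0)|$ at which $T^{-n}W'$ can reach length $\delta_0$, bound $\#\cG_{n_\ell}(W')\le K_1^{n_\ell}$ by (P1) and Convention~\ref{convention: n_0=1}, and apply $\nu_0(W_i)\le|\nu_0|_w$ on each piece, giving a bound of the form $(K_1 e^{-h_*})^{n_\ell}$ which one then converts to a power $\ell^\gamma$ through the relation between $n_\ell$, $\kappa$, and $\Lambda$. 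Whether $\gamma\ge1$ (as (a) asserts) hinges on a quantitative comparison of $h_*$, $\log K_1$, and $|\log\kappa|$; that verification is exactly what your sketch omits. As written, the proposal establishes only an $\ve^\gamma$ bound for some $\gamma\in(0,1]$, which suffices for (b)--(d) but does not prove (a) as stated.
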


\begin{proof}
The proof follows directly from the control established on the measures of the neighborhoods
of the singularity sets in Lemma~\ref{lem:control}.  The argument follows exactly as
in \cite[Corollary~7.4]{max}.
\end{proof}

With the control established in Lemma~\ref{lem:control}, we may follow the
same arguments as in \cite[Section~7.3]{max} to establish the ergodicity of the measure
$\mu_*$.  Indeed, our control is stronger than the bounds $\mu_*(\cN_\ve(\cS_k^{\pm})) \le C_k |\log \ve|^\gamma$ for some $\gamma>1$ available in \cite{max}, and the H\"older continuity
of our strong norm $\| \cdot \|_u$ is stronger than the logarithmic modulus of continuity
available in \cite{max}.  The key result is establishing the absolute continuity of the unstable
foliation with respect to $\mu_*$.  Given a locally maximal Cantor rectangle $R$, let
$\cW^{s/u}(R)$ be the set of stable/unstable manifolds that cross $D(R)$ completely (see
Section~\ref{sec:lower}).

\begin{proposition}
\label{prop:cont}
Let $R$ be a locally maximal Cantor rectangle with $\mu_*(R)>0$.  Fix $W^0 \in \cW^s(R)$,
and for $W \in \cW^s(R)$, let $\Theta_W : W^0 \cap R \to W \cap R$ denote the
holonomy map sliding along unstable manifolds in $\cW^u(R)$.  Then $\Theta_W$ is
absolutely continuous with respect to $\mu_*$.
\end{proposition}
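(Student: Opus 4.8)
The plan is to reduce the statement to a uniform comparison, along the different stable leaves of $R$, of the leafwise measures associated with $\nu_0$, and then to establish that comparison by pulling everything back under $T^{-n}$ and invoking the finiteness of the strong unstable norm $\|\nu_0\|_u$ (recall $\nu_0\in\cB$ by Theorem~\ref{thm:spectral}). First I would identify the conditional measures: disintegrating $\mu_*$ restricted to $R$ along the family $\cW^s(R)$ of stable manifolds crossing $D(R)$, the product form of $\mu_*$ in Definition~\ref{def:mu_*} ($\mu_*=\nu_0\,\tnu_0/\langle\nu_0,\tnu_0\rangle$, with $\nu_0$ a leafwise measure on stable manifolds by Lemma~\ref{lem:leafwise} and $\tnu_0$ acting on leafwise distributions essentially by integration against a transverse measure, cf.\ \eqref{eq:weak dual}) shows that the conditional measure $\mu_*^W$ of $\mu_*$ on each $W\in\cW^s(R)$ is proportional to the leafwise measure $\nu_0|_{W\cap R}$; in particular it is equivalent to it with Radon--Nikodym derivative bounded above and below by constants depending only on $T$. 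This identification proceeds exactly as the corresponding step of \cite[Section~7.3]{max}. It therefore suffices to show that $(\Theta_W)_*\bigl(\nu_0|_{W^0\cap R}\bigr)$ is absolutely continuous with respect to $\nu_0|_{W\cap R}$ with Radon--Nikodym derivative bounded above and below by a constant $C_R\ge 1$.

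Next I would reduce to a set comparison. For a solid rectangle of the form $D'=D(R)\cap\{\text{between two unstable manifolds }\gamma_1,\gamma_2\text{ crossing }D(R)\}$ (a ``vertical strip'' of $D(R)$), the holonomy $\Theta_W$ carries $W^0\cap R\cap\mathring D'$ onto $W\cap R\cap\mathring D'$, and as $\gamma_1,\gamma_2$ range over unstable manifolds through points of $R$ such sets realize all relatively open subintervals of the Cantor set $W^0\cap R$, hence generate its Borel $\sigma$-algebra. By a standard density argument on this generating family, it is then enough to produce a constant $C_R\ge 1$ with
\[
C_R^{-1}\,\nu_0\bigl(W\cap R\cap\mathring D'\bigr)\;\le\;\nu_0\bigl(W^0\cap R\cap\mathring D'\bigr)\;\le\;C_R\,\nu_0\bigl(W\cap R\cap\mathring D'\bigr)
\]
uniformly over all vertical strips $D'$ and all $W\in\cW^s(R)$.

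To prove this comparison I would fix $D'$ and $W$, set $F:=T^{-n}\bigl(R\cap\mathring D'\bigr)$ for a large $n$, and use the eigenrelation $\cL\nu_0=e^{h_*}\nu_0$ in its leafwise form $\nu_0(E)=e^{-nh_*}\sum_{U\in\cG_n(W^0)}\nu_0\bigl(U\cap T^{-n}E\bigr)$ (valid for Borel $E\subset W^0$ since $\cL^n$ is the transfer operator along stable curves with the stable-Jacobian weight, extending from $\psi\in\cC^1$ to indicators by approximation), and likewise for $W$. I would match each component $U$ of $T^{-n}W^0$ with the component $U'$ of $T^{-n}W$ of the same symbolic itinerary; since $T^{-n}$ does not cut unstable manifolds, $U$ and $U'$ are joined by the $T^{-n}$-image of an unstable manifold crossing $R$, of length $\le C\Lambda^{-n}$, so $d_{\cW^s}(U,U')\le C\Lambda^{-n}$ and, as in the matching construction behind \eqref{eq:unstable norm}, $U$ and $U'$ may be realized as graphs over a common base interval in a common chart. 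On each such pair, $U\cap F$ and $U'\cap F$ are intervals cut out by the short transverse curves $T^{-n}\gamma_1,T^{-n}\gamma_2$, with endpoints at most $C\Lambda^{-n}$ apart; choosing a mollified test function $\psi$ at a scale $\eta\ge C\Lambda^{-n}$ that is the same function of the chart parameter on $U$ and $U'$ (so $d_0(\psi|_U,\psi|_{U'})=0$, $|\psi|_{\cC^1}\le C\eta^{-1}$), the strong unstable norm gives $\bigl|\int_U\psi\,\nu_0-\int_{U'}\psi\,\nu_0\bigr|\le C\|\nu_0\|_u\Lambda^{-\beta n}\eta^{-1}$, and the strong stable norm (test function $1$ on intervals of length $\lesssim\eta$) gives $\bigl|\nu_0(U\cap F)-\int_U\psi\,\nu_0\bigr|\le C\|\nu_0\|_s\eta^{1/p}$, and similarly on $U'$. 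Summing over the at most $\#\cG_n(W^0)\le C\delta_0^{-1}\#\cM_0^n\le Ce^{nh_*}$ matched pairs (Lemma~\ref{lem:growth}(b), Proposition~\ref{prop:M0n}) and multiplying by $e^{-nh_*}$ yields
\[
\bigl|\nu_0\bigl(W^0\cap R\cap\mathring D'\bigr)-\nu_0\bigl(W\cap R\cap\mathring D'\bigr)\bigr|\;\le\;C\bigl(\Lambda^{-\beta n}\eta^{-1}+\eta^{1/p}\bigr)+(\text{mismatch terms});
\]
optimizing $\eta=\Lambda^{-\beta np/(p+1)}$ makes the first bracket $O(\Lambda^{-\beta n/(p+1)})$, and the mismatch terms — coming from components $U$ whose itinerary is not realized by $W$, or whose $T^n$-image straddles $\partial R\cup\partial D'$, hence lie after applying $T^n$ in a shrinking neighbourhood of $\partial R$ — are controlled by Corollary~\ref{cor:atomic}(a) (the unstable boundaries are transverse to the stable cone) and Lemma~\ref{lem:control}, and also vanish as $n\to\infty$. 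Letting $n\to\infty$ gives the comparison (indeed with $C_R$ arbitrarily close to $1$, reflecting the holonomy invariance of the Margulis-type conditionals, though absolute continuity with bounded density is all that is needed).

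I expect the main obstacle to be the bookkeeping in this last step: matching the components of $T^{-n}W^0$ and $T^{-n}W$ up to a set of small $\nu_0$-measure, verifying that matched components are $\Lambda^{-n}$-close in $d_{\cW^s}$ and share a common chart so that the strong unstable norm (which requires $d_0=0$) applies, and estimating the unmatched and boundary-straddling pieces via the transversality of $\partial R$ to the stable cone and the subexponential approach of $\mu_*$-a.e.\ orbit to $\cS_n^{\pm}$ from Lemma~\ref{lem:control}. Once this geometric matching is set up, the analytic estimates are of exactly the same type as those already carried out in the proof of Proposition~\ref{prop:LY}, and the argument follows that of \cite[Section~7.3]{max}.
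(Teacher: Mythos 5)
Your proof follows essentially the same approach as the paper, which defers the technical details to \cite[Corollary~7.9]{max}: you establish the equivalence \eqref{eq:equivalence} of the conditional measures $\mu_*^W$ with the leafwise measures $\nu_0|_W$ via the product structure of Definition~\ref{def:mu_*} and Lemma~\ref{lem:leafwise}, and then deduce absolute continuity of $\nu_0$ under unstable holonomy by pulling back under $T^{-n}$, matching smooth components, and applying the finiteness of $\|\nu_0\|_u$ and $\|\nu_0\|_s$ together with Lemma~\ref{lem:control} and Corollary~\ref{cor:atomic}(a) — precisely the ingredients the paper names. The mismatch-term and mollification bookkeeping you identify as the main obstacle is indeed where the work resides, but your estimates are of the same type as the matched/unmatched-piece decomposition already used in proving \eqref{eq:unstable norm}, consistent with the route taken in \cite[Section~7.3]{max}.
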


\begin{proof}
This is \cite[Corollary~7.9]{max}.  Its proof relies on the analogous property of absolute continuity
for $\nu_0$, which in turn follows from the control established by the strong norm and
Lemma~\ref{lem:control}.  The final step in the proof is to show that on each $W \in \cW^s(R)$,
the conditional measure $\mu_*^W$ of $\mu_*$ is equivalent to the leafwise measure $\nu_0$
restricted to $W$,
i.e. there exists $C_W >0$ such that
\begin{equation}
\label{eq:equivalence}
C_W \mu_*^W \le \nu_0|_W \le C_W^{-1} \mu_*^W \, .
\end{equation}
This equivalence of the measures follows from the representation of $\nu_0$ as a family
of leafwise measures given by Lemma~\ref{lem:leafwise} as well as the characterization of
$\mu_*$ via the limit,
\[
\mu_*(\psi) = \tnu_0(\nu)^{-1} \tnu_0(\psi \nu) 
= \tnu_0(\nu_0)^{-1} \lim_{n\to \infty} e^{-n h_*} (\cL^*)^n d\musrb(\psi \nu) \, ,
\]
from \eqref{eq:tnu def}.
\end{proof}

\begin{cor}
\label{cor:ergodic}
The absolute continuity of the unstable holonomy with respect to $\mu_*$ implies the following
additional properties.
\begin{itemize}
  \item[a)]  $(T^n, \mu_*)$ is ergodic for all $n \ge 1$.
  \item[b)]  For any open set $O \subset M$, we have $\mu_*(O) > 0$.
\end{itemize}
\end{cor}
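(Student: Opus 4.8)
The plan is to run the classical Hopf argument in the version adapted to hyperbolic systems with singularities, exactly as in \cite[Section~7.3]{max}; the two ingredients that make it work are Proposition~\ref{prop:cont} (absolute continuity of the unstable holonomy) and the equivalence \eqref{eq:equivalence} of the stable conditionals of $\mu_*$ with the leafwise measures of $\nu_0$, together with the leafwise positivity of $\nu_0$ from Lemma~\ref{lem:leafwise}.

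For part~(a), fix $n \ge 1$; since $T$ is topologically mixing so is $T^n$, so it suffices to treat a general $n$. Let $\phi \in L^2(M,\mu_*)$ be $T^n$-invariant. First I would apply the Birkhoff ergodic theorem to $\phi$ under both $T^n$ and $T^{-n}$ (using the invertibility and $\mu_*$-invariance of $T$, and Corollary~\ref{cor:atomic}(b),(d), so that $\mu_*$-a.e.\ point genuinely has stable and unstable manifolds of positive length) to produce a function $\bar\phi$, equal to $\phi$ $\mu_*$-a.e., which is simultaneously constant on local stable and on local unstable manifolds of $T$. Then, with the finite family of locally maximal Cantor rectangles $\cR_{\delta_1}=\{R_1,\dots,R_k\}$ from the proof of Lemma~\ref{lem:lower}, I would check that $\mu_*(R_i)>0$ for each $i$: by \eqref{eq:equivalence} the conditional measures of $\mu_*$ along the stable leaves crossing $D(R_i)$ are equivalent to the leafwise measures of $\nu_0$, which are positive by Lemma~\ref{lem:leafwise}, while the factor measure of $\mu_*$ along the stable foliation $\cF$ is equivalent to $\hatmusrb$ (from Lemma~\ref{lem:leafwise}, \eqref{eq:tnu def} and Definition~\ref{def:mu_*}), so $\musrb(R_i)>0$ forces $\mu_*(R_i)>0$. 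On each $R_i$, the local product structure, the absolute continuity of the unstable holonomy (Proposition~\ref{prop:cont}), and a Fubini argument then force $\bar\phi$ to equal a constant $c_i$ on a $\mu_*$-full subset of $R_i$.

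It remains to show $c_i=c_j$ for all $i,j$ and to propagate the common value to $\mu_*$-a.e.\ point. For the first, I would use mixing precisely as in the proof of Lemma~\ref{lem:lower}: by \cite[Lemma~4.13]{Li1} together with $\musrb(R_i^*\cap T^{-N}R_j)\ge\ve$ for all large $N$ (which I may take to be a multiple of $n$), some stable leaf crossing $R_i$ has a forward $T^N$-image containing a stable leaf crossing $R_j$; since $\bar\phi$ is constant along stable leaves and $\bar\phi\circ T^n=\bar\phi$, this yields $c_i=c_j=:c$. For the propagation, fix $\mu_*$-a.e.\ $x$; its global unstable manifold is an increasing union of forward iterates of local unstable manifolds through the backward orbit, and because that orbit approaches $\cS^{\pm}$ only subexponentially (Lemma~\ref{lem:control}, i.e.\ \eqref{eq:approach}), the lengths of these iterates grow without bound, so for large $m$ (a multiple of $n$) there is a connected unstable curve through $x$ of length $\ge\delta_1/3$, which therefore properly crosses some $R_i$. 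Since $\bar\phi$ is constant on this curve and $\mu_*$-a.e.\ leaf meeting $R_i$ carries the value $c$, we get $\bar\phi(x)=c$; hence $\phi$ is $\mu_*$-a.e.\ constant and $(T^n,\mu_*)$ is ergodic.

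For part~(b), given a nonempty open set $O$, I would invoke \cite[Theorem~4.10]{Li1} inside $O$ to obtain a locally maximal Cantor rectangle $R\subset O$ with $\musrb(R)>0$; the same argument as above gives $\mu_*(R)>0$, hence $\mu_*(O)>0$. (Alternatively, granting (a), one deduces (b) from the $T$-invariance of $\operatorname{supp}\mu_*$, the leafwise positivity of $\nu_0$, and topological mixing.) I expect the main obstacle to be the propagation step of part~(a): converting the growth-lemma control over long components of $T^{-n}W$ --- stated for the reference class $\cW^s$ and, through $\nu_0$, for leafwise measures --- into the assertion that the global unstable (resp.\ stable) manifold of $\mu_*$-a.e.\ point meets one of the rectangles $R_i$ in its core, which is exactly where Lemma~\ref{lem:control} and the polynomial bound $\mu_*(\cN_\ve(\cS_k^{\pm}))\le C_k\ve^{1/p}$ are needed and where the present argument departs from \cite{max}.
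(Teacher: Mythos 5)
Your overall route --- Birkhoff averages to obtain a function constant along stable and unstable leaves, a Hopf argument run on locally maximal Cantor rectangles via the absolute continuity of Proposition~\ref{prop:cont}, topological mixing to equate the constants, and propagation by letting unstable segments grow along the backward orbit --- is exactly the approach the paper intends, which it simply cites from \cite{max} (Lemma~7.15, Proposition~7.16 and Proposition~7.11 there).

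There is, however, a genuine gap in the step where you establish $\mu_*(R_i)>0$. You take the family $\cR_{\delta_1}$ from Lemma~\ref{lem:lower}, which is built so that $\musrb(R_i)>0$, and argue $\musrb(R_i)>0 \Rightarrow \mu_*(R_i)>0$ by comparing disintegrations. But the conditional $\mu_*^W$ is equivalent to $\nu_0|_W$ (by \eqref{eq:equivalence}), whereas $\musrb^\xi$ is equivalent to arc length on $W_\xi$ (by \eqref{eq:log g}); nothing in the paper identifies $\nu_0|_W$ with arc length, and generically they are mutually singular, since $\mu_*$ need not be absolutely continuous with respect to $\musrb$. Lemma~\ref{lem:leafwise} gives $\nu_0(V)>0$ only for \emph{open} subcurves $V\in\cW^s$, not for the Cantor set $R_i\cap W_\xi$, so positivity of the leafwise measure does not yield $\nu_0(R_i\cap W_\xi)>0$. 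Moreover, your assertion that the factor measure of $\mu_*$ along $\cF$ is equivalent to $\hatmusrb$ is not established: Lemma~\ref{lem:leafwise} proves this for $\hat\nu_0$, not for the factor measure of $\mu_*$. The correct fix --- and the one the paper actually uses, in the proof of Lemma~\ref{lem:long good} --- is to build the Cantor rectangles so that $\mu_*(R_i)>0$ from the start: Corollary~\ref{cor:atomic}(d) guarantees that $\mu_*$-a.e.\ point has stable and unstable manifolds of positive length, so \cite[Theorem~4.10]{Li1} applied relative to $\mu_*$ produces a finite covering family with $\mu_*(R_i)>0$ directly, with no detour through $\musrb$. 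The same remark applies to your proof of part~(b): choose the Cantor rectangle inside $O$ with $\mu_*(R)>0$ directly.

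One smaller point: your closing remark claims the propagation step "departs from \cite{max}", but the paper asserts the opposite --- the control $\mu_*(\cN_\ve(\cS_k^{\pm}))\le C_k\ve^{1/p}$ of Lemma~\ref{lem:control} is \emph{stronger} than the logarithmic bound available in \cite{max}, so the argument of \cite{max} transfers without modification.
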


\begin{proof}
a)  Using absolute continuity, one establishes that each Cantor rectangle belongs
to a single ergodic component following the usual Hopf argument \cite[Lemma~7.15]{max}.
Then the ergodicity of $T^n$ follows from the assumption that $T$ is topologically mixing
\cite[Proposition~7.16]{max}.

\smallskip
\noindent
b)  The proof is identical to the proof of \cite[Proposition~7.11]{max}.
\end{proof}


\subsection{Entropy of $\mu_*$}
\label{sec:entropy}

In this section, we prove that the measure-theoretic entropy of $\mu_*$ is $h_*$, by
estimating the measure of dynamically defined Bowen balls for $T^{-1}$.  
Recall the metric $\bar d$ defined in \eqref{eq:bar d}.
For
$n \ge 0$ and $\ve > 0$ and $x \in M$, define
\[
B_n(x, \ve) = \{ y \in M : \bar d(T^{-j}y, T^{-j}x) \le \ve, \, \forall \, 0 \le j \le n \} \, .
\]

\begin{lemma}
\label{lem:bowen}
There exists $C>0$ such that for all $\ve >0$ sufficiently small and all $n \ge 0$, we have\footnote{The extra factor of $n$ in this estimate is due to the fact that we do not assume the dynamical 
refinements of
$\cM_0^1$ are simply connected.  Such an assumption would allow us to eliminate this
factor, as in \cite[Proposition~7.12]{max}.}
\[
\mu_*(B_n(x,\ve)) \le C n e^{-n h_*} \, .
\]
\end{lemma}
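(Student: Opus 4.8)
The plan is to bound the Bowen ball $B_n(x,\ve)$ for $T^{-1}$ by a union of a controlled number of elements of $\cM_0^n$ intersected with a small neighborhood, and then to apply Proposition~\ref{prop:M0n} together with the measure estimates on $\nu_0$ and $\tnu_0$ already established. First I would observe that if $y \in B_n(x,\ve)$, then for every $0 \le j \le n$ the points $T^{-j}y$ and $T^{-j}x$ lie in the same element of $\cM_0^1$ (i.e. the same $M_i^+$), provided $\ve < \ve_0$ is smaller than the expansivity constant from \eqref{eq:expansive}; more precisely, $\bar d(T^{-j}y,T^{-j}x)\le \ve$ forces them into the same component $\overline{M_i^+}$ by the definition \eqref{eq:bar d} of $\bar d$. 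Consequently $y$ and $x$ belong to the closure of a common element of $\cM_{-n}^0$, equivalently (applying $T^n$) $T^nx$ and $T^ny$ lie in the closure of a common element of $\cM_0^n$. So $B_n(x,\ve)$ is contained in the union of those elements $A \in \cM_{-n}^0$ (together with their boundary curves) that meet the $\ve$-ball around $x$ in the $\bar d$ metric.

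Next I would estimate how many such elements there can be. Since the elements of $\cM_{-n}^0$ are bounded by unstable curves in $\cS_n^-$ and the $\ve$-ball $\{ \bar d(\cdot,x) \le \ve \}$ is (contained in) a genuine Riemannian $\ve$-ball inside a single $M_i^+$, the number of elements of $\cM_{-n}^0$ meeting this ball is controlled by the number of connected components that the curves of $\cS_n^-$ cut out of a disk of radius $\ve$. By the uniform transversality hypotheses and the fact that $K(n)$ curves can meet at a point, this count is bounded by $C \ve^{-1}$ uniformly in $n$ — here the factor of $n$ enters because, as the footnote notes, dynamical refinements of $\cM_0^1$ need not be simply connected, so one passes through $\#\cM_{-k}^k$-type counts and picks up a linear-in-$n$ factor from Lemma~\ref{lem:isolated} / the argument of Theorem~\ref{thm:initial}(a). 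Thus $B_n(x,\ve)$ is covered by at most $Cn\ve^{-1}$ sets of the form $A \cap \cN_{C\ve}(x)$ with $A \in \cM_{-n}^0$.

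Then I would bound $\mu_*(A \cap \cN_{C\ve}(x))$ for a single $A \in \cM_{-n}^0$. The key point is that such an $A$ has stable diameter at most $C\Lambda^{-n}$ (by \eqref{eq:exp def}, uniform transversality, and the simple-connectedness of the $M_i^+$), so intersecting with the $\ve$-ball leaves a region of stable diameter $\le C\min\{\Lambda^{-n},\ve\}$ and unstable extent $\le C\ve$. Using Definition~\ref{def:mu_*}, $\mu_*(E) = \tnu_0(1_E\nu_0)/\langle\nu_0,\tnu_0\rangle$, and \eqref{eq:weak dual} (so $\tnu_0(1_E\nu_0)\le C|1_E\nu_0|_w$), it suffices to bound $|1_{A\cap\cN_{C\ve}(x)}\nu_0|_w$. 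Integrating a test function $\psi$ with $|\psi|_{\cC^1(W)}\le 1$ over $W\cap A\cap\cN_{C\ve}(x)$ for $W\in\cW^s$, the intersection is a single stable subcurve of length at most $C\min\{\Lambda^{-n},\ve\}$, so by the definition of $\|\cdot\|_s$ this integral is bounded by $\|\nu_0\|_{\cB}\,(C\min\{\Lambda^{-n},\ve\})^{1/p}$. Hence $\mu_*(A\cap\cN_{C\ve}(x)) \le C'(\min\{\Lambda^{-n},\ve\})^{1/p}$; since $h_* \ge \log\Lambda$ up to the complexity factor — more to the point, $\#\cM_0^n \le C_\#^{-1}e^{nh_*}$ and $\#\cM_0^n \ge C\delta_0\Lambda^n$ by Lemma~\ref{lem:growth}(d), so $\Lambda^{-n} \le C e^{-nh_*}$ — we get $\mu_*(A\cap\cN_{C\ve}(x)) \le C e^{-nh_*/p}\cdot$(something), but this alone is not quite $e^{-nh_*}$.

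The cleaner route, which I would actually carry out, avoids the crude per-element bound: instead I would use that $B_n(x,\ve)$ is contained in a $\bar d$-ball of radius $\ve$ intersected with $\le Cn\ve^{-1}$ elements of $\cM_{-n}^0$, but then estimate the whole union at once against $\nu_0$ by disintegrating along stable manifolds as in Lemma~\ref{lem:leafwise}. The intersection of $B_n(x,\ve)$ with any stable manifold $W\in\cF$ is contained in a single stable subcurve of length $\le C\ve$ (it cannot be longer since $B_n(x,\ve)$ has small $\bar d$-diameter in all backward iterates and stable curves contract under $T^{-n}$ — actually they expand, so I use that $B_n(x,\ve)$ has $\bar d$-diameter $\le\ve$ at time $0$ directly), giving $\nu_0^W(B_n(x,\ve)\cap W) \le C\ve^{1/p}/\nu_0(W)$, and this does not yet produce the $e^{-nh_*}$ decay either. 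The decay must come from the \emph{unstable} direction: the union of $\le Cn\ve^{-1}$ elements of $\cM_{-n}^0$ each has \emph{unstable} diameter $\le C\ve$ as well — no. I expect the main obstacle is precisely reconciling the counting bound $\# \le Cn\ve^{-1}$ with the per-piece measure bound so that the $\ve$-dependence cancels and an $e^{-nh_*}$ factor survives; the resolution is that each $A\in\cM_{-n}^0$ meeting the ball has $\mu_*$-measure $\le C e^{-nh_*}\ve^{1+1/p}$ (using \emph{both} that its stable diameter is $\le C\Lambda^{-n}\le Ce^{-nh_*}$-comparable via the unstable side and that intersecting with the ball costs $\ve$ in the unstable direction and $\ve^{1/p}$ in the stable), so that $Cn\ve^{-1}\cdot C e^{-nh_*}\ve \le Cne^{-nh_*}$ after the powers of $\ve$ cancel — and getting the bookkeeping of these powers right, using Corollary~\ref{cor:uniform growth} to convert stable-length bounds into $e^{-nh_*}$ factors, is the delicate part. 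I would therefore model the estimate on \cite[Proposition~7.12]{max}, inserting the extra factor of $n$ wherever the simple-connectedness of $\cM_0^1$-refinements was used there.
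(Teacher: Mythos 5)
Your proposal starts from the right first reduction---passing to $\tnu_0(1_{B_n}\nu_0)/\langle\nu_0,\tnu_0\rangle$ and then to $|1_{B_n}\nu_0|_w$ via \eqref{eq:weak dual}---but then it diverges and, as you yourself acknowledge near the end, you never close the estimate. There are two genuine problems.

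First, your opening topological claim is exactly what the paper says can fail. You assert that $\bar d(T^{-j}y,T^{-j}x)\le\ve$ for all $0\le j\le n$ forces $y$ and $x$ to lie in the closure of a common element of $\cM_{-n}^0$. That would follow if the dynamical refinements of $\cM_0^1$ were simply connected, but the whole point of the footnote and of Figure~\ref{fig} is that they need not be: a local stable manifold $V$ inside one element of $\cM_0^j$ can meet $M_i^-$ in two connected components whose $T^{-1}$-images both land in $M_i^+$ within $\bar d$-distance $\ve$ of each other. So $T^{-n}(B_n(x,\ve))$ may straddle several elements of $\cM_0^n$, and the correct count of how many is the heart of the lemma. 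Your heuristic count $Cn\ve^{-1}$ is not only unjustified but carries an $\ve^{-1}$ that you then have to cancel against per-piece measure estimates, and that cancellation is never actually demonstrated.

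Second, and more fundamentally, you never invoke the eigenvalue equation $\cL\nu_0=e^{h_*}\nu_0$, which is where the $e^{-nh_*}$ decay actually comes from. The paper fixes a single $W\in\cW^s$ and $\psi\ge0$ with $|\psi|_{\cC^1(W)}\le 1$, writes $\nu_0=e^{-nh_*}\cL^n\nu_0$, and expands
\[
\int_W \psi\,1_{n,\ve}^B\,\nu_0 \;=\; e^{-nh_*}\sum_{W_i\in\cG_n(W)}\int_{W_i}\psi\circ T^n\,1_{T^{-n}B_n(x,\ve)}\,\nu_0\,.
\]
Each term with $W_i\cap T^{-n}B_n(x,\ve)\neq\emptyset$ is bounded by $\nu_0(W_i)\le|\nu_0|_w$, and the factor $e^{-nh_*}$ is already in front; what remains is to show at most $n$ indices $i$ contribute. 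That count is the careful topological argument: for $\ve$ small, since $\cS^-$ is a finite union of smooth curves uniformly transverse to the stable cone, a stable subcurve of length $<\ve$ lying in a single $M_{i_j}^-$ can split into at most two components mapped to the same $M_{i_j}^+$ under $T^{-1}$, and such a subdivision can happen at most once per iterate, giving the linear-in-$n$ bound. No estimate on $\mu_*(A\cap\cN_{C\ve}(x))$ per $A\in\cM_{-n}^0$ is used or needed, and no powers of $\ve$ have to cancel; the constant $C$ in the lemma is just $|\nu_0|_w$ times the normalizing constants. Without the eigenvalue-equation step and the per-iterate two-component bound, the argument does not close.
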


\begin{proof}
Fix $x \in M$, $\ve >0$ and $n \ge 0$, and let $1_{n, \ve}^B$ denote the indicator function
of the Bowen ball $B_n(x,\ve)$.  We shall prove
\begin{equation}
\label{eq:goal}
\mu_*(B_n(x,\ve))) =\frac{ \tnu_0(1_{n, \ve}^B \nu_0) }{\tnu_0(\nu_0) } \le C |1_{n, \ve}^B \nu_0|_w \le C n e^{-n h_*} \, ,
\end{equation}
where $C>0$ can be chosen independent of $\ve$.
The first inequality follows from \eqref{eq:weak dual}, once we show
that $1_{n,\ve}^B \nu_0 \in \cB_w$.  
To see this, write
\[
1_{n ,\ve}^B = \prod_{j=0}^n 1_{\cN_\ve(T^{-j}x)} \circ T^{-j} = \prod_{j=0}^n \cL_{\mbox{\tiny SRB}}^j(1_{\cN_{\ve}(T^{-j}x)}) \, ,
\]
where $\cL_{\mbox{\tiny SRB}}$ denotes the transfer operator with respect to $\musrb$.  Since
$\cL_{\mbox{\tiny SRB}}$ preserves $\cB_w$ (and also $\cB$) by \cite{demers liv}, the
claim follows since $1_{\cN_\ve(T^{-j}x)}$ satisfies the assumptions of 
Lemma~\ref{lem:piece}:  $\partial \cN_\ve(T^{-j}x)$ consists of a single circular arc, together
with possibly part of $\partial M$, both of which satisfy the weak transversality condition of that lemma
for $\ve$ sufficiently small.
Applying Lemma~\ref{lem:piece}(b) inductively in $j$ completes the proof of the claim, and
of the first inequality in \eqref{eq:goal}.

Next, since $\nu_0$ is a non-negative leafwise measure by Lemma~\ref{lem:leafwise},
we have $\int_W \psi \, \nu_0 \ge 0$ for all $W \in \cW^s$ and $\psi \ge 0$.  Then since
$|\int_W \psi \, \nu_0| \le \int_W |\psi| \, \nu_0$, we can achieve the supremum in the weak norm
of $\nu_0$
by restricting to test functions $\psi \ge 0$.

Now take $W \in \cW^s$, $\psi \in \cC^1(W)$ with $\psi \ge 0$ and $|\psi|_{\cC^1(W)} \le 1$, and suppose
that $W \cap B_n(x,\ve) \neq \emptyset$.  Then using that $\nu_0$ is an eigenfunction of $\cL$,
\[
\int_W \psi \, 1_{n,\ve}^B \, \nu_0 = \int_W \psi \, 1_{n,\ve}^B \, e^{-n h_*} \cL^n \nu_0
= e^{-n h_*} \sum_{W_i \in \cG_n(W)} \int_{W_i} \psi \circ T^n \, 1_{n,\ve}^B \circ T^n \, \nu_0 \, .
\]
Observe that $1_{n , \ve}^B \circ T^n = 1_{T^{-n}(B_n(x,\ve))}$, and that
\[
T^{-n}(B_n(x,\ve)) = \{ y \in M : \bar d(T^{j-n}x, T^jy) \le \ve, \, \forall \, 0 \le j \le n \} \, .
\]
Thus on each $W_i \in \cG_n(W)$ such that $W_i \cap T^{-n}(B_n(x,\ve)) \neq \emptyset$, the
positivity of $\nu_0$ implies,
\[
\int_{W_i} \psi \circ T^n \, 1_{n,\ve}^B \circ T^n \, \nu_0 \le \nu_0(W_i) \le |\nu_0|_w \, .
\]

It remains to estimate the cardinality of such $W_i$.
Recalling \eqref{eq:bar d}, if $\ve < 10 \, \diam(M)$, and $\bar d(T^{j-n}x, T^jy) \le \ve$, then
$T^{j-n}x$  and $T^jy$ belong to the same set $\overline M_{i_j}^+$ for each $j$.
We would like to conclude that then $T^{-n}(B_n(x,\ve))$ belongs to a single element of
$\cM_0^n$, yet this may fail since both the dynamical refinements of $\cM_0^1$ 
and the local components of $T^{-j}W \subset \overline M_{i_j}^+$ may 
not be connected.  
Figure~\ref{fig} shows an example of how these multiple components may arise due to 
intersections of $\cS^+_j$ with $\cS^-$.

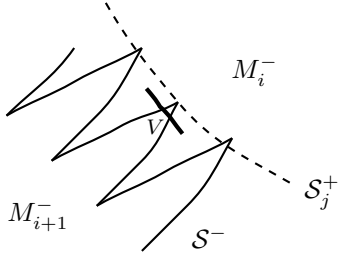
\begin{figure}[ht]
\begin{centering}
\begin{tikzpicture}[x=6mm,y=6mm]

\draw[thick] (0,13) to[out=45, in=235] (1.5,14.5);
\draw[thick] (0,13) to[out=30, in=210] (1.5,13.75) to[out=30, in=210] (3,14.5);
\draw[thick] (1,12) to[out=45, in=225] (2,13) to[out=45, in=240] (3,14.5);
\draw[thick] (1,12) to[out=30, in=210] (2.5,12.75) to[out=30, in=210] (3.8,13.3);
\draw[thick] (2,11) to[out=45, in=225] (3,12) to[out=45, in=240] (3.8,13.3);
\draw[thick] (2,11) to[out=30, in=210] (3.5,11.75) to[out=30, in=210] (5,12.5);
\draw[thick] (3,10) to[out=45, in=225] (4,11) to[out=45, in=240] (5,12.5);

\draw[thick, dashed] (2.2,15.5) to[out=300, in=130] (3.8,13.3) to[out=310, in=150] (6.3, 11.5);
\draw[ultra thick] (3.1,13.6) to[out=310, in=120] (3.4,13.2) to[out=320, in=130] (3.9,12.6);

\node at (3.3,12.75){\scriptsize $V$};
\node at (7,11.3){\small $\cS_j^+$};
\node at (4.5, 10.3){\small $\cS^-$};
\node at (5.5,14){\small $M_i^-$};
\node at (.7,10.8){\small $M_{i+1}^-$};

\end{tikzpicture}
\caption{A possible intersection between $\cS_j^+$ (dashed line) and $\cS^-$ (solid lines).  $\cS^-$ is the boundary between
two domains $M_i^-$ and $M_{i+1}^-$, 
while $\cS_j^+$ is the boundary of elements of $\cM_0^j$.  The 
local stable manifold $V \subset T^{-j}W$ is contained in a single element of $\cM_0^j$, yet
the intersection $V \cap M_i^-$ has two connected components whose images under
$T^{-1}$ will both lie in $M_i^+$ and be within distance
$\ve$ of one another in the metric $\bar d$.  }  
\label{fig}
\end{centering}
\end{figure}

Yet suppose $V \subset V' \in \cG_j(W)$, $|V| < \ve$.  Since $\cS^-$ comprises a finite
number of smooth curves uniformly
transverse to the stable cone, for $\ve$ sufficiently small there can be at most two connected
components of $V$ that lie in the same $M_{i_j}^-$; these will be mapped to the same 
$M_{i_j}^+$ under $T^{-1}$.  Since this subdivision of a set of radius $\ve$ 
can occur at most once per iterate, we have at most $n$ elements
$W_i \in \cG_n(W)$ such that 
$W_i \cap T^{-n}(B_n(x, \ve)) \neq 0$ for $\ve$ sufficiently small.  Putting these estimates
together yields,
\[
\int_W \psi \, 1_{n,\ve}^B \, \nu_0 \le e^{-n h_*} n |\nu_0|_w \, ,
\]
and taking the supremum over $\psi$ and $W$ yields the final inequality in \eqref{eq:goal}.
\end{proof}

\begin{proposition}
\label{prop:entropy}
For $\mu_*$ defined by Definition~\ref{def:mu_*}, we have $h_{\mu_*}(T) = h_*$.
\end{proposition}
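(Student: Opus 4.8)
The plan is to establish the two bounds $h_{\mu_*}(T)\le h_*$ and $h_{\mu_*}(T)\ge h_*$ separately. The upper bound is free: $\mu_*$ is a $T$-invariant probability measure (Definition~\ref{def:mu_*} and the remark following it), so Theorem~\ref{thm:initial}(d) gives $h_{\mu_*}(T)\le h_*$ immediately. All of the work is in the reverse inequality, and this is where Lemma~\ref{lem:bowen} enters.

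For the lower bound I would first fix $\ve>0$ small enough that the conclusion of Lemma~\ref{lem:bowen} holds, and then choose $k$ large enough that the partition $\cP_k$ has diameter at most $\ve$ in the metric $\bar d$; since the diameter of $\cP_k$ is bounded by $C\Lambda^{-k}$ (as used in the proof of Theorem~\ref{thm:initial}(d)), this is possible, and for $k$ this large $\cP_k$ is also a generator. Using that entropy is invariant under time reversal and that $\cP_k$ is a two-sided generator for $T$ (equivalently for $T^{-1}$),
\[
h_{\mu_*}(T)=h_{\mu_*}(T^{-1})=h_{\mu_*}(T^{-1},\cP_k)=\lim_{n\to\infty}\frac1n\,H_{\mu_*}\!\Big(\bigvee_{j=0}^{n-1}T^{j}\cP_k\Big)\,.
\]
The key geometric point --- and the one matching the backward Bowen balls of Lemma~\ref{lem:bowen} --- is that for any $x\in M$ the atom $A$ of $\bigvee_{j=0}^{n-1}T^{j}\cP_k$ containing $x$ is contained in $B_{n-1}(x,\ve)$: if $y\in A$ then $T^{-j}y$ and $T^{-j}x$ lie in a common element of $\cP_k$ for $0\le j\le n-1$, hence $\bar d(T^{-j}y,T^{-j}x)\le\ve$ there. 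Therefore Lemma~\ref{lem:bowen} gives $\mu_*(A)\le C(n-1)e^{-(n-1)h_*}$ for every atom $A$.

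From this uniform upper bound on atom masses and the elementary inequality $H_{\mu_*}(\cQ)=\sum_{Q}\mu_*(Q)\bigl(-\log\mu_*(Q)\bigr)\ge -\log\bigl(\max_{Q}\mu_*(Q)\bigr)$ applied to $\cQ=\bigvee_{j=0}^{n-1}T^{j}\cP_k$, I would obtain
\[
\frac1n\,H_{\mu_*}\!\Big(\bigvee_{j=0}^{n-1}T^{j}\cP_k\Big)\ \ge\ \frac{(n-1)h_*-\log\bigl(C(n-1)\bigr)}{n}\,,
\]
whose right-hand side tends to $h_*$ as $n\to\infty$. Combined with the displayed identity this yields $h_{\mu_*}(T)\ge h_*$, and with the upper bound, $h_{\mu_*}(T)=h_*$. (Equivalently one could invoke the Brin--Katok local entropy formula, observing that Lemma~\ref{lem:bowen} forces $\liminf_{n}-\tfrac1n\log\mu_*(B_n(x,\ve))\ge h_*$ uniformly in $x$ and in small $\ve$; the partition argument above is the same fact, packaged so as to avoid appealing to that theorem in the present non-continuous setting.)

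The main obstacle here is purely one of bookkeeping, not of substance: one must correctly align the time direction (the balls $B_n(x,\ve)$ of Lemma~\ref{lem:bowen} are Bowen balls for $T^{-1}$) with the dynamical refinement of $\cP_k$ and with the metric $\bar d$ on the space $M$ with doubled boundary points, and verify that atoms of the refined partition really do sit inside the matching Bowen balls despite this doubling. Once this containment and the generating property of $\cP_k$ are in place, the estimate of Lemma~\ref{lem:bowen} does everything; in particular the spurious factor $n$ there (flagged in its footnote) disappears after dividing by $n$ and letting $n\to\infty$, so it costs nothing.
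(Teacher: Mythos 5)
Your proof is correct, and it achieves the lower bound $h_{\mu_*}(T)\ge h_*$ by a genuinely different route than the paper. The paper fixes $\ve$, uses Corollary~\ref{cor:atomic}(c) ($\mu_*$-integrability of $|\log d(x,\cS_1^\pm)|$) and the ergodicity of $\mu_*$ from Corollary~\ref{cor:ergodic} to invoke a Brin--Katok type local entropy formula adapted to maps with singularities (via \cite[Prop.~3.1]{DWY}), then applies Lemma~\ref{lem:bowen} to bound the local entropy from below by $h_*$ at $\mu_*$-a.e.\ point. Your argument bypasses Brin--Katok entirely: it converts the uniform Bowen-ball bound of Lemma~\ref{lem:bowen} into a uniform upper bound on the atoms of $\bigvee_{j=0}^{n-1}T^j\cP_k$ (after checking that such an atom really sits inside $B_{n-1}(x,\ve)$ in the doubled metric $\bar d$, which works because each element of $\cP_k$ lies in a single $\overline M_i^+$, so $\bar d$ agrees with $d$ there), and then uses only the elementary inequality $H(\cQ)\ge -\log\max_Q\mu(Q)$ plus Kolmogorov--Sinai for the generator $\cP_k$. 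The generating property and the diameter bound $C\Lambda^{-k}$ for $\cP_k$ are already established in the proof of Theorem~\ref{thm:initial}(d), so no new ingredients are needed. What your version buys is self-containedness: it avoids citing the modified Brin--Katok theorem (whose hypotheses in the non-continuous setting require the integrability and ergodicity facts) and does not need ergodicity of $\mu_*$ at all for this proposition. What the paper's version buys is brevity at the proof level by offloading the partition combinatorics to a black-boxed external result, and it makes the local-entropy statement explicit, which some readers may prefer. Both proofs ultimately rest on Lemma~\ref{lem:bowen}, and, as you observe, the spurious factor of $n$ there is harmless in either formulation.
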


\begin{proof}
Recall that $\int_M |\log d(x, \cS_1^{\pm})| \, d\mu_* < \infty$ by Corollary~\ref{cor:atomic}(c),
and that $\mu_*$ is ergodic by Corollary~\ref{cor:ergodic}.  Thus applying
\cite[Proposition~3.1]{DWY},\footnote{Which is a slight modification of the Brin-Katok
local entropy theorem \cite{brin}, applying \cite[Lemma~2]{mane}.  See also 
\cite[Corollary~7.17]{max}.} we conclude that for $\mu_*$-a.e. $x \in M$,
\[
\lim_{\ve \to 0} \liminf_{n \to \infty} - \frac 1n \log \mu_*(B_n(x,\ve)) = 
\lim_{\ve \to 0} \limsup_{n \to \infty} - \frac 1n \log \mu_*(B_n(x,\ve)) = 
h_{\mu_*}(T^{-1} ) = h_{\mu_*}(T) \, .
\]	
On the other hand, Lemma~\ref{lem:bowen} implies that for all $\ve>0$ sufficiently small,
\[
\liminf_{n \to \infty} - \frac 1n \log \mu_*(B_n(x,\ve)) \ge h_* \, .
\]
Thus $h_{\mu_*}(T) \ge h_*$.  But $h_{\mu_*}(T) \le h_*$ by Theorem~\ref{thm:initial}(d), so
equality follows. 
\end{proof}


\subsection{Uniqueness of $\mu_*$}
\label{sec:unique}

In this section we prove that $\mu_*$ is the unique invariant probability
measure with $h_{\mu_*}(T) = h_*$.

The proof of uniqueness follows very closely the proof of uniqueness in
\cite[Section~7.7]{max}.  We include the proof to point out several
differences in the initial estimates on elements of $\cM_{-n}^0$, and for completeness.
The idea of the proof is to adapt Bowen's proof of the uniqueness of 
equilibrium states to the setting of maps with discontinuities.  The key estimates
will be to show that while not all elements of $\cM_{-n}^0$ satisfy good lower
bounds on their measure, most elements (in the sense of Lemma~\ref{lem:most good})
have satisfied good lower bounds at some point in the recent past (in the sense of
Lemma~\ref{lem:long good}).
Recall that $\cM_0^n$ denotes the set of maximal, open connected components on which
$T^n$ is smooth, while $\cM_{-n}^0$ denotes the analogous set for $T^{-n}$.

Choose $\delta_2>0$ sufficiently small that for all $n, k \in \mathbb{N}$, if 
$A \in \cM_{-k}^n$ is such that $\diam^u(A) \le \delta_2$ and $\diam^s(A) \le \delta_2$, then
$A \setminus \cS^{\pm}$ consists of no more that $K_1$ connected components.  Such a choice
of $\delta_2$ is possible by property (P1) and Convention~\ref{convention: n_0=1}.

For $n \ge 1$, define
\[
B_{-2n}^0 = \{ A \in \cM_{-2n}^0 : \forall j, \, 0 \le j \le n/2, \,
T^{-j}A \subset E \in \cM_{-n+j}^0 \mbox{ such that } \diam^u(E) < \delta_2 \} \, .
\]
Define $B_0^{2n} \subset \cM_0^{2n}$ analogously with $\diam^u(E)$ replaced by
$\diam^s(E)$.  Next, let
\begin{equation}
\label{eq:B2n}
B_{2n} := \{ A \in \cM_{-2n}^0 : \mbox{ either $A \in B_{-2n}^0$ or $T^{-2n}A \in B_0^{2n}$ } \} \, , 
\end{equation}
and $G_{2n} = \cM_{-2n}^0 \setminus B_{2n}$.  We think of $B_{2n}$ as the set of `bad' elements
and $G_{2n}$ as the set of `good' elements.

Note that for any $n \ge 1$, each $A \in \cM_{-n}^0$ satisfies $\diam^s(A) \le C \Lambda^{-n}$.
We choose $\bar n \in \mathbb{N}$ such that $C\Lambda^{-\bar n} \le \delta_2$.
Our first lemma shows that the cardinality of $B_{2n}$ is small relative to $e^{2n h_*}$ for
large $n$.

\begin{lemma}
\label{lem:most good}
There exists $C>0$ such that for all $n \ge \bar n$,
\[
\# B_{2n} \le C e^{3n h_*/2} K_1^{n/2} \le C \rho^{n/2} e^{2n h_*}  \, .
\]
\end{lemma}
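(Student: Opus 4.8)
The goal is to bound $\# B_{2n}$, and by definition $B_{2n} \subset \cM_{-2n}^0$ consists of those cells $A$ such that either $A \in B_{-2n}^0$ or $T^{-2n}A \in B_0^{2n}$. Since $\# \cM_{-2n}^0 = \# \cM_0^{2n}$ and the situation for $B_0^{2n}$ is symmetric to that for $B_{-2n}^0$ under time reversal, it suffices to prove the bound $\# B_{-2n}^0 \le C e^{3nh_*/2} K_1^{n/2}$ and then double it. So I would focus entirely on estimating $\# B_{-2n}^0$.

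The first step is to understand the structure of $A \in B_{-2n}^0$: for every $j$ with $0 \le j \le n/2$, the cell $T^{-j}A$ is contained in an element $E = E_j \in \cM_{-n+j}^0$ with $\diam^u(E_j) < \delta_2$. In particular, taking $j = n/2$ (or $\lfloor n/2 \rfloor$; I'd fix parity conventions along the way), $A$ lies inside $T^{n/2} E_{n/2}$ where $E_{n/2} \in \cM_{-n/2}^0$ has small unstable diameter. So I would first count the number of cells $E \in \cM_{-n/2}^0$ with $\diam^u(E) < \delta_2$, and then, for each such $E$, count how many cells of $B_{-2n}^0$ can sit inside $T^{n/2}E$ — equivalently, how many smooth components $T^{n/2}(E \cap \cS^+_{3n/2})$ has, since elements of $\cM_{-2n}^0$ inside $T^{n/2}E$ are cut out by $\cS^+_{3n/2}$ refined forward, i.e. the $(3n/2)$-th generation of subdivision of $E$.

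For the first count I'd use the trivial bound $\# \cM_{-n/2}^0 = \# \cM_0^{n/2} \le C e^{nh_*/2}$ from Corollary~\ref{cor:upper M} (or just Proposition~\ref{prop:M0n}), so there are at most $C e^{nh_*/2}$ candidate cells $E$. For the second count — the number of descendants of a single short-unstable-diameter cell $E$ after $3n/2$ further cuts — this is where the hypothesis that $E$ stays within cells of unstable diameter $<\delta_2$ \emph{for the whole window} $0\le j\le n/2$ enters, together with the choice of $\delta_2$ via (P1): when a cell has both stable and unstable diameter below $\delta_2$, refining it one more step produces at most $K_1$ connected components. I would argue inductively that a cell of $\cM_{-n+j}^0$ with small unstable diameter, refined backward, spawns at most $K_1$ children per step that themselves have small unstable diameter, and combine this with the fact that $A$ must have been the descendant of such a chain: the cell $E_j$ at generation $j$ lies inside $E_{j-1}$, giving a multiplicative factor of at most $K_1$ at each of the $\sim n/2$ steps, hence a factor $K_1^{n/2}$. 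Wait — I need to be careful: the control on unstable diameter is only guaranteed for $0 \le j \le n/2$, so only roughly $n/2$ of the $2n$ generations are controlled; for the remaining generations I use the trivial per-step bound, but those are already absorbed by going from $\cM_{-n/2}^0$-cells to $\cM_{-2n}^0$-cells via a factor bounded by $\# \cM_0^{3n/2}\le C e^{3nh_*/2}$... that would overshoot. Instead the right bookkeeping is: $\# B_{-2n}^0 \le (\#\{E \in \cM_{-n/2}^0 : \diam^u(E)<\delta_2\}) \times (\text{children in the controlled window, } \le K_1^{n/2}) \times (\text{children in the uncontrolled remaining } 3n/2 \text{ generations})$; the last factor is bounded not by $e^{3nh_*/2}$ but — using that after the window the relevant cells are genuine elements of $\cM$ and the total $\#\cM_0^{3n/2}\le C e^{3nh_*/2}$ is shared among all $E$'s — by a summed (not multiplied) contribution. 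So the cleaner route is: each $A\in B_{-2n}^0$ determines a chain $A = A_0 \supset$-preimage-of $\cdots$; bound the branching in the first $n/2$ steps by $K_1$ each (giving $K_1^{n/2}$) and bound the total number of terminal cells reachable after the remaining steps from all starting cells simultaneously by $\#\cM_0^{2n}\le Ce^{2nh_*}$ — but that loses the point.

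The correct and intended bookkeeping, which I would carry out carefully, is this: map $A\in B_{-2n}^0$ to the pair $(E, W)$ where $E = E_{n/2}\in\cM_{-n/2}^0$ has $\diam^u<\delta_2$ and $A$ is one of the smooth components of the backward refinement of $E$ over the remaining $3n/2$ steps; bound $\#\{E\} \le \#\cM_0^{n/2}\le C e^{nh_*/2}$ and, for each fixed $E$, bound the number of components of its $(3n/2)$-fold backward refinement by $K_1^{n/2}\cdot(\text{something}\le C e^{nh_*})$, where the first factor $K_1^{n/2}$ comes from the $n/2$ controlled steps (each contributing $\le K_1$ by the $\delta_2$-choice) and the second factor bounds the branching over the $n$ uncontrolled steps via the crude estimate $\#\cG_n^{\delta_2}(\cdot)$-type bound, i.e. a single-cell analogue of Lemma~\ref{lem:growth}(b) giving at most $C\delta_2^{-1}\#\cM_0^n\le Ce^{nh_*}$ descendants of any one cell after $n$ steps. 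Multiplying: $\# B_{-2n}^0 \le C e^{nh_*/2}\cdot K_1^{n/2}\cdot C e^{nh_*} = C e^{3nh_*/2} K_1^{n/2}$, which is exactly the claimed bound. The second inequality of the lemma, $C e^{3nh_*/2}K_1^{n/2}\le C\rho^{n/2}e^{2nh_*}$, follows immediately from $K_1 \le \rho\,\Lambda\kappa^{\alpha_0}\le\rho\,e^{h_*}$ (Lemma~\ref{lem:growth}(a) together with $\Lambda\kappa^{\alpha_0}\le e^{h_*}$, which holds since $\#\cM_0^n\ge C\delta_0\Lambda^n\ge C\delta_0(\Lambda\kappa^{\alpha_0})^n$ by Lemma~\ref{lem:growth}(d), whence $\Lambda\le e^{h_*}$ and a fortiori $\Lambda\kappa^{\alpha_0}\le e^{h_*}$).

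\textbf{Main obstacle.} The delicate point is the per-step branching bound of $K_1$ in the controlled window: one must verify that whenever $T^{-j}A\subset E_j\in\cM_{-n+j}^0$ with $\diam^u(E_j)<\delta_2$, then also $\diam^s(E_j)<\delta_2$ automatically (it does, since every element of $\cM_{-m}^0$ has $\diam^s\le C\Lambda^{-m}\le\delta_2$ once $m\ge\bar n$ — this is where the hypothesis $n\ge\bar n$ is used), so that the choice of $\delta_2$ applies and refining $E_j$ backward one step yields at most $K_1$ subcells, of which we then retain only those inside $E_{j-1}$. Getting the generations to line up — that the window $0\le j\le n/2$ of the definition of $B_{-2n}^0$ really does feed $n/2$ genuine $K_1$-branching steps, as opposed to $n/2$ steps that might already be "used up" — requires tracking indices carefully, and this index bookkeeping, rather than any hard analytic estimate, is the crux of the proof. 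The symmetric argument for $B_0^{2n}$ uses that (P1) is assumed on both $\cS_n^+$ and $\cS_n^-$, exactly as flagged in the remark following (P2).
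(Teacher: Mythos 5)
Your high-level strategy is the right one — count a base family of cells and then bound the branching over roughly $n/2$ further refinement steps by $K_1$ per step — but the specific bookkeeping you set up has two genuine problems that the paper's argument avoids. First, you factor through cells $E \in \cM_{-n/2}^0$ and then require, for each fixed $E$, a uniform per-cell bound of the form ``the number of $\cM_{-3n/2}^0$-descendants of $E$ is $\le Ce^{nh_*}$''; you call this a ``single-cell analogue of Lemma~\ref{lem:growth}(b),'' but no such lemma is available for cells of $\cM_{-m}^0$ (Lemma~\ref{lem:growth}(b) is a statement about curves, and the ratio $\#\cM_{-3n/2}^0/\#\cM_{-n/2}^0 \sim e^{nh_*}$ is only an \emph{average}, not a per-cell bound). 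Second, your chain $\{E_j\}$ with $E_j\in\cM_{-(n-j)}^0$ has the containment pointing the wrong way: as $j$ increases the partition $\cM_{-(n-j)}^0$ gets \emph{coarser}, so ``$E_j$ lies inside $E_{j-1}$'' is false, and there is no natural $K_1$-branching along that chain.

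The paper's route sidesteps both issues by choosing a different (finer) base family and a chain in which the partition refines. For $0\le j\le n/2$, let $A_j$ be the element of $\cM_{-(3n/2+j)}^0$ containing $T^{-(n/2-j)}A$, so $A_{n/2}=A$. The stable diameter of $A_j$ is $\le C\Lambda^{-(3n/2+j)}\le\delta_2$ because $3n/2+j\ge\bar n$ (this is where $n\ge\bar n$ is used), and the unstable diameter of $A_j$ is $\le\delta_2$ because $\cM_{-(3n/2+j)}^0$ refines $\cM_{-(n/2+j)}^0$, so $A_j$ sits inside the small-unstable-diameter cell $E$ supplied by the definition of $B_{-2n}^0$ at $j'=n/2-j$. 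One checks that $T^{-1}A_{j+1}$ is a connected component of $A_j\setminus\cS^+$, hence there are at most $K_1$ choices for $A_{j+1}$ given $A_j$ by the choice of $\delta_2$. Therefore $\#\{A'\in B_{-2n}^0 : T^{-n/2}A'\subset A_0\}\le K_1^{n/2}$, and summing over $A_0\in\cM_{-3n/2}^0$ directly gives $\# B_{-2n}^0\le \#\cM_{-3n/2}^0\, K_1^{n/2}\le Ce^{3nh_*/2}K_1^{n/2}$ — no per-cell refinement bound is needed. Your verification of the second inequality via $K_1=\rho\Lambda\kappa^{\alpha_0}$ and Lemma~\ref{lem:growth}(d), and your reduction to $B_{-2n}^0$ and $B_0^{2n}$ by symmetry, are both fine and match the paper.
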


\begin{proof}
For $n \ge \bar n$, suppose $A \in B_{-2n}^0 \subset \cM_{-2n}^0$.  For simplicity assume
$n$ is even; otherwise, we may use $\lfloor n/2 \rfloor$ in place of $n/2$.
For $0 \le j \le n/2$,
let $A_j$  denote the element of $\cM_{-3n/2 - j}^0$ containing $T^{- (n/2-j)}A \in \cM^{n/2-j}_{-3n/2-j}$.

Since $A \in B_{-2n}^0$ and by choice of $\bar n$, it follows that
$\max \{ \diam^s(A_j), \diam^u(A_j) \} \le \delta_2$ for each $0 \le j \le n/2$.  
By choice of $\delta_2$, the number of connected components of $\cM^1_{-3n/2-j}$ in
each $A_j$ is at most $K_1$.  Fixing $A_0 \in \cM_{-3n/2}^0$ and applying this estimate 
inductively in $j$, we conclude that
$\# \{ A' \in B_{-2n}^0 : T^{-n/2}A' \subset A_0 \} \le K_1^{n/2}$.  Summing over the possible
$A_0 \in \cM_{-3n/2}^0$ yields,
\[
\# B_{-2n}^0 \le \# \cM_{-3n/2}^0 K_1^{n/2} \le C e^{3n h_*/2} \rho^{n/2} \Lambda^{n/2} \le C \rho^{n/2} e^{2n h_*} \, ,
\]
where we have used Proposition~\ref{prop:M0n} and Convention~\ref{convention: n_0=1}
for the second inequality, and Lemma~\ref{lem:growth}(d) for the third.

Next, if $A \in \cM_0^n$, then $\diam^u(A) \le C\Lambda^{-n}$ as well, so the same choice of  
$\bar n$ permits the analogous estimate to hold for $\# B_0^{2n}$ for $n \ge \bar n$.
Finally, since there is a one-to-one correspondence between elements of $\cM_0^n$ and
$\cM_{-n}^0$, we have $\# B_{2n} \le \# B_{-2n}^0 + \# B_0^{2n}$, completing the proof of the
lemma.
\end{proof}

Our next lemma shows that long elements of $\cM_{-j}^0$ enjoy good lower bounds
on their $\mu_*$-measure.  These lower bounds will eventually be linked to elements
of $G_{2n}$.

\begin{lemma}
\label{lem:long good}
There exists a constant $C_{\delta_2} > 0$ such that for all $j \ge 1$ and $A \in \cM_{-j}^0$ such that
$\min \{ \diam^u(A), \diam^s(T^{-j}A) \} \ge \delta_2$,  it follows that, 
\[
\mu_*(A) \ge C_{\delta_2} e^{-j h_*} \, .
\]
\end{lemma}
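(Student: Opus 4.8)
The plan is to show that an element $A \in \cM_{-j}^0$ which is "long" in both the unstable direction (in $A$) and the stable direction (after applying $T^{-j}$) contains a Cantor-rectangle-type product structure carrying definite $\mu_*$-mass, and that this mass can be bounded below uniformly in $j$ using the eigenvector equations $\cL \nu_0 = e^{h_*}\nu_0$ and $\cL^*\tnu_0 = e^{h_*}\tnu_0$ together with the positivity and absolute-continuity properties already established in Section~\ref{sec:hyper}.

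First I would pin down a pair of stable/unstable curves realizing the diameter hypotheses: pick an unstable curve $U \subset A$ with $|U| \ge \delta_2$, and a stable curve $W' \subset T^{-j}A$ with $|W'| \ge \delta_2$; set $W = T^j W' \subset A$, a stable curve which, by \eqref{eq:one grow} (or rather its unstable analogue controlling how stable curves shrink forward) has length at least $c \Lambda^{-j}\delta_2$ — but crucially it is an honest stable manifold sitting inside the single smooth cell $A$. Using the finite family of Cantor rectangles $\cR_{\delta_2}$ from \cite[Theorem~4.10]{Li1} (as in the proof of Lemma~\ref{lem:lower}), the unstable curve $U$ properly crosses some $R_i \in \cR_{\delta_2}$; since $A$ is a cell of smoothness for $T^j$ whose boundary consists of stable curves in $\cS_j^+$, and stable curves cannot cut unstable manifolds under $T^{-j}$, the part of $R_i$ threaded by $U$ lies in $A$. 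Then I would invoke the mixing property of $(T,\musrb)$ and Lemma~\ref{lem:leafwise} to transport positive $\nu_0$-leafwise mass onto a definite sub-rectangle of $R_i$ inside $A$, and combine this with the equivalence \eqref{eq:equivalence} of the conditional measures $\mu_*^W$ with the leafwise measure $\nu_0|_W$ from Proposition~\ref{prop:cont} to get $\mu_*(R_i \cap A) \ge c_{\delta_2} > 0$, a constant depending only on $\delta_2$ and the finitely many rectangles.

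The work then is to replace the fixed constant $c_{\delta_2}$ by $C_{\delta_2} e^{-j h_*}$, i.e.\ to recover the correct exponential decay rate; this is where the dynamics must be used rather than just bounded-distortion and the fixed Cantor family. The natural route is: write $\mu_*(A) = \langle 1_A \nu_0, \tnu_0\rangle / \langle \nu_0,\tnu_0\rangle$, use $\nu_0 = e^{-j h_*}\cL^j \nu_0$ and the identification \eqref{eq:trans} to express $\int_W 1_A \psi\, \nu_0$ as $e^{-j h_*}\sum_{W_i \in \cG_j(W'')} \int_{W_i} (\cdots) \nu_0$ over a long stable curve $W''$ that properly crosses one of the $R_i$'s; then Lemma~\ref{lem:lower} gives $\#\cG_j(W'') \ge c_0 \#\cM_0^j$, and Proposition~\ref{prop:M0n} gives $\#\cM_0^j \ge C_\# e^{j h_*}$, so a positive fraction of the $\nu_0$-mass — of size comparable to $e^{-j h_*}\cdot e^{j h_*} = O(1)$ — is distributed among $\gtrsim e^{j h_*}$ pieces, and the piece landing in $A$ after the appropriate iterate carries at least $\gtrsim e^{-j h_*}$ of it. Matching this against $\tnu_0$ via \eqref{eq:weak dual} and the lower bound $\tnu_0(\nu_0)>0$ established before Definition~\ref{def:mu_*} then closes the estimate.

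The main obstacle I expect is bookkeeping the geometry of "$A$ is long in both directions" so that the Cantor-rectangle crossing argument genuinely takes place inside the single cell $A$ — one must be careful that $\partial A \subset \cS_j^+$ (stable curves) does not truncate the unstable curve $U$, which is exactly Sublemma~\ref{sub:cross}-type reasoning and the non-cutting of unstable manifolds under $T^{-j}$; and conversely that the pushed-forward stable curve $W = T^j W'$, though exponentially short, still properly crosses a rectangle in a scaled family, forcing one to run the Cantor-rectangle construction at the scale $\Lambda^{-j}\delta_2$ rather than the fixed scale $\delta_2$ — or, better, to apply Corollary~\ref{cor:most grow} to grow $W$ back to length $\ge \delta_1/3$ at a logarithmic cost $n_2 = O(j)$ that is then absorbed into the constant, exactly as in the proof of Lemma~\ref{lem:leafwise}. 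Threading this so that all constants remain independent of $j$ (depending only on $\delta_2$, hence ultimately on $T$) is the delicate point; everything else is an assembly of Lemma~\ref{lem:lower}, Proposition~\ref{prop:M0n}, Lemma~\ref{lem:leafwise}, and the equivalence \eqref{eq:equivalence}.
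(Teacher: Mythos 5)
Your proposal assembles the right tools---the Cantor rectangle family $\cR_{\delta_2}$, the leafwise positivity \eqref{eq:nu positive}, the eigenfunction relation $\cL\nu_0 = e^{h_*}\nu_0$, and the equivalence \eqref{eq:equivalence} from Proposition~\ref{prop:cont}---but the specific argument you sketch does not close, and it misses the one-step mechanism that actually produces the $e^{-jh_*}$ factor.

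Two concrete problems.  First, the intermediate target ``$\mu_*(R_i\cap A)\ge c_{\delta_2}>0$'' is false: $A\in\cM_{-j}^0$ is bounded by unstable curves in $\cS_j^-$ and its stable diameter shrinks like $\Lambda^{-j}$, so $\mu_*(R_i\cap A)\to 0$ as $j\to\infty$.  (There is also a sign slip: $\partial A\subset\cS_j^-$ consists of \emph{unstable} curves, not stable curves in $\cS_j^+$; this is precisely why $A$ must fully cross $R_i$ in the unstable direction---the unstable boundary of $A$ cannot cut the unstable fibers of $R_i$ under $T^{-j}$.)  Second, the counting argument in your second half ($\#\cG_j(W'')\ge c_0\#\cM_0^j$, $\#\cM_0^j\ge C_\# e^{jh_*}$, ``the piece landing in $A$ carries $\gtrsim e^{-jh_*}$'') has no mechanism identifying which piece lands in $A$ or controlling its individual $\nu_0$-mass from below; the lower bounds on cardinality tell you there are many pieces, not that any particular one has mass $\gtrsim e^{-jh_*}/\#\mathrm{pieces}$.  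Nothing here uses that $T^{-j}A$ crosses a rectangle, which is the hypothesis that makes the bound work.

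The paper's mechanism is simpler and runs in the opposite direction from your transfer-operator step.  Take a stable manifold $W_\xi$ of the rectangle $R_i$ that $A$ fully crosses (unstable direction), and look at the fixed segment $W_{\xi,A}=W_\xi\cap A$ \emph{inside} $A$.  Since $T^{-j}$ is smooth on $A\in\cM_{-j}^0$ and $T^{-j}A$ fully crosses some $R_k$ in the stable direction, $T^{-j}(W_{\xi,A})$ is a single stable curve properly crossing $R_k$, hence of length at least $s$, the shortest stable-manifold length in $\cR_{\delta_2}$ (a $j$-independent constant).  Now the eigenfunction relation is applied once, to this segment:
\[
\nu_0(W_{\xi,A})=e^{-jh_*}\int_{W_{\xi,A}}\cL^j\nu_0=e^{-jh_*}\,\nu_0\bigl(T^{-j}(W_{\xi,A})\bigr)\ge e^{-jh_*}\,C'\,s^{h_*\bar C_2},
\]
by \eqref{eq:nu positive}.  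The factor $e^{-jh_*}$ appears not from counting $\sim e^{jh_*}$ pieces but from the single change of variables $\int_W\cL^j\nu_0=\int_{T^{-j}W}\nu_0$; the constant is $j$-independent because the pushed-forward curve always has length $\ge s$.  Your worry about re-running the Cantor construction at scale $\Lambda^{-j}\delta_2$, or growing $W$ back via Corollary~\ref{cor:most grow} at logarithmic cost $O(j)$, is misplaced---the push-back by $T^{-j}$ does the stretching, and the hypothesis $\diam^s(T^{-j}A)\ge\delta_2$ guarantees the stretched curve lands across a fixed rectangle.  Finally, disintegrate $\mu_*$ over $R_i$ and use \eqref{eq:equivalence}:
\[
\mu_*(A)\ge\int_{\Xi_i}\mu_*^\xi(A)\,d\hat\mu_*(\xi)\ge\int_{\Xi_i}C_\xi^{-1}\nu_0(W_{\xi,A})\,d\hat\mu_*(\xi)\ge C's^{h_*\bar C_2}e^{-jh_*}\int_{\Xi_i}C_\xi^{-1}\,d\hat\mu_*(\xi),
\]
with the final integral positive and bounded below uniformly over the finite family $\cR_{\delta_2}$.
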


\begin{proof}
As in the proof of Lemma~\ref{lem:lower}, we choose a finite set
$\cR_{\delta_2} = \{ R_1, \ldots, R_\ell \}$ of locally maximal Cantor rectangles
with $\mu_*(R_i) >0$, such that every stable curve of length $\delta_2$ properly
crosses at least one $R_i$ in the stable direction, and every unstable curve of length 
$\delta_2$ properly crosses at least one $R_i$ in the unstable direction.

Now let $j \ge 1$ and $A \in \cM_{-j}^0$ be as in the statement of the lemma.  By choice
of $\cR_{\delta_2}$, an unstable curve in $A$ properly crosses at least one $R_i \in \cR_{\delta_2}$.
Since $\partial A \subset \cS_n^-$, $\partial A$ cannot intersect any unstable manifolds in
$R_i$ since unstable manifolds cannot be cut under $T^{-n}$.  
Thus $A$ must fully cross $R_i$ in the unstable direction.  Similarly, 
$T^{-j}A \in \cM_0^j$ must fully cross at least one rectangle $R_k \in \cR_{\delta_2}$ in the stable
direction.

Let $\Xi_i$ denote the index set of the family of stable manifolds comprising $R_i$.
If $\xi \in \Xi$, set $W_{\xi, A} = W_\xi \cap A$.  Since $T^{-j}$ is smooth on $A$ and
$T^{-j}A$ fully crosses $R_k$
in the stable direction, it must be that $T^{-j}(W_{\xi,A})$ is a single curve that properly crosses
$R_k$, and so contains a stable manifold in the family corresponding to $R_k$.

Let $s >0$ denote the length of the shortest stable manifold in the rectangles belonging to
$\cR_{\delta_2}$.  Applying \eqref{eq:nu positive}, we estimate for $\xi \in \Xi_i$,
\[
\int_{W_{\xi, A}} \nu_0 = e^{-j h_*} \int_{W_{\xi,A}} \cL^j \nu_0 
= e^{-j h_*} \int_{T^{-j}(W_{\xi,A})} \nu_0 \ge e^{-j h_*} C' s^{h_* \bar C_2} \, .
\]
Next, we let $D(R_i)$ denote the smallest solid rectangle containing $R_i$, and
disintegrate $\mu_*$ on $\{ W_\xi \}_{\xi \in \Xi_i}$ into conditional measures
$\mu_*^\xi$ and a factor measure $\hat \mu_*$ on $\Xi_i$.  Then
using the equivalence of the conditional measure $\mu_*^\xi$ with $\nu_0$ on 
$\mu_*$-a.e. $\xi \in \Xi_i$ from \eqref{eq:equivalence}, we have
\[
\begin{split}
\mu_*(A) & \ge \mu_*(A \cap D(R_i)) \ge \int_{\Xi_i} \mu_*^\xi(A) \, d\hat \mu_*(\xi) \\
& \ge \int_{\Xi_i} C_\xi^{-1} \nu_0(W_{\xi,A}) \, d\hat \mu_*(\xi) 
\ge C' s^{h_* \bar C} e^{-j h_*} \int_{\Xi_i} C_\xi^{-1} \, d\hat \mu_*(\xi) \, , 
\end{split}
\] 
which completes the proof of the lemma due to the finiteness of $\cR_{\delta_2}$.
\end{proof}

Our main proposition of the section is the following.

\begin{proposition}
\label{prop:max}
The measure $\mu_*$ is the unique measure of maximal entropy.
\end{proposition}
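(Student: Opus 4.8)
The plan is to adapt Bowen's argument for uniqueness of equilibrium states in the form used for discontinuous hyperbolic maps in \cite[Section~7.7]{max}; the only genuinely new input is that the required lower bounds on $\mu_*$ of dynamical cells must be extracted from the growth estimates for $\#\cM_{-n}^0$ proved above rather than from the continuation-of-singularities property available for billiards. First I would reduce to the ergodic case: if $\mu$ is any $T$-invariant probability with $h_\mu(T)=h_*$, then by affinity of the entropy map on the simplex of invariant measures together with Theorem~\ref{thm:initial}(d), $P$-almost every ergodic component $\mu_\omega$ of $\mu$ is itself a measure of maximal entropy, so it suffices to show that every \emph{ergodic} $\nu$ with $h_\nu(T)=h_*$ equals $\mu_*$. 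Since $\mu_*$ is ergodic (Corollary~\ref{cor:ergodic}(a)), $\nu$ and $\mu_*$ are either equal or mutually singular, so it is enough to prove $\nu\ll\mu_*$.

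The first of the two estimates driving the argument is obtained from Shannon--McMillan--Breiman: applied to $\nu$ with a finite generating partition whose backward refinements are the $\cM_{-n}^0$ (this partition generates by the expansiveness argument in the proof of Theorem~\ref{thm:initial}(d)), it gives, for $\nu$-a.e.\ $x$ and every $\epsilon>0$, the bound $e^{-2n(h_*+\epsilon)}\le\nu(\cM_{-2n}^0(x))\le e^{-2n(h_*-\epsilon)}$ for all large $n$, where $\cM_{-2n}^0(x)$ denotes the cell containing $x$. Combining the upper bound with $\#B_{2n}\le C\rho^{n/2}e^{2nh_*}$ (Lemma~\ref{lem:most good}) and choosing $\epsilon<\tfrac14|\log\rho|$, the union of the bad cells of $\cM_{-2n}^0$ carries $\nu$-measure tending to $0$; hence for $\nu$-a.e.\ $x$ the cell $\cM_{-2n}^0(x)$ lies in $G_{2n}$ for all large $n$.

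The second estimate is a lower bound for good cells: there is $C>0$ with $\mu_*(A)\ge Ce^{-2nh_*}$ for all $A\in G_{2n}$ and all large $n$. By the definition of $G_{2n}$, such an $A$ has, at some intermediate time $j\le n/2$, an ancestor of unstable diameter at least $\delta_2$, and $T^{-2n}A$ has, at some intermediate time $j'\le n/2$, an ancestor of stable diameter at least $\delta_2$. I would run the Cantor-rectangle argument of Lemma~\ref{lem:long good} at an intermediate cell that is long in both directions, then propagate the resulting lower bound through the remaining iterates using $\cL\nu_0=e^{h_*}\nu_0$, the leafwise positivity $\nu_0(W)>0$ together with \eqref{eq:nu positive}, the disintegration of $\nu_0$ along stable manifolds in Lemma~\ref{lem:leafwise}, and the uniform bounds $\#\cM_0^k\asymp e^{kh_*}$ of Proposition~\ref{prop:M0n}; this parallels the corresponding estimates in \cite[Section~7.7]{max}.

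With both estimates in hand, for $\nu$-a.e.\ $x$ and all large $n$ one has $\mu_*(\cM_{-2n}^0(x))\ge Ce^{-2nh_*}$, and feeding this comparison, together with the upper bound on $\nu$ of these cells and the uniform count $\#\cM_{-2n}^0\asymp e^{2nh_*}$, into the density/covering argument of \cite[Section~7.7]{max} yields $\nu\ll\mu_*$, hence $\nu=\mu_*$; one passes here to a genuine two-sided refinement (such as $\cM_{-n}^n$, whose cells shrink to points) and relates its cells to those of $\cM_{-2n}^0$ up to the subexponential corrections quantified in Theorem~\ref{thm:initial}(a) and Lemma~\ref{lem:isolated}. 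Combined with the reduction to ergodic components this shows $\mu_*$ is the unique measure of maximal entropy. The hard part will be twofold: establishing the good-cell lower bound, where one must carefully chain the estimate from the intermediate long cell down to the fine cell at time $2n$ using the eigenmeasure relation and the prevalence of long pieces; and---because we do not assume that the dynamical refinements of $\cM_0^1$ are simply connected---controlling the discrepancy between the dynamical partition $\cM_{-2n}^0$, where the measure estimates live, and a genuine generator, where Shannon--McMillan--Breiman and the density argument live, so that the extra polynomial-in-$n$ factors produced by disconnected cells and isolated points do not affect the conclusion. It is precisely here that the uniform exponential growth $\#\cM_0^n\asymp e^{nh_*}$ of Proposition~\ref{prop:M0n}, itself a consequence of the spectral gap, is indispensable, since it is what renders these corrections harmless.
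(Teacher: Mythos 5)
Your overall strategy diverges from the paper's, and it has a genuine gap at the crucial step. The paper does \emph{not} prove a per-cell lower bound $\mu_*(A)\ge Ce^{-2nh_*}$ for all $A\in G_{2n}$, and such a bound is not available. Lemma~\ref{lem:long good} only yields $\mu_*(\bar E)\ge C_{\delta_2}e^{(-2n+j+k)h_*}$ for a cell $\bar E\in\cM_{-2n+j+k}^0$ that is long in \emph{both} directions at its own time scale. A generic $A\in G_{2n}$ need not satisfy any such diameter condition: the definition of $G_{2n}$ only guarantees an ancestor with large unstable diameter at some earlier time $j\le n/2$ and a descendant direction condition, not that $A$ itself is long. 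Many cells $A$ of the finer partition $\cM_{-2n}^0$ live inside a single long ancestor $\bar E$, and the leafwise estimate \eqref{eq:nu positive} only gives a polynomial lower bound $\nu_0(W)\gtrsim|W|^{h_*\bar C_2}$, so the $\mu_*$-mass of $\bar E$ can be shared very unevenly among its descendants. Indeed, the maximal-triple counting $\#\cA_M(\bar E,j,k)\lesssim e^{(j+k)h_*}$ is precisely a bound on the \emph{number} of $A$'s sharing $\bar E$; it produces only an averaged lower bound, not a pointwise one. Even setting that aside, your final step does not close: from $\nu(Q)\le e^{-2n(h_*-\eps)}$ and the claimed $\mu_*(Q)\ge Ce^{-2nh_*}$ you get $\nu(Q)/\mu_*(Q)\le C^{-1}e^{2n\eps}$, which diverges as $n\to\infty$, so no Vitali/density argument of the kind you invoke can deliver $\nu\ll\mu_*$. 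The phrase ``density/covering argument of \cite[Section~7.7]{max}'' also misattributes: that section runs Bowen's entropy comparison, not a density argument.

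The paper's proof instead keeps the comparison at the level of \emph{entropy}, which is exactly what makes the available one-sided estimates sufficient. It assumes $\mu\perp\mu_*$ and shows $h_\mu(T)<h_*$. The invariant set $F$ with $\mu(F)=1$, $\mu_*(F)=0$ is approximated by unions $\cC_n$ of cells (Sublemma~\ref{sub:diff}), and $H_\mu(\cQ_{2n})$ is split according to membership in $\cup T^n\tilde\cC_n$ using the Gibbs-type inequality $-\sum p_j\log p_j\le 1/e+(\log N)\sum p_j$. The ``bad'' cells are disposed of by Lemma~\ref{lem:most good}. For the ``good'' cells, one replaces cardinalities by $\mu_*$-masses of the ancestors $\bar E$ using \eqref{eq:bar E} and the maximal-triple bookkeeping (Sublemma~\ref{sub:disjoint}) to avoid double counting. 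The upshot is $\sum_{Q\in G_{2n}\cap T^n\tilde\cC_n}e^{-2nh_*}\lesssim\mu_*(\cup\cC_n)\to0$, which forces $h_\mu<h_*$. Notice that this argument never needs a lower bound on $\mu_*(A)$ for individual fine cells $A$; it only needs the ancestor lower bound and a count of how many fine cells share each ancestor. If you want to salvage the spirit of your SMB-based approach, you would have to replace the per-cell lower bound by the same ancestor-plus-counting device, at which point you essentially reproduce Bowen's comparison; proving a genuine absolute continuity $\nu\ll\mu_*$ directly would require much stronger, two-sided, uniform control on $\mu_*$-masses of cells that the paper neither proves nor needs.
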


\begin{proof}
Since $\mu_*$ is ergodic, it suffices to prove that if $\mu$ is an invariant probability
measure that is singular with respect to $\mu_*$, then $h_\mu(T) < h_{\mu_*}(T)$.

Recall from \eqref{eq:expansive} that with respect to the metric $\bar d$ defined in \eqref{eq:bar d},
$T$ and $T^{-1}$ are expansive:  there exists $\ve_0 >0$ such that if $\bar d(T^jx, T^jy) < \ve_0$
for all $j \in \mathbb{Z}$, then $x=y$.  

For $n \ge 1$, define $\cQ_n$ to be the partition of maximal, connected components of $M$
(with boundary points doubled according to Convention~\ref{con:pointwise}) on which
$T^{-n}$ is continuous.  By the discussion of Section~\ref{sec:ent def}, 
$\cQ_n$ consists of elements with non-empty interior which correspond to elements of
$\cM_{-n}^0$, plus isolated points.  Since the entropy of an atomic measure is 0, we may assume
that $\mu$ gives 0 mass to the isolated points, and it follows from Lemma~\ref{lem:control}
that $\mu_*$ does as well.  Thus the only elements of $\cQ_n$ with positive measure
correspond to elements of $\cM_{-n}^0 = B_n \cup G_n$.  Accordingly, we throw out the atoms in
$\cQ_n$ and continue to call this collection of sets by the same name.

Since $\mu$ is singular with respect to $\mu_*$, there exists a Borel set $F \subset M$ with
$T^{-1}F = F$, $\mu_*(F) = 0$, and $\mu(F)=1$.  Our first step is to approximate $F$
by elements of $\cQ_n$.

\begin{sublem}
\label{sub:diff}
For each $n \ge \bar n$, there exists a finite union $\cC_n$ of elements of $\cQ_n$ such that
\[
\lim_{n \to \infty} (\mu + \mu_*)((T^{-n/2} \cC_n) \bigtriangleup F) = 0 \, .
\]
\end{sublem}

This is \cite[Sublemma~7.24]{max}, and its proof relies on the fact that the diameters of
elements of $T^{-n/2}(\cQ_n)$ tend to 0 as $n$ increases due to the uniform hyperbolicity of $T$.
The invariance of $F$ implies in addition that
\[
\lim_{n \to \infty} (\mu + \mu_*)(\cC_n \bigtriangleup F) = \lim_{n \to \infty} (\mu + \mu_*)((T^{n/2}\cC_n) \bigtriangleup F) = 0 \, .
\]
By the proof of \cite[Sublemma~7.24]{max}, for each $n$, there exists a compact set 
$\cK(n)$ that defines the approximating collection $\tilde \cC_n = T^{-n/2} \cC_n \subset \cM_{-n/2}^{n/2}$, and satisfying
$\cK(n) \nearrow F$ as $n \to \infty$.  To exploit this approximation, we group elements 
$Q \in \cQ_{2n}$ according to whether $T^{-n}Q \subset \cup \tilde \cC_n$ or $T^{-n} Q \cap (\cup \tilde \cC_n) = \emptyset$, where $\cup \tilde \cC_n$ denotes the union of 
elements of $\tilde \cC_n$ in $M$.  Since we have eliminated isolated points,
if $T^{-n}Q \cap (\cup \tilde \cC_n) \neq \emptyset$, then $T^{-n}Q \in \cM_{-n}^n$ is
contained in an element of $\cM_{-n/2}^{n/2}$ that intersects $\cK(n)$.  
Thus $Q \subset \cup T^n \tilde \cC_n = \cup T^{n/2} \cC_n$.

As noted above, the diameters of  $T^{-n}\cQ_{2n}$ tend to 0 as $n \to \infty$, so by the
expansive property of $T$, since the image under $T^{2n}$ of each element of $\cQ_{2n}$ 
is simply connected, $\cQ_{2n}$ is a generating partition for $T^{2n}$ for $n$ large enough.
Thus,
\[
h_\mu(T^{2n}) = h_\mu(T^{2n}, \cQ_{2n}) \le H_\mu(\cQ_{2n}) = - \sum_{Q \in \cQ_{2n}} \mu(Q) \log \mu(Q) \, .
\]
And so,
\[
\begin{split}
2n h_\mu(T) & = h_\mu(T^{2n}) \le - \sum_{Q \in \cQ_{2n}} \mu(Q) \log \mu(Q) \\
& \le - \sum_{Q \subset \cup T^n \tilde \cC^n} \mu(Q) \log \mu(Q) - \sum_{Q \cap (\cup T^n \tilde \cC^n) = \emptyset} \mu(Q) \log \mu(Q) \\
& \le \frac 2e + \mu( \cup T^n \tilde \cC^n) \log \# (\cQ_{2n} \cap T^n\tilde \cC_n) +
\mu(M \setminus (\cup T^n \tilde \cC^n)) \log \#(\cQ_{2n} \setminus (T^n \tilde \cC_n)) \, ,
\end{split}
\]
where in the last line we have used that for $p_j>0$, $\sum_{j=1}^N p_j \le 1$, it holds that
\[
- \sum_{j=1}^N p_j \log p_j \le \frac 1e + (\log N) \sum_{j=1}^N p_j \, ;
\]
see for example \cite[eq.~(20.3.5)]{KH}.  We have applied this fact with $p_j = \mu(Q)$ to both sums separately.  Next, since $- h_{\mu_*}(T) = \left( \mu(\cup T^n\tilde \cC_n) + \mu(M \setminus (\cup T^n \tilde \cC_n)) \right) \log e^{-h_*}$, we estimate for $n \ge \bar n$,
\begin{equation}
\label{eq:splitting C_n}
\begin{split}
2n&(h_\mu(T) - h_{\mu_*}(T)) - \frac 2e \\
& \le \mu(\cup T^n \tilde \cC_n) \log \sum_{Q \subset \cup T^n \tilde \cC_n} e^{-2n h_*}
+ \mu(M \setminus (\cup T^n \tilde \cC_n)) \log \sum_{Q \in \cQ_{2n}\setminus (T^n \tilde \cC_n)}
e^{-2n h_*} \\
& \le \mu(\cup \cC_n) \log \left( \sum_{Q \in G_{2n} \cap T^n \tilde \cC_n} e^{-2n h_*}
+ \sum_{Q \in B_{2n} \cap T^n \tilde \cC_n} e^{-2n h_*} \right) \\
& \qquad + \mu(M \setminus (\cup \cC_n)) \log \left( \sum_{Q \in G_{2n} \setminus T^n \tilde \cC_n} e^{-2n h_*}
+ \sum_{Q \in B_{2n} \setminus T^n \tilde \cC_n} e^{-2n h_*} \right) \, ,
\end{split}
\end{equation}
where for the last inequality, we have used the invariance of $\mu$.  By Lemma~\ref{lem:most good},
the sums over the two subsets of $B_{2n}$ are bounded by $C\rho^{n/2}$.  We focus on estimating
the sums over the two subsets of $G_{2n}$.

The following is proved in \cite[Section~7.7]{max}:  For each $Q \in G_{2n} \subset \cM_{-2n}^0$,
there exists $j, k \in \mathbb{N}$, $0 \le j,k \le n/2$ and $\bar E \in \cM_{-2n+j+k}^0$
such that $T^{-j}Q \subset \bar E$ and $\min \{ \diam^u(\bar E), \diam^s(T^{-2n+j+k}) \} \ge \delta_2$.
We call such a triple $(\bar E, j, k)$ an {\em admissible triple} for $Q \in G_{2n}$,
and note that by Lemma~\ref{lem:long good},
\begin{equation}
\label{eq:bar E}
\mu_*(\bar E) \ge C_{\delta_2} e^{(-2n+j+k)h_*} \, .
\end{equation}

There may be many admissible triples for a fixed $Q \in G_{2n}$.  
Define the unique {\em maximal triple} for $Q$ by taking first the maximum $j$, then the maximum $k$
over all admissible triples for $Q$.

Denote by $\cE_{2n}$ the set of maximal triples corresponding to elements of $G_{2n}$, and
for $(\bar E,j,k) \in \cE_{2n}$, set 
\[
\cA_M(\bar E, j,k) = \{ Q \in G_{2n} : (\bar E, j,k) \mbox{ is the maximal triple for $Q$} \} \, .
\]
Since $\bar E \in \cM_{-2n+j+k}^0$ and $G_{2n} \subset \cM_{-2n}^0$, it follows from Proposition~\ref{prop:M0n} that $\# \cA_M(\bar E, j, k) \le C e^{(j+k)h_*}$ for some $C$ independent of 
$(\bar E, j,k)$ and $n$.

The following sublemma is \cite[Sublemma~7.25]{max}, which implies that if we organize our counting
according to maximal triples, we avoid unwanted redundancies.

\begin{sublem}
\label{sub:disjoint}
If $(\bar E_1, j_1, k_1)$ and $(\bar E_2, j_2, k_2)$ are distinct elements of 
$\cE_{2n}$ with $j_2 \ge j_1$, then $T^{-(j_2-j_1)}\bar E_1 \cap \bar E_2 = \emptyset$.
\end{sublem}

If $Q \in T^n\tilde \cC_n \cap \cA_M(\bar E,j, k)$, then by definition of maximal triple,
$T^{-n+j} \bar E \in \cM_{-n+k}^{n-j}$ contains $T^{-n}Q$.  Since $j, k \le n/2$,
$T^{-n+j}\bar E$ is contained in an element of $\cM_{-n/2}^{n/2}$ that also contains
$T^{-n}Q$ and intersects $\cK(n)$.  Thus $T^{-n+j} \bar E \subset \cup \tilde \cC_n$
whenever $T^n\tilde \cC_n \cap \cA_M(\bar E, j ,k) \neq \emptyset$, and so
$\cA_m(\bar E, j,k) \subset T^n \tilde \cC_n$ whenever 
$T^n\tilde \cC_n \cap \cA_M(\bar E, j ,k) \neq \emptyset$.

Using these observations together with \eqref{eq:bar E}, we estimate
\[
\begin{split}
& \sum_{Q \in G_{2n} \cap T^n \tilde \cC_n} e^{-2n h_*}
\le \sum_{(\bar E, j,k) \in \cE_{2n} : \bar E \subset T^{n-j}\tilde \cC_n} 
\; \sum_{Q \in \cA_M(\bar E,j,k)} e^{-2n h_*} \\
& \le \sum_{(\bar E, j,k) \in \cE_{2n} : \bar E \subset T^{n-j}\tilde \cC_n} C e^{(-2n +j+k)h_*}
\le \sum_{(\bar E, j,k) \in \cE_{2n} : \bar E \subset T^{n-j}\tilde \cC_n} C' \mu_*(\bar E) \\
& \le \sum_{(\bar E, j,k) \in \cE_{2n} : \bar E \subset T^{n-j}\tilde \cC_n} C' \mu_*(T^{-n+j}\bar E)
\le C' \mu_*(\cup \tilde \cC_n) = C' \mu_*(\cup \cC_n) \, ,
\end{split}
\]
where we have used the invariance of $\mu_*$ and the constant $C'$ is independent of $n$.  In the last line we have used Sublemma~\ref{sub:disjoint} in order to sum over the elements of $\cE_{2n}$
without double counting.
Similarly, since $T^{-n+j}\bar E \subset M \setminus \tilde \cC_n$ whenever
$T^n \tilde \cC_n \cap \cA_M(\bar E , j,k) = \emptyset$, the sum over 
$Q \in G_{2n} \setminus T^n \tilde \cC_n$ in \eqref{eq:splitting C_n} 
is bounded by $C' \mu_*(M \setminus (\cup \cC_n))$.

Putting these estimates together with \eqref{eq:splitting C_n} allows us to conclude the argument,
\[
\begin{split}
2n(h_\mu(T) - h_{\mu_*}(T)) - \frac 2e & \le \mu(\cup \cC_n) \log \left(C' \mu_*(\cup \cC_n) + C \rho^{n/2} \right) \\
& \quad + \mu(M \setminus (\cup \cC_n)) \log \left(C' \mu_*(M \setminus (\cup \cC_n)) + C \rho^{n/2}\right) \, .
\end{split}
\]
Then since $\mu(\cup \cC_n) \to 1$ and $\mu_*(\cup \cC_n) \to 0$ as $n \to \infty$, the quantity on the 
right side of the inequality tends to $-\infty$.  This forces $h_\mu(T) < h_{\mu_*}(T)$ to permit
the left side to tend to $-\infty$ as well.
\end{proof}



\end{document}